\documentclass[english]{amsart} 

\usepackage{amsfonts,amsmath,amssymb,color,amscd,amsthm}
\usepackage[T1]{fontenc} 
\usepackage[english]{babel}
\usepackage{mathtools}\usepackage{gensymb}
 
\usepackage{pgf,tikz}
\usepackage{pgfplots} \usepgfplotslibrary{patchplots}
\usetikzlibrary{cd,arrows,positioning}
\tikzset{>=stealth}
\tikzcdset{arrow style=tikz}
\tikzset{link/.style={column sep=1.8cm,row sep=0.16cm}}
\tikzset{map/.style={row sep=0em, column sep=0em}}

\usepackage{soul}

\usepackage{tikz}
\usetikzlibrary{shapes,arrows}

\usepackage[curve,all]{xy}
\xyoption{matrix}
 \xyoption{curve}
 \xyoption{color}
 \xyoption{line}
\xyoption{arc}

\usepackage[shortlabels]{enumitem}
\setlist[1]{wide}
\setlist[2]{leftmargin=15mm}
\setlist[enumerate]{label=\rm{(\arabic*)}}
\setlist[enumerate,2]{label=\rm({\it\roman*}), }
\setlist[itemize]{label=\raisebox{0.25ex}{\tiny$\bullet$}}

\usepackage{thmtools}
\usepackage{thm-restate}
\usepackage[backref, colorlinks, linktocpage, citecolor = blue, linkcolor = blue]{hyperref}

\usepackage{cleveref}

\pgfplotsset{
        compat=1.11,
        My Line Style/.style={
            smooth,
            thick,
            samples=400,
        },
    }
    \definecolor{vlightgray}{rgb}{0.9, 0.9, 0.9}

\newcommand{\Polygone}[3]{\addplot3[patch,patch type=polygon, vertex count=#2,color=#3,faceted color=black]
coordinates {#1};}
\newcommand{\LLine}[1]{\addplot3[patch,patch type=line,color=black,faceted color=gray,style=dashed]
coordinates {#1};}
\newcommand{\PLine}[1]{\addplot3[patch,patch type=line,color=white,faceted color=black,style=thick]
coordinates {#1};}

\theoremstyle{plain}
\newtheorem{theoremA}{Theorem}

\newcommand{\tr}[1]{\vphantom{#1}^{t}\!#1}

\theoremstyle{plain}
\newtheorem{theorem}{Theorem}[subsection]
\newtheorem{lemma}[theorem]{Lemma}
\newtheorem{proposition}[theorem]{Proposition}
\newtheorem{corollary}[theorem]{Corollary}

\theoremstyle{definition}
\newtheorem{definition}[theorem]{Definition}

\theoremstyle{remark}
\newtheorem{remark}[theorem]{Remark}

\newcommand\iso{\stackrel{\simeq}{\longrightarrow}}

\newcommand{\incl}[1][r]{\ar@<-0.2pc>@{^(-}[#1] \ar@<+0.2pc>@{-}[#1]}

\renewcommand{\P}{\mathbb{P}}

\newcommand{\p}{\mathbb{P}}

\newcommand{\Q}{\mathbb{Q}}

\newcommand{\A}{\mathbb{A}}

\newcommand{\C}{\mathbb{C}}
\newcommand{\F}{\mathbb{F}}
\newcommand{\N}{\mathbb{N}}

\newcommand{\Z}{\mathbb{Z}}

\newcommand{\G}{\mathbb{G}}

\renewcommand{\O}{\mathcal{O}}

\newcommand{\im}{\mathbf{i}}

\DeclareMathOperator{\Aut}{Aut}
\DeclareMathOperator{\Chow}{Chow}

\DeclareMathOperator{\Autz}{Aut^{\circ}}
\DeclareMathOperator{\PGL}{PGL}

\DeclareMathOperator{\rk}{rk}

\DeclareMathOperator{\Pic}{Pic}

\DeclareMathOperator{\Bir}{Bir}

\DeclareMathOperator{\Ker}{Ker}

\title{Connected algebraic groups acting on Fano fibrations over $\p^1$}
\date{\today}

\author{J\'er\'emy Blanc}
\author{Enrica Floris}

\begin{document}

\thanks{Both authors acknowledge support by the Swiss National Science
Foundation Grant ''Algebraic subgroups of the Cremona groups`` 200021\_159921.
Enrica Floris  is supported by the ANR Project FIBALGA ANR-18-CE40-0003-01.
}

\tikzstyle{decision} = [diamond, draw, 
    text width=3.5em, text centered,  inner sep=0pt]
    \tikzstyle{sblock} = [rectangle, draw,  
    text width=3em, text centered, rounded corners, node distance=14em, minimum height=3em, minimum width=5em]
\tikzstyle{block} = [rectangle, draw,  
   text width=10em, text centered, rounded corners, minimum height=3em, minimum width=12em]
    \tikzstyle{mblock} = [rectangle, draw,  
    text width=18em, text centered,  rounded corners, minimum height=4em, minimum width=12em] 
\tikzstyle{bblock} = [rectangle, draw,  
    text width=20em, text centered, rounded corners, minimum height=4em, minimum width=12em]    
\tikzstyle{varX} = [rectangle, draw, node distance=20em,  minimum height=3em]  
\tikzstyle{var} = [rectangle, draw, node distance=14em,  minimum height=3em]  
\tikzstyle{line} = [draw, -latex']
\tikzstyle{y} = [circle, draw, minimum height=3em]
\tikzstyle{n} = [circle, draw, node distance=14em,
    minimum height=2em]

\begin{abstract}
Let $X/\p^1$ be a Mori fibre space with general fibre of Picard rank at least two. We prove that there is a proper closed subset $S\subsetneq X$, invariant by the connected component of the identity ${\rm Aut}^{\circ}(X)$ of the automorphism group of $X$, which is moreover the orbit of a section $s$ and whose intersection with a fibre is an orbit of the subgroup of ${\rm Aut}^{\circ}(X)$ acting trivially on $\mathbb{P}^1$.

Such result is a tool to describe equivariant birational maps from $X/\p^1$ to other Mori fibre spaces and therefore finds its applications in the study of connected algebraic subgroups of ${\rm Aut}^{\circ}(X)$. 
This represents a first reduction step towards  a possible classification of maximal connected algebraic subgroups of the Cremona group of rank $4$.
\end{abstract}  

\subjclass[2010]{14L30, 14E05, 14E07, 14E30}
\maketitle
\setcounter{tocdepth}{1}
\tableofcontents
\section{Introduction}

In this text, we work over the field of complex numbers. By a classical result, every maximal connected algebraic subgroup of $\Bir(\p^2)$ is conjugate to either the connected group $\Aut(\p^2)$, or $\Autz(\p^1\times \p^1)$, or $\Aut(\F_n)$ for some $n\ge 2$. This result, essentially due to Enriques \cite{Enr1893}, can be now be seen easily using modern tools, by finding a smooth projective rational surface where the subgroup acts, then running a minimal model program, which in the case of surfaces is a sequence of contractions of $(-1)$-curves, and which for rational varieties gives as an outcome a \emph{Mori fibration} $X\to B$. A Mori fibration is a fibration with $\rho(X/B)=1$ and whose fibres are Fano varieties. Therefore, if $X$ is a rational surface the only possibilities are that either $X=\p^2$ and $B$ is a point, or $X$ is a Hirzebruch surface and $B=\p^1$. The fact that $\Aut(\p^2)$, $\Autz(\p^1\times \p^1)$ and $\Aut(\F_n)$, $n\ge 2$ are maximal connected algebraic subgroups of $\Bir(\p^2)$ (via a birational map to $\p^2$) is then a direct consequence of the fact that
those groups act on the respective surfaces without 
 fixed points. The study of (maximal) connected algebraic subgroups of $\Bir(X)$ for a non-rational surface $X$ can be done with the same strategy, see \cite{Fong}.

In dimension $3$, the classification of the maximal connected algebraic subgroups of the Cremona group was started by Enriques and Fano in \cite{EF98} and achieved by Umemura in a series of four papers \cite{Ume80,Ume82a,Ume82b,Ume85}.
It was recovered in \cite{BFT1,BFT2} using the minimal model program and studying the possible Mori fibrations and their automorphisms groups. In dimension 3, in many cases, the automorphism group of a Mori fibre space is very small.
Hence, the maximal connected algebraic subgroups of $\Bir(\p^3)$ correspond to $\Autz(X)$ for \emph{some} Mori fibrations $X\to B$. These are some very natural ones which are in some sense ``symmetric enough'' as they have a group $\Autz(X)$ large enough. It is interesting to determine which are the Mori fibrations realising the maximal subgroups as automorphism groups.
 In particular, if $B$ is a curve and $X$ is a rational threefold with terminal singularities such that the group $\Autz(X)$ is a maximal connected subgroup of $\Bir(X)$ (or equivalently of $\Bir(\p^3)$ via a birational map $X\dasharrow \p^3$), then either $X\to B$ is a $\p^2$-bundle or a Mori fibration with general fibres isomorphic to $\p^1\times \p^1$ but a generic fibre which is a smooth quadric of Picard rank $1$ (see  \cite[Theorems D and E]{BFT2}). Moreover, in this latter case, there are plenty of examples of maximal algebraic groups (essentially parametrised by classes of hyperelliptic curves) and each is conjugate to the group of automorphisms of infinitely many Mori fibrations $X\to \p^1$ whose generic fibre is a smooth quadric of Picard rank $1$, but is not conjugate to a subgroup of automorphisms of any Mori fibration $Y\to B$ with $\dim B\not=1$.

For the moment, the study of maximal connected algebraic subgroups of $\Bir(\p^n)$ for $n\ge 4$ (or more generally of $\Bir(X)$ for  some rationally connected varieties $X$ of dimension $\ge 4$) seems out of reach in its full generality, due to the incredibly large number of possible cases.

In this text, we focus  on the case of Mori fibre spaces $X\to \p^1$, with $X$ a terminal $\Q$-factorial variety and where the general fibres are smooth Fano varieties of Picard rank $\ge 2$. If $X$ has dimension $3$, this corresponds to the quadric fibrations described above, whose general fibres are then isomorphic to $\p^1\times \p^1$. If $X$ has dimension $4$, the general fibre is a Fano variety of dimension 3. There are $88$ deformation families of smooth Fano threefolds of Picard rank $\ge 2$ \cite{MoriMukai1,MoriMukai2} and among these, exactly $9$ occur as general fibres of klt Mori fibre spaces $X\to \p^1$ \cite[Theorem~1.4]{CFST16}. If $X$ has dimension $\ge 5$, the possible classes for the general fibres are not fully classified (see \cite{CFST16,CFST18} for partial results).

In the study of connected algebraic groups acting on projective varieties, it is natural to look for invariant subsets, as these can be used to construct equivariant birational maps from one Mori fibre space to another. In particular, if $\pi\colon X\to B$ is a Mori fibre space and $\Autz(X)$ acts transitively on $X$, every $\Autz(X)$-equivariant birational map from $X$ to any other Mori fibre space is an isomorphism. This explains  the importance played by the next  result in the study of connected algebraic groups acting on Mori fibres space over $\p^1$, which is our main result:
\begin{theoremA}\label{Thm:ExistenceSection}
 Let $\pi\colon X\to\p^1$ be a $\Q$-factorial terminal Mori fibre space such that a general fibre $F$ satisfies $\rho(F)\geq 2$. Then, the action of \[\Autz(X)_{\p^1}=\{g\in \Autz(X)\mid \pi g=\pi\}\] on a general fibre is not transitive. Moreover, there is a section $s\subset X$ of $\pi$ such that the following holds:
 \begin{enumerate}
 \item The set
 \[\mathcal{S}=\Autz(X)\cdot s=\Autz(X)_{\p^1}\cdot s=(\Autz(X)_{\p^1})^{\circ}\cdot s\] is a proper closed subset of $X$;
 \item
 For each $b\in \p^1$, the fibre $\pi^{-1}(b)\cap \mathcal{S}$ of $\pi|_{\mathcal{S}}\colon \mathcal{S}\to \p^1$ is equal to \[\pi^{-1}(b)\cap \mathcal{S}=(\Autz(X)_{\p^1})^{\circ}\cdot p,\] where $p\in s$ is the point such that $\pi(p)=b$.
 \end{enumerate}
 \end{theoremA}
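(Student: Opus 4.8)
The plan is to pass to the generic fibre, deduce the non-transitivity from a rigidity property of homogeneous spaces under constant groups, and then build $\SS$ from a canonical closed orbit on the geometric generic fibre. Set $K=\mathbb{C}(\p^1)$ and let $X_\eta$, $X_{\overline\eta}=X_\eta\times_K\Kb$ be the generic and geometric generic fibres; then $X_\eta$ is a Fano variety over $K$ with $\rho(X_\eta)=1$ (as $\pi$ is a Mori fibre space), while $\rho(X_{\overline\eta})=\rho(F)\ge 2$, and $\Gal(\Kb/K)$ acts on $X_{\overline\eta}$, and on $\NS(X_{\overline\eta})$ with invariant sublattice of rank $\rho(X_\eta)=1$, spanned by $-K_{X_{\overline\eta}}$. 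Write $N=(\Autz(X)_{\p^1})^{\circ}$; since $X$ is rationally connected (an RC fibration over $\p^1$) the group $\Autz(X)$, hence $N$, is linear, and $N$ acts on $X$ over $\p^1$, so the \emph{constant} $K$-group $N_K=N\times_{\mathbb{C}}K$ acts on $X_\eta$.

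First, for non-transitivity: if $N$ acted transitively on a general fibre, then by generic flatness $N_K$ would act transitively on $X_\eta$. By Graber--Harris--Starr the rationally connected fibration $\pi$ has a section, so $X_\eta$ carries a $K$-point $p$, and then $X_\eta\cong N_K/P$ with $P=\mathrm{Stab}_{N_K}(p)$ parabolic. As $N_K$ is constant, every parabolic $K$-subgroup of it is $N_K(K)$-conjugate to $(P_0)_K$ for a parabolic $P_0\subseteq N$ over $\mathbb{C}$, so $X_\eta\cong(N/P_0)\times_{\mathbb{C}}K$ is a constant family; since the Picard group of a rational homogeneous space does not change under field extension, $\rho(X_\eta)=\rho(N/P_0)=\rho(X_{\overline\eta})\ge 2$, contradicting $\rho(X_\eta)=1$. (Equivalently: $N_{\Kb}$ constant implies $\Gal(\Kb/K)$ acts on the would-be homogeneous space $X_{\overline\eta}\cong N_{\Kb}/P$ by $N$-equivariant automorphisms, which are trivial since parabolics are self-normalising; the Galois action on $\NS(X_{\overline\eta})$ would then be trivial, again forcing $\rho(X_\eta)\ge 2$.)

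Next, for the section: by the Borel fixed point theorem $N_{\Kb}$ has closed orbits on $X_{\overline\eta}$, each of the form $N_{\Kb}/P$, and, $N_{\Kb}$ being constant, $\Gal(\Kb/K)$ permutes them. The crucial claim is that $X_{\overline\eta}$ carries a \emph{canonical}, in particular $\Gal(\Kb/K)$-stable, closed orbit $C$. This I would extract from the Mori theory of the Fano variety $X_{\overline\eta}$: the condition $\rho(X_\eta)=1<\rho(X_{\overline\eta})$ forces $\Gal(\Kb/K)$ to act on $\overline{\NE}(X_{\overline\eta})$ with only the direction $-K_{X_{\overline\eta}}$ invariant and preserving the intersection pairing, hence to permute the extremal rays in a very rigid way; together with Blanchard's lemma (each extremal contraction of $X_{\overline\eta}$ is automatically $N_{\Kb}$-equivariant) this pins down the closed orbit, either by proving uniqueness or by singling out a canonical one — and in dimension $4$ this can, if needed, be checked against the finite list of Fano threefolds occurring as fibres. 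The orbit $C$ then descends to $C_\eta\subseteq X_\eta$ over $K$, and one sets $\SS:=\overline{C_\eta}\subseteq X$. This $\SS$ is closed, dominates $\p^1$, and is proper since $C_\eta\subsetneq X_\eta$ by the first step; it is $\Autz(X)$-invariant because $\Autz(X)$ is connected, preserves $\pi$ (it fixes the contracted extremal ray) and normalises the normal subgroup $N$, hence permutes and therefore fixes the canonical closed orbit fibrewise. Spreading out, $\pi^{-1}(b)\cap\SS$ is the closed $N_b$-orbit of a point for general $b$, and one checks — with care over the finitely many special fibres — that this holds for every $b$. Since the fibres of $\SS\to\p^1$ are homogeneous under $N$, hence rational, Graber--Harris--Starr gives a section $s\subseteq\SS$ of $\pi$; then $(\Autz(X)_{\p^1})^{\circ}\cdot s(b)=N_b\cdot s(b)=\pi^{-1}(b)\cap\SS$ for all $b$ (a point of $N\cdot s$ in a fibre comes from an element of $N$, which preserves that fibre), so $N\cdot s=\SS$, and with the $\Autz(X)$-invariance of $\SS$ this gives the chain $\SS=\Autz(X)\cdot s=\Autz(X)_{\p^1}\cdot s=(\Autz(X)_{\p^1})^{\circ}\cdot s$ of assertion (1) and the fibrewise description of assertion (2).

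The main obstacle is the second step: in general a connected group acting on a Fano variety has several closed orbits, or a positive-dimensional family of them, so one must genuinely exploit the rigidity imposed by $\rho(X_\eta)=1<\rho(X_{\overline\eta})$ — via the induced Galois action on the Mori chamber structure of $X_{\overline\eta}$, Blanchard's lemma, and possibly a case analysis — to obtain a canonical closed orbit. A secondary technical difficulty is controlling $\SS$ over the special fibres of $\pi$, where the orbit structure can degenerate and where one must still recognise $\pi^{-1}(b)\cap\SS$ as a single $N_b$-orbit.
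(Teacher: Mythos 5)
Your first step (non-transitivity) is essentially correct and is a genuinely different, arguably slicker argument than the paper's: you deduce it from the rigidity of projective homogeneous spaces under a constant group over $K=\C(\p^1)$ (Borel--Tits conjugacy of parabolics forces $X_\eta$ to be a constant family, so $\rho(X_\eta)=\rho(F)\ge 2$, contradicting $\rho(X_\eta)=1$). The paper instead gets this from Lemma~\ref{lemmaactiontrans}, whose key input is the equivariant Bend and Break (Proposition~\ref{bandb}) applied to a minimal section; your route avoids Bend and Break entirely for this part.

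The main construction, however, has a genuine gap exactly where you flag it, and it is not a fillable technicality --- it is the actual content of the theorem. You need a \emph{canonical} (hence $\Gal(\Kb/K)$-stable and $\Autz(X)$-stable) closed $N_{\Kb}$-orbit on $X_{\overline\eta}$, and no mechanism is given to produce one. The set of closed orbits of a connected linear group on a Fano variety is in general a positive-dimensional family (e.g.\ if $N$ is a $\G_a$ or a torus, the closed orbits are the fixed points, which can form several curves or surfaces permuted by the Galois action), and the union of all closed orbits --- the only obviously canonical candidate --- fails assertion (2), which requires each fibre of $\SS\to\p^1$ to be a \emph{single} orbit. The Mori-chamber rigidity you invoke constrains the Galois action on $\NS(X_{\overline\eta})$, not on the orbit space, and Blanchard's lemma only makes contractions equivariant; neither singles out an orbit. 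The paper's entire Section~\ref{Sec:ProofExistenceS} exists to supply precisely this: it replaces ``canonical closed orbit on the generic fibre'' by ``orbit of an aut-minimal section $s$ with $\dim(\Autz(X)\cdot s)$ minimal'', and the equivariant Bend and Break is what shows that orbits of points of such a section have constant dimension, that non-closed orbits lead to sections with strictly smaller orbit (Lemma~\ref{Lem:orbithoriz2}), and that $\Autz(X)\cdot s=\Autz(X)_{\p^1}\cdot s$ (Lemma~\ref{Lemm:AutzAndAutzP1}, itself a second nontrivial Bend-and-Break argument). Relatedly, your approach works on the generic fibre and therefore says nothing about $\pi^{-1}(b)\cap\SS$ over the finitely many special $b$, where the fibre is not isomorphic to $F$ and the orbit could jump in dimension or break up; assertion (2) is claimed for \emph{every} $b\in\p^1$, and in the paper this is obtained from the closed embedding $\kappa\colon G^{\circ}/(G^{\circ})_s\times\p^1\to X$ of Lemma~\ref{Lem:orbithoriz2}\ref{ClosedImageOrbits}, which again rests on the minimality of $s$. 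So the proposal correctly reduces the theorem to its hardest point but does not prove it.
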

 
 The proof of Theorem~\ref{Thm:ExistenceSection} is done in Section~\ref{Sec:ProofExistenceS}, by studying sets of sections of the Mori fibre space $X\to \p^1$, applying an equivariant version of Bend and Break (Proposition~\ref{bandb}), and looking at actions of the different subgroups  $\Autz(X)_{\p^1}$ and $(\Autz(X)_{\p^1})^{\circ}$ of  $\Autz(X)$ on the set of minimal sections.

One motivation for studying the group $\Autz(X)_{\p^1}$ comes from the following two observations, proven in Section~\ref{Sec:Verticalgroup}:
\begin{restatable}{propositionA}{proptorus}\label{Prop:Torus}
Let $\pi\colon X\to\p^1$ be a Mori fibre space such that a general fibre $F$ satisfies $\rho(F)\geq 2$. 
Assume that \[\Autz(X)_{\p^1}=\{g\in \Autz(X)\mid \pi g=\pi\}\] is either finite or a torus. Then $\Autz(X)$ is a torus 
of dimension $r\in \{\dim (\Autz(X)_{\p^1})$, $\dim (\Autz(X)_{\p^1})+1\}$. Moreover, if $r\ge 1$, there is a smooth projective variety $C$, a trivial $\p^r$-bundle $Y\to C$ and a birational map $\psi\colon X\dasharrow Y$ such that $\psi\Autz(X)\psi^{-1}\subsetneq \Autz(Y)$.
\end{restatable}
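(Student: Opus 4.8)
The plan is to realise $\Autz(X)$ as an extension involving a subgroup of $\PGL_2$ and to show that no unipotent one–parameter subgroup can occur. Write $G:=\Autz(X)$ and $N:=\Autz(X)_{\p^1}$. First I would record that, $G$ being connected, it acts trivially on $N^1(X)$ and on $N_1(X)$, hence preserves the extremal contraction $\pi$ and acts on $\p^1$; this gives an exact sequence of algebraic groups $1\to N\to G\xrightarrow{\ \varphi\ }\varphi(G)\to 1$ with $\varphi(G)\subseteq\Aut(\p^1)=\PGL_2$ and $\ker\varphi=N$, and $G$ is linear, being an extension of the linear group $\varphi(G)$ by the linear group $N$. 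The strategy is then to prove that $G$ contains no subgroup isomorphic to $\Ga$. Granting this, $G$ is a torus (a connected linear algebraic group with no $\Ga$-subgroup has trivial unipotent radical and trivial derived subgroup), so $\varphi(G)$ is a subtorus of $\PGL_2$, of dimension $\le 1$, whence $\dim G=\dim N+\dim\varphi(G)\in\{\dim N,\dim N+1\}$, which is the first assertion. Note also that $N$, being finite or a torus, has no nontrivial unipotent element, so $\varphi$ is injective on any $\Ga\subseteq G$.

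The main step, and the real obstacle, is to exclude a subgroup $U\cong\Ga$ of $G$. Suppose one exists; then $\varphi(U)$ is a one–parameter unipotent subgroup of $\PGL_2$, with a unique fixed point $\infty\in\p^1$, acting simply transitively on $\A^1:=\p^1\smallsetminus\{\infty\}$. Fixing $b\in\A^1$, the orbit map identifies $\A^1$ with $U$, and carrying each fibre of $\pi$ over $\A^1$ back to $F_b:=\pi^{-1}(b)$ by the corresponding element of $U$ produces a $U$-equivariant isomorphism $X\smallsetminus\pi^{-1}(\infty)\xrightarrow{\ \sim\ } F_b\times\A^1$ ($U$ acting by translations on the second factor), where $F_b$ is a general fibre of $\pi$. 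Now I would compare Weil divisor class groups. Since $\pi^*\colon\Pic(\p^1)\to\Pic(X)$ is injective, the class $\pi^*[\infty]$, supported on the divisor $\pi^{-1}(\infty)$, is non-torsion in $\operatorname{Cl}(X)$; hence, writing $Z_1,\dots,Z_k$ for the components of $\pi^{-1}(\infty)$, the excision sequence together with $\operatorname{Cl}(F_b\times\A^1)=\operatorname{Cl}(F_b)$ gives
\[
\rho(X)=\rk\operatorname{Cl}(X)=\rk\operatorname{Cl}\bigl(X\smallsetminus\pi^{-1}(\infty)\bigr)+\rk\bigl\langle[Z_1],\dots,[Z_k]\bigr\rangle\ \ge\ \rk\operatorname{Cl}(F_b)+1\ \ge\ \rho(F_b)+1\ \ge\ 3 ,
\]
while $\rho(X)=\rho(X/\p^1)+\rho(\p^1)=2$. (This uses that $X$ is $\Q$-factorial, so $\rk\operatorname{Cl}(X)=\rho(X)$, and $H^1(X,\O_X)=0$; the hypotheses ``$N$ finite or a torus'' and $\rho(F)\ge2$ enter exactly here, and Theorem~\ref{Thm:ExistenceSection} is not needed.) This contradiction proves the claim, hence the first assertion.

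For the ``moreover'' part, assume $r\ge1$ and set $T:=\Autz(X)\cong\Gm^r$, acting faithfully on $X$. I would take a Rosenlicht rational quotient $q\colon X\dashrightarrow W$, so that $\C(W)=\C(X)^{T}$ and a general fibre of $q$ is a single orbit $T/S$. The isotropy $S$ must be finite, for otherwise $S^{\circ}$ would be a positive-dimensional subtorus fixing a dense subset of $X$, hence all of $X$, against faithfulness; thus $T/S\cong\Gm^r$. Viewed over $\Spec\C(W)$ this general fibre is a torsor under $\Gm^r$, hence trivial by Hilbert~90, and spreading out gives a $T$-equivariant birational map $X\dashrightarrow\Gm^r\times W$ with $T$ acting by translations on $\Gm^r$ and trivially on $W$. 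Composing with $\Gm^r\hookrightarrow\p^r$ and replacing $W$ by a smooth projective model $C$ (on which $T$ still acts trivially), I obtain a birational map $\psi\colon X\dashrightarrow Y:=\p^r\times C$; since the induced $T$-action on $Y$ is regular, $\psi$ conjugates $\Autz(X)$ onto the standard diagonal torus $\Gm^r\subseteq\PGL_{r+1}\subseteq\Autz(Y)$. This inclusion is strict, because $\Autz(Y)\supseteq\PGL_{r+1}$ is not a torus when $r\ge1$, so it cannot equal the torus $\psi\Autz(X)\psi^{-1}$.

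I expect the conceptual core, and the only genuine difficulty, to be the second step: once one thinks of removing from $X$ the fibre over the fixed point of a hypothetical $\Ga$-action and of recognising the complement as $F_b\times\A^1$, the Picard-number contradiction is immediate. The remaining ingredients --- the existence of the Rosenlicht quotient, the spreading-out of the Hilbert~90 trivialization, and the choice of a smooth projective $C$ compatible with the trivial $T$-action --- require only routine care and present no real obstacle.
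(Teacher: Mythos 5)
Your proof is correct, and the main assertion is obtained by a genuinely different route from the paper's. The paper bounds the image $H$ of $\Autz(X)$ in $\Aut(\p^1)$ via Lemma~\ref{lem:monodr}: the surjectivity of $N^1(X)_{\Q}\to N^1(F)_{\Q}^{\pi_1(U)}$ from \cite{CFST16} forces at least two non-terminal fibres, so $H$ fixes two points of $\p^1$ and is a torus of dimension $\le 1$, and then \cite[IV.11.14, Corollary~1]{Bor91} is used to lift this through the extension and conclude that $\Autz(X)$ itself is a torus. You instead exclude any subgroup $\Ga\subseteq\Autz(X)$ directly: since $\Autz(X)_{\p^1}$ has no unipotents, such a subgroup would act on $\p^1$ with a unique fixed point $\infty$ and simply transitively on its complement, whence an equivariant identification $X\setminus\pi^{-1}(\infty)\simeq F_b\times\A^1$, and excision for divisor class groups gives $\rho(X)=\rk\operatorname{Cl}(X)\ge\rho(F_b)+1\ge 3$, contradicting $\rho(X)=2$. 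Both arguments exploit the same tension between $\rho(F)\ge 2$ and $\rho(X/\p^1)=1$, but yours is more self-contained (no appeal to the monodromy result of \cite{CFST16}), at the cost of invoking $\Q$-factoriality and $H^1(X,\O_X)=0$ to identify $\rk\operatorname{Cl}(X)$ with $\rho(X)$ --- both of which do hold for a Mori fibre space over $\p^1$ with Fano fibres, so there is no gap. For the ``moreover'' part the two proofs coincide in substance: where you rederive the birational trivialization by hand via the Rosenlicht quotient and Hilbert~90, the paper simply cites \cite[Proposition~2.5.1]{BFT2} or \cite[Theorem~3]{Pop16} for the $G$-invariant open subset $(\Gm)^r\times U$, and both conclude strictness of the inclusion from $\Aut(\p^r)\subseteq\Autz(\p^r\times C)$.
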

\begin{restatable}{propositionA}{propsmallorbit}\label{Prop:Smallorbit}
Let $\pi\colon X\to B$ be a Mori fibre space such that \[\Autz(X)_{B}=\{g\in \Autz(X)\mid \pi \circ g=\pi\}\] is a linear group of positive dimension and that no orbit of $\Autz(X)_{B}$ is dense in a fibre of $\pi$.
Then, $k=\max\{\dim((\Autz(X)_{B})^\circ\cdot x)\mid x\in X\}>0$ and there is a Mori fibre space $Y\to C$ with $\dim C\ge \dim X-k>\dim B$, and an $\Autz(X)$-equivariant birational map $X\dasharrow Y$.
\end{restatable}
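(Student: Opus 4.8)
The plan is to realize $Y\to C$ as the outcome of an $\Autz(X)$-equivariant relative minimal model program run over a compactification of the rational quotient of $X$ by $G:=\Autz(X)_{B}$. Two preliminary observations settle the numerology. Since $G$ acts faithfully on $X$ and $\dim G>0$, not every $G$-orbit can be a point, so $k>0$. Moreover $x\mapsto \dim(G\cdot x)=\dim G-\dim G_{x}$ is lower semicontinuous, hence equals $k$ on a dense open subset of $X$, which meets a general fibre $F$ of $\pi$ (an irreducible variety of dimension $\dim X-\dim B$, by generic smoothness). For such an $x\in F$ the orbit $G\cdot x$ lies in $F$ (as $G$ acts over $B$), has dimension $k$, and is not dense in $F$ by hypothesis; therefore $k<\dim X-\dim B$, i.e. $\dim X-k>\dim B$.

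By Rosenlicht's theorem there is a $G$-invariant dense open $U\subseteq X$ carrying a geometric quotient $\rho\colon U\to Z_{0}:=U/G$ whose fibres are the $G$-orbits, so $\dim Z_{0}=\dim X-k$; since $G$ acts over $B$, the morphism $\pi|_{U}$ factors through a morphism $Z_{0}\to B$. Because $G$ is normal in $\Autz(X)$ one may arrange $U$ to be $\Autz(X)$-invariant, so that $\Autz(X)/G$ acts on $Z_{0}$ with $\rho$ and $Z_{0}\to B$ equivariant. Using Sumihiro's theorem and equivariant resolution of singularities, I would then replace $X$ by an $\Autz(X)$-equivariant smooth projective model $\bar X$ (birational to $X$ over $B$, with $\Autz(X)$ acting by automorphisms) and resolve the induced rational map $\bar X\dashrightarrow Z_{0}$ to an $\Autz(X)$-equivariant surjective morphism $g\colon \bar X\to \bar Z$ onto a projective model $\bar Z$ of $Z_{0}$, with $\bar Z\to B$ equivariant and $\dim \bar Z=\dim X-k$.

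The generic fibre of $g$ is birational to a $G$-orbit $G/G_{x}$; a connected linear algebraic group over $\C$ is a rational variety, so $G/G_{x}$ is unirational, hence of negative Kodaira dimension, and it has positive dimension $k$. Consequently $K_{\bar X}$ is not pseudo-effective over $\bar Z$, and by Birkar--Cascini--Hacon--McKernan the relative minimal model program for $\bar X$ over $\bar Z$ terminates with a Mori fibre space $h\colon X^{*}\to C$ admitting a surjective morphism $C\to \bar Z$. As $\Autz(X)$ is connected it acts trivially on all relative N\'eron--Severi groups, so every contraction and flip of this program is automatically $\Autz(X)$-invariant and $\Autz(X)$ acts by automorphisms on each intermediate model; in particular $\Autz(X)$ acts on $X^{*}$ and on $C$ with $h$ equivariant. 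Taking $Y:=X^{*}$, the composition of the equivariant birational maps $\bar X\dashrightarrow X$ and $\bar X\dashrightarrow X^{*}$ gives an $\Autz(X)$-equivariant birational map $\psi\colon X\dashrightarrow Y$, and $\dim C\ge \dim \bar Z=\dim X-k>\dim B$, which is what we want.

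The main obstacle is the equivariant bookkeeping of the first two steps: descending the Rosenlicht quotient to a genuine action of $\Autz(X)/G$ on $Z_{0}$, and building simultaneously an $\Autz(X)$-equivariant smooth projective model $\bar X$ together with an equivariant morphism from it onto a model of $Z_{0}$, so that the relative MMP over $\bar Z$ cannot push $\dim C$ below $\dim X-k$. Once this set-up is in place the minimal model program part is essentially free, since connectedness of $\Autz(X)$ forces every extremal contraction to be equivariant.
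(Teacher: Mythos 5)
Your argument is correct and follows the same overall skeleton as the paper's proof: establish $k>0$ and $k<\dim X-\dim B$ from the hypotheses, produce a rational fibration of $X$ onto a base of dimension $\dim X-k$ whose general fibres are closures of $(\Autz(X)_B)^\circ$-orbits, and then run an $\Autz(X)$-equivariant relative MMP over (a projective model of) that base, using uniruledness of the orbits to guarantee that the program ends with a Mori fibre space $Y\to C$ with $\dim C\ge\dim X-k$. Where you genuinely diverge is in the construction of the intermediate base. You take the Rosenlicht quotient $U\to U/G$ and then invoke Sumihiro's theorem and equivariant resolution to obtain an $\Autz(X)$-equivariant projective model $\bar Z$ together with an equivariant morphism $\bar X\to\bar Z$. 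The paper instead works inside the Chow varieties $\Chow_{k,\ell}(X)$: it parametrises the closures of the $k$-dimensional orbits, uses an uncountability argument to locate a single irreducible component $R_{\ell,j}$ whose incidence variety dominates $X$ birationally, and takes (the normalisation of) $R_{\ell,j}$ as the base $S$. The Chow-variety route delivers the projective, $\Autz(X)$-equivariant parameter space immediately (the Chow variety is projective and carries a biregular $\Autz(X)$-action because $G$ is normal), at the cost of the countability/generic-injectivity bookkeeping; your route avoids that bookkeeping entirely but concentrates the work in the equivariant set-up, in particular the descent of the $\Autz(X)/G$-action to the Rosenlicht quotient (standard for a normal subgroup, but worth a precise reference) and the simultaneous equivariant compactification and resolution. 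The final MMP step, including the observation that connectedness of $\Autz(X)$ makes every contraction and flip equivariant, matches the paper's Lemma on relative Mori fibre spaces.
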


As an application, we obtain the following result on Mori fibre spaces of dimension $4$. 
\begin{theoremA}\label{Thm:Dimension4}
 Let $\pi\colon X\to\p^1$ be a $\Q$-factorial terminal Mori fibre space such that a general fibre $F$ is a smooth threefold of Picard rank $\ge 2$, and such that $\Autz(X)$ is not trivial. Then, one of the following holds:
 
  \begin{enumerate}
 \item\label{Dim41}
There is a Mori fibre space $\pi_B\colon Y\to B$ with general fibres isomorphic to either $\p^1$, or $\p^3$, or a smooth quadric $Q\subset \p^4$ and 
an $\Autz(X)$-equivariant birational map $\varphi\colon X\dasharrow Y$. 
 \item\label{Dim42}
 A general fibre $F$ of $\pi$ is isomorphic to one of the following two smooth Fano threefolds of Picard rank $\ge 2$ with $\Autz(F)\simeq \PGL_2(\C)$:
 \begin{enumerate}
 \item\label{Dim43}
 The blow-up of the quadric $Q\subset \p^4$ given by $x_0x_4 - 4x_1x_3 + 3x_2^2=0$ along the image of the Veronese embedding of degree $4$ of $\p^1$.
 \item\label{Dim44}
 The threefold 
\[\left\{(x,y,z)\in (\p^2)^3 \left| 
 \sum_{i=0}^2 x_i y_i= \sum_{i=0}^2 x_i z_i=\sum_{i=0}^2 y_i z_i=0\right\}\right..\]
 \end{enumerate}
 \end{enumerate}
 \end{theoremA}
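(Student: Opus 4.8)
The plan is to combine the classification of the possible general fibres with Propositions~\ref{Prop:Torus} and~\ref{Prop:Smallorbit}, using throughout that $\Autz(X)$ is connected: it acts trivially on $\Num(X)$, so every run of the minimal model program on $X$, and every birational map produced by Propositions~\ref{Prop:Torus} and~\ref{Prop:Smallorbit}, is automatically $\Autz(X)$-equivariant. Write $G=\Autz(X)_{\p^1}$, a linear algebraic group acting faithfully on the generic fibre $X_\eta$, which is a $\C(t)$-form of $F$; so $G$ embeds geometrically into $\Aut(F)$. By \cite[Theorem~1.4]{CFST16} the fibre $F$ belongs to an explicit list of nine smooth Fano threefolds of Picard rank $\ge 2$, and the argument is a case analysis over this list together with the structure of $G$.

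First I would treat the case where $G$ is finite or a torus. Then Proposition~\ref{Prop:Torus} applies, and since $\Autz(X)\ne 1$ it gives that $\Autz(X)$ is a torus $T$ of dimension $r$ with $1\le r\le 4$, together with a $T$-equivariant birational map $X\dashrightarrow Y$ onto a trivial bundle $Y\cong\p^r\times C$ with $\dim C=4-r$. As $\Autz(Y)=\PGL_{r+1}\times\Autz(C)$, the torus $T$ respects a toric birational identification $\p^r\dashrightarrow\p^1\times\p^{r-1}$, which exhibits $Y$, hence $X$, as $\Autz(X)$-equivariantly birational to a $\p^1$-bundle over the threefold $\p^{r-1}\times C$ --- a Mori fibre space as in alternative~\ref{Dim41}. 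This settles every fibre type $F$ for which $\Aut(F)$ is finite or a torus.

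Now suppose $G$ has positive dimension and is not a torus; then the same holds for $\Aut(F)$, which restricts $F$ to the few members of the list with that property. If no $G$-orbit is dense in a general fibre, Proposition~\ref{Prop:Smallorbit} with $B=\p^1$ produces an $\Autz(X)$-equivariant birational map onto a Mori fibre space $Y\to C$ with $\dim C\in\{2,3\}$: for $\dim C=3$ this is a conic bundle, so we are in~\ref{Dim41}; for $\dim C=2$ one iterates the argument over the surface base $C$, via equivariant minimal model programs and a further use of Proposition~\ref{Prop:Smallorbit}, eventually arriving at a $\p^1$-fibration over a threefold. If instead some $G$-orbit is dense in a general fibre, then $F$ is an almost homogeneous Fano threefold on the list; going through these, $F$ is either one of the two threefolds of~\ref{Dim42}, with $\Autz(F)\simeq\PGL_2(\C)$, in which case we are in alternative~\ref{Dim42}, or $F$ carries a canonical $\Autz(F)$-equivariant Mori fibration $F\to F_0$ with general fibre $\p^1$ or $\p^2$. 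In the latter case I would spread this fibration out over $\C(t)$ --- using that the relevant $\C(t)$-forms (of $\p^2$, of quadric surfaces, of conics, of $\p^1$-bundles over surfaces) are mild because $\C(t)$ is a $C_1$ field --- to produce an $\Autz(X)$-equivariant rational fibration $X\dashrightarrow X_0$ over $\p^1$ with $X_0$ a threefold and $X\dashrightarrow X_0$ of relative dimension $1$; resolving it into a morphism and running a relative equivariant minimal model program over $X_0$ then exhibits $X$ as $\Autz(X)$-equivariantly birational to a conic bundle over a threefold, i.e.~to alternative~\ref{Dim41}.

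The step I expect to be the main obstacle is this ``spreading out'' together with the surface-base iteration: one must check, case by case on the list of \cite{CFST16}, that the canonical fibrations on the almost homogeneous fibres descend to a fibration of $X$ over $\p^1$ compatibly with $\pi$ and equivariantly --- keeping control of the $\C(t)$-forms, in particular of those arising from outer automorphisms permuting the natural fibrations of $F$ (a phenomenon analogous to the three-dimensional quadric-bundle case of \cite[Theorem~E]{BFT2}) --- and that each $\p^2$- or del Pezzo surface fibration produced along the way can be re-fibred equivariantly into a $\p^1$-, $\p^3$-, or smooth quadric surface fibration. The content of the theorem is precisely that the only fibre types for which no such equivariant simplification can be guaranteed are the two $\PGL_2(\C)$-threefolds of~\ref{Dim42}; establishing this dichotomy, rather than the individual reductions (routine applications of Propositions~\ref{Prop:Torus} and~\ref{Prop:Smallorbit} and of equivariant minimal model programs), is where the work lies.
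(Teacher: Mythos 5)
There is a genuine gap, and it sits exactly where you flagged ``the main obstacle''. For the fibres $F\simeq(\p^1)^3$ and $F\simeq\P(T_{\p^2})\subset\p^2\times\p^2$ (families \hyperlink{T7}{$7$} and \hyperlink{T4}{$4$}), your plan is to take the ``canonical $\Autz(F)$-equivariant Mori fibration $F\to F_0$'' and spread it out over $\C(t)$. But no such fibration exists on the generic fibre: since $\rho(X/\p^1)=1$, the monodromy (equivalently the Galois action on the geometric generic fibre) permutes the three factors of $(\p^1)^3$, resp.\ the two factors of $\p^2\times\p^2$, transitively, so $\NS(\mathcal{F})\simeq\NS(\mathcal{F}_{\overline{K}})^{G}$ has rank $1$ and the generic fibre admits no intermediate fibration defined over $\C(t)$ at all (this is exactly the computation in Lemma~\ref{Lemm:Balanced} and Lemma~\ref{lem:monodr}). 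The projections of $F$ are conjugate under the monodromy but none is individually defined over the base, so there is nothing to spread out; treating this as a ``$C_1$-field form'' technicality does not repair it. The paper's mechanism is entirely different: Theorem~\ref{Thm:ExistenceSection} produces a proper closed $\Autz(X)$-invariant subset $\mathcal{S}$ (the orbit of a section), Lemma~\ref{Lemm:Balanced} shows it cuts each general fibre in a \emph{balanced} point, curve or divisor, and the explicit symmetric links of Section~\ref{Sec:Symmbirmap} (blow up $\mathcal{S}$, then contract or fibre via $\lvert-\tfrac12 K-E\rvert$-type systems, Lemmas~\ref{Lem:CurveinP1P1P1link}--\ref{Lem:PointpinPTP2linkII}) convert $X$ into a $\p^3$-, quadric- or conic-bundle. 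Your proposal never invokes Theorem~\ref{Thm:ExistenceSection}, which is the paper's main theorem and the only tool that breaks the symmetry.

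Two smaller problems. First, in your ``no dense orbit'' branch, Proposition~\ref{Prop:Smallorbit} with $k=2$ only yields a Mori fibre space over a base of dimension $\ge 2$, and a del Pezzo fibration over a surface is not among the allowed outcomes of \ref{Dim41}; your proposed iteration needs the hypothesis of Proposition~\ref{Prop:Smallorbit} again at the next stage, which is not guaranteed, so this branch is not closed. (The paper avoids it: it only uses Proposition~\ref{Prop:Smallorbit} when $k=1$, where the base automatically has dimension $3$, and handles $k\ge 2$ by the fibre classification plus Propositions~\ref{prop:FibreP1P1P1goestoP3} and~\ref{prop:FibrePTP2goesQP1}.) Second, your opening reduction applies Proposition~\ref{Prop:Torus} whenever $G=\Autz(X)_{\p^1}$ is ``finite or a torus'', but knowing only that $G^{\circ}$ is a torus does not give that $G$ is one, so the case division needs care; the paper sidesteps this by splitting on $\dim G$ and on $k=\max\dim(G^\circ\cdot x)$ instead.
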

 
 Theorem~\ref{Thm:Dimension4} is proved as follows. In Section~\ref{Sec:SymmSmoothFano} (see Table~\ref{3folds.MFS}) we recall  \cite[Table~1]{CFST16}, 
 which lists all the smooth threefolds $F$ with Picard rank at least $ 2$ that are fibres of klt Mori fibre spaces. 
 These varieties are ``symmetric'', in the sense that there is a finite group $G\subseteq \Aut(F)$ such that $\Pic(F)^{G}$ has rank $1$ \cite[Theorem~1.2]{ProGFano2}. 
 We then show (Proposition~\ref{prop:AutFiniteTable}) that among the $9$ families listed, only the following four have infinite automorphism group: 
 $(\p^1)^3$, a smooth divisor of bidegree $(1,1)$ in $\p^2\times \p^2$ (isomorphic to $\P(T_{\p^2})$, see Lemma~\ref{T4Aut}), the blow-ups of a smooth quadric along a 
 Veronese curve of degree $4$ and the blow-up of $\P(T_{\p^2})\subset \p^2\times \p^2$ along a curve of bidegree $(2,2)$ whose projection on both factors is an embedding. 
 We moreover determine in which cases the automorphism group is not a torus.
 
In Section~\ref{Sec:Symmbirmap}, we describe some symmetric birational maps from $(\p^1)^3$ or $\P(T_{\p^2})\subset \p^2\times \p^2$ (which blow-up curves balanced with respect to the gradings) and use then these maps in Section~\ref{Sec:MfsP13TP2}, together with Theorem~\ref{Thm:ExistenceSection}, to get some $\Autz(X)$-equivariant birational maps from Mori fibre spaces $X\to \p^1$ having general fibres being isomorphic to $(\p^1)^3$ or $\P(T_{\p^2})$.

The proof of Theorem~\ref{Thm:Dimension4} is then given at the end of Section~\ref{Sec:MfsP13TP2}.

 The authors thank Andrea Fanelli, Ronan Terpereau, Andreas H\"oring, Vladimir Lazi\'c and Christopher Hacon for helpful discussions during the preparation of this text.

\section{Preliminaries}
\subsection{Mori fibre spaces and algebraic groups acting on them}
We work over the complex numbers. We refer to \cite{KM98} for the basic notions in birational geometry and minimal model program. We recall that a \emph{fibration} is a surjective morphism with connected fibres.
\begin{definition}
Let $f \colon X\rightarrow Y $ be a dominant projective morphism of normal varieties. Then $f$ is called a \emph{Mori fibre space} if the following conditions are satisfied:
\begin{itemize}
\item $f$ has connected fibres, with $\dim Y< \dim X$ ;
\item $X$ is terminal $\Q$-factorial with at most terminal singularities; 
\item the relative Picard number of $f$ is one and $-K_X$ is $f$-ample (i.e.~there is an element $[C]\in NS(X)$ with $-K_X\cdot C>0$, and such that each curve contracted by $f$ is numerically equivalent to an element of $\mathbb{R}_{>0}\cdot [C]$).
\end{itemize}
\end{definition}

We recall first the statement of the \textit{Blanchard's lemma}, which will be of fundamental importance for us. This result is due to Blanchard \cite{Bla56} in the setting of complex geometry, and the proof has been adapted to the setting of algebraic geometry.

\begin{lemma} \emph{\cite[Proposition~4.2.1]{BSU13}} \label{blanchard}
Let $f\colon X \to Y$ be a proper morphism between varieties such that $f_*(\O_X)=\O_Y$. If a connected algebraic group $G$ acts regularly on $X$, then there exists a unique regular action of $G$ on $Y$ such that $f$ is $G$-equivariant.
\end{lemma}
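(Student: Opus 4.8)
The plan is to descend the action morphism $a\colon G\times X\to X$ along $f$ to a morphism $\bar a\colon G\times Y\to Y$, by viewing $Y$-valued maps out of $G\times X$ as data that must be constant along the fibres of $f$. Two general facts drive everything: a \emph{descent lemma}, saying that a morphism contracting the fibres of $f$ factors through $f$, and a \emph{rigidity lemma}, which propagates the contraction property from the neutral element $e\in G$ to all of $G$. Once both are in place, the descended morphism $\bar a$ is forced, and the action axioms follow formally from the surjectivity of $f$.

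First I would record the descent lemma: if $h\colon X\to Z$ is a morphism to a separated scheme $Z$ that is constant on every fibre $f^{-1}(y)$, then $h=\bar h\circ f$ for a unique $\bar h\colon Y\to Z$. Uniqueness is clear since $f$ is surjective (properness makes $\Ima(f)$ closed, and $f_*\O_X=\O_Y$ forces it dense). For existence, cover $Z$ by affine opens $W=\Spec B$; because $h$ contracts fibres, $h^{-1}(W)=f^{-1}(U)$ for the open set $U=Y\setminus f(h^{-1}(Z\setminus W))$, open precisely because $f$ is a closed map. The comorphism of $h$ then gives $B\to \Gamma(f^{-1}(U),\O_X)=\Gamma(U,f_*\O_X)=\Gamma(U,\O_Y)$, i.e.\ a morphism $U\to W$, and these glue to $\bar h$. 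The identity $\Gamma(f^{-1}(U),\O_X)=\Gamma(U,\O_Y)$ is exactly the hypothesis $f_*\O_X=\O_Y$, and is the crux of this step. I would also note that this hypothesis is stable under the base change $G\times Y\to Y$: by flat base change along the projection, $q:=\mathrm{id}_G\times f$ is again proper with $q_*\O_{G\times X}=\O_{G\times Y}$, so the descent lemma applies to $q$ as well.

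Next comes the rigidity input. Consider $\Psi:=f\circ a\colon G\times X\to Y$, so that $\Psi(g,x)=f(g\cdot x)$; I want $\Psi$ to contract the fibres of $q$, that is, $x\mapsto f(g\cdot x)$ to be constant on $f^{-1}(y)$ for every $(g,y)$. This already holds at $g=e$, where $\Psi(e,x)=f(x)$. To spread it to all $g$ I would fix $y$ and apply the classical rigidity lemma to $\Psi$ restricted to $f^{-1}(y)\times G$: since $f^{-1}(y)$ is proper and connected with $H^0(f^{-1}(y),\O)=\C$, a morphism from $f^{-1}(y)\times G$ to the separated scheme $Y$ that contracts $f^{-1}(y)\times\{e\}$ to a point must contract $f^{-1}(y)\times\{g\}$ for every $g$ (the locus where a fibre maps into a chosen affine neighbourhood is open by properness of the projection, the global-sections computation $\O(f^{-1}(y)\times V)=\O(V)$ forces factorization there, and separatedness of $Y$ together with irreducibility of $G$ extends it). Hence $\Psi$ contracts every fibre of $q$.

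Finally I would assemble the action. By the descent lemma applied to $q$, $\Psi$ factors as $\Psi=\bar a\circ q$ for a unique morphism $\bar a\colon G\times Y\to Y$ characterized by $\bar a(g,f(x))=f(g\cdot x)$. The group-action axioms are then checked by pulling back along the surjection $f$: $\bar a(e,-)=\mathrm{id}_Y$ because $\Psi(e,x)=f(x)$, and $\bar a(g_1g_2,f(x))=f(g_1g_2\cdot x)=\bar a(g_1,\bar a(g_2,f(x)))$ using $x'=g_2\cdot x$; surjectivity of $f$ turns these fibrewise identities into equalities of morphisms on $G\times Y$. The same surjectivity yields uniqueness of $\bar a$, and $f$ is $G$-equivariant by construction. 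The main obstacle is the rigidity step: making the propagation from $g=e$ to all of $G$ rigorous requires the properness and connectedness of the fibres of $f$ (to kill nonconstant functions), the closedness of the relevant projections, and the separatedness of $Y$ to glue the local factorizations into one morphism — this is exactly where the hypotheses "$f$ proper" and "$f_*\O_X=\O_Y$" are genuinely used.
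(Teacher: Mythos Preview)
The paper does not supply its own proof of this lemma; it merely records the statement with a citation to \cite[Proposition~4.2.1]{BSU13}. Your argument is the standard one underlying that reference: descend $f\circ a$ along $q=\mathrm{id}_G\times f$ using (i) a descent lemma (morphisms contracting the fibres of a proper surjection with $f_*\O_X=\O_Y$ factor through the base), and (ii) the rigidity lemma to propagate the fibre-contraction property from $g=e$ to all of $G$.

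One small technical point: the scheme-theoretic fibre $f^{-1}(y)$ need not be reduced, so the assertion $H^0(f^{-1}(y),\O)=\C$ is not literally justified. This is harmless here because you only need that the restriction of the morphism $\Psi$ to each set-theoretic fibre is constant, and for that it suffices to work with the reduced fibre $(f^{-1}(y))_{\mathrm{red}}$, which is proper and connected over $\C$ (connectedness following from $f_*\O_X=\O_Y$) and hence has $H^0=\C$. With that caveat your sketch is correct and matches the approach of the cited source.
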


We also recall the following classical fact, which follows from \cite[Proposition~2, page~8]{BrionMonastir} or from \cite[Lemma~3.7]{Newstead}.
\begin{lemma}\label{lemm:DimMaxopen}
Let $G$ be an algebraic group acting regularly on a projective variety~$X$. Let $n=\max\{\dim(G\cdot x)\mid x\in X\}$ be the maximal dimension of an orbit of $G$. Then, the set $\{x\in X\mid \dim (G\cdot x)<n\}$ is a closed subset of $X$. In particular, the union of orbits of dimension $n$ is a dense open $G$-invariant subset of $X$.
\end{lemma}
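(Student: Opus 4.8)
The plan is to deduce the statement from the upper semicontinuity of fibre dimension applied to the \emph{stabiliser scheme} of the action, rather than working directly with the orbits themselves (whose union over all points is only constructible a priori). The point is that the pointwise orbit dimension obeys the relation $\dim(G\cdot x)=\dim G-\dim G_x$, where $G_x=\{g\in G\mid g\cdot x=x\}$ denotes the stabiliser; this follows from the fibre dimension theorem applied to the orbit morphism $G\to G\cdot x$, whose fibres are the left cosets of $G_x$. Hence the set we must show is closed rewrites as
\[\{x\in X\mid \dim(G\cdot x)<n\}=\{x\in X\mid \dim G_x>\dim G-n\},\]
and it suffices to prove that $x\mapsto \dim G_x$ is upper semicontinuous on $X$.

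To realise the stabilisers simultaneously as fibres of a single morphism, I would consider the morphism $\alpha\colon G\times X\to X\times X$, $(g,x)\mapsto (g\cdot x,x)$, and set $\Sigma=\alpha^{-1}(\Delta_X)$, the preimage of the diagonal. Then $\Sigma$ is a closed subscheme of $G\times X$, and the second projection $q\colon \Sigma\to X$ has fibre $q^{-1}(x)\cong G_x$ over each point $x\in X$. Now I would invoke the standard upper semicontinuity of the local fibre dimension $y\mapsto \dim_y q^{-1}(q(y))$, which is a statement about points $y$ of the source $\Sigma$. To transfer it to the base $X$, I use the identity section $\sigma\colon X\to \Sigma$, $x\mapsto (e,x)$, which is a morphism precisely because $e\in G_x$ for every $x$. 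Since $G_x$ is a closed subgroup of the smooth group $G$ (we work in characteristic zero), it is equidimensional, so $\dim_{(e,x)}q^{-1}(x)=\dim G_x$. Composing the upper semicontinuous function with the continuous map $\sigma$ then shows that $x\mapsto \dim G_x$ is upper semicontinuous, which yields the first assertion.

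For the ``in particular'' part, I would observe that the complement $\{x\in X\mid \dim(G\cdot x)=n\}$ is open and nonempty, the value $n$ being attained by the very definition of the maximum. It is $G$-invariant because $G\cdot(g\cdot x)=G\cdot x$ forces the orbit dimension to be constant along each orbit. Finally, as $X$ is irreducible, a nonempty open subset is automatically dense, which completes the argument.

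The main obstacle I anticipate is the correct passage of semicontinuity from the total space to the base: the fibre dimension theorem naturally produces upper semicontinuity on $\Sigma$, whereas the statement is about the base $X$. The whole argument hinges on exhibiting the global identity section $\sigma$ and on the equidimensionality of algebraic subgroups, so that the fibre dimension read off at the point $(e,x)$ genuinely equals $\dim G_x$ rather than merely the dimension of the component of $G_x$ through a badly chosen point.
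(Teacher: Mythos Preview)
Your argument is correct and is the standard proof of this classical fact; the paper itself does not give a proof but simply cites \cite[Proposition~2, page~8]{BrionMonastir} and \cite[Lemma~3.7]{Newstead}, where the reasoning proceeds along exactly these lines (upper semicontinuity of stabiliser dimension via Chevalley's fibre-dimension theorem, transferred to the base through the identity section). One small remark: the equidimensionality of $G_x$ does not require characteristic zero---for any group scheme of finite type over a field the connected components are cosets of the identity component and hence all have the same dimension.
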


The following theorem is a relative version of the so-called relative Base point free theorem 
\cite[Theorem~3-1-1]{KMM} and follows from \cite[Theorem~2.1]{Fujino11b}. As we will use it many times in this note, we recall the statement.
\begin{theorem}\label{fujino}Let $X$ be a variety with terminal singularities, let $\pi\colon X\to S$ be a proper surjective morphism of normal varieties,
and $D$ a $\pi$-nef Cartier divisor on~$X$. Assume that
  $rD-K_X$ is nef and big over $S$ for some positive integer $r$.

Then $D$ is relatively semiample. More precisely, there  exists a positive integer $m_0$ such  that  for  every $m\geq m_0$ the  natural  
homomorphism $\pi^*\pi_*\mathcal O_X(mD)\to\mathcal O_X(mD)$ is surjective.  
\end{theorem}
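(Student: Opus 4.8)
The plan is to follow the standard proof of the Kawamata--Shokurov base-point-free theorem, adapted to the relative and terminal setting, and ultimately to reduce to the cited relative result. First I would note that both conclusions are local on $S$: relative semiampleness and the surjectivity of $\pi^*\pi_*\O_X(mD)\to\O_X(mD)$ can be tested after restricting to an affine open of $S$. Moreover, surjectivity of this evaluation map for all $m\ge m_0$ is exactly the assertion that $\O_X(mD)$ is generated by its relative global sections, i.e.\ that $|mD|$ has empty base locus over $S$. So after shrinking $S$ I am reduced to proving that the relative stable base locus of $D$ is empty.

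Second, I would establish relative non-vanishing. Since $D$ is $\pi$-nef and $rD-K_X$ is nef and big over $S$, the relative non-vanishing theorem yields, for $m\gg 0$, that $\pi_*\O_X(mD)\neq 0$ and that the evaluation $\pi^*\pi_*\O_X(mD)\to\O_X(mD)$ is nonzero; concretely there is an effective relative divisor in the relative linear system $|mD/S|$. The key inputs here are the relative Kawamata--Viehweg vanishing theorem together with the bigness of $rD-K_X$ over $S$, which supplies the positivity needed to manufacture sections, while the terminal (hence klt) hypothesis on $X$ guarantees that vanishing applies.

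Third, I would remove the base locus by induction on its dimension. Assuming the relative stable base locus $B$ is nonempty, I would choose an irreducible component and, after passing to suitable multiples of $D$ with coprime exponents whose relative base loci have already stabilised, construct an auxiliary effective $\Q$-divisor $\Delta\sim_{\Q}\lambda D$ over $S$ whose non-klt locus has the chosen component as an isolated center --- the usual tie-breaking argument, using $\Q$-factoriality to perturb. Relative Kawamata--Viehweg vanishing then lifts a section from this center, and relative non-vanishing applied on the center produces a section of some $|m'D/S|$ that does not vanish there, contradicting its membership in $B$. Iterating lowers $\dim B$ until $B=\emptyset$, which gives relative semiampleness; the surjectivity statement for all $m\ge m_0$ then follows from relative global generation together with finite generation of the relative section algebra $\bigoplus_m \pi_*\O_X(mD)$.

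The main obstacle is the inductive step of the previous paragraph: producing, at a chosen point of the relative base locus, an auxiliary divisor with exactly one non-klt center there while retaining enough positivity of $rD-K_X$ over $S$ to invoke vanishing. This tie-breaking is the technical heart of the base-point-free theorem, and it is precisely what is packaged in the cited results \cite{KMM,Fujino11b}. In practice I would therefore invoke \cite[Theorem~3-1-1]{KMM} in its relative form, or \cite[Theorem~2.1]{Fujino11b}, after the purely formal reduction to $S$ affine and the verification that the hypotheses --- $D$ a $\pi$-nef Cartier divisor and $rD-K_X$ nef and big over $S$ --- match theirs.
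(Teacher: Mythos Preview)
Your proposal ultimately lands on exactly what the paper does: invoke \cite[Theorem~2.1]{Fujino11b} (or equivalently the relative base-point-free theorem \cite[Theorem~3-1-1]{KMM}), noting that a terminal variety with boundary $\Delta=0$ satisfies the hypotheses there. The paper's proof is literally that one sentence; your three preceding paragraphs sketching non-vanishing and the tie-breaking induction are an outline of the \emph{proof} of the cited theorem, not an alternative route to the present statement, and the paper makes no attempt to reproduce them. One small slip in your sketch: you appeal to ``$\Q$-factoriality to perturb'' in the tie-breaking step, but the statement does not assume $X$ is $\Q$-factorial, and indeed the standard argument works by passing to a log resolution rather than perturbing on $X$ itself. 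Since you defer to the cited theorems anyway this does not affect the correctness of your proposed proof, but it would be a genuine issue if you tried to carry out the sketch as written.
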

\begin{proof}
We apply \cite[Theorem~2.1]{Fujino11b} to the pair $(X,0)$. As the pair is terminal, the second hypothesis of \cite[Theorem~2.1]{Fujino11b} is verified.
\end{proof}
\subsection{Rational Maps between Mori fibrations}
The following lemma is known to experts (see \cite{HXu}), we recall the proof here for the reader's convenience. 
\begin{lemma}\label{lem:cpt}
Let $\pi_U'\colon Y_U\to U$ be a smooth projective fibration over a quasi-projective variety $U$ such that $\rho(Y_U/ U)=1$, $K_{Y_U}$ is $\pi_U'$-antiample
and there is a connected group $G$ acting on $Y_U$. 
Let $B\supseteq U$ be a $G$-equivariant compactification. 
Then there is a $G$-equivariant compactification $Y\supseteq Y_U$ and a morphism $\pi'\colon Y\to B$ such that $\pi'\vert_{Y_U}=\pi'_U$, and $Y\to B$ is a Mori fibre space.
\end{lemma}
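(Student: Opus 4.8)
\textbf{Proof plan for Lemma~\ref{lem:cpt}.}

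The plan is to build the compactification $Y$ in two stages: first take an arbitrary $G$-equivariant compactification, then run a relative MMP over $B$ to arrive at a Mori fibre space, and finally check that the resulting object still maps to $B$ and restricts to $\pi_U'$ over $U$. To start, since $G$ acts on $Y_U$ and on the chosen compactification $B$ of $U$, one can use equivariant completion (Sumihiro's theorem, or the standard reference for equivariant compactifications) to find \emph{some} $G$-equivariant normal projective variety $\overline{Y}\supseteq Y_U$; after equivariant resolution we may even assume $\overline{Y}$ is smooth. The rational map $\overline{Y}\dashrightarrow B$ extending $\pi_U'$ can be made a morphism by equivariantly blowing up the indeterminacy locus, so we get a $G$-equivariant projective morphism $f\colon Z\to B$ with $Z$ smooth, $Z$ containing $Y_U$ as a $G$-stable open subset, and $f|_{Y_U}=\pi_U'$. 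Replacing $Z$ by the closure of $Y_U$ if necessary, we may assume $f$ has connected fibres over a neighbourhood of $U$; shrinking attention to the generic point of $B$, the generic fibre of $f$ agrees with that of $\pi_U'$, which is Fano of Picard rank $1$.

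Next I would run a $G$-equivariant relative MMP for $Z$ over $B$. Since $G$ is connected, Blanchard's lemma (Lemma~\ref{blanchard}) guarantees that the action descends compatibly at each step, and each extremal contraction and flip is automatically $G$-equivariant because $G$ preserves $K_Z$ and hence the relevant cone of curves. Because the generic fibre of $Z\to B$ is already a Fano variety of Picard rank $1$ (namely the generic fibre of $\pi_U'$), the MMP over $B$ does not touch the generic fibre: every step is supported over a proper closed subset of $B$, i.e. over $B\setminus U$ after possibly shrinking $U$. The MMP terminates with a variety $Y$ admitting a Mori fibre space structure $Y\to B'$ for some $B'$ dominating $B$; but since the generic fibre of $Y\to B$ is Fano of Picard rank $1$ and is not contracted, the relative Picard number argument forces $B'=B$, so $\pi'\colon Y\to B$ is a Mori fibre space. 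Here I use that over $U$ nothing happens: $\rho(Y_U/U)=1$ and $-K_{Y_U}$ is already $\pi_U'$-ample, so $Y_U$ is untouched by the MMP and $Y\supseteq Y_U$ with $\pi'|_{Y_U}=\pi_U'$.

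The main obstacle is making precise the claim that ``the MMP over $B$ does not disturb $Y_U$''. One has to argue that an extremal ray of $\overline{\mathrm{NE}}(Z/B)$ whose contraction or flip affects the open part $Y_U$ would contradict $\rho(Y_U/U)=1$ together with the $\pi'_U$-ampleness of $-K_{Y_U}$; concretely, any curve in a fibre over $U$ that could be contracted would be numerically proportional (over $U$) to a fibre of $\pi_U'$, along which $-K$ is positive, so it cannot be $K$-negative-extremal-and-contracted unless it is a fibre itself, which the relative Picard rank condition excludes. Thus all MMP steps are isomorphisms over $U$, and at the end the open immersion $Y_U\hookrightarrow Y$ and the identification $\pi'|_{Y_U}=\pi'_U$ persist. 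Finally, $G$-equivariance of the whole construction is preserved throughout by Lemma~\ref{blanchard}, since at every stage we work with connected $G$ and morphisms that are canonically determined (MMP contractions, flips, and the map to $B$), so $B\supseteq U$ being a $G$-equivariant compactification is exactly what is needed to push the action down to $B$ at the end. A secondary technical point is ensuring the intermediate models remain $\Q$-factorial with terminal singularities so that the MMP applies; this is standard since we may begin with $Z$ smooth.
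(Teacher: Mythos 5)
Your plan matches the paper's proof essentially step for step: take a $G$-equivariant compactification mapping to $B$, pass to an equivariant resolution that is an isomorphism over $Y_U$, run a relative MMP over $B$, and use $\rho(Y_U/U)=1$ together with the relative antiampleness of $K_{Y_U}$ to conclude that the MMP is an isomorphism over $U$ and that the resulting Mori fibre space has base $B$ itself. The one point you assert without justification is termination of the MMP; the paper handles this by running the MMP \emph{with scaling of a relatively ample divisor} and invoking \cite[Theorem~2.3]{Fujino11a}, which applies because the generic fibre is uniruled.
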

\begin{proof}
Let $\overline Y$ be a $G$-equivariant compactification of $Y_U$ such that there is a morphism $\overline Y\to B$  and let $\eta\colon\widehat Y\to \overline Y$ be a $G$-equivariant resolution of singularities which is a composition of blow ups whose centers are contained in the singular locus of $\overline Y$.
In particular $\eta$ is an isomorphism over $Y_U$.
We run a $K_{\widehat Y}$-MMP over $B$ with scaling of a relatively ample divisor.
This MMP terminates by \cite[Theorem~2.3]{Fujino11a} (applied with $\Delta=0$) with a Mori fibre space over $B$ that we denote by  $\pi'\colon Y\to T$. Since $\rho(Y_U/ U)=1$  the MMP induces an isomorphism on $Y_U$ and $B=T$.
\end{proof}

\begin{lemma}\label{relBPF}
Let $X$ be a terminal variety,  $\pi\colon X\to B$ be a fibration  and let $\varepsilon \colon X'\to X$ be a $\Autz(X)$-equivariant birational  morphism.

Assume that there are a non-empty open subset $U\subseteq B$, an integer $a$ and a divisor $L$ on $X'$ such that  for every $b\in U$
\begin{enumerate}
\item\label{nfbig} the divisor $((a-1)K_{X'}+L)\vert_{X'_b}$ is nef and big
\item\label{nfnotbig} the divisor $(aK_{X'}+L)\vert_{X'_b}$ is nef, not numerically zero and not big.
\end{enumerate} 
where $X'_b\subset X'$ is the fibre over $b$.
Then there is a commutative diagram
\[
\xymatrix{
X'\ar[d]_{\varepsilon}\ar@{-->}[r]^{\psi}&W\ar[d]^{f}\\
X\ar[d]_{\pi}&S\ar[ld]\\
B&
}
\]
with $\Autz(X)$-equivariant arrows, such that $\psi$ is birational and $\dim S >\dim B$. Moreover, for each $m>0$ big enough, the map $X'_b\to S_b$ is given by $\lvert m(aK_{X'}+L)\rvert$ for each $b\in U$ $($where $S_b\subset S$ denotes the fibre over $b)$.
\end{lemma}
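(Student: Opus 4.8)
The plan is to build the map $\psi$ fibrewise via the relative base point free theorem (Theorem~\ref{fujino}), using the divisor $D := aK_{X'}+L$, and then verify that the resulting contraction lands over a base $S$ of dimension strictly larger than $\dim B$. First I would shrink $U$ if necessary so that $\pi\circ\varepsilon$ is flat over $U$ and so that the hypotheses \eqref{nfbig}–\eqref{nfnotbig} hold on every fibre $X'_b$, $b\in U$; one also wants the function $b\mapsto \dim H^0(X'_b, mD|_{X'_b})$ to be constant on $U$, which holds after shrinking by semicontinuity, for each fixed large $m$. The point of condition \eqref{nfbig} is that $(a-1)D - K_{X'} = (a-1)(aK_{X'}+L) - K_{X'}$ — wait, more precisely one massages the hypotheses so that $rD - K_{X'}$ is nef and big over $B$ for suitable $r$: taking the combination coming from \eqref{nfbig}, the divisor $D - K_{X'}$ restricted to a general fibre equals $((a-1)K_{X'}+L)|_{X'_b}$ up to the pullback of an ample from $B$, hence is nef and big there, and by standard openness this extends to being nef and big over a neighbourhood of the generic point of $B$. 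Then Theorem~\ref{fujino} gives that $D$ is relatively semiample over $B$ (after shrinking $B$), so for $m \gg 0$ the morphism $(\pi\varepsilon)^*(\pi\varepsilon)_*\O_{X'}(mD)\to \O_{X'}(mD)$ is surjective and defines a morphism $\psi\colon X' \to W := \mathrm{Proj}_B\big((\pi\varepsilon)_*\bigoplus_m \O_{X'}(mD)\big)$ over $B$, with $f\colon W\to B$ the structure map.

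Next I would factor $f$ through its Stein factorization, or rather observe that the $\mathrm{Proj}$ over $B$ naturally factors as $W \to S \to B$ where $S$ is the relative base locus quotient; concretely, on the fibre $X'_b$ the morphism $\psi_b\colon X'_b \to W_b$ is exactly the morphism associated to $|m D|_{X'_b}|$ for $m\gg 0$, by base change (here one uses the constancy of $h^0$ to get $(\pi\varepsilon)_*\O_{X'}(mD)\otimes k(b) = H^0(X'_b, mD|_{X'_b})$, i.e.\ cohomology and base change applies because $R^1$ vanishes after twisting enough, or more cheaply one argues with Theorem~\ref{fujino} applied directly over $\Spec k(b)$). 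Condition \eqref{nfnotbig} says $D|_{X'_b}$ is nef, nonzero, and not big; nef and semiample (being the restriction of a relatively semiample divisor) and not big means the image $\psi_b(X'_b)$ has dimension $\dim X'_b - (\text{something} \geq 1)$, and nonzero means the image is positive-dimensional. So the image of $\psi$ dominates $B$ with fibres $\psi_b(X'_b)$ of dimension strictly between $0$ and $\dim X'_b$. I set $S := $ image of $W\to B\times(\text{something})$, or cleanest: let $S$ be the normalization of the image of $X'$ under $(\pi\varepsilon,\psi)$ — a variety over $B$ with $\dim S > \dim B$ because the generic fibre of $S\to B$ is $\psi_{\eta}(X'_\eta)$, which is positive-dimensional by \eqref{nfnotbig}. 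The $\Autz(X)$-equivariance of everything is automatic from Blanchard's lemma (Lemma~\ref{blanchard}) and the fact that $D$ is canonically defined from $K_{X'}$ and $L$ — but $L$ itself need not be $\Autz(X)$-invariant, so here one must instead argue that the linear system $|m(aK_{X'}+L)|$, or rather the sheaf $(\pi\varepsilon)_*\O_{X'}(mD)$ and its $\mathrm{Proj}$, is preserved: since the rational map to $S$ is determined on fibres by a system whose dimension is an invariant and whose associated contraction $\psi_b$ is the unique one contracting exactly the curves $C$ with $D\cdot C = 0$, and that numerical condition is $\Autz(X)$-stable, the morphism $X' \dashrightarrow S$ is canonical, hence $\Autz(X)$-equivariant. (Alternatively, replace $L$ by the average of its $\Autz(X)$-orbit if $\Autz(X)$ acts on a suitable linear system, but the canonicity argument is cleaner.)

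The main obstacle I anticipate is the equivariance together with the passage from fibrewise data to a global morphism: one needs that the relatively semiample system $D$ really does define a fibration $\psi$ over $B$ whose fibrewise behaviour is the one dictated by \eqref{nfbig}–\eqref{nfnotbig}, which requires knowing that the formation of $(\pi\varepsilon)_*\O_{X'}(mD)$ commutes with base change to the fibres $X'_b$ for $b\in U$. This is where shrinking $U$ and invoking cohomology-and-base-change (or re-running Theorem~\ref{fujino} over the local ring at $b$) does the work; I would be careful that the hypotheses of Theorem~\ref{fujino} are genuinely satisfied \emph{over $B$} and not merely on a single fibre — that is, that nefness and bigness of $D - K_{X'}$ (or a positive multiple) spread out from the generic fibre, which is a standard openness-of-ampleness/bigness argument but deserves an explicit sentence. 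Finally, that $\dim S > \dim B$ is exactly the content of \eqref{nfnotbig} (nonzero forces the image to have a positive-dimensional fibre over $B$), and that $\dim S$ can be bounded is not needed here — only strict inequality, which follows immediately.
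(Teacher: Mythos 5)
Your proposal is correct and follows essentially the same route as the paper: set $D=aK_{X'}+L$, observe that $D-K_{X'}=(a-1)K_{X'}+L$ is relatively nef and big over $U$ by hypothesis \ref{nfbig}, apply Theorem~\ref{fujino} to get relative semiampleness of $D$ over $U$, use hypothesis \ref{nfnotbig} to pin down $\dim B<\dim S<\dim X'$, and then take $\Autz(X)$-equivariant compactifications. Your extra care about base change and about equivariance when $L$ itself is not invariant addresses points the paper leaves implicit, but does not change the argument.
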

\begin{proof}
As the group $\Autz(X)$ acts on the fibres satisfying conditions \ref{nfbig}-\ref{nfnotbig}, we can assume that $U$ is $\Autz(X)$-invariant.
Let $X_U=\pi^{-1}U$ and $X'_U=(\pi\circ\varepsilon)^{-1}U$.
The divisor $((a-1)K_{X'}+L)\vert_{X'_U}$ is relatively nef and big by  \ref{nfbig} and $(aK_{X'}+L)\vert_{X'_U}$ is nef by \ref{nfnotbig}.
Therefore
by Theorem~\ref{fujino} the divisor $(aK_{X'}+L)\vert_{X'_U}$ is relatively semiample.
Therefore there is a diagram
\[
\xymatrix{
X'_U\ar[d]\ar[rd]^{h}&\\
X_U\ar[d]&S_U\ar[ld]\\
U&
}
\]
where $h$ is the morphism induced by $\lvert m(aK_{X'}+L)\vert_{X'_U}\rvert$ for some $m$ big enough.
As the divisor is not relatively big nor numerically zero by \ref{nfnotbig},
$\dim U<\dim S_U<\dim X'_U$.

Then we consider an $\Autz(X)$-equivariant compactification $S$ of $S_U$ such that there is a morphism $S\to U$
and an $\Autz(X)$-equivariant compactification $W$ of $X'_U$ such that there are morphisms $W\to X$ and $W\to S$ and we proved our claim.
\end{proof}

The following lemma is an immediate consequence of \cite[Theorem~2.3]{Fujino11a} but we add the proof for the sake of completeness.
\begin{lemma}\label{relMfs}Let $W$, $S$ be quasi-projective varieties such that $W$ is terminal.
Let $f\colon W\to S$ be a fibration  whose general fibre is uniruled. Then there is a factorisation
\[
\xymatrix{
W\ar[ddr]_{f}\ar@{-->}[rr]^{\phi}&&Y\ar[d]^{\pi'}\\
&&T\ar[dl]\\
&S&
}
\]
where $\phi$ is a birational map and $\pi'\colon Y\to T$ is a Mori fibre space.
Moreover, all the maps appearing in the diagram are $\Autz(X)$-equivariant.
\end{lemma}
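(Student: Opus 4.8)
The plan is to run a relative minimal model program over $S$, after first passing to a smooth model carrying the group action. In the intended application $\Autz(X)$ acts on $W$; since $f$ is a fibration, Blanchard's lemma (Lemma~\ref{blanchard}) produces an induced action of $\Autz(X)$ on $S$ for which $f$ is equivariant. I would then replace $W$ by an $\Autz(X)$-equivariant resolution of singularities $\varepsilon\colon W'\to W$, which exists because resolution of singularities is functorial and $\Autz(X)$ is connected. This is harmless for the conclusion: $\phi$ is only required to be birational, and uniruledness is a birational invariant, so the general fibre of $\pi\circ\varepsilon\colon W'\to S$ is again uniruled; moreover $W'$ is smooth, hence $\Q$-factorial and terminal.

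Next, because the general fibre of $W'\to S$ is uniruled, its canonical class is not pseudo-effective, so $K_{W'}$ is not pseudo-effective over $S$. I would run a $K_{W'}$-MMP over $S$ with scaling of a relatively ample divisor. By \cite[Theorem~2.3]{Fujino11a}, applied to the terminal pair $(W',0)$, this program terminates; since $K_{W'}$ is not relatively pseudo-effective it cannot end with a relative minimal model, so it terminates with a Mori fibre space $\pi'\colon Y\to T$ over $S$, which comes equipped with a morphism $T\to S$. Taking $\phi\colon W\dashrightarrow Y$ to be the composition of the MMP with the (birational) inverse of $\varepsilon$ then yields the required diagram.

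It remains to verify equivariance step by step. Each stage of the MMP with scaling is canonically determined — the extremal ray to be contracted is the one realizing the current scaling threshold in the relative cone of curves over $S$ — hence it is preserved by the connected group $\Autz(X)$. Therefore every divisorial contraction and every small contraction occurring is $\Autz(X)$-equivariant by Blanchard's lemma, and each flip, being the relative $\mathrm{Proj}$ over the base of the corresponding small contraction of the canonically defined relative canonical algebra, inherits the action as well; by induction $\Autz(X)$ acts on every intermediate model, in particular on $Y$, compatibly with all the morphisms. Applying Blanchard's lemma once more to $Y\to T$ and to $T\to S$ shows these are equivariant, and $\phi$ is equivariant as a composite of equivariant birational maps.

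The only genuinely non-formal ingredient is the termination of the relative MMP with scaling for the non-pseudo-effective pair $(W',0)$, which is precisely \cite[Theorem~2.3]{Fujino11a}; the rest is bookkeeping of the sort already carried out in Lemma~\ref{lem:cpt}. In the write-up the point deserving the most care is the propagation of the $\Autz(X)$-action across the flips, but this is routine for actions of connected groups.
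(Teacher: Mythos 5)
Your proposal is correct and follows essentially the same route as the paper: run a relative $K$-MMP over $S$ with scaling of a relatively ample divisor, invoke \cite[Theorem~2.3]{Fujino11a} for termination, and conclude that the output is a Mori fibre space over $S$ because the uniruled general fibre forces $K$ to be non-pseudo-effective relatively. The extra care you take with the equivariant resolution and with propagating the $\Autz(X)$-action through the MMP steps is detail the paper leaves implicit, but it does not change the argument.
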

\begin{proof}
We run a $K_W$-MMP over $S$ with scaling of a relatively ample divisor.
This MMP terminates by \cite[Theorem~2.3]{Fujino11a} (applied with $\Delta=0$) with a Mori fibre space over $S$ that we denote by  $\pi'\colon Y\to T$.
\end{proof}

\begin{lemma}\label{lem:monodr}
Let $\pi\colon X\to \p^1$ be a Mori fibre space such that a general fibre satisfies $\rho(F)\geq 2$.
 Then $\pi$ has at least two singular fibres. In particular, the action of $\Autz(X)$ on $\p^1$ given by the Blanchard's lemma fixes at least two points.
\end{lemma}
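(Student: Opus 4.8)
The plan is to argue by contradiction: assuming $\pi$ has at most one singular fibre, I will derive that the restriction map $\NS(X)_\Q\to\NS(F)_\Q$ is surjective, which is impossible once $\rho(F)\ge 2$. The numerical side is immediate: since $\pi$ is a Mori fibre space $\rho(X/\p^1)=1$, and since $\pi^*\NS(\p^1)_\Q$ is one--dimensional in $\NS(X)_\Q$ (pullback is injective, $\pi$ being surjective), we get $\rho(X)=2$; the fibre class $\pi^*[\mathrm{pt}]$ is nonzero and restricts to $0$ on any other fibre $F$, so $\NS(X)_\Q\to\NS(F)_\Q$ has a kernel of dimension $\ge 1$ and thus image of rank $\le 1$. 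Hence if $\rho(F)\ge 2$ this restriction is not onto, which is the contradiction I want.

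For the geometric step I would remove from $\p^1$ the point below the (at most one) singular fibre --- or an arbitrary point, if there is none --- to get an open $U\cong\A^1$ over which $\pi_U\colon X_U:=\pi^{-1}(U)\to U$ is smooth and projective; then $X_U$ is smooth and I pick $F=\pi^{-1}(b)$ over a general $b\in U$, a smooth Fano variety with $\rho(F)\ge 2$. Since $F$ is Fano, $H^1(F,\O_F)=H^2(F,\O_F)=0$ by Kodaira vanishing, so the relative Picard scheme $\Pic_{X_U/U}$ is \'etale over $U$; as $U\cong\A^1$ has trivial \'etale fundamental group, $\Pic_{X_U/U}$ is a constant $U$--scheme and restriction to $F$ identifies its group of sections $\Pic_{X_U/U}(U)$ with $\NS(F)$. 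Because $\Pic(\A^1)=0$ and $\mathrm{Br}(\A^1)=0$, the low--degree exact sequence of the Leray spectral sequence for $\Gm$ gives that $\Pic(X_U)\to\Pic_{X_U/U}(U)\cong\NS(F)$ is onto; and $\Pic(X)_\Q\to\Pic(X_U)_\Q$ is onto since $X$ is $\Q$--factorial (take closures of divisors). Composing, $\NS(X)_\Q\to\NS(F)_\Q$ is onto --- contradicting the numerical bound. (One can also avoid the relative Picard scheme: a resolution $\varepsilon\colon\bar X\to X$ is an isomorphism near $F$, Deligne's global invariant cycle theorem makes $H^2(\bar X,\Q)\to H^2(F,\Q)$ onto because $U$ is simply connected, this is a morphism of Hodge structures so it stays onto on $(1,1)$--parts, Lefschetz $(1,1)$ upgrades this to $\NS(\bar X)_\Q\twoheadrightarrow\NS(F)_\Q$, and the $\varepsilon$--exceptional divisors miss $F$, so $\NS(X)_\Q\to\NS(F)_\Q$ is onto as well.)

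For the last assertion: $\pi$ is a fibration so $\pi_*\O_X=\O_{\p^1}$, and Blanchard's lemma (Lemma~\ref{blanchard}) gives an action of $\Autz(X)$ on $\p^1$ making $\pi$ equivariant. The set $S\subset\p^1$ of points with singular fibre is closed, finite, $\Autz(X)$--invariant, and has $|S|\ge 2$ by the first part; as $\Autz(X)$ is connected its orbits in $S$ are connected and finite, hence points, so $\Autz(X)$ fixes every point of $S$, i.e.\ at least two points.

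I expect the only genuine difficulty to be the surjectivity of $\NS(X)_\Q\to\NS(F)_\Q$: it says that over the simply connected base $U$ there is no monodromy preventing a divisor class on the general fibre from extending (rationally) to $X$, and this is exactly where the hypothesis of at most one singular fibre is used. The numerical reduction and the Blanchard-lemma argument are routine.
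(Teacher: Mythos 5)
Your argument is correct and is essentially the paper's: both rest on the fact that $N^1(X)_\Q$ surjects onto the monodromy-invariant part of $N^1(F)_\Q$ for $F$ a good fibre, so that $\rho(X)=2$ (with the fibre class in the kernel) together with $\rho(F)\ge 2$ forces $\pi_1(U)$ to be non-trivial and hence at least two bad fibres to exist --- the paper simply cites \cite[Theorems 2.2 and 2.5]{CFST16} and \cite{KolMor} for that surjectivity, where you prove it by hand over a simply connected $U$. The only point to tighten is the claim that the \'etale morphism $\Pic_{X_U/U}\to U\cong\A^1$ is automatically a constant scheme (\'etale does not imply finite \'etale, so you should add that each connected component of the relative Picard scheme of a family of Fanos is proper over the base); your parenthetical alternative via Deligne's global invariant cycle theorem avoids this and is airtight.
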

\begin{proof}
Let $U\subseteq\p^1$ be the set $\{b\in \p^1\vert\; \pi^{-1}b \;\;{\rm is}\;\;{\rm terminal}\}$. 
 By \cite[Theorems 2.2 and 2.5]{CFST16} and \cite{KolMor} for every $b\in U$ the restriction $r\colon N^1(X)_{\Q}\to N^1(F)_{\Q}^{\pi_1(U)}$ is a surjective map.
 We notice that since $\rho(X)=2$ we have $N^1(X)_{\Q}\cong\Q^2$.
 Moreover, the class of $F$ is in the kernel of $r$, therefore there is a surjective map $N^1(X)_{\Q}/\Q[F]\cong \Q\to N^1(F)_{\Q}^{\pi_1(U)}$.
 If $\rho(F)\geq 2$, then the fundamental group of $U$ must be non-trivial.
 Therefore the locus of non-terminal fibres contains at least two points.
 The last sentence follows from the fact that $\Autz(X)$ is a connected group preserving the non-terminal locus.
\end{proof}
\subsection{Finite morphisms and Fano manifolds}
\begin{lemma}\label{qfinite2finite}
 Let $f\colon W\to Z$ be a quasi-finite surjective morphism such that all its fibres have the same cardinality, $Z$ is normal projective and $W$ is quasi-projective.
 Then $W$ is projective and $f$ is finite.
\end{lemma}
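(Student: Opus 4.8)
The plan is to reduce, by Zariski's Main Theorem, the statement to the assertion that $W$ equals the (projective) finite cover of $Z$ through which $f$ factors, and to deduce this equality by comparing fibre cardinalities; the point will be that normality of $Z$ forces the generic fibre to have the largest number of points.

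Since $f$ is separated (being a morphism between schemes separated over $\mathbb{C}$), quasi-finite and of finite type, Zariski's Main Theorem in Grothendieck's form gives a factorisation $f = g\circ j$ with $j\colon W\hookrightarrow T$ an open immersion and $g\colon T\to Z$ finite; as $Z$ is projective, so is $T$. Replacing $T$ by the reduced closure of $j(W)$ we may assume that $W$ is a dense open subset of the projective variety $T$ and that $g$ is finite and surjective (surjective because $f$ is); treating irreducible components separately, we may also assume $T$ integral. It then suffices to prove $W = T$, for then $f = g$ is finite and $W = T$ is finite over the projective variety $Z$, hence projective.

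Set $d = [\mathbb{C}(T):\mathbb{C}(Z)] < \infty$. Because we work in characteristic zero, $\mathbb{C}(T)/\mathbb{C}(Z)$ is separable, so $g$ is étale over the generic point of $Z$; since the étale locus is open and $g$ is proper, there is a dense open $V_0 \subseteq Z$ over which $g$ is finite étale of degree $d$, whence $\#g^{-1}(z) = d$ for $z \in V_0$. Also $T \setminus W$ is a proper closed subset of the irreducible variety $T$, so $g(T \setminus W)$ is a proper closed subset of $Z$ (a finite morphism is closed and does not increase dimension), and for $z$ in the dense open $V_1 := Z \setminus g(T \setminus W)$ one has $g^{-1}(z) \subseteq W$, i.e. $f^{-1}(z) = g^{-1}(z)$. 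Choosing $z \in V_0 \cap V_1$, the common cardinality of the fibres of $f$ is seen to be $d$. Now let $z \in Z$ be arbitrary: from $f^{-1}(z) = g^{-1}(z) \cap W \subseteq g^{-1}(z)$ and $\#f^{-1}(z) = d$ we get $\#g^{-1}(z) \geq d$, while the classical bound for finite surjective morphisms with integral source and \emph{normal} target gives $\#g^{-1}(z) \leq [\mathbb{C}(T):\mathbb{C}(Z)] = d$; hence $\#g^{-1}(z) = d = \#(g^{-1}(z) \cap W)$, which forces $g^{-1}(z) \subseteq W$. As every point of $T$ lies in some fibre of $g$, this yields $T \subseteq W$, so $W = T$ and we are done.

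The only step that is not purely formal is the fibre bound $\#g^{-1}(z) \leq [\mathbb{C}(T):\mathbb{C}(Z)]$, which I expect to be the main point; I would justify it by passing (after normalising $T$) to a Galois closure $\widehat{T} \to Z$ with group $G$, over which each fibre has $|G/D| \le |G|$ points for a decomposition subgroup $D$, and observing that $T$ is dominated by a quotient $\widehat{T}/H$ whose fibres are indexed by double cosets $H \backslash G / D$, of which there are at most $[G:H] = d$; alternatively one may quote semicontinuity of the number of connected components of the fibres over a geometrically unibranch base. This is exactly where normality of $Z$ is indispensable: for the normalisation $\mathbb{P}^1 \to Z$ of a nodal cubic and $W = \mathbb{P}^1$ minus one of the two points lying over the node, every fibre of $W \to Z$ is a single point, yet $W \cong \mathbb{A}^1$ is not projective. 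The remaining ingredients — Zariski's Main Theorem, openness of the étale locus, properness of finite morphisms, dimension bounds for images — are routine.
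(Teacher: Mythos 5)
Your proof is correct and follows essentially the same route as the paper's: both reduce, via Zariski's Main Theorem, to the situation where $W$ sits as a dense open subset of a projective variety $T$ finite over $Z$ (the paper reaches this by compactifying and taking a Stein factorisation rather than by quoting Grothendieck's form of ZMT directly), and both then conclude $W=T$ by playing the constancy of $\#f^{-1}(z)$ against the fact that, $Z$ being normal, every fibre of the finite morphism $T\to Z$ has at most $\deg(T/Z)$ points, with equality generically — this is exactly the ``semicontinuity of the cardinality of the fibres'' the paper invokes. One small caveat: drop the aside about treating irreducible components of $T$ separately, since the fibre bound (and indeed the lemma itself) fails for reducible $W$ — e.g.\ $(\mathbb{P}^1\setminus\{0\})\sqcup\{\mathrm{pt}\}\to\mathbb{P}^1$ has constant fibre cardinality one — so one should simply use that $W$ is a variety, hence irreducible, under the paper's conventions.
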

\begin{proof}
 By the existence of compactifications and resolution of indeterminacies we can factor $f$ as $\bar f\circ\iota$ where $\iota\colon W\to\overline W$
 is an open immersion,  $\overline W$ is projective and $\bar f\colon\overline W\to Z$ is projective and generically finite.
 We consider then the Stein factorisation $\bar f=\eta\circ g$ of  $\bar f$, where $\eta\colon\overline W\to Z'$ has connected fibres and $g\colon Z'\to Z$ is finite. As $\bar f$ is quasi-finite, $\eta$ is birational.
Then the morphism $\eta\circ\iota\colon W\to Z'$ is birational and all its fibres are finite; it is thus an open embedding by the Zariski main theorem \cite[Lemma~37.38.1]{stacks-project}. 
Therefore, we can take $Z'=\overline W$ and then view $W$ as an open subset of $Z'$, and $f$ as the restriction of the finite morphism $g\colon Z'\to Z$.

 Since all the fibres of $f$ have the same cardinality, by semicontinuity of the cardinality of the fibres, all the fibres of $g$ have the same cardinality.
 Since $f$ is surjective, we get $W=\overline W$ which finishes the proof.
\end{proof}

We recall the following lemma for the readers' convenience 
\begin{lemma}\label{etaleFano}
Let $Z$ be a Fano manifold. Then any finite \'etale map $f\colon W\to Z$ is an isomorphism. 
\end{lemma}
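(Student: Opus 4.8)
The plan is to reduce the statement to the simple connectedness of Fano manifolds. First I would recall that a smooth Fano variety $Z$ over $\C$ is rationally connected, by the theorem of Campana and of Koll\'ar--Miyaoka--Mori. Then I would invoke the classical fact that a smooth projective rationally connected variety over $\C$ has trivial (\'etale) fundamental group $\pi_1(Z)$ --- equivalently, it admits no connected finite \'etale cover of degree larger than one; see for instance Debarre's book on higher-dimensional algebraic geometry or Koll\'ar's book on rational curves on algebraic varieties.

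Granting $\pi_1(Z)=\{1\}$, the conclusion is then immediate. Since $f$ is \'etale and $Z$ is smooth, $W$ is smooth, and being a variety it is in particular connected; a connected finite \'etale morphism $f\colon W\to Z$ of degree $d$ corresponds to an open subgroup of index $d$ in $\pi_1(Z)=\{1\}$, so $d=1$ and $f$ is an isomorphism. The only point requiring a little care is that $W$ be connected, which is automatic under the convention that a variety is integral.

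If one prefers to avoid rational connectedness, I would instead argue via Hirzebruch--Riemann--Roch. As $f$ is finite, $-K_W=f^{*}(-K_Z)$ is ample, so $W$ is again a Fano manifold; hence, by Kodaira vanishing, $H^{i}(Z,\O_Z)=H^{i}(W,\O_W)=0$ for $i>0$, and therefore $\chi(\O_Z)=\chi(\O_W)=1$. On the other hand, $f$ \'etale gives $T_W\cong f^{*}T_Z$, so $\mathrm{td}(T_W)=f^{*}\mathrm{td}(T_Z)$ and
\[
\chi(\O_W)=\int_W \mathrm{td}(T_W)=\deg(f)\cdot\int_Z \mathrm{td}(T_Z)=\deg(f)\cdot\chi(\O_Z)=\deg(f),
\]
which forces $\deg(f)=1$. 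In either approach there is no real obstacle: the whole content sits in the cited input, namely rational connectedness of Fano manifolds together with simple connectedness of rationally connected varieties, or Kodaira vanishing together with Hirzebruch--Riemann--Roch.
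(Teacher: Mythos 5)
Both of your arguments are correct, and your second one is essentially verbatim the paper's proof: the paper notes that $W$ is again Fano because $f$ is finite \'etale, deduces $\chi(\O_W)=\chi(\O_Z)=1$ from Kawamata--Viehweg vanishing (you invoke Kodaira vanishing, which amounts to the same thing for Fano manifolds), and concludes from the multiplicativity $\chi(\O_W)=\deg f\cdot\chi(\O_Z)$. Your first route, via rational connectedness of Fano manifolds and simple connectedness of smooth projective rationally connected varieties, is a genuinely different and equally standard argument; it is cleaner in that it avoids Riemann--Roch entirely, but it leans on a deeper input (the Campana/Koll\'ar--Miyaoka--Mori theorem plus the $\pi_1$-triviality of rationally connected varieties), whereas the Euler-characteristic argument only needs vanishing plus the behaviour of $\chi$ under finite \'etale covers. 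Either is acceptable; no gaps.
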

\begin{proof}

Since $f$ is \'etale, the variety $W$ is a Fano manifold.
Then by the Kawamata-Viehweg vanishing theorem $\chi(W)=1=\chi(Z)$.
On the other hand, if $f$ is finite \'etale, then  $\chi(W)=\deg f\chi(Z)$. Therefore $\deg f=1$ and $f$ is an isomorphism.
\end{proof}

\section{Existence of an invariant horizontal closed subspace}\label{Sec:ProofExistenceS}

 \subsection{Defining some sets of sections and a bend and break result}

\begin{lemma}\label{lemma1}
Let $X$ be a $\mathbb{Q}$-factorial variety and let $\pi\colon X\to B$ be a Mori fibration over a smooth irreducible curve $B$.  
 Then, the set \[\mathcal K=\{- K_X\cdot s\vert\; s\text{ is a section of }\pi\}\] is a non-empty discrete subset of $\mathbb{Q}$ which is bounded from below. In particular, it admits a minimum.
\end{lemma}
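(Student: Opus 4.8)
The plan is to show that, after a \emph{fixed} positive-rational rescaling and shift, the function $s\mapsto -K_X\cdot s$ on sections factors through $s\mapsto H\cdot s$ for a fixed ample divisor $H$, and then to invoke the well-ordering of $\Z_{\ge 1}$. Two preliminary observations first. Since $X$ is $\Q$-factorial, $K_X$ is $\Q$-Cartier, so $-K_X\cdot s\in\Q$ for every section $s$, i.e.\ $\mathcal{K}\subseteq\Q$. And since $\pi$ is a Mori fibration over a curve, a general fibre of $\pi$ is, by generic smoothness in characteristic $0$, a smooth Fano variety (using $K_F=K_X|_F$ and $-K_X$ $\pi$-ample), hence rationally connected; by the theorem of Graber--Harris--Starr $\pi$ therefore admits a section, so $\mathcal{K}\neq\emptyset$.

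Now fix an ample Cartier divisor $H$ on $X$ and one section $s_0$. The heart of the argument is that all section classes lie on a single affine line in $N_1(X)_{\R}$. Indeed, because $\rho(X/B)=1$ and $B$ is a curve we have $\rho(X)=\rho(X/B)+\rho(B)=2$; the relative cone $\overline{\NE}(X/B)$ is then a single ray $\R_{\ge 0}[\ell]$, with $\ell$ a curve contained in a fibre of $\pi$ and $-K_X\cdot\ell>0$ (as $-K_X$ is $\pi$-ample); and $\ker\big(\pi_*\colon N_1(X)_{\R}\to N_1(B)_{\R}\big)=\R[\ell]$, since it is one-dimensional by the rank count ($\pi_*$ is surjective, as $\pi_*[s_0]\neq 0$) and contains $[\ell]$. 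Every section $s$ satisfies $\pi_*[s]=[B]$, so all classes $[s]$ lie in the affine line $L=\pi_*^{-1}([B])$, a coset of $\R[\ell]$. The linear forms $H\cdot(-)$ and $-K_X\cdot(-)$ restrict to affine functions on $L$, and the first is non-constant because $H\cdot\ell>0$; hence on $L$ one has $-K_X\cdot(-)=\alpha\,(H\cdot(-))+\beta$ with $\alpha=\tfrac{-K_X\cdot\ell}{H\cdot\ell}$ and $\beta=-K_X\cdot s_0-\alpha\,(H\cdot s_0)$. Both $\alpha$ and $\beta$ are rational, and $\alpha>0$.

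It then remains to read off the statement. For every section $s$, the number $H\cdot s$ is a positive integer, so $-K_X\cdot s=\alpha\,(H\cdot s)+\beta$ gives $\mathcal{K}=\{\alpha n+\beta\mid n\in N\}$, where $N=\{H\cdot s\mid s\text{ a section}\}\subseteq\Z_{\ge 1}$. Since $\alpha>0$, the set $\{\alpha n+\beta\mid n\in\Z_{\ge 1}\}$ is a locally finite (hence discrete and closed in $\R$) subset of $\Q$ bounded below by $\alpha+\beta$, and the same properties pass to the subset $\mathcal{K}$. Finally $N$ is a non-empty set of positive integers, so it has a least element $n_0$, and as $n\mapsto\alpha n+\beta$ is increasing, $\min\mathcal{K}=\alpha n_0+\beta$.

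The only step that needs genuine care --- the main obstacle --- is the claim that the section classes lie on one affine line, i.e.\ that $\ker\pi_*=\R[\ell]$; this rests on $\rho(X)=\rho(X/B)+\rho(B)=2$ together with the structure of a Mori fibration (the relative cone being the single $K_X$-negative ray generated by a vertical curve). Everything else is linear algebra in the plane $N_1(X)_{\R}$ plus the well-ordering of $\Z_{\ge 1}$, and the non-emptiness is Graber--Harris--Starr.
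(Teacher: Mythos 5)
Your proposal is correct and follows essentially the same route as the paper: both reduce to the affine relation $-K_X\cdot s=\alpha\,(H\cdot s)+\beta$ with $\alpha>0$ coming from $\rho(X)=2$ and the $\pi$-ampleness of $-K_X$ (the paper phrases it dually as $-K_X\equiv\alpha H+\beta F$ in $N^1(X)_{\R}$), and both use Graber--Harris--Starr for non-emptiness. The only cosmetic difference is that the paper gets discreteness from $\Q$-factoriality ($rK_X$ Cartier gives $\mathcal K\subseteq\frac1r\Z$), whereas you get it from $\mathcal K\subseteq\alpha\Z_{\ge1}+\beta$; both are fine.
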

\begin{proof}
The fact that $K$ is non-empty follows from~\cite[Theorem~1.1]{GHS}.
Let $H$ be an ample divisor on $X$ and let $F$ be a fibre. Since $\rho(X/B)=2$, the divisor $-K_X$ is numerically equivalent to $\alpha H+\beta F$ for some $\alpha,\beta\in \mathbb{R}$. As $\pi$ is a Mori fibration, the restriction of $-K_X$ to a general fibre is ample, so $\alpha> 0$.
For each section $s$, we have $H\cdot s> 0$, so   $-K_X\cdot s =\alpha H\cdot s+\beta \ge \beta$. This shows that $\mathcal K$ is bounded from below.  Since $X$ is $\Q$-factorial, there exists $r\in\N$ such that $rK_X$ is Cartier.
 Therefore $\mathcal K\subseteq \frac{1}{r}\Z$ is discrete.
\end{proof}
\begin{definition}
Let $X$ be a $\mathbb{Q}$-factorial varieties and let $\pi\colon X\to B$ be a Mori fibration over a smooth irreducible curve $B$.  We say that a section $s\subset X$ is a \emph{minimal section} if $-K_X\cdot s\le -K_X\cdot s'$ for each section $s'\subset X$.
We say that a section $s\subset X$ is an \emph{aut-minimal section} if $-K_X\cdot s\le -K_X\cdot s'$ for each section $s'\subset \overline{\Autz(X)\cdot s}$.
 Lemma~\ref{lemma1} shows that minimal and aut-minimal sections always exist.
\end{definition}
\begin{remark}
If $\pi\colon X\to B$ is a $\p^1$-bundle over a smooth irreducible curve $B$, for each section $s\subset X$, the adjunction formula gives $s^2=-s\cdot K_X-2+2g(B)$, so Lemma~\ref{lemma1} generalises the classical fact that $s^2$ is bounded from below, and minimal sections correspond here to the sections of minimal self-intersection.
\end{remark}

We present now a Bend and Break result. The proof follows \cite[Proposition~3.2]{Deb01}.

\begin{proposition}\label{bandb}
 Let $X$ be a projective variety together with a fibration $\pi\colon X\to\p^1$, let $s\subset X$ be a section of $\pi$ and $x\in s$.
Suppose that there is an irreducible curve $\Gamma\subseteq \Autz(X)$ such that $g(x)=x$ and $g(s)\not=s$ for a general $g\in \Gamma$.
Then the $1$-cycle $s$ is numerically equivalent to a non-integral effective rational $1$-cycle passing through $x$ and contained in $\overline{\Autz(X)\cdot s}$. In particular, $s$ is not an aut-minimal section.
\end{proposition}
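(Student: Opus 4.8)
The plan is to set up a family of deformations of the section $s$ parametrized by the curve $\Gamma$, fix the point $x$, and then apply the classical Bend and Break mechanism to produce a reducible (hence non-integral) rational $1$-cycle in the numerical class of $s$. Concretely, consider the family $\{g(s)\mid g\in \Gamma\}$ of sections of $\pi$; since $g(x)=x$ for all $g\in\Gamma$ and $g(s)\ne s$ for general $g\in\Gamma$, this is a genuinely moving family of rational curves (each $g(s)\cong\p^1$ since $s$ is a section of a fibration over $\p^1$, hence $s\cong\p^1$) all passing through the fixed point $x$. After normalizing $\Gamma$ and restricting to an open dense subset, one obtains a morphism $f\colon \tilde\Gamma\times\p^1\dashrightarrow X$, $ (g,t)\mapsto g(s(t))$, well-defined on an open set, with $f(\tilde\Gamma\times\{t_0\})=\{x\}$ where $t_0$ is the point of $\p^1$ lying over $\pi(x)$ under the isomorphism $s\cong\p^1$.

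The heart of the argument is then the standard Bend-and-Break dichotomy, exactly as in \cite[Proposition~3.2]{Deb01}: resolving the indeterminacy of $f$ on a smooth projective surface $S$ mapping to $\tilde\Gamma\times\p^1$ (blowing up finitely many points), one gets $\tilde f\colon S\to X$. If $\tilde f$ were to contract no curve other than exceptional ones that map isomorphically to fibres of $S\to\tilde\Gamma$, a computation with the intersection of $\tilde f^*H$ (for $H$ ample on $X$) against the fibres and sections of $S\to \tilde\Gamma$ — using that two distinct fibres of $S\to\tilde\Gamma$ are mapped to curves meeting at $x$ — forces a contradiction: an honest family of curves through a fixed point over a complete base cannot exist without some fibre of $S\to\tilde\Gamma$ breaking or being contracted. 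Hence some fibre of $S\to \tilde\Gamma$ is mapped by $\tilde f$ to a connected but reducible (or non-reduced) curve; this degenerate fibre, pushed forward, is an effective rational $1$-cycle numerically equivalent to $s$, passing through $x$, with at least two components (counted with multiplicity), hence non-integral. Because everything takes place inside $\overline{\Autz(X)\cdot s}$ (the image of $f$ lies in $\Autz(X)\cdot s$, and its closure is $\overline{\Autz(X)\cdot s}$), the resulting cycle is supported on $\overline{\Autz(X)\cdot s}$.

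Finally, the ``in particular'' follows immediately: writing $s\equiv \sum_i a_i C_i$ with the $C_i$ irreducible rational curves in $\overline{\Autz(X)\cdot s}$, $a_i>0$, and at least two terms (or one term with $a_i\ge 2$), at least one component $C_i$ satisfies $-K_X\cdot C_i < -K_X\cdot s$ since $-K_X$ is $\pi$-ample and each $C_i$ has positive intersection with an ample divisor while their classes sum to $[s]$ — more precisely, among the $C_i$ at least one is again a section (the one component dominating $\p^1$, as $\pi_*s=[\p^1]$ forces exactly one horizontal component with multiplicity one, the others being contained in fibres), and since a vertical curve has strictly positive $-K_X$-degree, that horizontal component $C_i$ has $-K_X\cdot C_i< -K_X\cdot s$; this section lies in $\overline{\Autz(X)\cdot s}$, so $s$ is not aut-minimal.

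The main obstacle I expect is the careful bookkeeping in the Bend-and-Break step: making sure the indeterminacy resolution $S\to\tilde\Gamma\times\p^1$ does not accidentally kill the ``fixed point'' condition, controlling which exceptional curves of the resolution are contracted by $\tilde f$ versus which survive, and extracting from the degenerate fibre a cycle that is genuinely non-integral rather than just a multiple of an integral class — this last point is where the hypothesis $g(s)\ne s$ for general $g$ (so that $f$ is non-constant on $\tilde\Gamma$) is essential, since it guarantees the generic fibre of $S\to\tilde\Gamma$ maps to an honest copy of $s$ and the degeneration is nontrivial. All of this is genuinely the content of \cite[Proposition~3.2]{Deb01}, so the proof is really a matter of transporting that argument to the relative setting and noting that the family of sections is furnished by the group action along $\Gamma$.
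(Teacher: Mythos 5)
The crucial step of your argument --- ``an honest family of curves through a fixed point over a complete base cannot exist without some fibre of $S\to\tilde\Gamma$ breaking or being contracted'' --- is false as a general principle, and this is exactly where the gap sits. Take $X=\p^2$ and the family of lines through a fixed point $x$, parametrized by the complete curve $\p^1$: the associated surface is $\F_1\to\p^1$, every fibre is integral, the contracted curve $C_0$ has self-intersection $-1$, and no contradiction comes from intersecting with $\tilde f^*H$ for $H$ ample (on the minimal ruled surface one finds $(\tilde f^*H)^2=a^2m>0$, perfectly consistent with the image being a surface); the fact that two distinct members meet at $x$ gives nothing, since two lines through $x$ do meet at $x$. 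Bend-and-break through a \emph{single} fixed point over a complete base does not force degeneration; the result you invoke, \cite[Proposition~3.2]{Deb01}, is bend-and-break through \emph{two} fixed points, where the contradiction comes from two disjoint contracted sections both of negative self-intersection. The paper itself flags that its argument at precisely this point differs from Debarre's.

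What actually closes the argument is the hypothesis your bend-and-break step never uses: every member $g(s)$ of the family is a \emph{section} of $\pi$. The paper exploits this by noting that $(\kappa,\pi\circ e)\colon S\to\overline{\Gamma}\times\p^1$ is a birational morphism; the curve $C_0$ lying over $x$ satisfies $C_0^2<0$ but maps onto a curve of self-intersection $0$, so some component of a fibre of $\kappa$ must be contracted, whence a non-integral fibre. Equivalently, one can intersect with $e^*\pi^*(\pi(x))$ instead of an ample class: each $g(s)$ meets $\pi^{-1}(\pi(x))$ with multiplicity exactly $1$, so if all fibres of $\kappa$ were integral one would get $e^*\pi^*(\pi(x))=C_0$ and hence $C_0^2=e^*\pi^*(\pi(x))\cdot C_0=0$, a contradiction. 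Your treatment of the ``in particular'' part (exactly one horizontal component, of multiplicity one, and vertical components of positive anticanonical degree) is correct and matches the paper, but the existence of the degenerate member is not established by your argument as written.
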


\begin{proof}
 Let $\nu\colon C\to\Gamma$ be the normalisation of $\Gamma$. By the Blanchard's Lemma~\ref{blanchard} there is a morphism $\nu'\colon C\to \Aut(\p^1)=\PGL_2(\C)$ such that $\pi\circ \nu(g)=\nu'(g)\circ \pi$ for each $g\in C$.

Let $\varphi\colon \p^1\to s$ be the morphism such that $\pi\circ \varphi=\mathrm{id}_{\p^1}$. We now prove that the morphism
 \[ \begin{array}{rccc}
 F\colon & \p^1\times C&\to & X\times C\\
  & (p,g) &\mapsto &(\nu(g)(\varphi(p)),g)
 \end{array}\]
 is finite. This is implied by the fact that for each $g\in \Gamma$, the morphism $\psi_g\colon \p^1\to X, t\mapsto \nu(g)(\varphi(p))$ is  injective. This last claim follows from the fact that $\pi\circ \psi_g=\nu'(g)\in \Aut(\p^1)$, as $\pi\circ \psi_g(p)=(\pi\circ \nu(g))(\varphi(p))=(\nu'(g)\circ \pi)(\varphi(p))=\nu'(g)(p)$ for each $p\in \p^1$.
 
 As $F$ is finite, $F(\p^1\times C)$ has dimension $2$. We then follow the proof of \cite[Proposition~3.2]{Deb01}.
 Let $\overline C$ be a smooth compactification of $C$. Let $S$ be the normalisation in $\C(\p^1\times C)$ of the closure in $X\times\overline C$ of the image of $F$, with finite canonical morphism $\overline F\colon S\to X\times\overline C$.
 Since $\p^1\times C$ is normal, by uniqueness of the normalisation we have $\overline F^{-1}(X\times C)=\p^1\times C$. We obtain the commutative diagram
 \[
 \xymatrix{
  \p^1\times C\ar[dd]\ar[r]&S\ar@/_2pc/[dd]_{\kappa}\ar[d]^{\overline F}\ar[r]^{e}&X\\
 &X\times\overline C\ar[ur]_{p_1}\ar[d]^{p_2}&\\
 C\ar[r]&\overline C&}
 \]
 No component of a fibre of $\kappa$ is contracted by $e$ because it would otherwise be contracted by $\overline F$
 and $\overline F\colon S\to \overline F(S)$ is finite as it is a normalisation.
The morphism $\kappa\colon S\to\overline C$ is flat as $\overline C$ is a smooth curve \cite[III, prop.9.7]{Har77}.
Therefore each fibre is 1-dimensional, with no embedded component, of genus 0 \cite[III, cor.9.10]{Har77}. An integral fibre is a rational curve and a singular one is a tree of rational curves.

Denoting by $p_0=\pi(x)\in \p^1$ the image of $x$, we have $\varphi(p_0)=x$. As every element of $\Gamma$ fixes $x$, the image by $F$ of $\{p_0\}\times C$ is $\{x\}\times C$. In particular, the closure $C_0$ of $\{p_0\}\times C$  is contracted to $x$ by $e$. 

We now prove that there is a non-integral fibre of $\kappa$. This part is different from  \cite[Proposition~3.2]{Deb01}.
 Since $C_0$ is contracted by $e$ and $e(S)$ is a surface, we have $C_0^2<0$.
 We then consider the morphism $\tau\colon S\to \overline{C}\times \p^1$ that is given by  the two morphisms $\kappa\colon S\to \overline C$ and $\pi\circ e\colon S\to\p^1$. Observe that $\tau$ is birational: for an element $c\in C$, the preimage in $S$ of the curve $\p^1\times \{c\}\subseteq \p^1\times C$  is sent to a section of $\pi$ by $F$. As $\tau$ is a birational morphism and as $C_0^2<0$, the morphism $\tau$ contracts some irreducible components contained in the fibres of $\overline C\setminus C$, which are therefore not all integral.

Let $z_0\in \overline C$ be a point such that $\kappa^*z_0$ is a non integral fibre. The curve $s$ is equal to  $e_*\kappa^*z_1$, where some $z_1\in C$ is sent to the identity in $\Gamma\subset \Autz(X)_{\p^1}$.
 Then $e_*\pi^*z_0$ is an effective $1$-cycle  numerically equivalent to $s$ and passes through $x$ as $C_0$ is a section of $\kappa$ that is contracted to $x$. Moreover, $e_*\pi^*z_0$ is not integral as $\pi^*z_0$ is not integral and $e$ does not contract any irreducible component of a fibre.
 
It remains to see that $s$ is not a minimal section. We write $e_*\pi^*z_0=\sum_{i=1}^r \ell_i$ where the $\ell_1,\ldots,\ell_r$ are irreducible and reduced curves on $X$. As $e_*\pi^*z_0$  is numerically equivalent to a section, exactly one of the $\ell_i$, say $\ell_1$, is a section, and $\ell_2,\ldots,\ell_r$ are contained in fibres. As $-K_X$ is ample on the fibres, we have $-K_X\cdot \ell_i>0$ for $i\ge 2$. This gives
\[-K_X\cdot s=-K_X\cdot \ell_1+\sum_{i=1}^r -K_X\cdot \ell_i>-K_X\cdot \ell_1\]
and implies that $s$ is not a minimal section.
\end{proof}

\subsection{Transitivity on the fibres}
\begin{lemma}\label{lemmaactiontrans}
 Let $\pi\colon X\to\p^1$ be a Mori fibre space such that the action of \[\Autz(X)_{\p^1}=\{g\in \Autz(X)\mid \pi g=\pi\}\] on a general fibre is transitive. 
 
 Then, there is a dense open subset $U\subseteq \p^1$, a smooth Fano variety $F$ of Picard rank $1$ and an isomorphism $\theta\colon U\times F\iso \pi^{-1}(U)$ such that $\pi\circ \theta$ is the first projection $U\times F\to U$.
\end{lemma}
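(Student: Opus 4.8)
The plan is to show that the identity component $G:=(\Autz(X)_{\p^1})^{\circ}$ already acts transitively on a general fibre, that a general fibre is then a rational homogeneous space under $G$, and to produce an equivariant, Zariski‑local trivialisation by exploiting that parabolic subgroups are self‑normalising. For the first point: every element of $\Autz(X)_{\p^1}$ fixes $\p^1$ pointwise, so $\Autz(X)_{\p^1}$, and hence $G$, acts on each fibre of $\pi$; if $\Autz(X)_{\p^1}$ is transitive on $\pi^{-1}(b)$, then $\pi^{-1}(b)$ is a finite union of $G$-orbits, all of the same dimension, hence each open by Lemma~\ref{lemm:DimMaxopen}, so by irreducibility there is only one and $G$ is transitive on $\pi^{-1}(b)$ too. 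A general fibre of $\pi$ is rationally connected (it is a Fano variety by adjunction, as $-K_X$ is $\pi$-ample), so $X$ is rationally connected and therefore $\Autz(X)$, hence $G$, is a linear algebraic group. Fix a dense open $U_0\subseteq\p^1$ such that $\pi^{-1}(b)$ is a single $G$-orbit for every $b\in U_0$. For such $b$, a Borel subgroup of $G$ has a fixed point on the projective variety $\pi^{-1}(b)$, so the stabiliser of that point is a parabolic subgroup of $G$ and $\pi^{-1}(b)\cong G/P_b$ is a rational homogeneous space.

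\emph{Equivariant trivialisation.} By Lemma~\ref{lemma1} the fibration $\pi$ has a section $s_0\subset X$. The stabilisers $G_{s_0(b)}$, $b\in U_0$, are parabolic and all of dimension $\dim G-\dim X+1$; restricting the stabiliser group scheme of the $G$-action on $X$ to $s_0(U_0)\cong U_0$ and applying generic flatness over this curve, we may shrink $U_0$ to a dense open $U$ over which this family of subgroups is flat, hence is given by a morphism from $U$ to the scheme of parabolic subgroups of $G$; since $U$ is connected, all the $G_{s_0(b)}$, $b\in U$, are conjugate to a fixed $P_0$. Then for every $b\in U$ the homogeneous space $\pi^{-1}(b)$ is $G$-isomorphic to $G/P_0$, so $P_0$ occurs among its point stabilisers; being self-normalising, $P_0$ has a unique fixed point $\sigma(b)$ in $\pi^{-1}(b)$, and $G_{\sigma(b)}=P_0$. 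Moreover the tangent space of $G/P_0$ at the base point carries no nonzero $P_0$-fixed vector, so $\sigma(b)$ is an isolated reduced point of the fixed locus of $P_0$ in $\pi^{-1}(U)$; that fixed locus is thus finite of degree one over the normal base $U$, hence maps isomorphically onto $U$, giving a section $\sigma$ of $\pi$ over $U$ along which the stabiliser is the constant group $P_0$. Finally $(g,b)\mapsto g\cdot\sigma(b)$ is a surjective morphism $G\times U\to\pi^{-1}(U)$ with fibres the cosets $gP_0\times\{b\}$, so it factors through a bijective morphism $(G/P_0)\times U\to\pi^{-1}(U)$ over $U$; this morphism is proper over $U$ and its target is normal (as $X$ is terminal), so in characteristic zero it is an isomorphism. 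Setting $F:=G/P_0$, a smooth Fano variety, and reordering the two factors yields the desired $\theta$, with $\pi\circ\theta$ the projection to $U$.

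\emph{Picard rank one.} Since $\pi$ is a Mori fibre space over $\p^1$ we have $\rho(X)=\rho(X/\p^1)+1=2$. By the trivialisation above, the generic fibre $X_\eta$ of $\pi$ is isomorphic to $F\times_{\C}\C(\p^1)$, so $\rho(F)=\rho(X_\eta)$. Any line bundle on $X_\eta$ extends to $\pi^{-1}(V)$ for some dense open $V\subseteq\p^1$, and the Zariski closure of a divisor representing it is a $\Q$-Cartier divisor on $X$ (as $X$ is $\Q$-factorial), so $\Pic(X)\otimes\Q\to\Pic(X_\eta)\otimes\Q$ is surjective with kernel containing the nonzero class of a fibre; hence $\rho(F)=\rho(X_\eta)\le\rho(X)-1=1$, and $\rho(F)=1$ because $F$ is a nonempty projective variety. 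This is exactly the assertion of the lemma.

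I expect the main obstacle to be the comparison of stabilisers in the second step: one must show that the general fibres are all $G$-isomorphic to a single $G/P_0$, i.e.\ that the conjugacy type of the parabolic $G_{s_0(b)}$ does not jump over the connected base, which is what the generic-flatness argument is designed to ensure, the self-normalising property of parabolics then being used to replace $s_0$ by a section $\sigma$ along which the stabiliser is literally constant. The remaining ingredients — the reduction to the identity component, the Borel fixed point theorem, and the Néron–Severi bookkeeping — are routine.
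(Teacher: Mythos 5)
Your proof is correct, but it reaches the trivialisation by a genuinely different mechanism than the paper. The paper fixes a \emph{minimal} section $s$, forms the homogeneous space $V=G^{\circ}/\mathrm{Stab}(s)$ and the orbit map $V\to F$, $[g]\mapsto g(x)$, and the whole difficulty is to show this map is an isomorphism: the dimension count $\dim V=\dim F$ comes from the equivariant Bend-and-Break (Proposition~\ref{bandb}) applied to the minimal section, and bijectivity then follows from Stein factorisation together with the triviality of finite \'etale covers of Fano manifolds (Lemma~\ref{etaleFano}). You instead observe that, $G$ being linear, each general fibre is a complete homogeneous space $G/P$ with $P$ parabolic, and you exploit the rigidity and self-normalisation of parabolic subgroups to manufacture a section along which the stabiliser is \emph{literally} a constant $P_0$; the trivialisation is then immediate and Bend-and-Break is never invoked. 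Your route buys a structural explanation (the fibres are generalised flag varieties, and such families are Zariski-locally trivial because parabolics have no moduli) at the cost of importing the theory of parabolic subgroups and the linearity of $\Autz(X)$ (which needs rational connectedness of $X$ via Graber--Harris--Starr); the paper's route stays inside the toolkit it has already assembled for Theorem~A. Two points worth tightening: the constancy of the conjugacy class of $G_{s_0(b)}$ over the connected base is most cleanly seen from the fact that $G$ has only finitely many conjugacy classes of parabolics and that the locus where $G_{s_0(b)}$ lies in a given class is constructible, so one class fills a dense open subset of $\p^1$; and $\sigma(b)$ is an isolated reduced point of the fixed locus of $P_0$ in the \emph{fibre} $\pi^{-1}(b)$, not in $\pi^{-1}(U)$ (where that fixed locus is the curve $\sigma(U)$) --- your subsequent ``finite of degree one over $U$'' argument is the correct formulation.
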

\begin{proof}
Let $s\subset X$ be a minimal section (which exists by Lemma~\ref{lemma1}) and  let $\varphi\colon \p^1\to s$ be the morphism such that $\pi\circ \varphi=\mathrm{id}_{\p^1}$.  
Let $G=\Autz(X)_{\p^1}$ and $G^{\circ}$ the connected component of the identity. As the finite group $G/G^{\circ}$ acts on the set of orbits of $G^{\circ}$, the action of $G^{\circ}$ on a general fibre is transitive.

Let $H=\{g\in G^{\circ}\mid g(s)=s\}\subseteq G^{\circ}$. The quotient $V=G^{\circ}/H$ is homogeneous for the action of $G^{\circ}$ and is thus smooth. Let $x\in X$ be such that $\{x\}=s\cap F$. We obtain a surjective $G^{\circ}$-equivariant morphism 
\[\begin{array}{rrcl}
\Phi\colon& V&\to& F\\
&  [g]&\mapsto& g(x),
 \end{array}\]
 where $[g]\in V=G^{\circ}/H$ denotes the class of $g\in G^{\circ}$.
 
 We prove that $\dim V=\dim F$. Indeed, otherwise $\dim V>\dim F$ and there is an irreducible curve $\Gamma\subseteq G^{\circ}$ such that $g(x)=x$ and 
$g\circ\varphi\not=\varphi$ for a general $g\in \Gamma$. The fact that  $g\in G^{\circ}$ and $g\circ\varphi\not=\varphi$ implies that $g(s)\neq s$, impossible by Lemma~\ref{bandb}.

We now consider the Stein factorisation of $\Phi$ is given by $V\overset{\tilde \Phi}{\rightarrow}\widetilde F\overset{\nu}{\rightarrow}F$.
Since $\dim V=\dim F$, the morphism $\tilde{\Phi}$ is birational; it is moreover $G^{\circ}$-equivariant by Blanchard's lemma (Lemma~\ref{blanchard}). Hence,  $\nu$ is also $G^{\circ}$-equivariant.

As $\nu$ is finite and $G^{\circ}$-equivariant and as $G^{\circ}$ acts transitively on $F$, it is \'etale (by the generic smoothness). Lemma~\ref{etaleFano} implies that $\nu$ is an isomorphism.
We just proved that for a general fibre $F$, for any $y\in F$ there is a unique $g\in G^{\circ}$ such that $y\in g(s)$ (or equivalently such that $y=g(\varphi(\pi(y)))$).
Therefore there is an open set $U\subseteq\p^1$ such that the morphism 
 $$
 \begin{array}{rrcl}
 \theta\colon & V\times U&\to&\pi^{-1}(U)\\
&  ([g],t)&\mapsto&g(\varphi(t)).
 \end{array}
 $$
 is bijective. Restricting $U$ we may assume that $\pi^{-1}(U)$ is smooth, and thus by the Main theorem of Zariski, the above morphism is an  isomorphism.

This proves that $\pi$ is a trivial $V$-bundle over $U$, so in particular $V$ is isomorphic to a general fibre $F$. It remains to see that the Picard rank of $V$ is equal to $1$. Suppose for a contradiction that there exist two prime divisors $D_1$ and $D_2$ on $F$ having classes in $\mathrm{NS}(F)$ which are $\Q$-independent (which exist as soon as $\rho(F)\ge 2$)  
 and $\mathcal D_i$ the Zariski closure of $\theta(D_i\times U)$ in $X$.
 Then the classes  $\mathcal D_1$ and $\mathcal D_2$ are not numerically equivalent over the base, even after some multiple, so $\rho(X/\p^1)\ge 2$, 
 contradicting that $\pi$ is a Mori fibre space. 
\end{proof}

\subsection{Existence of invariant subsets and the proof of Theorem~\ref{Thm:ExistenceSection}}
 
\begin{lemma}\label{Lem:orbithoriz2}
Let $\pi\colon X\to\p^1$ be a Mori fibre space and let \[G=\Autz(X)_{\p^1}=\{g\in \Autz(X)\mid \pi g=\pi\}\] and let $G^{\circ}$ the connected component of the identity. Then, the following hold
 \begin{enumerate}
 \item \label{dimensionOrbits}
 For each aut-minimal section $s\subset X$ and for any two points $p,q\in s$, we have \[\dim G\cdot p=\dim G^{\circ} \cdot p=\dim G^{\circ} \cdot q= \dim G\cdot q.\]
 \item\label{orbitnotclosedBendBreak}
For each $p\in s$ for each point $q\in \overline{G^{\circ}\cdot p}\setminus G^{\circ}\cdot p$ there is an aut-minimal section $s'\subset X$ that contains $q$.
 \item\label{ClosedImageOrbits}
   For each aut-minimal section $s\subset X$ such that $G\cdot p$ is closed for some $p\in s$,  the stabilisers $(G^\circ)_q$ and $(G^\circ)_s$ are equal for all $q\in s$ and the quotient $G^{\circ}/(G^\circ)_s$ is a Fano variety. Moreover,   denoting by $\varphi\colon \p^1\to s$  the morphism such that $\pi\circ \varphi=\mathrm{id}_{\p^1}$, the morphism
\[\begin{array}{rccc}
\kappa\colon &G^{\circ}/(G^{\circ})_s\times \p^1 &\to &X\\
& ([g],b) & \mapsto & g(\varphi(b)).\end{array}\]
is a closed embedding.
  \end{enumerate}
\end{lemma}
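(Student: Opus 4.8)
The plan is to treat the three assertions in order, using the Bend and Break result (Proposition~\ref{bandb}) as the engine for (1) and (2), and then deducing (3) by a compactness/étaleness argument in the style of Lemma~\ref{lemmaactiontrans}. For (1), fix an aut-minimal section $s$ and two points $p,q\in s$. First I would observe that $\dim G\cdot p=\dim G^\circ\cdot p$ since $G^\circ$ has finite index in $G$. Then suppose for contradiction that, say, $\dim G^\circ\cdot p>\dim G^\circ\cdot q$. The stabiliser $(G^\circ)_q$ then has strictly larger dimension than $(G^\circ)_p$, so there is an irreducible curve $\Gamma\subseteq (G^\circ)_q\subseteq \Autz(X)$ whose general element $g$ fixes $q$ but does not fix $p$, hence $g(s)\neq s$ (since a section is determined by any one of its points together with the requirement $\pi g=\pi$; more precisely $g(s)$ is a section through $g(p)\neq p$ while $s$ passes through $p$). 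Applying Proposition~\ref{bandb} with the base point $q$ and this $\Gamma$ shows $s$ is not an aut-minimal section — a contradiction. By symmetry (swapping the roles of $p$ and $q$) the two orbit dimensions coincide, which gives the chain of equalities.

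For (2), let $p\in s$ and $q\in\overline{G^\circ\cdot p}\setminus G^\circ\cdot p$. The key point is that $q$ lies in a different fibre than $p$ only if moved by $G$, but in fact $\pi$ is $G$-equivariant with $G$ acting trivially on $\p^1$, so the whole orbit $G^\circ\cdot p$ and its closure lie in the single fibre $\pi^{-1}(\pi(p))$; hence $q$ is in that fibre. I need to produce an aut-minimal section through $q$. Here I would use the following: by Lemma~\ref{lemma1} the set $\mathcal K$ of values $-K_X\cdot s'$ is discrete and bounded below, so within the $\Autz(X)$-invariant family $\overline{\Autz(X)\cdot s}$ the minimum $-K_X\cdot s'$ over sections $s'\subseteq\overline{\Autz(X)\cdot s}$ is attained — any section realising it is aut-minimal. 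To see some section in that family passes through $q$: since $q\in\overline{G^\circ\cdot p}$, choose a curve in $G^\circ$ limiting to an element carrying $p$ towards $q$; then the corresponding sections $g(s)$ (for $g$ in that curve) form a family whose closure, being a closed subset of $X\times(\text{base})$ mapping onto the base, contains a limiting $1$-cycle through $q$. This limiting cycle is numerically equivalent to $s$, so it again splits as one section $\ell_1$ plus vertical curves; the section component $\ell_1$ is contained in $\overline{\Autz(X)\cdot s}$ and, if the vertical part were nonzero, $-K_X\cdot\ell_1<-K_X\cdot s$, which is fine but I then pass to an aut-minimal section in the family. The cleanest route is: if $q$ already lies on an aut-minimal section we are done; otherwise the orbit-closure argument plus Proposition~\ref{bandb} applied to a suitable curve through $q$ exhibits a splitting, and iterating (the $-K_X$-degrees strictly decrease and are discrete and bounded below) terminates at an aut-minimal section through $q$.

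For (3), assume $G\cdot p$ is closed for some $p\in s$. By (1) every orbit $G\cdot q$, $q\in s$, has the same dimension, and by (2) combined with the hypothesis that no orbit in a fibre escapes to a lower-dimensional one (closedness), the stabilisers $(G^\circ)_q$ all have the same dimension along $s$; I would then show $(G^\circ)_q=(G^\circ)_s$ for all $q\in s$. One inclusion is clear: if $g$ fixes $s$ pointwise-as-a-set it fixes each fibre-point of $s$ individually because $g$ commutes with $\pi$ and $s$ meets each fibre once, so $(G^\circ)_s\subseteq (G^\circ)_q$; conversely an element fixing one point $q$ of $s$ fixes the whole of $s$ by the same Bend-and-Break obstruction as in (1) (if it moved $s$, it would move some other point of $s$ while fixing $q$, contradicting aut-minimality via Proposition~\ref{bandb}), giving $(G^\circ)_q\subseteq(G^\circ)_s$. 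Hence $V:=G^\circ/(G^\circ)_s$ is well-defined and homogeneous, and $[g]\mapsto g(\varphi(b))$ gives, for each $b$, a $G^\circ$-equivariant morphism $V\to \pi^{-1}(b)$ which is injective (same argument) and whose image is the orbit $G^\circ\cdot\varphi(b)$, which is closed hence projective; so $V$ is projective. Running the Stein factorisation of $V\to \overline{G^\circ\cdot\varphi(b)}$ and invoking that a finite $G^\circ$-equivariant map onto a homogeneous space is étale, together with Lemma~\ref{etaleFano}, forces the image to be a Fano variety isomorphic to $V$. Finally, to see $\kappa\colon V\times\p^1\to X$ is a closed embedding: it is injective (a point $x$ of $X$ lies in exactly one $g(s)$, determined by its fibre $b=\pi(x)$ and the unique $[g]\in V$ with $g(\varphi(b))=x$), proper since $V\times\p^1$ is projective, and unramified/an immersion by checking differentials using homogeneity of $V$ and smoothness of the fibres over which we work; an injective proper immersion is a closed embedding.

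\medskip
\noindent\textbf{Main obstacle.} The delicate step is (2): upgrading "$q$ lies in an orbit closure" to "$q$ lies on an honest aut-minimal section." The existence of \emph{some} limiting $1$-cycle through $q$ numerically equivalent to $s$ is routine via properness of the relevant family, but controlling its decomposition — ensuring the section-component stays inside $\overline{\Autz(X)\cdot s}$ and that the termination argument on $-K_X$-degrees actually lands on an aut-minimal section rather than merely a lower-degree one — requires care, and is where Proposition~\ref{bandb} and the discreteness from Lemma~\ref{lemma1} must be combined precisely.
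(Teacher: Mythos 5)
Your parts (1) and (3) follow essentially the paper's route: (1) is the same stabiliser-dimension argument fed into Proposition~\ref{bandb}, and in (3) you correctly identify that projectivity of $G^\circ/(G^\circ)_s$ (obtained from the closed orbit via Lemma~\ref{qfinite2finite}) together with the triviality of finite \'etale covers of Fano varieties (Lemma~\ref{etaleFano}) is what yields $(G^\circ)_q=(G^\circ)_s$ and the closed embedding. One caveat in (3): your opening claim that an element fixing one point of $s$ must fix $s$ ``by the same Bend-and-Break obstruction'' is not justified, since Proposition~\ref{bandb} requires a \emph{curve} of automorphisms and therefore only controls the dimension of $(G^\circ)_q$; the upgrade from $\dim(G^\circ)_q=\dim(G^\circ)_s$ to the equality $(G^\circ)_q=(G^\circ)_s$ is exactly what the \'etale-plus-Fano step must carry, so that step should come first rather than after the conclusion has already been asserted.

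The genuine gap is in (2), and it is precisely the point you flag as the ``main obstacle.'' You produce a limit $1$-cycle through $q$ numerically equivalent to $s$, decompose it as a section $\ell_1$ plus an effective vertical part $l$, and then propose to ``iterate'' when $l\neq 0$. This cannot work as stated: if $l\neq 0$ there is no reason for $q$ to lie on $\ell_1$ rather than on the vertical part, so the iteration loses track of $q$. The resolution is that $l\neq 0$ is impossible: since $-K_X$ is $\pi$-ample, $l\neq 0$ would force $-K_X\cdot l>0$ and hence $-K_X\cdot\ell_1<-K_X\cdot s$, while $\ell_1$ is a section contained in $\overline{\Autz(X)\cdot s}$ --- this contradicts the very definition of $s$ being aut-minimal. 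Therefore $l=0$, the limit cycle is the single section $s'=\ell_1$, which contains $q$, and $s'$ is itself aut-minimal because $\overline{\Autz(X)\cdot s'}\subseteq\overline{\Autz(X)\cdot s}$ and $-K_X\cdot s'=-K_X\cdot s$. No iteration is needed; the inequality you dismiss as ``fine'' is exactly the contradiction that closes the argument.
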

\begin{proof}Lemma~\ref{lemma1} implies that there exists a minimal section of $X$, which is therefore also aut-minimal.

\ref{dimensionOrbits}: As the finite group $G/G^{\circ}$ acts on the set of orbits of $G^{\circ}$, we have $\dim G\cdot p=\dim G^{\circ} \cdot p$ for each $p\in s$. It then suffices to show that $\dim G^{\circ}\cdot p= \dim G^{\circ}\cdot q$ for any two points $p,q\in s$. Up to exchanging $p$ and $q$ we may assume that $\dim G^{\circ}\cdot p< \dim G^{\circ}\cdot q$,  in order to derive a contradiction. Then the stabilisers satisfy $\dim (G^{\circ})_{p}>\dim (G^{\circ})_{q}$. As the stabiliser $(G^{\circ})_s\subseteq G^{\circ}$ of $s$ is contained in both $(G^{\circ})_{p}$ and $(G^{\circ})_{q}$, we obtain $\dim (G^{\circ})_{p}/(G^{\circ})_s>1$. There is thus an irreducible curve $\Gamma\subseteq G^{\circ}\subseteq G=\Autz(X)_{\p^1}$, such that $g(p)=p$ and $g(s)\not=s$ for a general $g\in \Gamma$.
This contradicts the minimality of $s$ by Lemma~\ref{bandb}. 

\ref{orbitnotclosedBendBreak}: Assume that $G^{\circ}\cdot p$ is not closed and let $q\in \overline{G^{\circ}\cdot p}\setminus G^{\circ}\cdot p$.
Set $a=\pi(q)=\pi(p)\in \p^1$ and consider the morphism \[\begin{array}{rccc}
\overline\kappa\colon &G^{\circ}\times \p^1 &\to &X\\
& (g,b) & \mapsto & g(\varphi(b)),\end{array}\] where $\varphi\colon \p^1\to s$  is the morphism such that $\pi\circ \varphi=\mathrm{id}_{\p^1}$. There exists an irreducible curve $C\subset G^{\circ}\times \p^1$ such that $q$ belongs to the closure of $\overline{\kappa}(C)$ and such that $(\mathrm{id},a)\in C$. The closure $\Gamma\subseteq G^{\circ}$  of the projection of $C$ in $G^{\circ}$ is an irreducible curve  such that $\mathrm{id}\in \Gamma$ and such that $q$ belongs to the closure of $\overline{\kappa}(\Gamma\times \p^1)$.
 
Let $\overline\Gamma$ be a compactification of a normalisation of  $\Gamma$.
The morphism $\overline{\kappa}|_{\Gamma\times \p^1}\colon \Gamma\times \p^1\to X$ yields a rational map $\theta\colon \overline\Gamma\times \p^1\dasharrow X$.
Let $S$ be the Zariski closure of the image of $\theta$ (or equivalently of $\overline{\kappa}(\Gamma\times \p^1)$). Then $S$ has dimension 2 
and $q\in S$. We take a  resolution of the indeterminacies of  $\theta$ 
\[
 \xymatrix{
&\hat S\ar[ld]_\nu\ar[rd]^\mu&\\
 \overline\Gamma\times \p^1\ar@{-->}[rr]^{\theta}&&S
}\]
and find $ z\in \overline\Gamma\setminus \Gamma$ such that $q\in \mu_*\nu^*(z\times \p^1)$. As $\mathrm{id}\in \Gamma$, we obtain $\mu_*\nu^*({ \mathrm{id}}\times \p^1)=s$. Hence, $\mu_*\nu^*(z\times \p^1)$ is a cycle numerically equivalent to $s$, that we can write as 
\[\mu_*\nu^*(z\times \p^1)\equiv s'+l\]
where $s'$ is a section and $l$ is an effective $1$-cycle contained in fibres of $\pi$. As $-K_X\cdot l\ge 0$ and $s'\subseteq \overline{\Autz(X)\cdot s}$, the minimality of $s$ implies that $l=0$ and that $s'\equiv s$.
The section $s'$ is aut-minimal as $\overline{\Autz(X)\cdot s'}\subseteq\overline{\Autz(X)\cdot s}$. Moreover $q$ belongs to $\mu_*\nu^*(z\times \p^1)=s'$, so $q\in s'$. This achieves the proof of \ref{orbitnotclosedBendBreak}.
 
\ref{ClosedImageOrbits}:
By \ref{dimensionOrbits} for every $p\in s$ we have $\dim (G^{\circ})_p=\dim (G^{\circ})_s$.
Hence, the morphism 
$\tau\colon G^{\circ}/(G^{\circ})_s\to G^{\circ}/(G^{\circ})_p$ is a quasi finite morphism.
The morphism $\tau$ being $G^{\circ}$-equivariant, all of its fibres have the same cardinality.
As the orbit $G^{\circ} \cdot p$ is closed in $X$, it is projective, hence the variety $G^{\circ}/(G^{\circ})_p$ is projective, 
and thus Fano as it is homogeneous for the action of the linear connected group $G^{\circ}$ by \cite[Corollary 2.1.7]{AGV}. 
By Lemma~\ref{qfinite2finite} the variety $G^{\circ}/(G^{\circ})_s$ is projective. Therefore $\tau$ is \'etale and by Lemma~\ref{etaleFano}
it is an isomorphism. This gives $(G^{\circ})_p= (G^{\circ})_s$ for every $p\in s$.

The morphism \[\begin{array}{rccc}
\kappa\colon &G^{\circ}/(G^{\circ})_s\times \p^1 &\to &X\\
& ([g],b) & \mapsto & g(\varphi(b)).\end{array}\] is closed as $G^{\circ}/(G^{\circ})_s$ is projective. 
It is moreover an isomorphism onto its image as for a fixed $b\in\p^1$ it induces the isomorphism $G^{\circ}/(G^{\circ})_{\varphi(b)}\to G^{\circ}\cdot\varphi(b)$.
 \end{proof}

\begin{lemma}\label{Lemm:AutzAndAutzP1}
Let $\pi\colon X\to\p^1$ be a Mori fibre space such that a general fibre $F$ has Picard $\rho(F)\ge 2$, and let $s\subset X$ be an aut-minimal section such that $\Autz(X)_{\p^1}\cdot p$ is closed for some $p\in s$. 

If $\Autz(X)_{\p^1}\cdot s\not= \Autz(X)\cdot s$, then there is an aut-minimal section $s'\subset X$  such that 
\[\dim {\Autz(X)\cdot s'}<\dim {\Autz(X)\cdot s}.\]
\end{lemma}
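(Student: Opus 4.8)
The plan is to exploit the action of the connected group $G := \Autz(X)$ on the set $\SS = \overline{G\cdot s}$, on which $G_{\p^1} := \Autz(X)_{\p^1}$ also acts, together with the Blanchard action of $G$ on $\p^1$ (Lemma~\ref{blanchard}). By Lemma~\ref{lem:monodr}, this $G$-action on $\p^1$ is nontrivial (it fixes at least two points, but more importantly, since $G_{\p^1}\cdot s$ is a strict subset of $G\cdot s$, some $g\in G$ must move the section $s$ off its $G_{\p^1}$-orbit while necessarily moving fibres). So there is a point $b_0\in\p^1$ fixed by $G$ (guaranteed by Lemma~\ref{lem:monodr}); I will work over that fixed point. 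Write $\varphi\colon\p^1\to s$ for the section morphism and $p_0 := \varphi(b_0)$, which lies in the fibre $F_0 = \pi^{-1}(b_0)$. Since $b_0$ is $G$-fixed, $G$ acts on $F_0$, and by Lemma~\ref{Lem:orbithoriz2}\ref{ClosedImageOrbits} applied with the hypothesis that $G_{\p^1}\cdot p$ is closed for some $p\in s$ (hence, by part~\ref{dimensionOrbits}, all orbits of points of $s$ under $G_{\p^1}$ have the same dimension, and one checks the closed orbit hypothesis propagates), we know the $G_{\p^1}$-orbit $G_{\p^1}\cdot p_0$ is a closed (hence projective, hence Fano-homogeneous) subvariety of $F_0$, isomorphic to $G_{\p^1}^{\circ}/(G_{\p^1}^{\circ})_s$.

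The heart of the argument is to compare the $G$-orbit of $p_0$ in $F_0$ with the $G_{\p^1}$-orbit. Consider $Z := \overline{G\cdot p_0}\subseteq F_0$. I claim that $G\cdot p_0\supsetneq G_{\p^1}\cdot p_0$ strictly, for otherwise the whole section $s$ — which is swept out from $p_0$ by translating along $\p^1$, using that $\SS\to\p^1$ has fibres that are single orbits by part~\ref{ClosedImageOrbits} of Lemma~\ref{Lem:orbithoriz2} — would have $G\cdot s = G_{\p^1}\cdot s$, contrary to hypothesis. (More precisely: an element $g\in G$ with $g(s)\notin G_{\p^1}\cdot s$, composed with a suitable element of $G_{\p^1}$ undoing the base motion, would produce a new section through a point of $F_0$ not in $G_{\p^1}\cdot p_0$; I will need to be careful here that $g(s)$ is again a section, which it is, and again aut-minimal, since $\overline{G\cdot g(s)} = \overline{G\cdot s}$ and $-K_X\cdot g(s) = -K_X\cdot s$.) So the $G$-orbit of $p_0$ is strictly larger than its $G_{\p^1}$-orbit, whence $G\cdot p_0$ is \emph{not} closed in $F_0$: indeed if it were closed it would be projective and homogeneous under $G^\circ$, i.e. Fano, and the finite quotient map from $G^\circ/(G^\circ)_{p_0}$ onto itself would force (via Lemma~\ref{qfinite2finite} and Lemma~\ref{etaleFano}) the surjection $G_{\p^1}\cdot p_0 = G_{\p^1}^\circ/(G_{\p^1}^\circ)_{p_0}\to G^\circ/(G^\circ)_{p_0}$ coming from the inclusion to be an isomorphism — but the latter has strictly larger dimension or strictly more points. [This dimension/degree bookkeeping is the step I expect to require the most care: one must rule out the $G$-orbit being closed purely from it being strictly bigger, using that both orbit closures live inside the Fano fibre $F_0$ and invoking the étale-implies-isomorphism lemma.]

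Having shown $G\cdot p_0$ is not closed, pick $q\in \overline{G\cdot p_0}\setminus G\cdot p_0$; then $q\in\overline{G^\circ\cdot p_0}\setminus G^\circ\cdot p_0$ as well (the finite group $G/G^\circ$ permutes $G^\circ$-orbits). Now I would run an equivariant Bend-and-Break argument in the spirit of the proof of Lemma~\ref{Lem:orbithoriz2}\ref{orbitnotclosedBendBreak}: choose an irreducible curve $\Gamma\subseteq G^\circ$ with $\mathrm{id}\in\overline\Gamma$ such that $q$ lies in the closure of $\overline\kappa(\Gamma\times\p^1)$, where $\overline\kappa(g,b) = g(\varphi(b))$; resolve the rational map $\overline\Gamma\times\p^1\dashrightarrow X$, degenerate the fibre $\mathrm{id}\times\p^1$ (which maps to $s$) to a fibre over $z\in\overline\Gamma\setminus\Gamma$, and write the limit cycle as $s' + l$ with $s'$ a section and $l$ effective vertical; minimality of $s$ forces $l=0$ and $s'\equiv s$, and $s'\subseteq\overline{G\cdot s}$ is aut-minimal with $q\in s'$. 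Finally, to get the strict drop in orbit dimension: the section $s'$ passes through $q$, and I will argue that $\dim G\cdot q < \dim G\cdot p_0 = \dim G\cdot s$ (the last equality since $\SS\to\p^1$ is $\p^1$ times the orbit of $p_0$ over the fixed point, so $\dim G\cdot s = \dim G\cdot p_0 + $ possibly $0$ — actually $G$ fixes $b_0$ so it does not enlarge via the base here — giving $\dim G\cdot s = \dim \SS = \dim G\cdot p_0$ after accounting for the $\p^1$-direction; I will reconcile the exact relation using that over a general, non-fixed point the orbit could be one bigger, but over $b_0$ it is exactly $\dim G\cdot p_0$, and $\dim\SS$ is computed there). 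Since $q$ lies in the \emph{boundary} $\overline{G^\circ\cdot p_0}\setminus G^\circ\cdot p_0$, its orbit has strictly smaller dimension than that of $p_0$ by the standard fact on orbit closures (Lemma~\ref{lemm:DimMaxopen}); combined with Lemma~\ref{Lem:orbithoriz2}\ref{dimensionOrbits} applied to the aut-minimal section $s'\ni q$, \emph{every} point of $s'$ has orbit dimension $\dim G\cdot q < \dim G\cdot p_0$, and tracing back through the $\kappa$-parametrisation this yields $\dim\overline{G\cdot s'} < \dim\overline{G\cdot s}$, i.e. $\dim \Autz(X)\cdot s' < \dim\Autz(X)\cdot s$, as required.
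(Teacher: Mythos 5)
Your route is genuinely different from the paper's: the paper studies the generically finite, $\Autz(X)$-equivariant map $\Phi\colon \Autz(X)/(G^{\circ})_s\times\p^1\to \overline{\Autz(X)\cdot s}$, shows its degeneracy locus $R$ is closed, invariant and dominates $\p^1$, and extracts the lower-dimensional aut-minimal section from $R$; you instead localise at a $\Autz(X)$-fixed point $b_0\in\p^1$ (Lemma~\ref{lem:monodr}) and try to produce a non-closed $\Autz(X)$-orbit inside the fixed fibre $F_0$, then reuse the Bend-and-Break mechanism of Lemma~\ref{Lem:orbithoriz2}\ref{orbitnotclosedBendBreak}. The overall architecture is plausible and the final bookkeeping ($q$ in the orbit boundary $\Rightarrow$ $\dim\Autz(X)\cdot q<\dim\Autz(X)\cdot p_0$ $\Rightarrow$ $\dim\Autz(X)\cdot s'<\dim\Autz(X)\cdot s$, using $\Autz(X)_{s'}=\Autz(X)_{q}$ from Proposition~\ref{bandb}) can be made to work. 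But the pivotal claim --- that $\Autz(X)\cdot p_0$ is \emph{not closed} in $F_0$ --- is not correctly justified, and this is exactly the step your whole construction of $q$ depends on.

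Concretely: the inclusion $\Autz(X)_{\p^1}^{\circ}\subseteq\Autz(X)$ induces an \emph{inclusion} $\Autz(X)_{\p^1}\cdot p_0\hookrightarrow\Autz(X)\cdot p_0$ of a codimension-one closed subvariety, not a surjection, so there is no ``finite quotient map'' to which Lemma~\ref{qfinite2finite} or Lemma~\ref{etaleFano} could be applied; as written, that paragraph proves nothing, and a closed orbit $\Autz(X)\cdot p_0$ (a projective homogeneous space properly containing the Fano $\Autz(X)_{\p^1}\cdot p_0$) is not excluded by any counting of degrees or dimensions of the kind you invoke. A correct argument does exist but is of a different nature: if $\Autz(X)\cdot p_0$ were projective, the stabiliser $P=\Autz(X)_{p_0}$ would be parabolic, hence would surject onto the torus quotient $\Autz(X)/\Autz(X)_{\p^1}\simeq\C^*$, giving $\Autz(X)=\Autz(X)_{\p^1}\cdot P$ and therefore $\Autz(X)\cdot p_0=\Autz(X)_{\p^1}\cdot p_0$, contradicting the strict inclusion. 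Note also that the strict inclusion itself needs more than your ``undo the base motion'' remark: equality of the two orbits only hands you a \emph{single} element fixing $p_0$ and moving $s$, whereas Proposition~\ref{bandb} requires a curve of such elements; you must first establish $\dim\Autz(X)\cdot s=\dim\Autz(X)_{\p^1}\cdot s+1$ (as the paper does via irreducibility of the image of $\Phi$) and combine it with $(G^{\circ})_{p_0}=(G^{\circ})_s$ from Lemma~\ref{Lem:orbithoriz2}\ref{ClosedImageOrbits} to get the dimension discrepancy that feeds Bend and Break. Until the non-closedness step is repaired along these lines, the proof has a genuine gap.
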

\begin{proof}
Let us write $G=\Autz(X)_{\p^1}$ and denote by $G^{\circ}$ the connected component of the identity.
By assumption, $G\cdot p$ is closed for some $p\in s$,  so we can apply Lemma~\ref{Lem:orbithoriz2}\ref{ClosedImageOrbits}. This implies that the   the stabilisers $(G^\circ)_q$ and $(G^\circ)_s$ are equal for each $q\in s$ and that 
\[\begin{array}{rccc}
\kappa\colon &G^{\circ}/(G^{\circ})_s\times \p^1 &\to &X\\
& ([g],b) & \mapsto & g(\varphi(b)).\end{array}\]
is a closed embedding, where $\varphi\colon \p^1\to s$ is the morphism such that $\pi\circ \varphi=\mathrm{id}_{\p^1}$. In particular, $G^{\circ}\cdot q=G^{\circ}/(G^{\circ})_s\cdot q$ is closed in $X$ for each $q\in s$.

Since $G\cdot s\not= \Autz(X)\cdot s$, we have $G\subsetneq \Autz(X)$. As the action of $\Autz(X)$ on $\p^1$ fixes at least two points (Lemma~\ref{lem:monodr}), we may change coordinates such that the action of $\Autz(X)$ on $\p^1$ is exactly the group, isomorphic to $\C^*$, fixing $[1:0]$ and $[0:1]$. We denote by $\alpha\colon \Autz(X)\to \C^*$ the corresponding surjective group homomorphism.

We consider the $\Autz(X)$-equivariant morphism 
\[\begin{array}{rccc}
\Phi\colon &\Autz(X)/(G^{\circ})_s\times \p^1 &\to &X\\
& ([g],b) & \mapsto & g(\varphi(b)),\end{array}\]
whose restriction to $G^{\circ}/(G^{\circ})_s$ gives $\kappa$. The image \[Z=\Phi(\Autz(X)/(G^{\circ})_s\times \p^1)=\Autz(X)\cdot s\] strictly contains $G\cdot s$, which is closed in $X$. Since $Z$ is irreducible (because $\Autz(X)/(G^{\circ})_s\times \p^1$ is irreducible), we find $\dim Z>\dim G\cdot s=\dim (G^{\circ}/(G^{\circ})_s\times \p^1)$. As $\Autz(X)/G\simeq \C^*$ has dimension $1$, we obtain 
\[\dim(\Autz(X)/(G^{\circ})_s\times \p^1)=\dim Z=\dim (G\cdot s)+1.\]
In particular, $\Phi\colon \Autz(X)/(G^{\circ})_s\times \p^1\to Z$ is generically finite.

We now prove that every fibre of $\Phi$ is finite. Suppose by contradiction that some irreducible curve $\Gamma\subseteq \Autz(X)/(G^{\circ})_s\times \p^1$ 
is sent onto a point $q_0\in Z$. For each $g\in \Autz(X)$, the curve $g\cdot \Gamma$ is contracted onto $g(q_0)$, so 
the closure of $\{g\cdot \Gamma\mid g\in \Autz(X)\}$ is a subvariety $F\subseteq \Autz(X)/(G^{\circ})_s\times \p^1$ that is sent by 
$\Phi$ onto a subvariety $F'=\Phi(F)$ of $Z$ of dimension $\dim F'<\dim F$. Since $\Phi$ is generically finite, 
$\dim F <\dim ( \Autz(X)/(G^{\circ})_s\times \p^1)$. As $F$ is invariant by $\Autz(X)$, it has to be equal to $\Autz(X)/(G^{\circ})_s\times \{b_0\}$ for some $b_0\in \p^1$. 
This implies that $\Gamma=\Gamma'\times \{b_0\}$ for some curve $\Gamma'\subseteq \Autz(X)/(G^{\circ})_s$. Replacing $\Gamma$ with $g\cdot \Gamma$ for some $g\in \Autz(X)$, we may assume that $q_0=\varphi(b_0)$. We then obtain $\Gamma'\cdot q_0=q_0$. As $s$ is an aut-minimal section, Proposition~\ref{bandb} implies that $\Gamma'\cdot s=s$. As $(G^{\circ})_{q_0}=(G^{\circ})_s$, the group $G_{q_0}/(G^{\circ})_s$ is finite, so $\Gamma'$ is not contained in $G$, which means that $\alpha(\Gamma')=\C^*$. For each element $g\in \Autz(X)$, there exists then $h\in \Gamma'$ such that $\alpha(g)=\alpha(h)$, and thus $g=g_0h$ for some $g_0\in G$. This gives $g(s)=g_0h(s)=g_0(s)\subseteq G\cdot s$. This being true for each $g\in \Autz(X)$, it gives $Z=\Autz(X)\cdot s=G\cdot s$, giving the desired contradiction.

We denote by $d$ the degree of the quasi-finite morphism $\Phi\colon \Autz(X)/(G^{\circ})_s\times \p^1\to Z$, by $\overline{Z}$ the closure of $Z$ in $X$ and by $R\subsetneq \overline{Z}$ the subset
\[R=\{x\in \overline{Z}\mid \Phi^{-1}(x)\text{ contains less than $d$ points}\}.\]
As $Z$ is $\Autz(X)$-invariant and $\Phi$ is $\Autz(X)$-equivariant, the set $R$ is $\Autz(X)$-invariant.

We prove that $R$ is closed in $\overline Z$. By the Hironaka resolution of singularities, there is a projective variety $W$
together with a morphism $\overline\Phi\colon W\to \overline Z$ and an open immersion $\iota\colon\Autz(X)/(G^{\circ})_s\times \p^1\to W$
such that $\overline\Phi\circ\iota=\Phi$.
Let $\overline\Phi=\sigma\circ\mu$ where $\mu$ has connected fibres, and is therefore birational, and $\sigma$ is finite.
By the Zariski main theorem \cite[Lemma~37.38]{stacks-project}, $\mu\circ\iota$ is an open immersion, therefore we can assume that $\overline\Phi$ is finite. The set $R$ is closed as it is the union of two closed sets: \[R=\overline Z\setminus Z\cup\{x\in \overline{Z}\mid \overline\Phi^{-1}(x)\text{ contains less than $d$ points}\}.\]

We now prove that $\pi(R)=\p^1$. Suppose for contradiction that $\pi(R)\not=\p^1$, which implies that $\pi(R)\subseteq \{[0:1],[1:0]\}$.  Writing $Z_1=Z\cap \pi^{-1}([1:1])$, the preimage $W_1=\Phi^{-1}(Z_1)$ is projective (Lemma~\ref{qfinite2finite}). It then suffices to show that the morphism $W_1\to \C^*$ obtained by composing $\alpha$ with the first projection $\Autz(X)/(G^{\circ})_s\times \p^1\to \Autz(X)/(G^{\circ})_s$ is surjective to obtain the desired contradiction. For each $\mu\in \C^*$, we take $g\in \Autz(X)$ such that $\alpha(g)=\mu$ and  choose the point $p\in s$ such that $g(\varphi(p))\in \pi^{-1}([1:1])$.

If $R$ contains a section $s''$, we have $\overline{\Autz(X)\cdot s''}\subseteq R\subsetneq\overline{\Autz(X)\cdot s}=Z$ and choose a aut-minimal section  $s'$ contained in $\overline{\Autz(X)\cdot s''}$. Otherwise, we choose a point $x\in R\cap \pi^{-1}([1:0])$ and a section $s''\subset X$ through $x$ and choose $s''$ such that $-K_X\cdot s''$ is minimal, among all sections through $x$ (this is possible by Lemma~\ref{lemma1}). If $ \dim {\Autz(X)\cdot s''}<\dim {\Autz(X)\cdot s}$, we choose a aut-minimal section  $s'$ contained in $\overline{\Autz(X)\cdot s''}$. It remains to assume that $ \dim {\Autz(X)\cdot s''}\ge \dim {\Autz(X)\cdot s})$ and to derive a contradiction. As $\Autz(X)\cdot x\subseteq R\cap \pi^{-1}([1:0])$, we have $\dim\Autz(X)\cdot x<\dim R<\dim Z=\dim {\Autz(X)\cdot s}$. This, together with $ \dim {\Autz(X)\cdot s''}\ge\dim Z$ implies the existence of an irreducible curve $C\subseteq\Autz(X)$ such that $C\cdot x=x$ and $C\cdot s''\not=s''$, contradicting Proposition~\ref{bandb}.

\end{proof}
The following proposition directly implies Theorem~\ref{Thm:ExistenceSection}.

\begin{proposition}\label{prop:inv}
 Let $\pi\colon X\to\p^1$ be a Mori fibre space such that a general fibre $F$ satisfies $\rho(F)\geq 2$ and let  $s\subset X$ be an aut-minimal section such that $\Autz(X)\cdot s$ is of minimal dimension $($i.e.~$\dim(\Autz(X)\cdot s)\le \dim(\Autz(X)\cdot s')$ for each aut-minimal section $s')$. Then, the following holds:
 \begin{enumerate}
 \item
 $\mathcal{S}=\Autz(X)\cdot s=\Autz(X)_{\p^1}\cdot s=(\Autz(X)_{\p^1})^{\circ}\cdot s$ is a proper closed subset of $X$;
 \item
 For each $b\in \p^1$, the fibre $\pi^{-1}(b)\cap \mathcal{S}$ of $\mathcal{S}\to \p^1$ is equal to $(\Autz(X)_{\p^1})^{\circ}\cdot p$, where $p\in s$ is such that $\pi(p)=b$.
 \end{enumerate}
\end{proposition}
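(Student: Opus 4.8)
The plan is to set $G=\Autz(X)_{\p^1}$ with identity component $G^{\circ}$, and to use repeatedly that, by Lemma~\ref{Lem:orbithoriz2}\ref{dimensionOrbits}, for any aut-minimal section $s'$ the quantity $m(s')=\dim(G\cdot p)=\dim(G^{\circ}\cdot p)$ is independent of $p\in s'$. Write $D$ for the minimal value of $\dim(\Autz(X)\cdot s')$ over aut-minimal sections $s'$, so $\dim(\Autz(X)\cdot s)=D$ by hypothesis. The proof will run in three stages: (i) a descent on $m$ that replaces $s$ by an aut-minimal section still contained, with its orbit, in $\overline{\Autz(X)\cdot s}$ and along which every $G^{\circ}$-orbit is closed; (ii) an analysis of that section, via Lemma~\ref{Lem:orbithoriz2}\ref{ClosedImageOrbits} and Lemma~\ref{Lemm:AutzAndAutzP1}, identifying its $\Autz(X)$-orbit with a closed $\Autz(X)$-invariant subset of the form "$\p^1$ times a homogeneous space''; (iii) a transfer of this back to the prescribed section $s$.

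\emph{Stage (i).} Suppose $G^{\circ}\cdot p_0$ is not closed for some $p_0\in s$ (by Lemma~\ref{Lem:orbithoriz2}\ref{ClosedImageOrbits}, closedness at one point of $s$ forces it at all, so such a $p_0$ exists whenever the orbits are not all closed). Picking $q\in\overline{G^{\circ}\cdot p_0}\setminus G^{\circ}\cdot p_0$, Lemma~\ref{Lem:orbithoriz2}\ref{orbitnotclosedBendBreak} yields an aut-minimal section $s_1\ni q$ with $\overline{\Autz(X)\cdot s_1}\subseteq\overline{\Autz(X)\cdot s}$; hence $\dim(\Autz(X)\cdot s_1)\le D$, so $=D$ by minimality of $D$. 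The usual dimension drop for an orbit lying in an orbit closure gives $m(s_1)=\dim(G^{\circ}\cdot q)<\dim(G^{\circ}\cdot p_0)=m(s)$. Iterating produces a strictly decreasing sequence of non-negative integers $m(s)>m(s_1)>m(s_2)>\cdots$, which terminates; the last section $\tilde s$ is aut-minimal, satisfies $\overline{\Autz(X)\cdot\tilde s}\subseteq\overline{\Autz(X)\cdot s}$ and $\dim(\Autz(X)\cdot\tilde s)=D$, and has $G^{\circ}\cdot p$ — hence the finite union $G\cdot p$ — closed for every $p\in\tilde s$.

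\emph{Stages (ii)--(iii).} Apply Lemma~\ref{Lem:orbithoriz2}\ref{ClosedImageOrbits} to $\tilde s$: the stabilisers $(G^{\circ})_q=(G^{\circ})_{\tilde s}$ for all $q\in\tilde s$, the homogeneous space $G^{\circ}/(G^{\circ})_{\tilde s}$ is Fano, and $\kappa\colon G^{\circ}/(G^{\circ})_{\tilde s}\times\p^1\to X$, $([g],b)\mapsto g(\tilde\varphi(b))$ (with $\tilde\varphi\colon\p^1\to\tilde s$ the section), is a closed embedding with image $G^{\circ}\cdot\tilde s$ satisfying $\pi\circ\kappa=\pr_2$; so $G^{\circ}\cdot\tilde s$ is closed of dimension $m(\tilde s)+1$. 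Since $\dim(\Autz(X)\cdot\tilde s)=D$ is minimal, Lemma~\ref{Lemm:AutzAndAutzP1} excludes $G\cdot\tilde s\subsetneq\Autz(X)\cdot\tilde s$, so $G\cdot\tilde s=\Autz(X)\cdot\tilde s$; this set is closed (finitely many automorphic copies of the closed $G^{\circ}\cdot\tilde s$), irreducible, of dimension $\dim(G\cdot p)+1=m(\tilde s)+1$, hence equals $G^{\circ}\cdot\tilde s$. Call $\mathcal{T}$ this common closed $\Autz(X)$-invariant set, isomorphic over $\p^1$ via $\kappa$ to $(G^{\circ}/(G^{\circ})_{\tilde s})\times\p^1$. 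As $\mathcal{T}\subseteq\overline{\Autz(X)\cdot s}$ is closed of dimension $D=\dim\overline{\Autz(X)\cdot s}$ with $\overline{\Autz(X)\cdot s}$ irreducible, $\mathcal{T}=\overline{\Autz(X)\cdot s}$, so $s\subseteq\mathcal{T}$. Viewing $s$ inside $\mathcal{T}\cong(G^{\circ}/(G^{\circ})_{\tilde s})\times\p^1$, it is the graph of a morphism $\p^1\to G^{\circ}/(G^{\circ})_{\tilde s}$, and since $G^{\circ}$ acts transitively on $G^{\circ}/(G^{\circ})_{\tilde s}$ we get $G^{\circ}\cdot s=\mathcal{T}$. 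Then
\[G^{\circ}\cdot s\subseteq G\cdot s\subseteq\Autz(X)\cdot s\subseteq\overline{\Autz(X)\cdot s}=\mathcal{T}=G^{\circ}\cdot s,\]
so all of these equal $\mathcal{S}:=\mathcal{T}$, giving the equalities and closedness in~(1). If $\mathcal{S}=X$, then for general $b$ the fibre $\pi^{-1}(b)\cong F$ would be a single $G^{\circ}$-orbit, i.e.\ $G$ would act transitively on a general fibre, impossible by Lemma~\ref{lemmaactiontrans} as $\rho(F)\ge 2$ (which also yields the non-transitivity assertion of Theorem~\ref{Thm:ExistenceSection}); hence $\mathcal{S}\subsetneq X$. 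For~(2), $G^{\circ}$ preserves each fibre, so $\pi^{-1}(b)\cap\mathcal{S}=G^{\circ}\cdot(s\cap\pi^{-1}(b))=(\Autz(X)_{\p^1})^{\circ}\cdot p$ with $p=s\cap\pi^{-1}(b)$.

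I expect the delicate point to be Stage~(i): one must keep the descent inside the fixed orbit dimension $D$ (so that Lemma~\ref{Lemm:AutzAndAutzP1} stays available) and guarantee that it terminates at a section all of whose $G^{\circ}$-orbits are closed, so that Lemma~\ref{Lem:orbithoriz2}\ref{ClosedImageOrbits} can be invoked at every point. By contrast, once $\mathcal{T}$ is known to be a product over $\p^1$ with $G^{\circ}$ acting transitively on the fibres, transferring the conclusion from $\tilde s$ to the arbitrary prescribed section $s$ is straightforward.
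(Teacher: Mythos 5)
Your proof is correct and rests on exactly the same pillars as the paper's: Lemma~\ref{Lem:orbithoriz2}\ref{orbitnotclosedBendBreak} to handle non-closed orbits, Lemma~\ref{Lem:orbithoriz2}\ref{ClosedImageOrbits} to get the closed embedding $\kappa$, Lemma~\ref{Lemm:AutzAndAutzP1} for the equality $\Autz(X)_{\p^1}\cdot s=\Autz(X)\cdot s$, and Lemma~\ref{lemmaactiontrans} for properness. The one structural difference is in how closedness of the $G^{\circ}$-orbits is obtained. The paper works with $s$ itself: a non-closed orbit would produce, via Lemma~\ref{Lem:orbithoriz2}\ref{orbitnotclosedBendBreak}, an aut-minimal section $s'$ with $\dim(G\cdot s')<\dim(G\cdot s)$, declared ``impossible'' in one line. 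You instead run a descent on the orbit dimension $m(\cdot)$ to reach an auxiliary section $\tilde s$ with all orbits closed, identify $\mathcal{T}=G^{\circ}\cdot\tilde s$ with $\overline{\Autz(X)\cdot s}$ by an irreducibility-and-dimension count, and then transfer back to $s$ using that $s$ is a graph in $\mathcal{T}\cong(G^{\circ}/(G^{\circ})_{\tilde s})\times\p^1$. Your version is longer but arguably more careful: the paper's displayed inequality compares $G$-orbits of sections while the minimality hypothesis is stated for $\Autz(X)$-orbits, and since $\dim(\Autz(X)\cdot s')$ can a priori exceed $\dim(G\cdot s')$ by one, the contradiction is not entirely immediate as written; your bookkeeping of $D$ and $m(\cdot)$ as separate quantities, together with the final transfer, closes that gap cleanly. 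What the paper's route buys is brevity and the stronger conclusion that the orbits along the originally chosen $s$ are themselves closed, without passing through an auxiliary section.
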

\begin{proof}
We write $G=\Autz(X)_{\p^1}$ and denote by $G^{\circ}$ the connected component of the identity. If a point $p\in s$ were such that $G^{\circ}\cdot p$ were not closed, then for each point $q\in \overline{G^{\circ}\cdot p}\setminus G^{\circ}\cdot p$ there would be an aut-minimal section $s'\subset X$ that contains $q$ (Lemma~\ref{Lem:orbithoriz2}\ref{orbitnotclosedBendBreak}), giving thus $\dim (G\cdot s')<\dim(G\cdot s)$, impossible.

The choice of $s$ implies thus that  $G^{\circ}\cdot p$ is closed for each $p\in s$, and thus that $G\cdot p$ is closed too. Lemma~\ref{Lem:orbithoriz2}\ref{ClosedImageOrbits} then implies that $G^{\circ}\cdot s=(\Autz(X)_{\p^1})^{\circ}\cdot s$ is a closed subset of $X$ and that for each point $b\in \p^1$, the fibre $\pi^{-1}(b)\cap \mathcal{S}$ of $\mathcal{S}\to \p^1$ is equal to $G^\circ\cdot p=(\Autz(X)_{\p^1})^{\circ}\cdot p$, where $p\in s$ is the point such that $\pi(p)=b$. The fact that $G^{\circ}\cdot s$ is a proper subset of $X$ then follows from the fact that the general fibre has $F$ satisfies $\rho(F)\geq 2$ and from Lemma~\ref{lemmaactiontrans}.

Lemma~\ref{Lemm:AutzAndAutzP1} then implies that $\Autz(X)_{\p^1}\cdot s= \Autz(X)\cdot s$. 
\end{proof}

\section{The group $\Autz(X)_{\p^1}$}\label{Sec:Verticalgroup}

\subsection{The case of tori}
We start this section by proving Propositions~\ref{Prop:Torus} and~\ref{Prop:Smallorbit}, stated in the introduction. We recall the statements for the sake of readability.

\proptorus*
\begin{proof}Let us write $G=\Autz(X)$ to simplify the notation. As the result is empty if $G$ is trivial, we may assume that $\dim G=r\ge 1$. 
The group $G$ acts on $\p^1$ by Blanchard's Lemma~\ref{blanchard}.  This gives rise to an exact sequence
\[1 \to \Autz(X)_{\p^1}\to G\to H\to 1\]
where $H\subseteq \Aut(\p^1)$. As $\rho(F)\ge 2$ for a general fibre $F$, the number of singular fibres is at least $2$ so $H$ is a torus of dimension $0$ or $1$ 
(Lemma~\ref{lem:monodr}). Hence, we find $r\in \{\dim (\Autz(X)_{\p^1})$, $\dim (\Autz(X)_{\p^1})+1\}$. If $ \Autz(X)_{\p^1}$ is finite, 
then $\dim H =\dim G =r$, and \cite[IV.11.14, Corollary~1]{Bor91} provides the existence of a torus $T\subseteq G$ of dimension $r$. Hence, $G$ is a torus of dimension $r$.
 If $ \Autz(X)_{\p^1}$ is a torus of positive dimension, it is contained in a maximal torus $T$ of $G$, which contains a subtorus $T'\subseteq T$ such that $T'\to H$ is an isogeny (again by \cite[IV.11.14, Corollary~1]{Bor91}). In particular, $T/ \Autz(X)_{\p^1}$ is isomorphic to $H$, so $\dim T=\dim G$ and $G=T$ is again a torus.

We have proved that $G$ is isomorphic to $(\G_m)^r=(\C^*)^r$. There is a $G$-invariant open subset of $X$ isomorphic to $ (\G_m)^r\times U$, where $U$ is a smooth affine variety, such that  $G$ acts trivially on $U$ and acts on $(\G_m)^r$ by multiplication (see \cite[Proposition~2.5.1]{BFT2} or  \cite[Theorem~3]{Pop16}). Choosing a smooth projective variety $C$ birational to $U$, and embedding $ (\G_m)^r$ into $\p^r$, we obtain a birational map $\psi\colon X\dasharrow \p^r\times C$ such that $\psi G\psi^{-1}\subseteq \Autz(\p^r\times C)$. It remains to observe that this last inclusion is strict, as $\Aut(\p^r)$ embeds into $\Autz(\p^r\times C)$.
\end{proof}
\subsection{Orbits of small dimension}
\propsmallorbit*
\begin{proof}
Let us fix a polarization $H$ on $X$. By  \cite[Theorem~3.21.3]{KolRC}, for all positive integrs $h,\ell\ge 1$, there is a  projective variety $\Chow_{h,\ell}(X)$
which parametrises the proper algebraic cycles of dimension $h$ and degree $\ell$ with respect to $H$.
For more details on Chow varieties and the construction of these, we refer to \cite{KolRC} and \cite{Rydh}. For all $h,\ell\ge 1$, the group $\Autz(X)$ acts biregularly on $\Chow_{h,\ell}(X)$.

Let us write $G=(\Autz(X)_{B})^\circ$, so that  $k=\max\{\dim(G\cdot x)\mid x\in X\}$. By hypothesis, $G$ is of positive dimension, so $k>0$. 
The union of all orbits of dimension $k$ is a dense open $G$-invariant subset $U\subseteq X$ (Lemma~\ref{lemm:DimMaxopen}).

We denote by $R$ the set of closures of $k$-dimensional orbits of the linear connected algebraic group $G=(\Autz(X)_{B})^\circ$. The set $R$ is an uncountable subset of $\bigcup\limits_{\ell\ge 1} \Chow_{k,\ell}(X)$ that is pointwise fixed by the action of the group $G$ and left invariant by $\Autz(X)$, since $G$ is a normal subgroup of $\Autz(X)$. There is thus an integer $\ell\ge 1$ such that $R\cap \Chow_{k,\ell}(X)$ is uncountable.

For each integer $\ell\ge 1$, we decompose the closure of $R\,\cap\, \Chow_{k,\ell}(X)$ in $\Chow_{k,\ell}(X)$  into finitely many irreducible components $R_{\ell,j}$ and consider the irreducible varieties $Z_{\ell,j}=\{(x,[t])\in X\times R_{\ell,j}\vert\;x\in t\}$, which have dimension equal to $\dim R_{\ell,j}+k$. The morphism
\[\bigcup_{\ell,j}Z_{\ell,j}\to X\]
given by the first projection is surjective, since $\overline{U}=X$. On the other hand, there are countably many pairs $(\ell,j)$, therefore there exists a pair $(\ell,j)$ such that the first projection $\beta\colon Z_{\ell,j}\to X$ is surjective, which implies that $R_{\ell,j}$ has dimension at least $\dim X-k$. We now prove that the morphism $\beta$ is  generically injective. We denote by $p\colon Z_{\ell,j}\to R_{\ell,j}$ the second projection. There is an open set $V\subseteq R_{\ell,j}$ such that a point in $V$ corresponds to a unique cycle in $X$.
The morphism $\beta$ is $G$-equivariant, therefore it sends fibres of $p$ inside closures of orbits of the action of $G$ in $X$. We prove that $\beta$ is injective on $p^{-1}V\cap \beta^{-1}U$. For this, we take two points $y,y'\in p^{-1}V\cap \beta^{-1}U$ having the same image  $x=\beta(y)=\beta(y')\in X$. We write $y=(x,[t])$ and $y'=(x,[t'])$ where $t,t'\in R_{\ell,j}$. Then, $\beta (p^{-1}t)$, $\beta (p^{-1}t')$ are contained in the closure of the same orbit as they have the point $x$ in common. As the orbit of $x$ has dimension $k$,  
we have  $\beta (p^{-1}t)=\beta (p^{-1}t')$ implying $t=t'$.

Since the morphism $\beta$ is generically injective, it is birational.

As $\Autz(X)$  acts on $S'=R_{\ell,j}$ and on $Z'=Z_{\ell,j}$, there is an action of $\Autz(X)$ on the normalisations $S$ and $Z$ of $S'$ and $Z'$. We obtain two  $\Autz(X)$-equivariant morphisms $Z\to X$ and $Z\to S$; the morphism $Z\to X$ is birational and a general fibre of $Z\to S$ is a unirational variety of dimension $k$, so $\dim S=\dim Z-k=\dim X-k>\dim B$, since  we assumed $k<\dim X-\dim B$.
Applying an $\Autz(X)$-equivariant resolution of singularities and an $\Autz(X)$-equivariant resolution of indeterminacies, we can assume that $S$ and $Z$ are smooth. We run a $K_{Z}$-MMP over $S$ with scaling of a relatively ample divisor.
Since $k>0$ and because the fibres of $Z\to S$ are unirational, this MMP terminates by Lemma~\ref{relMfs}
with a Mori fibre space over $S$  that we denote by  $\pi'\colon Y\to C\to S$. We have also $\dim C\ge \dim S=\dim X-k>\dim B$.
\end{proof}

\section{Symmetric smooth Fano threefolds}\label{Sec:SymmSmoothFano}

\subsection{The list of symmetric smooth Fano threefolds}
In Table~\ref{3folds.MFS} we recall the list \cite[Table~1]{CFST16} of all the smooth threefolds $F$ with Picard rank at least~$2$ that are fibres of klt Mori fibre spaces. These are varieties which are ``symmetric'', in the sense that there is a finite group $G\subseteq \Aut(F)$ such that $\Pic(F)^{G}$ has rank $1$ \cite[Theorem~1.2]{ProGFano2}. The numeration that we use is the one of \cite[Table~1]{CFST16}. It is almost the same  numeration as in \cite[Theorem~1.2]{ProGFano2}: Cases \hyperlink{T1a}{$1a$} and \hyperlink{T1b}{$1b$} correspond to (1.2.1) in \cite[Theorem~1.2]{ProGFano2} (which is also subdivided into two cases), and Cases $\hyperlink{T2}{2}, \hyperlink{T3}{3},\ldots,\hyperlink{T8}{8}$ correspond respectively to $(1.2.2),(1.2.3),\ldots,(1.2.8)$ in \cite[Theorem~1.2]{ProGFano2}. 

\begin{table}[ht]\label{3folds.MFS}
\begin{center}

\begin{tabular}{*{4}{c} | p{8.5cm}}
 &&$\rho(F)$ & $-K_F^3$ & Description of $F$\\
\hline
$\hypertarget{T1a}{1a}$ & (6a) & 2 & 12 & A divisor of bidegree $(2, 2)$ in $\p^2 \times \p^2$.\\
$\hypertarget{T1b}{1b}$ & (6b) & 2 & 12 & A $2:1$ cover of a smooth divisor $W$ of bidegree $(1, 1)$ in $\p^2 \times \p^2$ branched along a member of $\lvert-K_W\rvert$.\\ 
$\hypertarget{T2}{2}$ & (12) &2 & 20 & The blow-up of $\p^3$ along a curve of degree $6$ and genus $3$ which is an 
intersection of cubics.\\
$\hypertarget{T3}{3}$ & (28) &2 & 28 & The blow-up of a smooth quadric $Q \subset \p^4$ along a smooth rational curve
of degree 4 which spans~$\p^4$. \\
$\hypertarget{T4}{4}$ & (32) &2 & 48 & A divisor of bidegree $(1, 1)$ in $\p^2 \times \p^2$.\\
$\hypertarget{T5}{5}$ & (1) &3 & 12 & A double cover of $\p^1 \times \p^1 \times \p^1$ branched along a member of  $\lvert-K_{\p^1 \times \p^1 \times \p^1}\rvert$.\\
$\hypertarget{T6}{6}$ &(13) &3 & 30 & The blow-up of a smooth divisor of bidegree $(1, 1)$ in $\p^2 \times \p^2$ along a curve $C$ of bidegree $(2, 2)$, such that $C \hookrightarrow W \hookrightarrow \p^2 \times \p^2 \to \p^2$ is an embedding for both projections $\p^2 \times \p^2 \to \p^2$.
\\
$\hypertarget{T7}{7}$ & (27) &3 & 48 & $\p^1 \times \p^1 \times \p^1$.\\
$\hypertarget{T8}{8}$ & (1) &4 & 24 & A smooth divisor of multidegree $(1, 1, 1, 1)$ in $\p^1 \times \p^1 \times \p^1 \times \p^1$.\\
\end{tabular}
\caption{Smooth Fano varieties being general fibres of klt Mori fibre spaces. The first column indicates the numeration of \cite{CFST16}, which is also the numeration of \cite{ProGFano2},  and the second column indicates the numeration of \cite{MoriMukai1}. }
\end{center}
\end{table}
\subsection{Automorphisms of symmetric smooth Fano threefolds}
Because of Proposition~\ref{Prop:Torus}, the interesting threefolds $F$ in the list are those such that $\Autz(F)$ is not a torus. We will show that $\Autz(F)$ is trivial in all cases except \hyperlink{T3}{$3$}, \hyperlink{T4}{$4$}, \hyperlink{T6}{$6$} or \hyperlink{T7}{$7$} (Proposition~\ref{prop:AutFiniteTable} below). The cases  \hyperlink{T4}{$4$} and \hyperlink{T7}{$7$} consist of one isomorphism class, with  automorphism group not being a torus; we moreover prove that in the families \hyperlink{T3}{$3$} and \hyperlink{T6}{$6$}, there is only one isomorphism class where $\Autz(F)$ is not a torus (Section~\ref{SubSec:PosDim}).

To prove Proposition~\ref{prop:AutFiniteTable}, we will need the following result whose proof is adapted from the proof of  \cite[Proposition~1.2]{BeauPrym}.
\begin{lemma}\label{Lem:Discriminant}
Let $S$ be a smooth threefold given either by
\[\hypertarget{S1}{(i)}\;\;\left\{([x_0:x_1:x_2],(u,v))\in \p^2\times \A^2\left|
\begin{bmatrix} x_0& x_1& x_2\end{bmatrix}\cdot M(u,v)\cdot \begin{bmatrix} x_0\\ x_1\\ x_2\end{bmatrix}=0 \right\}\right.\]
for some symmetric matrix $M\in \mathrm{Mat}_{3\times 3}(\C[u,v])$ or by 
\[\hypertarget{S2}{(ii)}\;\;\left\{([x_0:x_1],[y_0:y_1],(u,v))\in \p^1\times \p^1 \times \A^2\left|
\begin{bmatrix} x_0& x_1\end{bmatrix}\cdot M(u,v)\cdot \begin{bmatrix} y_0\\ y_1\end{bmatrix}=0 \right\}\right.\]
for some matrix  $M\in \mathrm{Mat}_{2\times 2}(\C[u,v])$.
Let $\Delta\subseteq \A^2$ be the zero locus of $\det M$ and let $\pi\colon S\to \A^2$ be the projection on the last factor. The following hold:
\begin{enumerate}
\item\label{Deltanodal}
$\Delta$ is a reduced curve of $\A^2$, smooth if $S \subseteq \p^1\times \p^1\times \A^2$ is as in \hyperlink{S2}{$(ii)$} and with only ordinary double points if $S\subseteq\p^2\times \A^2$ is as in \hyperlink{S1}{$(i)$};
\item\label{Deltathreecases}
For each $p\in \A^2$, the fibre $\pi^{-1}(p)$ is isomorphic to a conic in $\p^2$ which is  
\[\left\{\begin{array}{ll} \text{not reduced $($double line}) & \text{ if }p \text{ is a singular point of }\Delta;\\
\text{reduced and singular $($two distinct lines}) & \text{ if }p \text{ is a smooth point of }\Delta;\\
\text{smooth $($isomorphic to }\p^1) & \text{ if }p \text{ does not belong to }\Delta.\end{array}\right.\]
\end{enumerate}
\end{lemma}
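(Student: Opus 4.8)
The plan is to analyze the two cases in parallel, since the only real difference is that conics in $\p^2$ coming from a symmetric $3\times 3$ matrix and conics in $\p^1\times\p^1$ coming from a $2\times 2$ matrix degenerate according to the rank of the matrix, and the ``discriminant'' locus is the rank-drop locus $\{\det M = 0\}$. First I would set up the pointwise linear algebra: for a fixed $p\in\A^2$ with matrix $A = M(p)$, the fibre $\pi^{-1}(p)$ is the conic $\{x^t A x = 0\}\subseteq\p^2$ (resp. the $(1,1)$-curve $\{x^t A y = 0\}\subseteq\p^1\times\p^1$), and its geometry is governed by $\rk A$. In the $\p^2$ case: $\rk A = 3$ gives a smooth conic $\cong\p^1$; $\rk A = 2$ gives two distinct lines; $\rk A = 1$ gives a double line. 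In the $\p^1\times\p^1$ case: $\rk A = 2$ gives a smooth $(1,1)$-curve $\cong\p^1$ (the graph of a M\"obius transformation); $\rk A = 1$ gives a union of a ``horizontal'' and a ``vertical'' ruling line (again two distinct lines meeting in a point, so reduced and singular); $\rk A = 0$ would give the whole $\p^1\times\p^1$, but this is excluded by smoothness of $S$. This already gives part \ref{Deltathreecases} \emph{provided} one knows that the smooth-$\Delta$ points are exactly the $\rk = 2$ (resp. $\rk=1$) points and the singular-$\Delta$ points are exactly the $\rk = 1$ (resp. $\rk = 0$) points — so part \ref{Deltathreecases} is really a corollary of part \ref{Deltanodal} together with this dictionary.

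Next I would prove part \ref{Deltanodal}, which is the heart of the matter, by exploiting the smoothness of $S$. The key local computation: near a point $q$ of the fibre over $p\in\Delta$, write the defining equation of $S$ as $f(x,u,v) = x^t M(u,v) x$ (or the bilinear analogue) and compute the Jacobian. Smoothness of $S$ at every point of $\pi^{-1}(p)$ forces the rank of $M(p)$ to be at least $2$ in the $\p^2$ case (rank $\le 1$ would make the conic a double line, whose singular locus — the whole line — would also be singular on $S$, since along a non-reduced fibre component the partials $\partial f/\partial u, \partial f/\partial v$ also vanish in a way I'd need to check), and forces $\rk M(p)\ge 1$ in the $\p^1\times\p^1$ case (rank $0$ meaning $M(p) = 0$, so $\pi^{-1}(p)$ is all of $\p^1\times\p^1$, manifestly singular in $S$ or at least not a fibre of the claimed type, and more precisely $f$ and all its $x,y$-partials vanish identically on the fibre). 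Hence $\rk M \ge 2$ everywhere in case \hyperlink{S1}{(i)} and $\rk M\ge 1$ everywhere in case \hyperlink{S2}{(ii)}. In case (ii) this means $\Delta$ is the rank-$1$ locus; the standard fact (following Beauville's argument for symmetric determinantal varieties, here even easier since $M$ is a general $2\times2$ matrix) is that $S$ smooth implies $\Delta = \{\det M = 0\}$ is \emph{smooth}: one shows $d(\det M)$ does not vanish at a rank-$1$ point, using that the second fundamental data / the entries of the adjugate do not all vanish there precisely because $S$ is smooth along that fibre. In case (i), at a rank-$2$ point (smooth point of $\Delta$) the same cofactor argument shows $d(\det M)\ne 0$; at a rank-$1$ point, $S$ smooth is incompatible as argued, so the singular points of $\Delta$ — which a priori could occur — must be ordinary double points: here I'd compute the Hessian of $\det M$ (a cubic form in the entries of a $3\times3$ symmetric matrix) restricted to the $2$-dimensional family and show, again via the smoothness of $S$ along the double-line fibre, that it is nondegenerate, giving an $A_1$ singularity.

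The steps in order: (1) establish the rank/fibre-type dictionary by elementary linear algebra over $\C$; (2) use the Jacobian criterion on $S$ to bound $\rk M(p)$ from below at every $p$ (the smoothness of $S$ is the only global input); (3) identify $\Delta$ with the appropriate rank-drop locus and, via cofactor computations, show $d(\det M)$ is nonzero at points where the rank is one above the minimum — giving smoothness of $\Delta$ in case (ii) and smoothness of $\Delta$ away from its singularities in case (i); (4) at the possible singular points of $\Delta$ in case (i), compute the local Hessian of $\det M$ and show it is rank $2$, i.e. an ordinary double point, again invoking smoothness of $S$ along the double-line fibre; (5) reassemble: combine (1) with the classification of $\Delta$ to read off part \ref{Deltathreecases}. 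The main obstacle I expect is step (4): showing the singularities of $\Delta$ are \emph{precisely} nodes and not worse, which requires a careful second-order analysis of the map $(u,v)\mapsto M(u,v)$ at a point where $M$ has rank $1$, together with a precise translation of ``$S$ is smooth along the double line $\{x^t M(p) x = 0\}$'' into a transversality statement — this is exactly where Beauville's Prym-variety argument does the real work, and adapting it to the $2$-parameter affine base (rather than a curve) while keeping track of the symmetric versus general matrix structure is the delicate point.
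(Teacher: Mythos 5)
Your proposal contains a genuine error at its central step, and it is one that contradicts the very statement you are trying to prove. You claim that smoothness of $S$ forces $\rk M(p)\ge 2$ in case \hyperlink{S1}{$(i)$}, on the grounds that a double-line fibre would force $S$ to be singular along that line because ``the partials $\partial f/\partial u,\partial f/\partial v$ also vanish'' there. They do not: take $S=\{ux_0^2+vx_1^2+x_2^2=0\}$, which is smooth along the double line $\pi^{-1}(0,0)=\{x_2^2=0\}$ (at each point of that line one of $\partial f/\partial u=x_0^2$, $\partial f/\partial v=x_1^2$ is nonzero), and $\det M=uv$ has a node at the origin. Rank-one points of $M$ \emph{must} occur whenever $\Delta$ is singular — the lemma asserts exactly that the fibres over singular points of $\Delta$ are double lines — so excluding them makes your case \hyperlink{S1}{$(i)$} incoherent: if $\rk M\ge 2$ everywhere, your own cofactor argument would give $d(\det M)\ne 0$ along all of $\Delta$, i.e.\ $\Delta$ smooth, and there would be nothing left to produce the ordinary double points. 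What smoothness of $S$ actually excludes is only $M(p)=0$ (and even that requires an argument: the two conics cut out by $\partial M/\partial u(p)$ and $\partial M/\partial v(p)$ in $\p^2$, resp.\ the two $(1,1)$-curves in $\p^1\times\p^1$, always meet, and a common zero is a singular point of $S$ — ``manifestly singular'' is not enough in case \hyperlink{S2}{$(ii)$} either).

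Beyond this, the part you correctly identify as delicate — showing that at a rank-one point of case \hyperlink{S1}{$(i)$} the singularity of $\Delta$ is an ordinary node — is precisely where the paper does concrete work that your sketch does not supply. After normalising $M(p)=\mathrm{diag}(0,0,1)$, the paper writes the linear parts $a_0,a_1,a_2$ of the relevant entries, translates smoothness of $S$ at \emph{every} point $([\theta:1:0],p)$ of the double line into the condition $\theta^2a_0+\theta a_1+a_2\ne 0$ for all $\theta$, and deduces that the quadratic part $a_0a_2-\tfrac14 a_1^2$ of $\det M$ at $p$ has nonzero discriminant $\lambda\epsilon^2+\mu^2$. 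Your plan to ``compute the Hessian of $\det M$ and show it is nondegenerate via smoothness of $S$'' names the right target but, as written, cannot be executed because the premise it would rest on (your rank bound) is wrong. The rank/fibre-type dictionary and the cofactor argument at corank-one points are fine and match the paper; the proof fails at the corank-two analysis.
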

\begin{proof} We take a point $p\in \A^2$ and consider the fibre $\pi^{-1}(p)\subset X$, and the matrix $M(p)$ associated to $p$. If $p\not\in \Delta$, then $M(p)$ is invertible, so $\pi^{-1}(p)\subset X$ is a smooth conic in $\p^2$ (respectively a smooth curve of bidegree $(1,1)$ in $\p^1\times \p^1$). This gives the third case of \ref{Deltathreecases}. We then assume that $p\in \Delta$, in which case $M(p)$ is not invertible, so $ \pi^{-1}(p)$ is not smooth. We prove that either $p$ is a smooth point of $\Delta$, or an ordinary double point of $\Delta$, and that these two cases give the first two cases of \ref{Deltathreecases}. For this, one can change coordinates on $\A^2$ and assume that $p=(0,0)$. We choose $r=5$ (respectively $r=3$) and $\alpha_0,\ldots,\alpha_r\in \C[u,v]$ so that either
\[M=\begin{pmatrix} \alpha_0 & \frac{1}{2}\alpha_1 & \frac{1}{2}\alpha_3\\  \frac{1}{2}\alpha_1 &\alpha_2& \frac{1}{2}\alpha_4
 \\ \frac{1}{2}\alpha_3& \frac{1}{2}\alpha_4& \alpha_5
\end{pmatrix}\text{ or }\begin{pmatrix} \alpha_0 & \alpha_1 \\ \alpha_2&\alpha_3
\end{pmatrix}\]
and the equation of $S$ is either
 \[\begin{array}{llll}
\hyperlink{S1}{(i)}\;\;\alpha_0 x_0^2+\alpha_1 x_0x_1+\alpha_2 x_1^2+\alpha_3 x_0x_2+\alpha_4 x_1x_2+\alpha_5 x_2^2&=&0&\text{ or }\\
\hyperlink{S2}{(ii)}\;\alpha_0 x_0y_0+\alpha_1 x_0y_1+\alpha_2 x_1y_0+\alpha_3 x_1y_1&=&0.\end{array}\]

We first assume that $M(p)$ is the zero matrix (i.e.~$\alpha_i(p)=0$ for $i=0,\ldots,r$), and derive a contradiction. Each symmetric matrix $R\in \mathrm{Mat}_{3\times 3}(\C)$  (respectively each matrix $R\in \mathrm{Mat}_{2\times 2}(\C)$) defines a closed subset $C_R$ in $\p^2$ (respectively $\p^1\times\p^1$) by $
\begin{bsmallmatrix} x_0& x_1& x_2\end{bsmallmatrix}\cdot R\cdot \begin{bsmallmatrix} x_0\\ x_1\\ x_2\end{bsmallmatrix}=0$ (or $
\begin{bsmallmatrix} x_0& x_1\end{bsmallmatrix}\cdot R\cdot {\small\begin{bsmallmatrix} y_0\\ y_1\end{bsmallmatrix}}=0$). The corresponding subsets $C_{\frac{\partial M}{\partial u}(p)}$ and $C_{\frac{\partial M}{\partial v}(p)}$ for $\frac{\partial M}{\partial u}(p)$ and $\frac{\partial M}{\partial v}(p)$  have non-empty intersection; we can then change coordinates in $\p^2$ (respectively $\p^1\times \p^1$) and assume that $[1:0:0]$  (respectively $([1:0],[1:0])$) belongs to $C_{\frac{\partial M}{\partial v}(p)}\cap C_{\frac{\partial M}{\partial u}(p)}$. 
This implies that $\frac{\partial \alpha_0}{\partial u}(p)=\frac{\partial \alpha_0}{\partial v}(p)=0$. Hence, the point $([1:0:0],p)$ (respectively $([1:0],[1:0],p)$) is a singular point of~$S$, contradicting the smoothness assumption.

We now assume that $\Ker M(p)$ has dimension $1$. Changing coordinates in $\p^2$ (respectively $\p^1\times \p^1$), we may assume that $M(p)$ is diagonal, with $\alpha_0(p)=0$ and $\alpha_2(p)=\alpha_5(p)=1$ (respectively $\alpha_3(p)=1$). Hence, $\pi^{-1}(p)$ is given by $x_1^2+x_2^2=0$ (respectively $x_1y_1=0$), which is isomorphic to the union of two lines in~$\p^2$. The fact that $S$ is smooth at $([1:0:0],p)$ (respectively $([1:0],[1:0],p)$) implies that $(\frac{\partial \alpha_0}{\partial u}(p),\frac{\partial \alpha_0}{\partial v}(p))\not=(0,0)$. The form of the diagonal matrix $M(p)$ implies that $\det M -\alpha_0$ has multiplicity $2$ 
 along $p$, which implies that $\det M$ has multiplicity $1$ at $p$, so $p$ is a smooth point of $\Delta$.

The remaining case is when $\Ker M(p)$ has dimension $2$. Since $M(p)\not=0$, this case only occurs for $\p^2\times \A^2$. We may change coordinates and assume that $M(p)$ is diagonal with $\alpha_5(p)=1$ and $\alpha_i(p)=0$ for $i=1,\ldots,4$. Hence $\pi^{-1}(p)$ is defined by $x_2^2=0$, which is a double line.  

It remains to prove that $p$ is an ordinary double point of $\Delta$.
For each $i\in \{0,1,2\}$ we denote by $a_i=u\frac{\partial \alpha_i}{\partial u}(p)+v\frac{\partial \alpha_i}{\partial v}(p)$ the linear part of $\alpha_i$, so that $\alpha_i-a_i$ has multiplicity at least $ 2$ at $p$. As $S$ is smooth along $\pi^{-1}(p)$, the polynomial $a_0 x_0^2+a_1 x_0x_1+a_2 x_1^2$ has multiplicity $1$ at any point of $\pi^{-1}(p)$ (note that $\alpha_3 x_0x_2+\alpha_4 x_1x_2+\alpha_5 x_2^2$ has multiplicity $2$ along any point of $\pi^{-1}(p)$ as $x_2$ and $\alpha_3,\alpha_4$ vanish on it). The smoothness of the point $([1:0:0],p)$ is equivalent to to the condition $a_0\not=0$. After a linear change of coordinates of $\A^2$, we may assume that $a_0=u$. We then replace $x_0$ by $x_0+\xi x_1$ for some $\xi\in \C$. Under this change of coordinates $a_1$ becomes $a_1+2a_0\xi$, and we may assume that $a_1=\epsilon v$  for some $\epsilon\in \C$. We then write $a_2=\lambda u+\mu v$ for some $\lambda,\mu\in \C$. For each $\theta\in \C$, the point $([\theta:1:0],p)$ is smooth, so we get $$0\not=\theta^2a_0+\theta a_1+a_2=(\theta^2+\lambda)u+(\theta \epsilon +\mu)v.$$ Choosing $\theta$ such that $\theta^2=-\lambda$, we get $\theta\epsilon+\mu\not=0$ and $-\theta\epsilon+\mu\not=0$, so $0\not=(\theta\epsilon+\mu)(-\theta\epsilon+\mu)=\lambda\epsilon^2+\mu^2$.

The polynomial $\det(M)-\alpha_0\alpha_2+\frac{1}{4}\alpha_1^2$, has multiplicity at least $ 3$ at the origin, so $p$ is a singular point of~$\Delta$ and it remains to see that $a_0a_2-\frac{1}{4}a_1^2$ is not a square. This is because  $a_0a_2-\frac{1}{4}a_1^2=u(\lambda u+\mu v)-\frac{1}{4}\epsilon^2v^2$, whose discriminant is $\lambda\epsilon^2+\mu^2$.
\end{proof}
\begin{proposition}\label{prop:AutFiniteTable}

Let $F$ be a smooth Fano threefold being in the list of Table~$\ref{3folds.MFS}$ $($or equivalently of \cite[Table~1]{CFST16}$)$. If $\Autz(F)$ is not trivial, then $F$ belongs to the families \hyperlink{T3}{$3$}, \hyperlink{T4}{$4$}, \hyperlink{T6}{$6$} or \hyperlink{T7}{$7$} $($respectively $(28)$, $(13)$, $(32)$, $(27))$ in the notation of \cite{CFST16} $($respectively of \cite{MoriMukai1}$)$.

\end{proposition}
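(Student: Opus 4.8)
The plan is to go through the eight families of Table~\ref{3folds.MFS} one by one, and show that the five families not appearing in the statement --- namely \hyperlink{T1a}{$1a$}, \hyperlink{T1b}{$1b$}, \hyperlink{T2}{$2$}, \hyperlink{T5}{$5$}, \hyperlink{T8}{$8$} (in the notation of \cite{CFST16}, i.e.\ $(6a),(6b),(12),(1),(1)$) --- have finite automorphism group, equivalently $\Autz(F)$ trivial. Since $F$ is a smooth Fano threefold, $\Aut(F)$ is a linear algebraic group, so ``$\Autz(F)$ trivial'' is the same as ``$\Aut(F)$ finite'', and it suffices to show that $F$ carries no nonconstant vector field, or more geometrically that no positive-dimensional connected group can act on $F$. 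The main tool in each case is the explicit description of $F$ as a (cover of, or blow-up related to) a hypersurface or complete intersection in a product of projective spaces, together with the observation that any connected group action on $F$ must preserve the relevant structures (the double cover, the blown-up locus, the fibration, the polarisation) by Blanchard's Lemma~\ref{blanchard} and rigidity of discrete data.

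Concretely, I would argue as follows. For the two double covers \hyperlink{T1b}{$1b$} and \hyperlink{T5}{$5$}: a connected group $G$ acting on $F$ descends, via Blanchard's Lemma, to an action on the base ($W\subset \p^2\times\p^2$ of bidegree $(1,1)$, resp.\ $\p^1\times\p^1\times\p^1$) preserving the branch divisor, which is a general anticanonical member; for a general such branch divisor the connected automorphism group of the pair is trivial, so $G$ lies in the deck-transformation group, which is finite --- hence $G$ is trivial. (One must check that the covering involution together with its $G$-conjugates still generates a finite group; since $G$ is connected and normalises the $\Z/2$ generated by the involution, it centralises it, and a connected group acting on the base fixing the branch locus must be trivial.) For the divisor \hyperlink{T8}{$8$} of multidegree $(1,1,1,1)$ in $(\p^1)^4$: the four projections realise $F$ as a conic bundle over each $(\p^1)^3$, or one can note that $\Aut((\p^1)^4)$ permutes the factors and acts as $\PGL_2^4$, and a general $(1,1,1,1)$-divisor has trivial stabiliser (its equation, a general multilinear form, is not preserved by any positive-dimensional subgroup); alternatively apply Lemma~\ref{Lem:Discriminant}\hyperlink{S2}{$(ii)$} after grouping two of the factors into the base $\A^2\subset\p^1\times\p^1$, so that $F$ becomes a conic bundle whose discriminant curve $\Delta$ is a smooth curve of the appropriate bidegree $(2,2)$, and a positive-dimensional group would have to act on $\Delta$ preserving a polarisation --- but a smooth $(2,2)$-curve in $\p^1\times\p^1$ has genus $1$ and, being general, no positive-dimensional automorphisms fixing the two rulings. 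For \hyperlink{T1a}{$1a$}, the $(2,2)$-divisor in $\p^2\times\p^2$: project to one $\p^2$-factor to get a conic bundle $F\to\p^2$ with discriminant a plane curve of degree $6$ (by Lemma~\ref{Lem:Discriminant}\hyperlink{S1}{$(i)$}, reduced with only nodes); a connected group acting on $F$ acts on $\p^2$ preserving this sextic, and a nodal plane sextic has finite automorphism group in $\PGL_3$. For \hyperlink{T2}{$2$}, the blow-up of $\p^3$ along a curve $C$ of degree $6$ and genus $3$ cut out by cubics: the exceptional divisor is the unique divisor with those numerical invariants, so any connected group acting on $F$ descends to $\p^3$ preserving $C$; but a smooth curve of genus $3$ has finite automorphism group, and the subgroup of $\PGL_4$ stabilising $C$ injects into $\Aut(C)$ (a $\PGL_4$-element fixing $C$ pointwise would fix its linear span $\p^3$ pointwise), hence is finite.

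Across all five cases, the recurring mechanism is: \emph{connected group $\leadsto$ action on the natural base preserving a discrete datum (branch divisor, blown-up curve, discriminant curve) $\leadsto$ the pair has no infinitesimal automorphisms $\leadsto$ the group is trivial}. I expect the main obstacle to be the double-cover cases \hyperlink{T1b}{$1b$} and \hyperlink{T5}{$5$}: one has to handle the deck involution carefully (showing $G$ cannot ``mix'' with it), and one has to know that the relevant branch divisors, which are only assumed to be \emph{some} smooth anticanonical member for $F$ to be in the family, nonetheless always have trivial connected automorphism group of the pair --- this requires either a direct tangent-space / vector-field computation on $W$ resp.\ $(\p^1)^3$ restricted to the divisor, or an appeal to the literature on automorphisms of such covers; in the write-up I would reduce it to the statement that a smooth divisor $D\in|-K_Y|$ on $Y=W$ or $(\p^1)^3$ has $H^0(Y,T_Y(-D))=0$, which follows from $T_Y(-D)=T_Y\otimes K_Y$ having no sections by Kodaira–Nakano type vanishing or a Bott-formula computation on the ambient product. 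The blow-up case \hyperlink{T2}{$2$} and the $(2,2)$-divisor case \hyperlink{T1a}{$1a$} are comparatively routine once Lemma~\ref{Lem:Discriminant} is invoked, and \hyperlink{T8}{$8$} is the easiest. Finally, I would remark that this proposition only rules out five families; the complementary statement, that families \hyperlink{T3}{$3$}, \hyperlink{T4}{$4$}, \hyperlink{T6}{$6$}, \hyperlink{T7}{$7$} do have nontrivial (indeed often non-torus) $\Autz$, is taken up in the subsequent subsection and is not part of what must be proved here.
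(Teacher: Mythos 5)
Your overall strategy coincides with the paper's: a case-by-case elimination of families \hyperlink{T1a}{$1a$}, \hyperlink{T1b}{$1b$}, \hyperlink{T2}{$2$}, \hyperlink{T5}{$5$}, \hyperlink{T8}{$8$}, descending via Blanchard's Lemma~\ref{blanchard} to the natural base and using rigidity of the branch divisor, blown-up curve, or discriminant curve (via Lemma~\ref{Lem:Discriminant}). Cases \hyperlink{T2}{$2$} and \hyperlink{T1a}{$1a$} are handled as in the paper, although for \hyperlink{T1a}{$1a$} the assertion that ``a nodal plane sextic has finite automorphism group in $\PGL_3$'' does require the case analysis you are eliding: one must separately treat rational components of low degree and, crucially, the case where $\Delta$ is a union of six lines in general position, which the paper disposes of by normalising four of them to $xyz(x+y+z)=0$.

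The genuine gap is your reliance on genericity in the double-cover cases \hyperlink{T1b}{$1b$} and \hyperlink{T5}{$5$} (and in your alternative argument for \hyperlink{T8}{$8$}). The proposition must hold for \emph{every} smooth member of each family, not just a general one, so ``for a general such branch divisor the connected automorphism group of the pair is trivial'' proves the wrong statement; and your proposed substitute $H^0(Y,T_Y(-D))=0$ computes vector fields \emph{vanishing along} $D$, not infinitesimal automorphisms of the pair $(Y,D)$ (that would be the logarithmic tangent sheaf), so by itself it does not control the connected stabiliser of $D$. The paper closes both issues at once with a softer observation that needs no genericity and no cohomology: smoothness of $F$ forces the branch divisor $D$ to be smooth with $K_D\sim 0$, hence of non-negative Kodaira dimension, so $\Aut(D)$ contains no connected \emph{linear} algebraic group of positive dimension; therefore the connected group $H$ acting on the base and preserving $D$ acts trivially on $D$, and is then trivial because the fixed locus of a non-trivial automorphism of $\p^2$ (resp.\ of $\p^1\times\p^1$, resp.\ of $(\p^1)^3$) is too small to contain $D$. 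The same mechanism handles \hyperlink{T8}{$8$} without genericity: the discriminant $\Delta\subset\p^1\times\p^1$ is smooth of bidegree $(2,2)$, hence of genus $1$, and its automorphism group contains no positive-dimensional connected linear group, which suffices since $H\subseteq\PGL_2(\C)\times\PGL_2(\C)$ is linear. Your worry about the deck involution is not an obstacle: $\Autz(F)$ maps to $H$ with kernel contained in the finite deck group, so $H$ trivial forces $\Autz(F)$ trivial.
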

\begin{proof}
We study the list case-by-case and prove that $\Aut(F)$ is finite in cases 
\hypertarget{T1a}{$1a$},
\hypertarget{T1b}{$1b$},
\hypertarget{T2}{$2$},
\hypertarget{T5}{$5$} and
\hypertarget{T8}{$8$}. 
We use the notation  of \cite{CFST16}, \textit{i.e.}~the first column.

\hyperlink{T1a}{$1a:$} In case \hyperlink{T1a}{$1a$}, $F$ is a hypersurface of bidegree $(2, 2)$ in $\p^2 \times \p^2$, that we can view as
\[F=\left\{([x_0:x_1:x_2],[y_0:y_1:y_2])\in \p^2\times \p^2\left|
\begin{bmatrix} y_0& y_1& y_2\end{bmatrix}\cdot M(x)\cdot \begin{bsmallmatrix} y_0\\ y_1\\ y_2\end{bsmallmatrix}=0 \right\}\right.\]
where $M$ is a symmetric $3\times 3$-matrix whose coefficients are homogeneous polynomials of degree $2$ in $\C[x_0,x_1,x_2]$. We consider the first projection $\pi\colon F\to  \p^2$, whose fibres are conics.  As $F$ is smooth, the curve  $\Delta\subset \p^2$ given by the polynomial $\det(M)$, which parametrises the singular fibres, is reduced and has only ordinary double points (it follows from Lemma~\ref{Lem:Discriminant} applied to affine charts of $\p^2$).

By Blanchard's Lemma~\ref{blanchard}, the group $\Autz(F)$ acts on $\p^2$ via  a connected subgroup $H\subseteq \Aut(\p^2)\simeq \PGL_3(\C)$. We will prove that $H$ is trivial, which implies that $\Autz(F)$ is trivial, as we can make the same argument with the other projection. The group $H$ preserves the reduced curve $\Delta\subset \p^2$ of degree $6$. Suppose first that an irreducible component $C$ of $\Delta$ is not a line. The action of $H$ on $C$ gives an injective group homomorphism $H\hookrightarrow \Aut(C)$. If $C$ is not rational, then $H$ is trivial, as $\Aut(C)$ does not contain any connected linear algebraic group of positive dimension, so we may assume that $C$ is rational. Every singular point of $C$ is an ordinary node, so if there are at least two singular points, $\Autz(C)$ is trivial. Hence we may assume that $\deg(C)\in \{2,3\}$. Then $C'=\overline{\Delta\setminus C}$ intersects $C$ in $\deg(C)\cdot \deg(C')\ge 8$ points, all fixed by $H$, which implies again that $H$ is trivial. The remaining case is when $\Delta$ is a union of $6$ lines, no $3$ of which have a common point. One may change coordinates so that four of the lines are given by $xyz(x+y+z)=0$, which implies that $H$ is trivial, as it has to leave each of the four lines invariant.

\hyperlink{T1b}{$1b:$} In case \hyperlink{T1b}{$1b$}, $F$ is a $2:1$ cover of a smooth divisor $W$ of bidegree $(1, 1)$ in $\p^2 \times \p^2$ branched along a member $D$ of $\lvert-K_W\rvert$. We consider the composition $p_i\colon F\to W\to \p^2$ of the covering with the projection on the $i$-th factor, and observe that fibres are conics. By Blanchard's Lemma~\ref{blanchard},  $\Autz(F)$ acts on $\p^2$, making the morphism equivariant. Hence, the group $\Autz(F)$ acts on $W$, via a connected algebraic subgroup $H\subseteq \Autz(W)$ that preserves $D$. As $W$ is smooth, the divisor $D$ is also smooth, and satisfies $K_D=0$. The Kodaira dimension of $D$ being non-negative, $\Autz(D)$ does not contain any connected linear algebraic group of positive dimension, so the action of $H$ on $D$ is trivial, and thus $H$ is trivial, as the set of the fixed points of a non-trivial automorphism of $\p^2$ is a finite union of lines and isolated points.

\hyperlink{T2}{$2:$} In case \hyperlink{T2}{$2$}, the blow-up $F\to \p^3$ is  $\Autz(F)$-equivariant (Lemma~\ref{blanchard}). One may thus see $\Autz(F)$ as the subgroup of $\Aut(\p^3)=\PGL_4(\C)$ preserving the blown-up curve $\Gamma\subset \p^3$  of degree $6$ and genus $3$. As this curve is not contained in a plane, the action on $\Gamma$ gives an injective group homomorphism $\Autz(F)\hookrightarrow \Autz(\Gamma)$. This shows that $\Autz(F)$ is trivial as $\Aut(\Gamma)$ is finite.

\hyperlink{T5}{$5:$} In case \hyperlink{T5}{$5$}, $F$ is a double cover of $\p^1 \times \p^1 \times \p^1$ whose branch locus is a divisor $D$ of
tridegree $(2, 2, 2)$. We consider the morphism $F\to \p^1\times \p^1$ obtained by projecting onto the first two coordinates. As the fibres are connected, by Blanchard's Lemma~\ref{blanchard} the group $\Autz(F)$ acts on $\p^1\times \p^1$. Doing the same with the other projections on two factors, one obtains a group homomorphism $\Autz(F)\to \Autz(\p^1 \times \p^1 \times \p^1)\simeq \PGL_2(\C)^3$ whose image is a connected group $H$ that preserves the divisor $D$. As $F$ is smooth, the divisor $D$ is also smooth. Moreover, $K_D\sim 0$ so the Kodaira dimension of $F$ is non-negative, which implies that $\Autz(F)$ does not contain any connected linear algebraic group of positive dimension. The action of $H$ on $F$ is then trivial, which implies that $H$ is trivial.

\hyperlink{T8}{$8:$} In case \hyperlink{T8}{$8$}, $F$ is a smooth divisor of multidegree $(1, 1, 1, 1)$ in $\p^1 \times \p^1 \times \p^1 \times \p^1= (\p^1)^4$. 
We can view $F$ as the set
\[\left\{([x_0:x_1],[y_0:y_1],[u_0:u_1],[v_0:v_1])\in (\p^1)^4\left|
\begin{bmatrix} x_0& x_1\end{bmatrix}\cdot M(u,v)\cdot \begin{bmatrix} y_0\\ y_1\end{bmatrix}=0 \right\}\right.\]
where $M$ is a symmetric $2\times 2$-matrix whose coefficients are homogeneous polynomials of bidegree $(1,1)$ in $\C[u_0,u_1][v_0,v_1]$. 
We consider the projection $\pi\colon F\to\p^1\times \p^1$ onto the last two factors. By Lemma~\ref{Lem:Discriminant} applied to affine charts of $\p^1 \times \p^1$,
as $F$ is smooth, the curve  $\Delta\subset \p^2$ defined by the polynomial $\det(M)$, parametrising the singular fibres, is smooth. By Blanchard's Lemma~\ref{blanchard}, the group $\Autz(F)$ acts on $\p^1\times \p^1$ via  a connected subgroup $H\subseteq \Autz(\p^1\times \p^1)\simeq \PGL_2(\C)\times \PGL_2(\C)$. We will prove that $H$ is trivial, which implies that $\Autz(F)$ is trivial, as we can make the same argument with the projection onto the first two factors. The group $H$ preserves the smooth curve $\Delta\subset \p^1\times \p^1$ which is of bidegree $(2,2)$ and is thus of genus $1$. As $\Aut(\Delta)$ does not contain any connected linear algebraic group of positive dimension, the group $H$ acts trivially on $\Delta$, and is thus the trivial group, as the set of fixed points of every non-trivial element of $\Autz(\p^1\times \p^1)$ consists of unions of isolated points and fibres of the projections.\end{proof}

\subsection{Explicit descriptions of the groups of automorphisms of positive dimension}\label{SubSec:PosDim}
According to Proposition~\ref{prop:AutFiniteTable}, the only  smooth Fano threefolds $F$ in the list of Table~$\ref{3folds.MFS}$ for which $\Autz(F)$ is not trivial belong to the families \hyperlink{T3}{$3$}, \hyperlink{T4}{$4$}, \hyperlink{T6}{$6$} or \hyperlink{T7}{$7$}. We now describe $\Autz(F)$ in these cases, and prove that in each family there are finitely many isomorphism classes for which $\Autz(F)$ is not a torus.

In Case \hyperlink{T7}{$7$}, $F=\p^1\times \p^1\times \p^1$, so $\Autz(F)=\PGL_2(\C)\times \PGL_2(\C)\times \PGL_2(\C)$ (this is classical and follows from Blanchard's Lemma~\ref{blanchard} applied to the projections $F\to \p^1$). We then consider the cases \hyperlink{T3}{$3$}, \hyperlink{T4}{$4$} and \hyperlink{T6}{$6$} in Lemmata~\ref{T3Aut}, \ref{T4Aut} and \ref{T6Aut} respectively.
\begin{lemma}\label{T3Aut}
Let $F$ be the blow-up of a smooth quadric $Q \subset \p^4$ along a smooth rational curve $C$
of degree $4$ which spans~$\p^4$ $($Case~\hyperlink{T3}{$3$} of Table~$\ref{3folds.MFS})$.
\begin{enumerate}
\item
$\Autz(F)$ is either trivial, or isomorphic to $\G_a$, $\G_m$ or $\PGL_2(\C)$, all cases being possible.
\item
If $\Autz(F)\simeq \PGL_2(\C)$, then up to a change of coordinates, $Q$ is given by $x_0x_4 - 4x_1x_3 + 3x_2^2=0$ and $C$ is the image of the Veronese embedding of degree $4$ of $\p^1$.
\item
If $\Autz(F)\simeq \G_a$, then up to a change of coordinates, $Q$ is given by $x_0x_4 - 4x_1x_3 + 3x_2^2+x_0x_2 - x_1^2=0$ and $C$ is the image of the Veronese embedding of degree $4$ of $\p^1$. Moreover, there is a unique point $p\in F$ fixed by $\Autz(F)$.
\end{enumerate}
\end{lemma}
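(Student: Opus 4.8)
The plan is to reduce everything to the classical fact that $\Aut(Q)\cong \PSO_5(\C)\cong \PGL_2(\C)$ as an abstract group only when... wait, that's not right — $\Aut(Q)$ for a smooth quadric $Q\subset\p^4$ is $\PSO_5(\C)$ which has dimension $10$, not $\PGL_2(\C)$. Let me reconsider.

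Actually the point is this: since the blow-up $F\to Q$ is $\Autz(F)$-equivariant by Blanchard's Lemma~\ref{blanchard}, we may identify $\Autz(F)$ with the subgroup $H\subseteq\Autz(Q)=\PSO_5(\C)$ stabilizing the smooth rational quartic curve $C$. First I would recall that a smooth rational normal quartic curve $C\subset\p^4$ is projectively equivalent to the image of the degree-$4$ Veronese embedding $\p^1\hookrightarrow\p^4$; its stabilizer in $\PGL_5(\C)$ is exactly the image of $\PGL_2(\C)$ acting on $H^0(\p^1,\O(4))$ via the fourth symmetric power, a subgroup isomorphic to $\PGL_2(\C)$. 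So the subgroup of $\PGL_5(\C)$ preserving $C$ is $\PGL_2(\C)$, and $H=\Autz(F)$ is the subgroup of this $\PGL_2(\C)$ that also preserves $Q$, i.e.\ fixes the point $[Q]\in\p(H^0(\p^4,\O(2)))=\p(\mathrm{Sym}^2(\C^5)^\vee)$ under the induced action. Therefore $\Autz(F)$ is a closed subgroup of $\PGL_2(\C)$, hence one of $\{1\}$, $\G_a$, $\G_m$, the infinite dihedral-type groups (finite-by-$\G_m$), or $\PGL_2(\C)$ — but the only \emph{connected} options are $\{1\}$, $\G_a$, $\G_m$, $\PGL_2(\C)$, giving the list in~(1).

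For part~(2): decompose the $\PGL_2(\C)$-module $\mathrm{Sym}^2(\mathrm{Sym}^4\C^2)$ into irreducibles by Clebsch–Gordan; one gets $\mathrm{Sym}^4\C^2\otimes_{\mathrm{sym}}\mathrm{Sym}^4\C^2\cong \mathrm{Sym}^8\C^2\oplus\mathrm{Sym}^4\C^2\oplus\mathrm{Sym}^0\C^2$ (the symmetric part picks out the even-weight summands $8,4,0$). A quadric $Q$ is $\PGL_2(\C)$-invariant precisely when $[Q]$ lies in the trivial summand $\mathrm{Sym}^0\C^2$, which is one-dimensional; so there is a \emph{unique} $\PGL_2$-invariant quadric up to scalar. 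Writing it out in the basis of degree-$4$ monomials (with appropriate binomial normalization $x_i\leftrightarrow\binom{4}{i}$-scaled coordinates), this invariant is the discriminant-type form $x_0x_4-4x_1x_3+3x_2^2$, which I would verify is nondegenerate, hence smooth. This proves~(2): if $\Autz(F)\cong\PGL_2(\C)$ then $(Q,C)$ is this distinguished pair up to change of coordinates. Conversely, for that pair $C$ is visibly $\PGL_2$-stable and $Q$ is the invariant quadric, so $\Autz(F)=\PGL_2(\C)$, giving existence.

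For part~(3): now we want the $\G_a$-case, so $[Q]$ must be fixed by a one-parameter unipotent subgroup $U\cong\G_a\subset\PGL_2(\C)$ but not by all of $\PGL_2(\C)$. The $U$-fixed vectors in $\mathrm{Sym}^2(\mathrm{Sym}^4\C^2)$ are the highest-weight vectors of the three summands: one in weight $0$ (the $\PGL_2$-invariant $Q_0$ above), one in weight $4$, one in weight $8$. A $U$-invariant quadric is thus a linear combination $Q=Q_0+aQ_4+bQ_8$; requiring $Q$ smooth and $\mathrm{Stab}^\circ$ exactly $\G_a$ (not bigger) forces, after rescaling the $\G_a$-action and the ambient coordinates, $a\neq 0$ and then one can normalize to the stated equation $x_0x_4-4x_1x_3+3x_2^2+x_0x_2-x_1^2=0$ — I would check smoothness by computing the determinant of the associated symmetric $5\times5$ matrix, and check that adding the $b$-term can be absorbed by the residual torus normalizer or changes nothing essential. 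Finally, for the unique fixed point: $U\cong\G_a$ acting on $\p^1$ has a unique fixed point $p_0\in\p^1\subset C\subset F$ (since $\p^1$ parametrizes $C$ via Veronese and $C$ is $\Autz(F)$-stable, $\Autz(F)$ acts on $C\cong\p^1$ through $U$, whose only fixed point on $\p^1$ is $p_0$); one checks no other point of $F$ is $U$-fixed by analyzing the $U$-action on $Q$ (a unipotent action on a projective quadric) and on the exceptional divisor over $C$, concluding $p\in F$ is unique.

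The main obstacle will be the explicit identification of the normalized equations in~(2) and~(3): one must set up the right binomial-scaled monomial coordinates so that the abstract $\PGL_2$-invariant and the $\G_a$-semiinvariant come out in exactly the displayed form, and verify smoothness (nonvanishing of a $5\times5$ determinant) in the $\G_a$-case where the quadric is no longer diagonal. A secondary subtlety is ruling out that the stabilizer in~(3) is secretly larger (e.g.\ a Borel or $\G_m$), which requires checking that the weight-$4$ coefficient genuinely obstructs invariance under the opposite unipotent and under the torus; this is where the irreducible decomposition of $\mathrm{Sym}^2(\mathrm{Sym}^4\C^2)$ does the real work.
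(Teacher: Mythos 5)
Your overall strategy coincides with the paper's: Blanchard's lemma identifies $\Autz(F)$ with the connected stabiliser of $Q$ inside the copy of $\PGL_2(\C)\subset\PGL_5(\C)$ preserving the Veronese quartic $C$, and everything reduces to deciding which smooth quadrics through $C$ are invariant under which connected subgroups. Your Clebsch--Gordan decomposition $\mathrm{Sym}^2(\mathrm{Sym}^4\C^2)\cong\mathrm{Sym}^8\oplus\mathrm{Sym}^4\oplus\mathrm{Sym}^0$ is a clean repackaging of the paper's explicit computation with the basis $f_0,\dots,f_5$ of quadrics through $C$ and their torus weights. Note that since $Q\supset C$, the class of $Q$ automatically lies in the $6$-dimensional summand $\mathrm{Sym}^4\oplus\mathrm{Sym}^0$ (the kernel of restriction to $C$), which disposes of your worry about the weight-$8$ term in part (3): that coefficient is forced to vanish, it is not merely ``absorbable''.

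There is, however, a genuine gap in part (1): your list of connected closed subgroups of $\PGL_2(\C)$ omits the Borel subgroup $B=\G_a\rtimes\G_m$, which is connected and two-dimensional, so the conclusion that $\Autz(F)$ is trivial, $\G_a$, $\G_m$ or $\PGL_2(\C)$ does not follow from what you wrote. You must rule $B$ out: a $B$-invariant quadric through $C$ is a $U$-fixed torus-eigenvector in $\mathrm{Sym}^4\oplus\mathrm{Sym}^0$, hence a multiple either of the invariant $x_0x_4-4x_1x_3+3x_2^2$ (in which case the stabiliser is all of $\PGL_2(\C)$) or of the extremal-weight vector of $\mathrm{Sym}^4$, namely $x_0x_2-x_1^2$ up to coordinates, which is a singular quadric; so $B$ never occurs for smooth $Q$. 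The paper sidesteps this by splitting into the cases ``some $\G_m$ is contained in the stabiliser'' (where the torus-eigenvector analysis yields $f_0+\kappa f_3$ and the stabiliser is $\PGL_2(\C)$ or $\G_m\rtimes\Z/2$) and ``no $\G_m$ is contained'' (forcing $\G_a$ or trivial). A second, smaller omission is in part (3): uniqueness of the $U$-fixed point $p_0$ on $C\cong\p^1$ only shows that all fixed points of $F$ lie on the exceptional fibre $e\cong\p^1$ over $p_0$; you still need the induced $\G_a$-action on $e$, i.e.\ on the projectivised conormal space of $C$ at $p_0$, to be nontrivial, which the paper verifies by an explicit computation with the quadrics generating the conormal bundle at that point.
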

\begin{proof}By Blanchard's Lemma~\ref{blanchard}, the morphism $\eta\colon F\to Q$ is $\Autz(F)$-equivariant, so $\Autz(F)$ is conjugate via $\eta$ to the connected component $H^{\circ}$ of the group
 \[H=\{g\in \Aut(Q)\mid g(C)=C\}=\{g\in \Aut(\p^4)\mid g(C)=C, g(Q)=Q\}\]containing the identity.
Changing coordinates on $\p^4$, we may assume that $C$ is the image of the Veronese embedding
\[\tau\colon \p^1\to \p^4, [u:v]\mapsto [u^4:u^3v:u^2v^2:uv^3:v^3].\]
In particular, $H$ is contained in $\hat{H}=\{g\in \Aut(\p^4)\mid g(C)=C\}$, which is isomorphic to $\Aut(C)\simeq \Aut(\p^1)\simeq\PGL_2(\C)$.
We then choose the following basis of the vector space of polynomials of degree $2$ vanishing on $C$:
\[\begin{array}{rclrcl}
f_0&=&x_0x_4 - 4x_1x_3 + 3x_2^2,& f_3&=&x_1x_3 - x_2^2\\
 f_1&=&x_0x_2 - x_1^2,&f_4&=&x_1x_4 - x_2x_3,\\
  f_2&=&x_0x_3 - x_1x_2,& f_5&=&x_2x_4 - x_3^2.\end{array}\]

One then verifies that $f_0$ is invariant by $\hat{H}\simeq \PGL_2(\C)$.

$(i)$ Suppose first that some torus $\G_m$ is contained in $H$. Conjugating by an element of $\hat{H}$, the torus acts on the image of the Veronese embedding via $\tau$ as $[u:v]\mapsto [u:\xi v]$, $\xi\in \C^*$ and thus acts on $\p^4$ as $[x_0:\cdots:x_4]\mapsto [x_0:\xi x_1:\xi^2 x_2:\xi^3 x_3 :\xi^4x_4]$.  Replacing $x_i$ with $\xi^i x_i$ in $f_0,\ldots,f_5$ yields $\xi^4f_0, \xi^2 f_1, \xi^3f_2,\xi^4 f_3,\xi^5 f_4,\xi^6 f_6$. Hence, the equation of $Q$ is either given by $f_0+\kappa f_3=0$ for some $\kappa\in \C$, or is given by $f_i$ for some $i\in \{1,\ldots,6\}$. This latter case is impossible as $Q$ is smooth, so we are in the former case. If $\kappa=0$, then $H\simeq \PGL_2(\C)$, as $f_0$ is invariant by $\hat{H}$. We may thus assume that $\kappa\not=0$ and prove that $H=\G_m\rtimes \langle \sigma\rangle$, where $\sigma$ is the involution $\sigma\colon [x_0:x_1:\cdots:x_4:x_5]\mapsto [x_5:x_4:\cdots:x_1:x_0]$, which preserves~$Q$. As $\PGL_2(\C)$ acts $2$-transitively on $\p^1$, the group $\G_m\rtimes \langle \sigma\rangle$, which is the group of elements preserving $\{[1:0],[0:1]\}$, is maximal in $\hat{H}$. Hence, it remains to see that $H\not=\hat{H}$ when $\kappa\not=0$. We simply consider the automorphism of $\p^1$ given by $[u:v]\mapsto [u:v+u]$, which induces the automorphism $ \nu\colon [x_0:\cdots:x_4]\mapsto [x_0:x_1+x_0:x_2+2x_1+x_0:x_3+3x_2+3x_1+x_0:x_4+4x_3+6x_2+4x_1+x_0]$ on $\p^4$, and observe that $\nu$ does not preserve $Q$.

$(ii)$ Suppose now that no torus $\G_m$ is contained in $H$. This implies that either $H$ is finite (in which case $\Autz(F)$ is trivial) or $H^{\circ} \simeq \G_a$. Conjugating by an element of $\hat{H}$, we may assume that $H$ corresponds to $[u:v]\mapsto [u:v+\xi u]$, $\xi\in \C$, and thus contains the element $\nu$ given above. We then compute that $ f_0\circ \nu,\ldots,f_6\circ \nu$ are equal to 
$f_0,f_1,2f_1+f_2,f_1+f_2+f_3,f_0+2f_1+3f_2+6f_3+f_4,f_0+f_1+2f_2+6f_3+2f_4+f_5$
respectively.  This implies that the equation of $Q$ is a linear combination of $f_0$ and $f_1$. As $\{f_1=0\}$ is singular and no torus is contained in $H$, the equation is of the form $f_0+\lambda f_1$ for some $\lambda\in \C^*$. Conjugating by $[u:v]\mapsto [u:\xi v]$ for some $\xi\in \C^*$ we obtain the equation $f_0+f_1$. We now prove that $H^{\circ}$ is indeed isomorphic to $\G_a$ for this equation. Indeed, otherwise $H^{\circ}$ would be bigger and thus would contain the torus $[u:v]\mapsto [u:\xi v]$, $\xi\in \C^*$, which is impossible because $f_0+f_1$ is not invariant under this torus. It remains to show that there is only one fixed point for $\Autz(F)$.
As $q=[0:\cdots:0:1]$ is the only point of $\p^4$ fixed by $\nu$, all fixed points of $\Autz(F)$ are contained in the preimage of $q$ in $F$, which is  a curve $e$ isomorphic to $\p^1$.
It remains then to see that the action of $\Autz(F)$ on $e$ is not trivial. Note that the tangent hyperplane of $Q$ at $q$ is given by $x_0=0$. The tangent line of $C$ at $q$ is given by $x_0=x_1=x_2=0$. The quadrics $f_0,f_4,f_5$ then generate the conormal bundle of $C$ at $q$ and $f_1,f_2,f_3$ are singular at $q$. Since $f_4\circ \nu=f_0$, $f_4\circ \nu=f_0+2f_1+3f_2+6f_3+f_4$ and $f_5\circ \nu=f_0+f_1+2f_2+6f_3+2f_4+f_5$, the action of $\Autz(F)$ on $e$ is not trivial.
\end{proof}

A smooth hypersurface of bidegree $(1,1)$ in $ \p^2\times \p^2$ is isomorphic to $\P(T_{\p^2})$, via any 
of the two projections. In the next lemma we recall the proof of this classical fact for the reader's convenience.

\begin{lemma}\label{T4Aut}
Let $F\subset  \p^2\times \p^2 $ be a smooth hypersurface of bidegree $(1,1)$ $($Case~\hyperlink{T4}{$4$} of Table~$\ref{3folds.MFS})$. Then, the following hold:
\begin{enumerate}
\item\label{F11standard}
Changing coordinates on $\p^2\times \p^2$, the threefold $F$ is given by
\[F=\left\{([x_0:x_1:x_2],[y_0:y_1:y_2])\in \p^2\times \p^2 \left| \textstyle \sum\limits_{i=0}^2 x_i y_i=0\right\}\right..\]
\item\label{PTP2PGL3}
The group $\PGL_3(\C)$ acts faithfully on $F$ via
\[\begin{array}{ccc}
\PGL_{3}(\C) \times F &\longrightarrow & F\\
\left(A, \left(\begin{bsmallmatrix} x_0\\ x_1\\ x_2\end{bsmallmatrix},\begin{bsmallmatrix} y_0\\ y_1\\ y_2\end{bsmallmatrix}\right)\right)&\mapsto& \left(A\cdot \begin{bsmallmatrix} x_0\\ x_1\\ x_2\end{bsmallmatrix}, \tr{A}^{-1}\cdot \begin{bsmallmatrix} y_0\\ y_1\\ y_2\end{bsmallmatrix}\right)\end{array}.\]
Moreover, this actions provides an isomorphism $\PGL_3(\C)\simeq \Autz(F)$.\end{enumerate}
\end{lemma}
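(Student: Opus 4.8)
I would prove \ref{F11standard} by a linear‑algebra normalisation, then check that the displayed formula is a faithful $\PGL_3(\C)$‑action, and finally use Blanchard's lemma on the two projections to identify this action with all of $\Autz(F)$. For \ref{F11standard}: a divisor of bidegree $(1,1)$ in $\p^2\times\p^2$ is the zero locus of a bilinear form $f(x,y)=\tr{x}\cdot A\cdot y$ for some $A\in\mathrm{Mat}_{3\times3}(\C)$. Using Euler's identities $\sum_i x_i\,\partial f/\partial x_i=\sum_j y_j\,\partial f/\partial y_j=f$, the differential of $f$ descends to the tangent space of $\p^2\times\p^2$ at each point of $F$, so a point $([x],[y])\in F$ is singular exactly when $Ay=0$ and $\tr{A}x=0$; hence $F$ is smooth if and only if $\det A\ne0$ (when $\det A=0$ a singular point is produced from $0\ne x\in\ker\tr{A}$ and $0\ne y\in\ker A$). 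For $\det A\ne0$, the substitution $y\mapsto A^{-1}y$ turns $f$ into $\sum_{i=0}^2 x_iy_i$, i.e.~composing the inclusion $F\hookrightarrow\p^2\times\p^2$ with a linear automorphism of the second factor puts $F$ in the claimed form.

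For the easy half of \ref{PTP2PGL3}, the displayed map preserves $F$ since $\tr{(Ax)}\cdot(\tr{A}^{-1}y)=\tr{x}\cdot y$, it is manifestly a group action, and the scalar matrices act trivially on $\p^2\times\p^2$, so $\PGL_3(\C)$ acts on $F$; this action is faithful because every $[x]\in\p^2$ occurs as the first coordinate of a point of $F$, so an $A$ acting trivially satisfies $Ax\parallel x$ for all $x$ and is therefore scalar. Denote by $\iota\colon\PGL_3(\C)\hookrightarrow\Autz(F)$ the resulting injection.

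It remains to show $\iota$ is onto. The two projections $\pi_1,\pi_2\colon F\to\p^2$ are $\p^1$-bundles with $(\pi_i)_*\O_F=\O_{\p^2}$, so by Blanchard's Lemma~\ref{blanchard} the group $\Autz(F)$ acts on the base of each $\pi_i$, giving homomorphisms $\rho_i\colon\Autz(F)\to\Aut(\p^2)=\PGL_3(\C)$ making each $\pi_i$ equivariant. Then: (a) $(\rho_1,\rho_2)$ is injective, since $F\hookrightarrow\p^2\times\p^2$ is the map $(\pi_1,\pi_2)$, so an element trivial on both bases fixes $F$ pointwise; (b) $\rho_1$ alone is injective --- if $\rho_1(g)=\mathrm{id}$, then $g$ preserves each fibre $\pi_1^{-1}([x])$, whose image under $\pi_2$ is the line $\ell_x=\{[y]\mid\sum_i x_iy_i=0\}\subseteq\p^2$, so equivariance of $\pi_2$ forces $\rho_2(g)(\ell_x)=\ell_x$ for every $[x]$; as the $\ell_x$ run over all lines of $\p^2$, $\rho_2(g)$ fixes every line and is hence trivial, and (a) gives $g=\mathrm{id}$; (c) $\rho_1$ is surjective, because $\rho_1\circ\iota$ is the identity of $\PGL_3(\C)$ (the automorphism $\iota(A)$ induces $[x]\mapsto[Ax]$ on the first factor). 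Combining (b) and (c), $\rho_1$ is an isomorphism with section $\iota$, whence $\iota\colon\PGL_3(\C)\iso\Autz(F)$, as wanted.

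The main obstacle is step (b): ruling out automorphisms of $F$ that are trivial along one of the two rulings. The ``fixes every line'' argument above handles it directly; alternatively one could use that $T_{\p^2}$ is simple, so that automorphisms of $\P(T_{\p^2})$ over $\p^2$ are induced by automorphisms of the bundle up to twist and hence trivial, but the elementary argument is shorter. Everything else is formal once Blanchard's lemma is in hand.
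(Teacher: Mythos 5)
Your proposal is correct and follows essentially the same route as the paper: normalise the defining bilinear form using smoothness to get the standard equation, exhibit the $\PGL_3(\C)$-action, and apply Blanchard's lemma to both $\p^1$-bundle projections to see that every element of $\Autz(F)$ acts as $(x,y)\mapsto(Ax,By)$ and is then forced into the image of $\iota$. The only cosmetic difference is in the last step: you observe that a $g$ with $\rho_1(g)=\mathrm{id}$ must preserve every line $\ell_x$ and is hence trivial on the second base, whereas the paper reduces to $B=\mathrm{id}$ by composing with an element of the image and then checks four explicit fibres to force $A=\mathrm{id}$; both arguments are sound and equivalent in substance.
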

\begin{proof}\ref{F11standard}: The variety $F$ is given by 
\[\left\{([x_0:x_1:x_2],[y_0:y_1:y_2])\in \p^2\times \p^2\left|
\begin{bsmallmatrix} x_0& x_1& x_2\end{bsmallmatrix}\cdot M\cdot \begin{bsmallmatrix} y_0\\ y_1\\ y_2\end{bsmallmatrix}=0 \right\}\right.\]
for some matrix $M\in \mathrm{Mat}_{3\times 3}(\C)$. 
After a change of coordinates on $\p^2\times \p^2$ of the form $(A,B)$ with $A,B\in \mathrm{GL}_3(\C)$, we can assume that $M$ is diagonal with all entries equal to either $0$ or $1$. Indeed, in the new coordinates, $F$ is given by the matrix 
$\tr{A}^{-1} M B^{-1}$.
As $F$ is smooth, this implies that $M$ is the identity matrix, so 
\[F=\left\{([x_0:x_1:x_2],[y_0:y_1:y_2])\in \p^2\times \p^2 \left| \textstyle \sum_{i=0}^2 x_i y_i=0\right\}\right..\]

\ref{PTP2PGL3}: The group $\PGL_3(\C)$ acts faithfully on $F$ via
\[\begin{array}{ccc}
\PGL_{3}(\C) \times F &\longrightarrow & F\\
\left(A, \left(\begin{bsmallmatrix} x_0\\ x_1\\ x_2\end{bsmallmatrix},\begin{bsmallmatrix} y_0\\ y_1\\ y_2\end{bsmallmatrix}\right)\right)&\mapsto& \left(A\cdot \begin{bsmallmatrix} x_0\\ x_1\\ x_2\end{bsmallmatrix}, \tr{A}^{-1}\cdot \begin{bsmallmatrix} y_0\\ y_1\\ y_2\end{bsmallmatrix}\right)\end{array}.\]
In particular, the group $\Autz(F)$ contains $\PGL_3(\C)$. It remains to see that each element of $\Autz(F)$ is of this form. To see this, we first use Blanchard's Lemma~\ref{blanchard} for the two projections $F\to \p^2$, which are $\p^1$-bundles, and obtain that each element of $\Autz(F)$ is of the form $(x,y)\mapsto (Ax,By)$ for some $A,B\in \PGL_3(\C)$. Applying the element $(x,y)\mapsto (\tr{B}x,B^{-1}y)$, we may assume that $B$ is the identity. It remains to see that $A$ is the identity too. Denoting by $\pi\colon F\to \p^2$ the second projection, the automorphism $(x,y)\mapsto (Ax,y)$ leaves invariant every fibre of $\pi$. In particular, it preserves $\pi^{-1}([1:0:0])$, $\pi^{-1}([0:1:0])$ and $\pi^{-1}([0:0:1])$, so $A$ is diagonal. It moreover preserves $\pi^{-1}([1:1:1])$, whose image under the first projection is the line $x_0+x_1+x_2=0$, and thus $A$ is the identity.
\end{proof}

Case~\hyperlink{T6}{$6$} of Table~$\ref{3folds.MFS}$ is presented in \cite{ProGFano2} as an intersection of three hypersurfaces of tridegree $(0,1,1)$, $(1,0,1)$ and $(1,1,0)$ in $\p^2\times \p^2\times \p^2$. In \cite[Case 1.2.6, page 422]{ProGFano2}, it is explained that varieties of that form are isomorphic to smooth varieties as in Case~\hyperlink{T6}{$6$}. Lemma~\ref{Lemm:Iso26} below gives an explicit way to see the converse. Lemma~\ref{T6Aut} then describes the group of automorphisms.

\begin{lemma}\label{Lemm:Iso26}
Let $T\subset \p^2\times \p^2$ be a smooth hypersurface of bidegree $(1,1)$. Let $C\subset T$ be a smooth curve of bidegree $(2,2)$, such that the projection to any of the two $\p^2$ gives an embedding $C\hookrightarrow \p^2$. Denoting by $\eta\colon F\to T$ the blow-up of $C$ and by $E\subset F$ the exceptional divisor, the following hold:
\begin{enumerate}
\item\label{PTP2CFano}
The threefold $F$ is a smooth Fano threefold of Picard rank $3$.
\item\label{PTP2Cbignef}
The divisor $D=-\frac{3}{2}K_F-\frac{1}{2}E$ is ample.
\item\label{PTP2CBirMorph}
The linear system $\lvert D+K_F\rvert$ gives a morphism $\kappa\colon F\to \p^2$. Moreover, the morphism $\kappa\times\eta$ gives a closed embedding $F\to \p^2\times \p^2\times \p^2$, that sends $F$ onto a the intersection of three hypersurfaces of tridegree $(0,1,1)$, $(1,0,1)$ and $(1,1,0)$.
\end{enumerate}\end{lemma}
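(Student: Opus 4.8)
The plan is to do all the computations on $F$, to identify the linear system $|D+K_F|$ with the system of $(1,1)$-divisors of $T$ through $C$, and then to use it to embed $F$ into $\p^2\times\p^2\times\p^2$; the ampleness statements \ref{PTP2CFano} and \ref{PTP2Cbignef} will then follow from that embedding by adjunction, so the real content is \ref{PTP2CBirMorph}.

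I would start with the formal parts. Since $\eta$ is the blow-up of the smooth curve $C$ inside the smooth threefold $T$, the threefold $F$ is smooth, and since $T\simeq\P(T_{\p^2})$ is a $\p^1$-bundle over $\p^2$ we have $\rho(T)=2$, hence $\rho(F)=3$. Writing $H_i=\eta^*h_i$ for the pullbacks of the two hyperplane classes of $\p^2\times\p^2$, and using Lemma~\ref{T4Aut} with adjunction to get $K_T=-2h_1-2h_2$, the blow-up formula gives $K_F=\eta^*K_T+E=-2H_1-2H_2+E$. Hence $-K_F=2H_1+2H_2-E$ and
\[ D+K_F=-\tfrac12K_F-\tfrac12E=H_1+H_2-E,\qquad D=3H_1+3H_2-2E. \]
By the projection formula $\eta_*\O_F(-E)=\I_{C/T}$, so $H^0(F,\O_F(H_1+H_2-E))$ is the space of $(1,1)$-forms of $\p^2\times\p^2$ vanishing on $C$, modulo the one-dimensional subspace of those vanishing on $T$.

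Next I would show this space has dimension $3$ and is base-point-free. Twisting the structure sequence of $T$ in $\p^2\times\p^2$ by $\O(1,1)$ and using $H^1(\p^2\times\p^2,\O)=0$ gives $h^0(T,\O_T(1,1))=9-1=8$. Restricting to $C$, which is a smooth conic in each factor, so $C\simeq\p^1$ and $\O_T(1,1)|_C\simeq\O_{\p^1}(4)$ with $h^0=5$, it remains to see that the restriction $H^0(T,\O_T(1,1))\to H^0(C,\O_C(1,1))$ is onto. Parametrising $C$ by $t\mapsto([a(t)],[b(t)])$, where $a_0,a_1,a_2$ and $b_0,b_1,b_2$ are each a basis of the binary quadratic forms (because each projection embeds $C$ as a smooth conic), the image of the restriction of $(1,1)$-forms of $\p^2\times\p^2$ is spanned by the products $a_ib_j$, which already span $\mathrm{Sym}^4$ of the binary forms; since $H^0(\p^2\times\p^2,\O(1,1))\twoheadrightarrow H^0(T,\O_T(1,1))$, the map to $H^0(C,\O_C(1,1))$ is onto and $h^0(F,\O_F(H_1+H_2-E))=8-5=3$, so $\kappa$ maps to $\p^2$. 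For base-point-freeness one checks that the three $(1,1)$-forms through $C$ cut out exactly $C$ scheme-theoretically on $T$ (visible from the parametrisation, or by a degree count on $T$), together with a check on the exceptional surface $E$, a Hirzebruch surface over $C\simeq\p^1$; this produces the morphism $\kappa\colon F\to\p^2$.

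The heart of the proof is \ref{PTP2CBirMorph}, that $\kappa\times\eta\colon F\to\p^2\times\p^2\times\p^2$ is a closed embedding. Away from $E$ the morphism $\eta$ is already an isomorphism onto $T\setminus C$, so what must be shown is that $\kappa$ separates the points of each fibre of $E\to C$ and is non-ramified in the directions normal to $E$. I expect this verification along $E$ to be the main obstacle, and I would carry it out by identifying $E=\P(\NN_{C/T})$ explicitly and computing the restriction of $\O_F(H_1+H_2-E)$ to $E$ and to the first infinitesimal neighbourhood of $E$, or equivalently via a coordinate model of $(T,C)$. Once the embedding is established, write $F'=\kappa\times\eta(F)$: the equation of $T$ pulls back to a $(1,1,0)$-form vanishing on $F'$, and since the three coordinates of $\kappa$ are bilinear in the coordinates $(x,y)$ of $\p^2\times\p^2$ one extracts from them, together with the equation of $T$, a $(1,0,1)$- and a $(0,1,1)$-form vanishing on $F'$. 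Let $W$ be the common zero locus of these three hypersurfaces; then $F'\subseteq W$, the class $[W]=(h_1+h_2)(h_1+h_3)(h_2+h_3)$ has $\O(1,1,1)$-degree $30$, and since $W$ is of pure dimension $3$ (for instance because its projection to $T\subseteq\p^2\times\p^2$ is generically finite) while $\deg_{\O(1,1,1)}F'=-K_F^3=30$ (recorded in Table~\ref{3folds.MFS}), the inclusion forces $W=F'$. Thus $F$ is a transverse intersection in $(\p^2)^3$ of three divisors of classes $(1,1,0),(1,0,1),(0,1,1)$, and adjunction gives $-K_F=\O_{(\p^2)^3}(1,1,1)|_F$, which is ample, proving \ref{PTP2CFano}; re-expressing $D$ in the same three classes yields $D=\O_{(\p^2)^3}(1,1,2)|_F$, again ample, proving \ref{PTP2Cbignef}.
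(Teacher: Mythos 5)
Your overall strategy is viable, and several of the supporting computations are correct and match what is needed: the identification $D+K_F=H_1+H_2-E$, the count $h^0(F,\O_F(H_1+H_2-E))=8-5=3$ via the parametrisation of $C$, and the identification of $-K_F$ and $D$ with $\O(1,1,1)|_F$ and $\O(1,1,2)|_F$ once the embedding is in place. But the proposal does not actually prove the one statement that carries all the content, namely that $\kappa\times\eta$ is a closed embedding. You correctly isolate this as ``the main obstacle'' --- one must check that $\kappa$ is injective and unramified on each fibre of $E\to C$ --- but you only describe how you would verify it (``by identifying $E$ with the projectivised normal bundle\dots or via a coordinate model''); the verification itself is not carried out. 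Since everything downstream (base-point-freeness of $|D+K_F|$, the degree-$30$ comparison, the deduction of \ref{PTP2CFano} and \ref{PTP2Cbignef}) is conditional on this step, the proof is incomplete as written. Two smaller assertions are also left unjustified: that one can ``extract'' a $(1,0,1)$- and a $(0,1,1)$-form vanishing on the image (i.e.\ that $\sum_{i,j} a_{ij}x_i\kappa_j$ and $\sum_{i,j} b_{ij}y_i\kappa_j$ lie in the ideal of $T$ for suitable constants), and that $W$ is pure of dimension $3$ with multiplicity one along the image (your generically-finite-projection remark does not by itself exclude a component of $W$ that fails to dominate $T$).

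For comparison, the paper closes exactly this gap by a normalisation that makes everything explicit: after a change of coordinates $C$ becomes the diagonal conic $\{([u^2:uv:v^2],[u^2:uv:v^2])\}$, whence $\kappa$ is induced by the cross-product map $\theta(x,y)=[x_1y_2-x_2y_1:x_2y_0-x_0y_2:x_0y_1-x_1y_0]$, and --- this is the key observation --- the two hypersurfaces of tridegree $(1,0,1)$ and $(0,1,1)$ cut out in $\p^2\times\p^2\times\p^2$ the blow-up $\epsilon\colon W\to\p^2\times\p^2$ of the diagonal, with inverse $(x,y)\mapsto(x,y,\theta(x,y))$ (checked on affine charts). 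Restricting $\epsilon$ over $T$ gives the blow-up of $C=T\cap\Delta$, so $F\simeq\epsilon^{-1}(T)$, and the embedding, the identification of the image as the triple intersection, and base-point-freeness all come at once --- no separate analysis along $E$ and no degree count are needed. If you want to keep your route, this same normalisation is also the cheapest way to perform your local computation along $E$ and to exhibit the two missing multilinear relations.
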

\begin{proof}
After changing coordinates in $ \p^2\times \p^2$, we may assume that $C$ is the image of the morphism $\tau\colon \p^1\iso C$, $[u:v]\mapsto ([u^2:uv:v^2], [u^2:uv:v^2])$.

We have $K_T=-2H$, where $H\subset T$ is the intersection of $T$ with a hypersurface of $\p^2\times \p^2$ of bidegree $(1,1)$.
Since $K_F=\eta^{*}(K_T)+E$, we obtain $D+K_F=-\frac{1}{2}K_F-\frac{1}{2}E=-\frac{1}{2}\eta^{*}(K_T)-E=\eta^{*}(H)-E$. Therefore the linear system $\lvert D+K_F\rvert$ is the linear system of strict transforms of hypersurfaces of bidegree $(1,1)$ through~$C$. 
 The vector space of polynomials of  bidegree $(1,1)$ vanishing along $C$ is of dimension $4$, generated by \[ x_1y_2 - x_2y_1,\;\;x_2y_0-x_0y_2,\;\;x_0y_1 - x_1y_0 ,\;\;x_1y_1 - x_2y_0.\]
The equation of $T$ is a linear combination of the four above polynomials. We observe  that it is linearly independent of the first three. Indeed, for any $(a_0,a_1,a_2)\in \C^3\setminus \{0\}$, the hypersurface of $\p^2\times \p^2$ given by $a_0(x_1y_2 - x_2y_1)+a_1(x_2y_0-x_0y_2)+a_2(x_0y_1 - x_1y_0 )$ is singular at $([a_0:a_1:a_2],[a_0:a_1:a_2])\in \p^2\times \p^2$, as the derivative with respect to $x_i$ or $y_i$ is zero for all $i$. This implies that the intersection of $T$ with the diagonal of $\p^2\times \p^2$ is equal to $C$, that the linear system of hypersurfaces of $T$ of degree $(1,1)$ through $C$ is of dimension $3$, and that the rational map $\kappa\colon F\dasharrow \p^2$ induced by $\lvert D+K_F\rvert$ is equal to $\kappa=\theta\circ \eta$, where $\theta\colon \p^2\times \p^2\dasharrow \p^2$ is given by
\[\theta\big( ([x_0:x_1:x_2],[y_0:y_1:y_2])\big)= [x_1y_2 - x_2y_1:x_2y_0-x_0y_2:x_0y_1 - x_1y_0].\]
We consider the variety 
\[W=\{(x,y,z)\in \p^2\times \p^2\times \p^2\mid x_0z_2+x_1z_1+x_2z_0=0,\;\; y_1z_2+y_1z_1+y_2z_0=0\}\]
and observe that $\epsilon \colon W\to \p^2\times \p^2, (x,y,z)\mapsto (x,y)$ is the blow-up of the diagonal $\Delta\subset\p^2\times\p^2$. This can for instance be seen on the local charts $U_{ij}\subset \p^2\times \p^2$ given by $x_iy_j\not=0$ with $i,j\in \{0,1,2\}$. The inverse of the blow-up is  $(x,y)\mapsto (x,y,\theta(x,y))$. Hence, $\epsilon \colon \epsilon^{-1}(T)\to T$ is the blow-up of $T\cap \Delta=C$, with inverse $(x,y)\mapsto (x,y,\kappa(x,y))$.  It follows that the map $\kappa\times\eta\colon F\dasharrow \p^2\times \p^2\times \p^2$ is an isomorphism onto its image $F\iso\epsilon^{-1}(T)$. This proves~\ref{PTP2CBirMorph}, which in turn implies that $D+K_F=\eta^*(H)-E$ is semiample but not big. 

To see that $-K_F=2\eta^*(H)-E$ is ample, one observes that for each curve  $\Gamma\subset F$ not contracted by $\eta$, one has $\Gamma\cdot (-K_F)\ge  \Gamma\cdot \eta^*(H)>0$ since $\eta^*(H)-E$ is nef and that $\Gamma\cdot  (-K_F)\ge \Gamma\cdot (\eta^*(H)-E)>0$ for curves $\Gamma$ contracted by $\eta$. Moreover, $(-K_F)^3=30>0$, so we conclude by the Nakai-Moishezon criterion. 

We can also see that $F$ is in \cite[n$\degree$13 of Table~3]{MoriMukai1}. This gives \ref{PTP2CFano} and then implies that  $D$ is ample, since $D+K_F$ is semiample.
\end{proof}

\begin{lemma}\label{T6Aut}
Let $F$ be the blow-up of a smooth divisor $T\subset \p^2\times \p^2$ of bidegree $(1, 1)$ along a 
curve $C$ of bidegree $(2, 2)$, such that $C \hookrightarrow T \hookrightarrow \p^2 \times \p^2 
\to \p^2$ is an embedding for both projections $\p^2 \times \p^2 \to \p^2$, assume that $F$ is smooth $($Case~\hyperlink{T6}{$6$} of Table~$\ref{3folds.MFS})$. Then, the following hold:
\begin{enumerate}
\item\label{ThreeCasesAutzF6}
$\Autz(F)$ is either isomorphic to $\G_m$, or to $\G_a$ or to $\PGL_2(\C)$, all cases being possible.
\item\label{F6PGL2}
If $\Autz(F)\simeq \PGL_2(\C)$, then $F$ is isomorphic to 
\[F_0=\left\{([x_0:x_1:x_2],[y_0:y_1:y_2],[z_0:z_1:z_2])\in (\p^2)^3 \left| \begin{array}{lll}
 \sum_{i=0}^2 x_i y_i&=&0\\
 \sum_{i=0}^2 x_i z_i&=&0\\
 \sum_{i=0}^2 y_i z_i&=&0\end{array}\right\}\right..\]
The projection $F_0\to \p^2\times \p^2$ on two different factors yields a birational morphism $F_0\to T$, where
$T=\left\{([x_0:x_1:x_2],[y_0:y_1:y_2])\in \p^2\times \p^2 \left| \textstyle \sum_{i=0}^2 x_i y_i=0\right\}\right.,$
which is the blow-up of 
$\Gamma=\left\{([x_0:x_1:x_2],[x_0:x_1:x_2])\in \p^2\times \p^2 \left| \textstyle \sum_{i=0}^2 x_i^2=0\right\}\right.$.

For each matrix $M\in \PGL_3(\C)$ such that $\tr{M}\cdot M=\mathrm{id}$, we have an automorphism of $F_0$ given by
\[\begin{array}{ccc}
F_0 &\longrightarrow & F_0\\
 \left(\begin{bsmallmatrix} x_0\\ x_1\\ x_2\end{bsmallmatrix},\begin{bsmallmatrix} y_0\\ y_1\\ y_2\end{bsmallmatrix},\begin{bsmallmatrix} z_0\\ z_1\\ z_2\end{bsmallmatrix}\right)&\mapsto& \left(M\cdot \begin{bsmallmatrix} x_0\\ x_1\\ x_2\end{bsmallmatrix}, M\cdot \begin{bsmallmatrix} y_0\\ y_1\\ y_2\end{bsmallmatrix},M\cdot \begin{bsmallmatrix} z_0\\ z_1\\ z_2\end{bsmallmatrix}\right)\end{array}.\]
and every automorphism of $F_0$ is of this form. This gives an isomorphism $\Autz(F_0)\simeq \mathrm{PO}_3(\C)\simeq \PGL_2(\C)$.
\item\label{F6Ga}If $\Autz(F)\simeq \G_a$, there is only one isomorphism class for $F$. This latter is isomorphic to the intersection of three hypersurfaces of $\p^2\times \p^2\times \p^2$ of tridegree $(0,1,1)$, $(1,0,1)$ and $(1,1,0)$ such that the group $\Autz(F)$ acts on each of the three copies of $\p^2$ by fixing exactly one point.
\end{enumerate}
\end{lemma}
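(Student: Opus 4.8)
The plan is to identify $\Autz(F)$ with the stabiliser of the blown‑up curve inside $\P(T_{\p^2})$, and then to translate everything into a question about orbits of $\PGL_2(\C)$ on a $\p^3$.

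\textbf{Step 1 (reduction to a stabiliser).} By Lemma~\ref{Lemm:Iso26}, $\eta\colon F\to T$ realises $F$ as the blow‑up of the $(2,2)$‑curve $C$ inside a smooth $(1,1)$‑divisor $T\subset\p^2\times\p^2$, both projections $T\to\p^2$ restricting to closed embeddings on $C$. Blanchard's Lemma~\ref{blanchard} makes $\eta$ $\Autz(F)$‑equivariant, and since $\eta$ is birational the induced homomorphism $\Autz(F)\to\Aut(T)$ is injective onto a connected, hence closed, subgroup; by Lemma~\ref{T4Aut} we may take $T=\{\sum_i x_iy_i=0\}$ with $\Aut^\circ(T)=\PGL_3(\C)$ acting by $A\cdot(x,y)=(Ax,\tr{A}^{-1}y)$. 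Lifting the action of the stabiliser of $C$ back to the blow‑up identifies $\Autz(F)$ with the identity component of $\{A\in\PGL_3(\C)\mid A(C)=C\}$.

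\textbf{Step 2 (parametrising $C$).} The first projection embeds $C$ onto a smooth conic $C_1\subset\p^2$, which the stabiliser preserves; acting by the twisted $\PGL_3(\C)$ (which preserves $T$) I normalise $C_1$ to the Veronese conic $\{x_0x_2=x_1^2\}$, so that $\Autz(F)$ is a connected subgroup of the residual group $\PGL_2(\C)=\{A\mid A(C_1)=C_1\}$, the image of $\mathrm{Sym}^2\colon\PGL_2(\C)\to\PGL_3(\C)$. Then $C$ is the graph of a degree‑$2$ morphism $\ell\colon\p^1\xrightarrow{\ \sim\ }C_1\to(\p^2)^*$, $[u:v]\mapsto[q_0:q_1:q_2]$ with $q_i$ binary quadratics, subject to $u^2q_0+uvq_1+v^2q_2\equiv 0$ (which is exactly $C\subset T$) and to $\ell$ being a closed embedding (which encodes the second embedding condition and $\mathrm{bideg}\,C=(2,2)$). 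The linear condition has a $4$‑dimensional solution space; a short computation with the $\PGL_2(\C)$‑equivariance of $\ell$ shows it is a $\PGL_2(\C)$‑subrepresentation of $\mathrm{Sym}^2\C^2\otimes\mathrm{Sym}^2\C^2=\mathrm{Sym}^4\C^2\oplus\mathrm{Sym}^2\C^2\oplus\C$, and being $4$‑dimensional and nontrivial it projectivises to $\p^3\cong\p(\mathrm{Sym}^2\C^2\oplus\C)$. The embedding condition cuts out a dense open $\PGL_2(\C)$‑invariant subset $V\subseteq\p^3$, and $\Autz(F)=\mathrm{Stab}_{\PGL_2(\C)}([\ell])^\circ$ for the point $[\ell]\in V$ attached to $F$.

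\textbf{Step 3 (orbits and the three cases).} Writing a point of $\p(\mathrm{Sym}^2\C^2\oplus\C)$ as $[q:\lambda]$, the $\PGL_2(\C)$‑orbits are those of $[0:1]$ (stabiliser $\PGL_2(\C)$), of $[v^2:\lambda]$ with $\lambda\in\C$ (stabiliser with identity component $\G_a$ if $\lambda\ne0$, a Borel subgroup if $\lambda=0$), and of $[\mu uv:\lambda]$ with $\mu\ne0$ (stabiliser with identity component $\G_m$). One checks that the points with Borel stabiliser lie outside $V$: there the $q_i$ are proportional to $uv,u^2,0$, so they span only a $2$‑dimensional space and $\ell$ is not an embedding. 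This yields the trichotomy $\Autz(F)\in\{\G_m,\G_a,\PGL_2(\C)\}$ of \ref{ThreeCasesAutzF6}. For \ref{F6PGL2}: if $\Autz(F)\cong\PGL_2(\C)$ then its image in $\PGL_3(\C)$ is a $3$‑dimensional connected subgroup preserving $C_1$, hence equals the residual $\PGL_2(\C)$, so $C$ is $\PGL_2(\C)$‑invariant; as a $\PGL_2(\C)$‑equivariant isomorphism $\p^1\to\p^1$ is unique, there is a single such $C$, hence a single $F$, which one identifies with $F_0$; on $F_0$ the group $\mathrm{PO}_3(\C)$ acts by $M\cdot(x,y,z)=(Mx,My,Mz)$ (this preserves all three equations since $\tr{M}M=\mathrm{id}$), the projection forgetting any factor is the blow‑up of $T$ along $\Gamma$, and $\mathrm{PO}_3(\C)\cong\PSO_3(\C)\cong\PGL_2(\C)$ is $3$‑dimensional, hence equal to $\Autz(F_0)$ by the trichotomy. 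For \ref{F6Ga}: if $\Autz(F)\cong\G_a$, its image in $\PGL_3(\C)$ is a one‑dimensional unipotent subgroup preserving $C_1$, hence a fixed $\G_a\subset\PGL_2(\C)$, so $[\ell]$ lies on the $\G_a$‑fixed line $\{[v^2:\lambda]\}\subset\p^3$; modulo $N(\G_a)/\G_a\cong\G_m$ the unique point of that line lying in $V$ with stabiliser exactly $\G_a$ is the class of $[v^2:1]$, so $F$ is unique, and its description as an intersection of hypersurfaces of tridegrees $(0,1,1),(1,0,1),(1,1,0)$ with $\G_a$ fixing one point on each $\p^2$ follows from Lemma~\ref{Lemm:Iso26} by tracking the unique $\G_a$‑fixed point of $C_1\cong\p^1$ through $\ell$ and through the map $\kappa$ of that lemma. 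Finally, all three groups occur: $F_0$ gives $\PGL_2(\C)$, the $\G_a$‑example above gives $\G_a$, and any $[\mu uv:1]\in V$ gives $\G_m$.

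\textbf{Main obstacle.} The substantive work is the bookkeeping of Steps 2–3: making the identification of the $4$‑dimensional representation explicit, computing the $\PGL_2(\C)$‑orbits and their stabilisers on $\p^3$, and — most delicate — determining exactly which points lie in $V$, which is what rules out the Borel case and what forces the $\G_a$‑case to be a single isomorphism class. The remaining verifications (the $\mathrm{PO}_3(\C)$‑action on $F_0$, that $F_0\to T$ contracts onto $\Gamma$, and the fixed‑point count for the $\G_a$‑action) are direct computations of the same nature as those carried out for Case~$3$ in Lemma~\ref{T3Aut}.
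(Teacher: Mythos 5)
Your proof is correct, and its skeleton coincides with the paper's: Blanchard's Lemma~\ref{blanchard} to descend the action to $T$, Lemma~\ref{T4Aut} to identify $\Autz(T)$ with $\PGL_3(\C)$ acting by $(x,y)\mapsto(Ax,\tr{A}^{-1}y)$, and the observation that the stabiliser of the conic $C_1$ is $\mathrm{PO}_3(\C)\simeq\PGL_2(\C)$, so that $\Autz(F)$ is the identity component of the stabiliser of $C$ inside this $\PGL_2(\C)$. Where you genuinely diverge is the classification of the possible curves $C$. The paper identifies the surface $(\pi_1)^{-1}(C_1)\subset T$ with $\p^1\times\p^1$ carrying the diagonal $\PGL_2(\C)$-action, views $C$ as a $(1,1)$-curve there, and reads off the stabiliser from the intersection of $C$ with the diagonal $\Gamma$ (two points, one point, or $C=\Gamma$), using explicit normal forms $ad+\xi bc$ and $ad-bc+ac$. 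You instead parametrise $C$ by the triple of binary quadratics giving its second projection and decompose the resulting parameter space as $\p^3\cong\p(\mathrm{Sym}^2\C^2\oplus\C)$, classifying $\PGL_2(\C)$-orbits representation-theoretically. The two parameter spaces are canonically the same, since $H^0(\p^1\times\p^1,\O(1,1))\cong\C^2\otimes\C^2\cong\mathrm{Sym}^2\C^2\oplus\C$ under the diagonal action, with the trivial summand spanned by the equation of $\Gamma$; your component $q$ is essentially the restriction of the equation of $C$ to the diagonal. Your version buys two things: the exclusion of a Borel stabiliser becomes the concrete check that at $[v^2:0]$ the quadratics $q_i$ span only a plane, so the second projection is not an embedding (the paper excludes this case only implicitly, via the irreducibility and bidegree hypotheses on $C$), and the uniqueness in the $\G_a$-case is immediate from its being a single orbit. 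What the paper's explicit normal forms buy in exchange is the concrete parametrisation of $C$ used in part~\ref{F6Ga} to locate the unique fixed point on each copy of $\p^2$ and, via the cross-product map, the identification with $F_0$ in part~\ref{F6PGL2}; your appeals to ``tracking the fixed point through $\ell$ and $\kappa$'' and to ``direct computations of the same nature'' would need to be expanded to that level of explicitness, the key point for the fixed-point count being that a one-parameter unipotent subgroup of $\PGL_3(\C)$ preserving a smooth conic is regular and hence fixes exactly one point of $\p^2$.
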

\begin{proof}
By definition, we have a birational morphism $\eta\colon  F\to T$, where $T\subset \p^2\times \p^2$ is a smooth divisor of bidegree $(1,1)$. This morphism is the blow-up of a curve $C$ of bidegree $(2,2)$.  
By Blanchard's Lemma~\ref{blanchard}, the morphism $\eta\colon F\to T$ is $\Autz(F)$-equivariant, so $\Autz(F)$ is conjugate via $\eta$ to the group $H=\{g\in \Autz(T)\mid g(C)=C\}$.
Changing coordinates, by Lemma~\ref{T4Aut}\ref{F11standard} we may assume that $T=\left\{(x,y)\in \p^2\times \p^2 \left| \textstyle \sum_{i=0}^2 x_i y_i=0\right\}\right.$. Moreover, the embeddings of $C$ into $\p^2$ given by each of the two projections $\pi_1,\pi_2\colon \p^2\times \p^2\to \p^2$ induce isomorphisms of $C$ with two conics $C_1\subset \p^2$ and $C_2\subset \p^2$. We may apply an automorphism of $T$ of the form $(\tr{A}^{-1},A)$ (cf. Lemma~\ref{T4Aut}\ref{PTP2PGL3})  and assume that $C_1$ is defined by $\sum_{i=0}^2 x_i^2=0$.

By Lemma~\ref{T4Aut}, $\PGL_3(\C)\simeq \Autz(T)$ acts on  $T$ by $(x,y)\mapsto (Ax, \tr{A}^{-1} y)$. The subgroup of $\PGL_3(\C)$ that preserves the conic $C_1\subset\p^2$ is the projective orthogonal group $\mathrm{PO}_3(\C)=\{M\in \PGL_3(\C)\mid M=\tr{M}\}\simeq \PGL_2(\C)$. Hence, the group $H$ is contained in the subgroup $\hat{H}\subset \Autz(T)$ given by $\hat{H}=\{(x,y)\mapsto (Mx,My)\mid M\in \mathrm{PO}_3(\C)\}\simeq \mathrm{PO}_3(\C)\simeq \PGL_2(\C)$. The closed curve 
$\Gamma=\{([x_0:x_1:x_2],[x_0:x_1:x_2])\in \p^2\times \p^2\mid \sum_{i=0}^2 x_i^2=0\}\subset T$ given by the diagonal embedding of $C_1$ into $T$ is invariant by $\hat{H}$, so one obtains $H=\hat{H}$ if $C=\Gamma$. In any case, $C$ is contained in the surface $(\pi_1)^{-1}(C_1)\subset T$, which is isomorphic to $\p^1\times \p^1$, via 
\[\begin{array}{cccc}
\tau\colon & \p^1\times \p^1& \iso & (\pi_1)^{-1}(C_1)\\
& ([a:b],[c:d])&\mapsto & ([a^2-b^2:\im( a^2+b^2) : 2ab],[ac - bd: \im (ac+bd):ad+bc]).
\end{array}\]
Moreover, the isomorphism $\tau$ sends the diagonal of $\p^1\times \p^1$ onto $\Gamma$. As $\hat{H}\simeq \PGL_2(\C)$ acts on $(\pi_1)^{-1}(C_1)\simeq \p^1\times \p^1$ via a faithful action on the first coordinate (corresponding to the action of $\mathrm{PO}_3(\C)$ on $C_1$) and preserves the diagonal, the action on $\p^1\times \p^1$ is the diagonal action (for a suitable isomorphism $\hat{H}\simeq \PGL_2(\C)$). In particular, $\Gamma$ is the unique curve of $(\pi_1)^{-1}(C_1)$ that is invariant by $\hat{H}$.

The curve $C$ is the image by $\tau$ of a curve $C'\subset\p^1\times \p^1$ of bidegree $(1,1)$. If $C$ is not equal to $\Gamma$, it intersects $\Gamma$ in two or one point. In the first case, we may apply an element of $\hat{H}$ and assume that the two points are the image by $\tau$ of $([0:1],[0:1])$ and $([1:0],[1:0])$, which implies that $C'$ is given by $ad+\xi bc$ for some $\xi\in \C\setminus \{-1\}$. Hence, $H$ is isomorphic to $\C^*$, acting as $([a:b],[c:d])\mapsto ([\lambda a:b],[\lambda c:d])$.
In the second case, we may assume that the point is $([0:1],[0:1])$. Hence, $C'$ is given by $ad-bc+\xi ac$ for some $\xi\in \C^*$. Applying an element of the form $([a:b],[c:d])\mapsto ([\lambda a:b],[\lambda c:d])$, we may assume that $\xi=1$. The group is then isomorphic to $\G_a$, via 
$([a:b],[c:d])\mapsto ([ a:b+\mu a],[c:d+\mu c])$. This achieves the proof of~\ref{ThreeCasesAutzF6}.

\ref{F6PGL2}: If $\Autz(F)\simeq \PGL_2(\C)$, then in the above description, $C$ is given by $\Gamma$. We write as above \[F_0=\left\{(x,y,z)\in (\p^2)^3 \left|  \sum\nolimits_{i=0}^2 x_i y_i=\sum\nolimits_{i=0}^2 x_i z_i=\sum\nolimits_{i=0}^2 y_i z_i=0\right\}\right.,\]
and consider the rational map $\tau\colon T\dasharrow F_0$ given by $(x,y)\mapsto (x,y,[x_1y_2 - x_2y_1:x_2y_0-x_0y_2:x_0y_1 - x_1y_0 ])$, which is $\mathrm{PO}_3(\C)$-equivariant, with an action on $F_0$ given by $(x,y,z)\mapsto (Mx,My,Mz)$ (follows from the fact that $\tau$ corresponds to the cross-product). Lemma~\ref{Lemm:Iso26} implies that $\tau\circ \eta\colon F\dasharrow F_0$ is an isomorphism and thus that the projection $F_0\to T$ is the blow-up of $C$.

\ref{F6Ga}: If $\Autz(F)\simeq \G_a$, then the curve $C$ is the image by $\tau$ of the curve of $\p^1\times \p^1$ given by $ad-bc+ac=0$. Hence, \[\begin{array}{rcl}
C&=&\{\tau([a:b],[a:b-a])\mid [a:b]\in \p^1\}\\
&=&\{([a^2-b^2:\im( a^2+b^2) : 2ab],[a^2+ab-b^2: \im (a^2+b^2-ab):2ab-a^2])\mid [a:b]\in \p^1\}.\end{array}\]
The action of $\Autz(F)\simeq \G_a$ on any of the two $\p^2$ preserves the conic and fixes a unique point of the conic, so has a unique point fixes on $\p^2$. This point is equal to $\tau([0:1],[0:1])$. Using the embedding of $F$ into $\p^2\times \p^2\times \p^2$ of  Lemma~\ref{Lemm:Iso26}, one gets an action of $\G_a$ on the third factor too, with a unique fixed point, indeed, the projection on the last two coordinates gives again a birational morphism which is the blow-up of a curve of bidegree $(2,2)$ (see \cite[Case 1.2.6, page 422]{ProGFano2}) and we can use the same argument as above.
\end{proof}
\begin{corollary}\label{Coro:PGL2}
There are exactly two smooth Fano threefolds $F$ which satisfy $\rho(F)\ge 2$,  $\Autz(F)\simeq \PGL_2(\C)$ and which can occur as general fibres of a Mori fibre space. These two threefolds are the following:
\begin{enumerate}[$(A)$]
\item\label{PGL2firstcase3}
The blow-up of the quadric $Q\subset \p^4$ given by $x_0x_4 - 4x_1x_3 + 3x_2^2=0$ along the image of the Veronese embedding of degree $4$ of $\p^1$.
\item\label{PGL2secondcase6}
The threefold 
\[\left\{(x,y,z)\in (\p^2)^3 \left| 
 \sum_{i=0}^2 x_i y_i= \sum_{i=0}^2 x_i z_i=\sum_{i=0}^2 y_i z_i=0\right\}\right..\]
\end{enumerate}
\end{corollary}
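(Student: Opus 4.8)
The plan is to assemble the classification results established earlier in this section. Recall first that by Table~\ref{3folds.MFS} (that is, \cite[Table~1]{CFST16}), a smooth Fano threefold $F$ with $\rho(F)\ge 2$ occurring as a general fibre of a klt Mori fibre space must belong to one of the nine families $1a$, $1b$, $2$, $3$, $4$, $5$, $6$, $7$, $8$; since a Mori fibre space in our sense is terminal, hence klt, and its general fibre is smooth by generic smoothness, this applies to any $F$ satisfying the hypotheses of the corollary. By Proposition~\ref{prop:AutFiniteTable}, $\Autz(F)$ is trivial unless $F$ lies in family $3$, $4$, $6$ or $7$, so if $\Autz(F)\simeq\PGL_2(\C)$ then $F$ belongs to one of these four families.

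Next I would eliminate families $4$ and $7$ by a dimension count. In family $7$ we have $F=\p^1\times\p^1\times\p^1$ with $\Autz(F)=\PGL_2(\C)^3$, of dimension $9\ne 3$; in family $4$ we have $F\simeq\P(T_{\p^2})$ with $\Autz(F)\simeq\PGL_3(\C)$ by Lemma~\ref{T4Aut}, of dimension $8\ne 3$. Hence neither can have $\Autz(F)\simeq\PGL_2(\C)$, and only families $3$ and $6$ survive.

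For family $3$, Lemma~\ref{T3Aut} shows that $\Autz(F)\simeq\PGL_2(\C)$ forces $F$, up to a change of coordinates, to be the blow-up of the quadric $x_0x_4-4x_1x_3+3x_2^2=0$ along the image of the degree-$4$ Veronese embedding of $\p^1$; this is case~\ref{PGL2firstcase3}, and here $\rho(F)=2$. For family $6$, Lemma~\ref{T6Aut}\ref{F6PGL2} shows that $\Autz(F)\simeq\PGL_2(\C)$ forces $F\simeq F_0$, the threefold of case~\ref{PGL2secondcase6}, and here $\rho(F)=3$. Conversely, both threefolds are smooth Fano, both appear in Table~\ref{3folds.MFS} (in families $3$ and $6$ respectively) hence occur as general fibres of Mori fibre spaces, and both satisfy $\Autz(F)\simeq\PGL_2(\C)$ by Lemmata~\ref{T3Aut} and~\ref{T6Aut}. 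Finally, the two are not isomorphic since they have different Picard ranks, so there are exactly two such threefolds.

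There is no genuine obstacle here: the argument is purely a matter of bookkeeping, collecting Table~\ref{3folds.MFS}, Proposition~\ref{prop:AutFiniteTable} and Lemmata~\ref{T3Aut},~\ref{T4Aut},~\ref{T6Aut}. The only point requiring even a one-line argument is the exclusion of families $4$ and $7$, handled by the dimension count above, and the observation that the two surviving threefolds are distinguished by their Picard ranks.
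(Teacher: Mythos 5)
Your proposal is correct and follows essentially the same route as the paper: restrict to families $3$, $4$, $6$, $7$ via Table~\ref{3folds.MFS} and Proposition~\ref{prop:AutFiniteTable}, rule out $4$ and $7$ using Lemma~\ref{T4Aut} and the identification $F\simeq(\p^1)^3$, and identify the unique members of families $3$ and $6$ with $\Autz(F)\simeq\PGL_2(\C)$ via Lemmata~\ref{T3Aut} and~\ref{T6Aut}. Your explicit dimension count for excluding families $4$ and $7$ and the Picard-rank remark distinguishing the two threefolds are harmless refinements of the same argument.
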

\begin{proof}
Let $F$ be a Fano threefold with $\rho(F)\ge 2$ and  $\Autz(F)\simeq \PGL_2(\C)$, which occur as a general fibre of a Mori fibre space. By \cite[Theorem~1.4]{CFST16}, the threefold needs to be in the list of Table~$\ref{3folds.MFS}$ $($or equivalently of \cite[Table~1]{CFST16}$)$. Since $\Autz(F)$ is not trivial, Proposition~\ref{prop:AutFiniteTable} implies that $F$ belongs to the families \hyperlink{T3}{$3$}, \hyperlink{T4}{$4$}, \hyperlink{T6}{$6$} or \hyperlink{T7}{$7$}. 

In Case \hyperlink{T3}{$3$}, Lemma~\ref{T3Aut} proves that $F$ is isomorphic to the threefold~\ref{PGL2firstcase3} above.

In Case \hyperlink{T4}{$4$}, it is impossible to have $\Autz(F)\simeq \PGL_2(\C)$ (Lemma~\ref{T4Aut}).

In Case \hyperlink{T36}{$6$}, Lemma~\ref{T6Aut} proves that $F$ is isomorphic to the threefold~\ref{PGL2secondcase6} above.

In Case \hyperlink{T7}{$7$}, we have $F\simeq (\p^1)^3$, contradicting $\Autz(F)\simeq \PGL_2(\C)$.
\end{proof}

\section{Symmetric birational maps from $(\p^1)^3$ or $\P(T_{\p^2})$}\label{Sec:Symmbirmap}
\subsection{Symmetric birational maps from $\p^1\times \p^1\times \p^1$}
In order to construct birational maps from a Mori fibre space $X\to B$ with general fibre a smooth Fano threefold $F$, we need to understand rational maps from $F$ to other varieties, which are symmetric enough.
Those will be typically induced by sublinear systems of $-mK_F$ for some  positive $m\in \Q$. In this section, we study the case of $F=\p^1\times \p^1\times \p^1$.

\begin{lemma}\label{Lem:CurveinP1P1P1link}
Let $C$ be a curve of tridegree $(1,1,1)$ in $F=\p^1\times \p^1\times \p^1$, let $\eta\colon \hat{F}\to F$ be the blow-up of $F$ along $C$, with exceptional divisor $E$. 
\begin{enumerate}
\item\label{P1P1P1CFano}
The threefold $\hat{F}$ is a smooth Fano threefold of Picard rank $4$.
\item\label{P1P1P1Cbignef}
The divisor $D=-\frac{3}{2}K_{\hat{F}}-\frac{1}{2}E$ is ample.
\item\label{P1P1P1CBirMorph}
The linear system $\lvert D+K_{\hat{F}}\rvert$ gives a birational morphism $\hat{F}\to \p^3$, which is  the contraction of the strict transforms of divisors of $F$ of tridegree $(0,1,1)$, $(1,0,1)$, $(1,1,0)$ through $C$, or equivalently the blow-up of three skew lines of $\p^3$.\end{enumerate}
\end{lemma}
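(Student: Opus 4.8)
The strategy mirrors that of Lemma~\ref{Lemm:Iso26}: identify $\hat F$ explicitly inside a product of projective spaces by writing down the linear system $|{-}K_F\otimes\I_C|$, and then read off all the numerical statements from that model. First I would normalize the curve: a smooth curve $C$ of tridegree $(1,1,1)$ in $F=(\p^1)^3$ is the image of $\p^1$ under $[u:v]\mapsto (\ell_1,\ell_2,\ell_3)$ for three coordinate changes, so after an automorphism of $F$ we may take $C=\{([u:v],[u:v],[u:v])\mid [u:v]\in\p^1\}$, the "diagonal". Next I would compute the space of multihomogeneous forms of tridegree $(1,1,1)$ vanishing on $C$. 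A form of tridegree $(1,1,1)$ lives in an $8$-dimensional space; imposing vanishing along a rational normal curve of degree $3$ (the image of $C$ under the Segre embedding $(\p^1)^3\hookrightarrow\p^7$) cuts this down, and one checks the space of such forms is $4$-dimensional, spanned by the three "$2\times2$ minors" $x_0y_1-x_1y_0$, $y_0z_1-y_1z_0$, $x_0z_1-x_1z_0$ together with one more relation coming from the ambient $(1,1,1)$-hypersurface structure — concretely one can take a basis $x_0y_1-x_1y_0$, $y_0z_1-y_1z_0$, $x_0z_1-x_1z_0$ and $x_0y_0z_1-x_1y_1z_0$ (or any convenient fourth form), and verify linear independence exactly as in the proof of Lemma~\ref{Lemm:Iso26} by exhibiting, for the first three, a common singular point of the corresponding hypersurface. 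This shows $|D+K_{\hat F}|=|\eta^*(-\tfrac12 K_F)-E|$ (using $K_{\hat F}=\eta^*K_F+E$, so $D+K_{\hat F}=-\tfrac12 K_F$ pulled back minus $E$, and $-\tfrac12 K_F$ is the tridegree-$(1,1,1)$ class) is base-point free of projective dimension $3$.

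For part~\ref{P1P1P1CBirMorph} I would then realize the morphism given by $|D+K_{\hat F}|$ as follows. The three minors $x_0y_1-x_1y_0$, $x_0z_1-x_1z_0$, $y_0z_1-y_1z_0$ define a rational map $\theta\colon (\p^1)^3\dashrightarrow\p^2$ whose indeterminacy locus is exactly the diagonal $C$; blowing up $C$ resolves it. Better, I would use the same trick as in Lemma~\ref{Lemm:Iso26}: exhibit $\hat F$ as the graph, i.e.\ as a complete intersection inside $(\p^1)^3\times\p^2$ (or directly argue via the Segre-type coordinates) and identify the resulting morphism $\hat F\to\p^2$; but since the target here is to be $\p^3$ I would instead use the $4$-dimensional linear system itself. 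The map $\psi\colon\hat F\to\p^3$ given by the four forms above is birational onto its image: on the open locus where it is an immersion this is a dimension count, and one checks it contracts exactly the strict transforms $\tilde S_1,\tilde S_2,\tilde S_3$ of the three divisors $S_1\in|(0,1,1)|$, $S_2\in|(1,0,1)|$, $S_3\in|(1,1,0)|$ through $C$ — each $S_i\cong\p^1\times\p^1$ contains $C$ as a curve of bidegree $(1,1)$, its strict transform is again $\cong\p^1\times\p^1$, and $(D+K_{\hat F})\cdot(\text{ruling of }\tilde S_i)=0$, so $\tilde S_i$ is contracted to a line. The inverse map blows up three pairwise skew lines of $\p^3$ (skew because the $S_i$ meet pairwise along fibres that get contracted to points, not lines — this needs a short check on the combinatorics of the $S_i\cap S_j$), so $\psi$ is precisely the blow-up of three skew lines. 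In particular $\hat F$ is the blow-up of $\p^3$ at three skew lines, which is n$\degree$27 (or the relevant entry) of \cite[Table~3]{MoriMukai1}.

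Part~\ref{P1P1P1CFano} then follows as in Lemma~\ref{Lemm:Iso26}: identifying $\hat F$ with the blow-up of $\p^3$ along three skew lines places it in Mori--Mukai's list, hence it is a smooth Fano threefold, and $\rho(\hat F)=\rho(F)+1=4$. Alternatively one checks directly that $-K_{\hat F}=2\eta^*(-\tfrac12 K_F)-E$ is ample: for a curve $\Gamma$ not contracted by $\eta$ one has $\Gamma\cdot(-K_{\hat F})\ge\Gamma\cdot\eta^*(-\tfrac12 K_F)>0$ since $\eta^*(-\tfrac12 K_F)-E$ is nef and $-\tfrac12 K_F$ is ample on $F$, while for $\Gamma$ contracted by $\eta$ one has $\Gamma\cdot(-K_{\hat F})=\Gamma\cdot(\eta^*(-\tfrac12 K_F)-E)+\Gamma\cdot\eta^*(-\tfrac12 K_F)>0$ because $-E$ is $\eta$-ample; combined with $(-K_{\hat F})^3>0$ (computed as $(2\eta^*(-\tfrac12 K_F)-E)^3$, using $(-\tfrac12 K_F)^3=(1,1,1)^3=6$ on $(\p^1)^3$, $\deg C=3$, $g(C)=0$) the Nakai--Moishezon criterion applies. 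For part~\ref{P1P1P1Cbignef}, since $D+K_{\hat F}=\eta^*(-\tfrac12 K_F)-E$ is semiample (base-point free, by \ref{P1P1P1CBirMorph}) and $-K_{\hat F}$ is ample, $D=-K_{\hat F}+(D+K_{\hat F})\cdot\tfrac12+\tfrac12(-K_{\hat F})$... more cleanly, $D=\tfrac12(-K_{\hat F})+(-K_{\hat F})+\tfrac12(D+K_{\hat F})$ is a sum of an ample and a nef class, hence ample; or just write $2D=3(-K_{\hat F})-E=(-K_{\hat F})+2(\eta^*(-\tfrac12 K_F)-E)+\dots$ and conclude $D$ is ample exactly as the analogous step in Lemma~\ref{Lemm:Iso26}.

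\textbf{Main obstacle.} The genuinely non-formal step is the explicit identification in part~\ref{P1P1P1CBirMorph}: computing the $4$-dimensional linear system $|(-\tfrac12 K_F)\otimes\I_C|$, checking base-point freeness on $\hat F$, and — the fiddliest part — verifying that the contracted divisors $\tilde S_i$ go to \emph{lines} and that these three lines are pairwise \emph{skew} in $\p^3$ (rather than concurrent or coplanar), which requires tracking how the pairwise intersections $S_i\cap S_j\subset(\p^1)^3$ (a fibre of one of the projections, meeting $C$ in one point) behave under the blow-up and the subsequent contraction. Everything else is bookkeeping with intersection numbers on $\hat F$ and a citation to \cite{MoriMukai1}.
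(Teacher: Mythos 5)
Your overall strategy is the same as the paper's: normalise $C$ to the diagonal, identify the linear system of tridegree $(1,1,1)$ forms through $C$ as a $4$-dimensional space giving a map to $\p^3$, show this map contracts the strict transforms of $H_1,H_2,H_3$ onto three skew lines, identify $\hat F$ with the blow-up of those lines, invoke the Mori--Mukai classification for \ref{P1P1P1CFano}, and deduce \ref{P1P1P1Cbignef} from \ref{P1P1P1CBirMorph} plus ampleness of $-K_{\hat F}$. However, your concrete description of the linear system is wrong: the three minors $x_0y_1-x_1y_0$, $y_0z_1-y_1z_0$, $x_0z_1-x_1z_0$ have tridegrees $(1,1,0)$, $(0,1,1)$, $(1,0,1)$ respectively, so they are not sections of $-\tfrac12K_F=\O_F(1,1,1)$ and cannot be members of the system $\lvert\eta^*(H)-E\rvert$; moreover your proposed fourth form $x_0y_0z_1-x_1y_1z_0$ restricts on the diagonal to $uv(u-v)\not\equiv 0$, so it does not even vanish on $C$. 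A correct basis is obtained by multiplying the minors by linear forms, namely $y_0(x_0z_1-x_1z_0)$, $y_1(x_0z_1-x_1z_0)$, $z_0(x_0y_1-x_1y_0)$, $z_1(x_0y_1-x_1y_0)$, which is exactly what the paper uses to define $\tau\colon F\dashrightarrow\p^3$.

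The second point you flag as the main obstacle --- that the contracted divisors map to \emph{lines} and that these are pairwise \emph{skew} --- is indeed where your write-up stops short. The paper settles it not by intersection-theoretic bookkeeping on the $\tilde S_i$ but by writing down the explicit inverse $\tau^{-1}\colon[w:x:y:z]\mapsto([w-y:x-z],[w:x],[y:z])$, reading off directly that $H_1,H_2,H_3$ contract to the three lines $\{y=z=0\}$, $\{w-y=x-z=0\}$, $\{w=x=0\}$ (which are visibly pairwise skew, lying in the two rulings of the quadric $wz=xy$), and then checking that $\tau^{-1}$ composed with the blow-up of those lines is a morphism by observing that each projection $\pi_i\circ\tau^{-1}$ is the linear projection away from $\ell_i$. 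With the corrected basis your plan would go through along these same lines, so the issue is a repairable computational slip plus a deferred verification rather than a wrong approach; but as written, the map you propose to $\p^3$ is not defined.
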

\begin{proof}
We have $K_F=-2H$, where $H\subset F$ is a hypersurface of tridegree $(1,1,1)$.
Since $K_{\hat{F}}=\eta^{*}(K_F)+E$, we obtain $D+K_{\hat{F}}=-\frac{1}{2}K_{\hat{F}}-\frac{1}{2}E=-\frac{1}{2}\eta^{*}(K_F)-E=\eta^{*}(H)-E$.
Changing coordinates, we may assume that $C=\{([u:v],[u:v],[u:v])\mid [u:v]\in \p^1\}$. The divisors $H_1,H_2,H_3\subseteq F$  of tridegree $(0,1,1)$, $(1,0,1)$, $(1,1,0)$ through $C$ are then given by 
\[H_1=\{x_0y_1-x_1y_0=0\}, H_2=\{y_0z_1-y_1z_0=0\}, H_3=\{x_0z_1-x_1z_0=0\}.\]

The rational map $\tau\colon F\dasharrow \p^3$ induced by $\lvert -\frac{1}{2}K_{\hat{F}}-\frac{1}{2}E\rvert$ is then given by hypersurfaces of tridegree $(1,1,1)$ through $C$ and thus given by
\[ ([x_0:x_1],[y_0:y_1],[z_0:z_1])\mapsto [y_0 ( x_0z_1-x_1z_0 ):y_1 ( x_0z_1-x_1z_0 ):z_0 ( 
x_0y_1-x_1y_0 ):z_1 ( x_0y_1-x_1y_0 )].\]
Its inverse $\tau^{-1}\colon  \p^3 \dasharrow\p^1\times \p^1\times \p^1$ is given by
\[[w:x:y:z]\mapsto ([w-y:x-z],[w:x],[y:z]).\]
We observe that $\tau^{-1}$ contracts the smooth quadric surface $S=\{wz-xy=0\}\subset \p^3$ onto the curve $C$, and that $\tau$ contracts respectively $H_1,H_2,H_3$ onto the three skew lines $\ell_1,\ell_2,\ell_3\subset S\subset \p^3$ given by $\ell_1=\{y=z=0\}$, $\ell_2=\{w-y=x-z=0\}$, $\ell_3=\{w=x=0\}$. Denote by $\kappa\colon X\to \p^3$ the blow-up of $\ell_1,\ell_2,\ell_3$. For $i\in \{1,2,3\}$, we denote by $\pi_i\colon \p^1\times \p^1\times \p^1\to \p^1$  the $i$-th projection, and observe that $\pi_i\circ \tau^{-1}\colon  \p^3\dasharrow \p^1$ is the linear projection away from the line $\ell_i$. Hence, $\pi_i\circ \tau^{-1}\circ\kappa \colon X\to \p^1$ is a morphism. This being true for the three projections, the birational map $\tau^{-1}\circ\kappa \colon X\to \p^1\times\p^1 \times \p^1$ is a morphism. 
This birational morphism between two smooth threefolds contracts the strict transform of $S$, isomorphic to $\p^1\times \p^1$, onto the curve $C\simeq \p^1$, and is thus the blow-up of $C$. This achieves the proof of~\ref{P1P1P1CBirMorph}.

To prove~\ref{P1P1P1CFano}, one can compute the cone of effective curves and prove that it is polyhedral, like in Lemma~\ref{Lem:PointpinP1P1P1linkII} and check that $-K_{\hat{F}}$ is ample. 
Equivalently, one can see that $\hat{F}$ appears in the classifcation of Fano threefolds (see \cite[n$\degree$6 of Table~4]{MoriMukai1}).
As for~\ref{P1P1P1Cbignef},
we  first observe that~\ref{P1P1P1CBirMorph} implies that  $D+K_{\hat{F}}$ is big and nef, and \ref{P1P1P1CFano} implies that $-K_{\hat{F}}$ is ample, so $D$ ample.
\end{proof}
\begin{remark}In Lemma~\ref{Lem:CurveinP1P1P1link}, note that $\tau\colon \p^1\times \p^1\times \p^1\dasharrow \p^3$ is $\PGL_2(\C)$-invariant, where $\begin{pmatrix} a & b \\ c & d\end{pmatrix}\in \PGL_2(\C)$ acts on $\p^1\times \p^1\times \p^1$ and $\p^3$ as
$([x_0:x_1],[y_0:y_1],[z_0:z_1])\mapsto ([ax_0+bx_1:cx_0+dx_1],[ay_0+by_1:cy_0+dy_1],[az_0+bz_1:cz_0+dz_1])$ and 
$[w:x:y:z]\mapsto [aw+bx:cw+dx:ay+bz:cy+dz]$.\end{remark}

\begin{lemma}\label{Lem:PointpinP1P1P1linkII}
Let $p$ be a point of $F=\p^1\times \p^1\times \p^1$ and let $\ell_1,\ell_2,\ell_3\subset F$ be the three curves of tridegree $(1,0,0)$, $(0,1,0)$, $(0,0,1)$  passing through $p$. Let $\eta_1\colon F_1\to F$ be the blow-up of $F$ at $p$ and let $\eta_2\colon F_2\to F_1$
be the blow-up at the strict transforms $\tilde{\ell}_1,\tilde{\ell}_2,\tilde{\ell}_3$ of  $\ell_1,\ell_2,\ell_3$.
Denoting by $E_i\subset F_i$ the exceptional divisor of $\eta_i$ and writing again $E_1\subset F_2$ for the strict transform of $E_1\subset F_1$, the following hold:

\begin{enumerate}
\item\label{P1P1P1Dnefbig}
The divisor $D=-\frac{3}{2}K_{F_2}-E_1-\frac{1}{2}E_2$ is big and nef.
\item\label{P1P1P1semiample}
The linear system $\lvert D+K_{F_2}\rvert$ gives a birational morphism $\tau_2\colon F_2\to \p^3$, which is the contraction of the exceptional divisors of $\eta_2$ and the strict transforms of the divisors of $F$ of tridegree $(1,0,0)$, $(0,1,0)$, $(0,0,1)$ through $p$. It is also the blow-up of three non-collinear points of $\p^3$ followed by the blow-up of the strict transforms of the three lines through two of them.
\item\label{P1P1P1flopcontraction}
The birational map $\tau_1=\tau_2\circ \eta_2^{-1}\colon F_1\dasharrow \p^3$ is obtained by the flop of the curves $\tilde{\ell}_1,\tilde{\ell}_2,\tilde{\ell}_3$ followed by the contraction of the strict transforms of the divisors of $F$ of tridegree $(1,0,0)$, $(0,1,0)$, $(0,0,1)$ through $p$. 
\item\label{P1P1P1p1Ample}
The divisor $A_1=-(\eta_1)^*K_F-E_1$  is an ample divisor of $F_1$ and the union of the curves of $F_1$ having intersection $1$ with $A_1$ is $E_1\cup\tilde\ell_1\cup\tilde\ell_2\cup\tilde\ell_3$.
\end{enumerate}
\end{lemma}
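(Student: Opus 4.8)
The plan is to work out explicit coordinates throughout, just as in the proof of Lemma~\ref{Lem:CurveinP1P1P1link}. First I would normalise $p=([1:0],[1:0],[1:0])$, so that $\ell_1,\ell_2,\ell_3$ are the coordinate lines through $p$ and the three divisors $H_i$ of tridegree $e_i$ (the coordinate hyperplanes) are $\{x_1=0\}$, $\{y_1=0\}$, $\{z_1=0\}$. Using $K_F=-2H$ with $H$ of tridegree $(1,1,1)$ and the blow-up formulas $K_{F_1}=\eta_1^*K_F+2E_1$, $K_{F_2}=\eta_2^*K_{F_1}+E_2$ (the centres $\tilde\ell_i$ being curves), one computes $D+K_{F_2}=-\tfrac12 K_{F_2}-E_1-\tfrac12 E_2=\eta_2^*\eta_1^*(H)-E_1-E_2-\eta_2^*E_1$, i.e. the strict transform on $F_2$ of the linear system of tridegree-$(1,1,1)$ divisors singular at $p$. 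Concretely the space of such sections is spanned by the six monomials $x_0y_0z_1,\ x_0y_1z_0,\ x_1y_0z_0$ together with $x_1y_1z_0,\ x_1y_0z_1,\ x_0y_1z_1$ — but after subtracting the base locus along $E_1$ and the $\tilde\ell_i$ one is left with the three-dimensional system giving the map $\tau_2\colon F_2\dashrightarrow\p^3$, $([x_\ast],[y_\ast],[z_\ast])\mapsto[x_0y_0z_0:x_1y_0z_0:x_0y_1z_0:x_0y_0z_1]$ (dehomogenised version $[1:t_1:t_2:t_3]$ where $t_i=x_{i,1}/x_{i,0}$). Its inverse $[w:a:b:c]\mapsto([w:a],[w:b],[w:c])$ is the standard Cremona-type map; it visibly blows up the three coordinate points $[0:1:0:0],[0:0:1:0],[0:0:0:1]$ of $\p^3$ and then the strict transforms of the three lines joining pairs of them, and it contracts the three coordinate planes $\{a=0\},\{b=0\},\{c=0\}$ (the strict transforms of $H_1,H_2,H_3$) and the three coordinate lines (coming from the blow-ups $E_2$). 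Verifying that $\tau_2$ is exactly this composite of blow-ups — hence a birational \emph{morphism} between smooth projective threefolds — is the crux of \ref{P1P1P1semiample}, and it also forces $D+K_{F_2}$ to be base-point free, i.e. nef, with positive top self-intersection, hence big; that gives \ref{P1P1P1Dnefbig} once we know $-K_{F_2}$ is nef, which I would get either by recognising $F_2$ in the Mori--Mukai tables or by the same cone-of-curves bookkeeping used elsewhere in the paper (so $D$ is a sum of a nef and an ample class).

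For \ref{P1P1P1flopcontraction}, I would simply factor $\tau_1=\tau_2\circ\eta_2^{-1}$: the morphism $\eta_2$ extracts the three divisors $E_2$ over the $\tilde\ell_i$, and on the target side $\tau_2$ these correspond to blowing down the three coordinate lines of $\p^3$; composing, the rational map $F_1\dashrightarrow\p^3$ first performs the standard Atiyah flop in each of the three curves $\tilde\ell_i$ (each $\tilde\ell_i\simeq\p^1$ has normal bundle $\O(-1)\oplus\O(-1)$ in $F_1$ — this one checks from the explicit local charts, exactly the $(-1,-1)$-curve picture), and then contracts the strict transforms of $H_1,H_2,H_3$; there is no indeterminacy left because after the three flops the resulting threefold maps isomorphically, away from those three divisors, to $\p^3$ minus three planes. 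Finally, \ref{P1P1P1p1Ample}: $A_1=-\eta_1^*K_F-E_1=2\eta_1^*H-E_1$; ampleness follows from Nakai--Moishezon by checking $A_1\cdot\Gamma>0$ for every irreducible curve $\Gamma\subset F_1$ (if $\Gamma\not\subset E_1$ then $A_1\cdot\Gamma\ge \eta_1^*H\cdot\Gamma>0$ since $2\eta_1^*H-E_1=\eta_1^*H+(D+K_{F_1}\text{-type nef class})$ is nef; if $\Gamma\subset E_1\simeq\p^2$ then $A_1|_{E_1}=\O_{\p^2}(1)$), $A_1^2\cdot S>0$ for surfaces, and $A_1^3=8H^3-E_1^3=8\cdot 6 - 1>0$ — or, more cheaply, by noting $A_1$ is the pullback of an ample class under the composite of $\eta_2$ and the contraction $F_2\to Y$ of the three $E_2$'s to a Fano threefold. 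The identification of the curves with $A_1\cdot\Gamma=1$ is then a direct computation: $E_1\simeq\p^2$ meets $A_1$ in lines, each $\tilde\ell_i$ (tridegree $(1,0,0)$ etc. minus a point) has $\eta_1^*H\cdot\tilde\ell_i=1$ and $E_1\cdot\tilde\ell_i=1$ so $A_1\cdot\tilde\ell_i=2\cdot1-1=1$, and any other curve has $A_1\cdot\Gamma\ge 2$.

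The main obstacle I anticipate is \ref{P1P1P1semiample}: one has to pin down precisely which divisors and curves are contracted by $\tau_2$ and check, chart by chart on $F_2$, that $\tau_2$ is a genuine morphism resolving all indeterminacy of $\tau_1$ — in other words that the base locus of $|D+K_{F_2}|$ really is empty on $F_2$ and that the six monomials above cut out exactly the expected sequence of blow-ups of $\p^3$. Everything else (the adjunction and blow-up bookkeeping, the $(-1,-1)$-curve computation behind the flops, and the Nakai--Moishezon verification for $A_1$) is routine once the coordinates are fixed; the structure closely parallels Lemma~\ref{Lem:CurveinP1P1P1link} and one can lean on \cite[Table~4]{MoriMukai1} to shortcut the positivity of $-K_{F_2}$ and $-K_{F_1}$.
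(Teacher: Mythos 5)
Your plan is essentially the paper's proof: the same normalisation of $p$, the same toric map $\tau_2\colon([x_*],[y_*],[z_*])\mapsto[x_0y_0z_0:x_1y_0z_0:x_0y_1z_0:x_0y_0z_1]$ with inverse $[w:a:b:c]\mapsto([w:a],[w:b],[w:c])$, the same chart-by-chart check that $\tau_2$ is a morphism, and the same identification of the three Atiyah flops (the paper deduces these from extremality and $K$-triviality of the $\tilde\ell_i$ rather than from the normal bundle $\O(-1)\oplus\O(-1)$, but both are fine). Two of your proposed shortcuts, however, do not work and you must fall back on the cone-of-curves bookkeeping you offer as the alternative. First, neither $F_1$ nor $F_2$ is Fano: one computes $-K_{F_1}\cdot\tilde\ell_i=2-2=0$, and on $F_2$ the strict transforms $e_{ij}$ of the lines of $E_1\simeq\p^2$ through two of the $\tilde\ell_i$ satisfy $-K_{F_2}\cdot e_{ij}=0$; so these varieties are not in the Mori--Mukai tables, and the analogy with Lemma~\ref{Lem:CurveinP1P1P1link} (where the blow-up \emph{is} Fano) breaks precisely here. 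The nefness of $-K_{F_2}$, which your decomposition $D=(D+K_{F_2})+(-K_{F_2})$ requires, has to come from an explicit description of the Mori cone of $F_2$; the paper determines nine generators $s_i,f_i,e_{ij}$ and checks $D$ directly against them ($D\cdot e_{ij}=0$, $D\cdot s_i=1$, $D\cdot f_i=2$), proving bigness by writing $D+\frac12\eta^*(K_F)$ as an effective divisor. Second, your ``more cheaply'' remark for the ampleness of $A_1$ (pullback of an ample class under a birational morphism) only gives big and nef, not ample; and your Nakai--Moishezon route still owes the surface inequality $A_1^2\cdot S>0$ as well as the unproved claim that every curve other than the $\tilde\ell_i$ and the lines of $E_1$ has $A_1\cdot\Gamma\ge2$. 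The paper handles both at once by showing the Mori cone of $F_1$ is polyhedral with generators $\tilde\ell_1,\tilde\ell_2,\tilde\ell_3,e_1$, each meeting $A_1$ with intersection exactly $1$, and that any other irreducible curve is numerically $\sum a_i\tilde\ell_i+be_1$ with $b\ge1$ and $\sum a_i\ge1$, whence $A_1\cdot\Gamma\ge2$; you should incorporate that computation, after which the rest of your outline goes through.
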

\begin{proof}
We denote by $H_1,H_2,H_3\subset F$ the divisors of tridegree $(1,0,0)$, $(0,1,0)$, $(0,0,1)$ respectively through $p$. This gives, for all $i,j,k$ with $\{1,2,3\}=\{i,j,k\}$, that $\ell_k=H_i\cap H_j$, $H_i\cdot \ell_j=0$  and $H_i\cdot \ell_i=1$.  The cone of curves of $F$ is then generated by $\ell_1,\ell_2,\ell_3$, and one has $-K_{F}=2H_1+2H_2+2H_3$, so $-K_{F}\cdot \ell_i=2$ for each $i\in \{1,2,3\}$.

We denote by $e_1\subset E_1\subset F_1$ a line in $E_1\simeq \p^2$, by $\tilde{\ell}_i$ and $H_i$ the strict transforms of  $\ell_i$ and $H_i$ on $F_1$, giving $(\eta_1)^*(H_i)=H_i+E_1$, for $i=1,2,3$. For all $i,j\in \{1,2,3\}$ with $i\not=j$, one finds the following intersection numbers:
\[\begin{array}{|c|c|c|c|c|}
\hline
&  E_1 & H_i &H_j\\
\hline
e_1 & -1 &1 & 1 \\
\tilde{\ell}_i& 1& 0& -1 \\ \hline
\end{array}\]
 This implies that the cone of curves of $F_1$ is polyhedral, generated by $\tilde{\ell}_1,\tilde{\ell}_2,\tilde{\ell}_3,e_1$. Indeed, each irreducible curve $C$ of $F_1$ is either contained in $E_1\simeq \p^2$ and thus equivalent to a positive multiple of $e_1$, or is the strict transform of a curve of $F$, so equal to $\sum a_i \tilde{\ell}_i+be_1$ with $b\in \Z$ and $a_1,a_2,a_3\ge 0$, $a_1+a_2+a_3\ge 1$. If $C$ is not equal to $\tilde{\ell}_1$, $\tilde{\ell}_2$ or $\tilde{\ell}_3$, then it is not contained in $H_i$ and $H_j$ for two distinct $i,j\in \{1,2,3\}$. Choosing $k$ with $\{i,j,k\}=\{1,2,3\}$ we obtain $0\le H_i\cdot C=b-a_j-a_k$ and  $0\le H_j\cdot C=b-a_i-a_k$, which implies that $b\ge 1$. We moreover obtain $K_{F_1}=(\eta_1)^*(K_{F})+2E_1=-2\sum_{i=1}^3 H_i-4E_1.$
 
 We now use this to prove \ref{P1P1P1p1Ample}. Firstly, the divisor $A_1=-(\eta_1)^*K_F-E_1=2\sum_{i=1}^3 H_i+5 E_1$
 is ample as $A_1\cdot \tilde{\ell}_1=A_1\cdot \tilde{\ell}_2=A_1\cdot \tilde{\ell}_3=A_1\cdot e_1=1$. As every line in $E_1\simeq \p^2$ is equivalent to $e_1$, its intersection with $A_1$ is $1$. The union of curves having intersection $1$ with $A_1$  thus contains $E_1\cup\tilde\ell_1\cup\tilde\ell_2\cup\tilde\ell_3$. Conversely, an irreducible curve $C\subset F_1$ not contained in $E_1$ is numerically equivalent to $\sum a_i \tilde{\ell}_i+be_1$ with $a_1,a_2,a_3,b\ge 0$ and $a_1+a_2+a_3\ge 1$. Moreover, if it is not equal to $ \tilde{\ell}_1$, $ \tilde{\ell}_2$ or $\tilde{\ell}_3$, then $b\ge 1$, as we observed before, so $C\cdot A_1=a_1+a_2+a_3+b\ge 2$.  This achieves the proof of  \ref{P1P1P1p1Ample}.

\begin{figure}[ht]
\begin{align*}\begin{tikzpicture}[scale=3.6,font=\footnotesize]
\coordinate (A) at (0:0) {};
\coordinate (B) at (0:1) {};
\coordinate (C) at (60:1) {};
\coordinate (H) at (barycentric cs:A=1,B=1,C=1) {};
\coordinate (H1) at (barycentric cs:A=0.15,B=0.5,C=0.5) {};
\coordinate (H2) at (barycentric cs:A=0.5,B=0.15,C=0.5) {};
\coordinate (H3) at (barycentric cs:A=0.5,B=0.5,C=0.15) {};
\draw (A) to  (C)
      (C) to  (B)
      (B) to (A);
\draw (A) to ["\scriptsize $\ell_1$",swap,yshift =1.2mm,xshift=0.5] (H)
      (B) to ["\scriptsize $\ell_2$",yshift =1.2mm,xshift=0.5]  (H)
      (H) to ["\scriptsize $\ell_3$",yshift =0mm,xshift=2] (C);
\node at (H1) {$H_1$};
\node at (H2) {$H_2$};
\node at (H3) {$H_3$};
\node at (H) {$\bullet$};
\node[right] at (H) {\scriptsize $p$};
\end{tikzpicture} 
&&
 \begin{tikzpicture}[scale=3.6,font=\footnotesize]
\coordinate (A) at (0:0) {};
\coordinate (B) at (0:1) {};
\coordinate (C) at (60:1) {};
\coordinate (R) at (barycentric cs:A=1,B=1,C=1) {};
\coordinate (E2) at (barycentric cs:A=0.35,B=1,C=0.35) {};
\coordinate (E3) at (barycentric cs:A=0.35,B=0.35,C=1) {};
\coordinate (E1) at (barycentric cs:A=1,B=0.35,C=0.35) {};
\coordinate (H1) at (barycentric cs:A=0.1,B=0.5,C=0.5) {};
\coordinate (H2) at (barycentric cs:A=0.5,B=0.1,C=0.5) {};
\coordinate (H3) at (barycentric cs:A=0.5,B=0.5,C=0.1) {};
\draw (A) to  (C)
      (C) to  (B)
      (B) to (A);
\draw (A) to ["\scriptsize $\tilde{\ell}_1$",swap,yshift =1mm,xshift=0.5] (E1)
      (B) to ["\scriptsize $\tilde{\ell}_2$",yshift =1mm,xshift=0.5]  (E2)
      (E3) to ["\scriptsize $\tilde{\ell}_3$",yshift =-0.5mm,xshift=0.75] (C)
      (E1) to ["\scriptsize $e_1$",yshift =-0.5mm,xshift=3] (E2)
      (E2) to ["\scriptsize $e_1$",yshift =0.5mm,xshift=3] (E3)
      (E3) to ["\scriptsize $e_1$",yshift =0.5mm,xshift=-3] (E1);
\node at (R) {$E_1$};
\node at (H1) {$H_1$};
\node at (H2) {$H_2$};
\node at (H3) {$H_3$};
\end{tikzpicture}
&&
 \begin{tikzpicture}[scale=4.2,font=\footnotesize]
\coordinate (A) at (0:0) {};
\coordinate (B) at (0:1) {};
\coordinate (C) at (60:1) {};
\coordinate (R) at (barycentric cs:A=1,B=1,C=1) {};
\coordinate (Z2) at (barycentric cs:A=0.6,B=3,C=0.2) {};
\coordinate (Z4) at (barycentric cs:A=0.2,B=3,C=0.6) {};
\coordinate (Z3) at (barycentric cs:A=0.2,B=0.6,C=3) {};
\coordinate (Z5) at (barycentric cs:A=0.6,B=0.2,C=3) {};
\coordinate (Z1) at (barycentric cs:A=3,B=0.6,C=0.2) {};
\coordinate (Z6) at (barycentric cs:A=3,B=0.2,C=0.6) {};
\coordinate (E2) at (barycentric cs:A=0.5,B=0.8,C=0.3) {};
\coordinate (E4) at (barycentric cs:A=0.3,B=0.8,C=0.5) {};
\coordinate (E3) at (barycentric cs:A=0.3,B=0.5,C=0.8) {};
\coordinate (E5) at (barycentric cs:A=0.5,B=0.3,C=0.8) {};
\coordinate (E1) at (barycentric cs:A=0.8,B=0.5,C=0.3) {};
\coordinate (E6) at (barycentric cs:A=0.8,B=0.3,C=0.5) {};
\coordinate (H1) at (barycentric cs:A=0.12,B=0.5,C=0.5) {};
\coordinate (H2) at (barycentric cs:A=0.5,B=0.12,C=0.5) {};
\coordinate (H3) at (barycentric cs:A=0.5,B=0.5,C=0.12) {};
\coordinate (S1) at (barycentric cs:A=2,B=0.5,C=0.5) {};
\coordinate (S2) at (barycentric cs:A=0.5,B=2,C=0.5) {};
\coordinate (S3) at (barycentric cs:A=0.5,B=0.5,C=2) {};
\draw (Z6) to  (Z5)
      (Z3) to  (Z4)
      (Z2) to (Z1);
\draw (E1) to ["\scriptsize $s_1$",yshift =1mm,xshift=-0.5] (Z1)
      (E6) to ["\scriptsize $s_1$",swap,yshift =-0.5mm,xshift=2.5] (Z6)
      (E2) to ["\scriptsize $s_2$",swap,yshift =1mm,xshift=0.5] (Z2)
      (E4) to ["\scriptsize $s_2$",yshift =-0.5mm,xshift=-2.5] (Z4)
      (E3) to ["\scriptsize $s_3$",swap,yshift =0.5mm,xshift=-1] (Z3)
      (E5) to ["\scriptsize $s_3$",yshift =0.5mm,xshift=1.3] (Z5)
      (E1) to ["\scriptsize $e_{12}$",yshift =-0.5mm,xshift=3.5] (E2)
      (E3) to ["\scriptsize $e_{23}$",swap,yshift =0.6mm,xshift=2] (E4)
      (E5) to ["\scriptsize $e_{13}$",yshift =0.6mm,xshift=-1.8] (E6)
      (E1) to ["\scriptsize $f_1$",swap,yshift =-0.5mm,xshift=-1.1] (E6)
      (Z1) to ["\scriptsize $f_1$",yshift =0.8mm,xshift=2.2] (Z6)
      (E2) to ["\scriptsize $f_2$",yshift =-0.7mm,xshift=1.4] (E4)
      (Z2) to ["\scriptsize $f_2$",swap,yshift =0.8mm,xshift=-1.8] (Z4)
      (Z3) to ["\scriptsize $f_3$",yshift =0.7mm,xshift=-3] (Z5)
      (E3) to ["\scriptsize $f_3$",yshift =0.5mm,xshift=-3] (E5);
\node at (R) {$E_1$};
\node at (H1) {$H_1$};
\node at (H2) {$H_2$};
\node at (H3) {$H_3$};
\node at (S1) {$\mathcal{E}_1$};
\node at (S2) {$\mathcal{E}_2$};
\node at (S3) {$\mathcal{E}_3$};
\end{tikzpicture}
\end{align*}
\caption{The divisors and curves on $F$, $F_1$ and $F_2$ respectively.} \label{FF1F2P1P1P1}
\end{figure}
The morphism $\eta_2\colon F_2\to F_1$ being the blow-up of $\tilde{\ell}_1,\tilde{\ell}_2,\tilde{\ell}_3$, 
we denote by $\mathcal{E}_i$ the irreducible component of $E_2$ lying over $\tilde{\ell}_i$, for $i=1,2,3$. 
The divisor $\mathcal{E}_i$ is isomorphic to $\p^1\times \p^1$ as it is isomorphic to the exceptional divisor of the  blow-up of $F$ along $\ell_i$. 
We write $s_i\subset \mathcal{E}_i$ for a section of $\mathcal{E}_i\to \tilde{\ell}_i$ of self-intersection $0$ and $f_i\subset \mathcal{E}_i$ a fibre of $\eta_2$; 
in particular $f_i\cdot \mathcal{E}_i=-1$.  We then denote again by $E_1,H_1,H_2,H_3$ the strict transforms of the same surfaces on $F_2$ and obtain $\eta_2^*(E_1)=E_1$ 
and $\eta_2^*(H_1)=H_1+\mathcal{E}_2+\mathcal{E}_3$.

Since $s_2\cdot \eta_2^*(H_1)=(\eta_2)_*(s_2)\cdot H_1=\tilde{\ell}_2\cdot H_1=-1$ and $s_2\cdot H_1=s_2\cdot \mathcal{E}_3=0$, we find $s_2\cdot \mathcal{E}_2=-1$. Similarly, one obtains $s_i\cdot \mathcal{E}_i=-1$ for each $i\in \{1,2,3\}$.

For all distinct $i,j\in \{1,2,3\}$, we also denote by $e_{ij}\subset F_2$ the strict transform of the line of $E_1\simeq \p^2$ 
that intersects $\tilde{\ell}_i$ and $\tilde{\ell}_j$ (see Figure~\ref{FF1F2P1P1P1}). 
This gives  
$E_1\cdot e_{ij}=\eta_2^*(E_1)\cdot e_{ij}=E_1\cdot (\eta_2)_*(e_{ij})=E_1\cdot e_1=-1$. We similarly obtain $E_1\cdot f_i=E_1\cdot f_j=0$. 

For all $i,j,k$ with $\{i,j,k\}=\{1,2,3\}$, one finds the following intersection numbers: 
\[\begin{array}{|c|c|c|c|c|c|}
\hline
& H_i & H_k& E_1 & \mathcal{E}_i &  \mathcal{E}_k\\
\hline
e_{ij}& 0 & -1&-1& 1 & 0\\
s_i&0&0&1&-1 & 0 \\
f_i& 0 & 1&0&-1 &  0\\
\hline
\end{array}\]
The fact that $e_{ij}\cdot H_k=-1$ can be computed as follows:   $1=e_1\cdot H_1=(\eta_2)_*e_{23}\cdot H_1=e_{23}\cdot \eta_2^*(H_1)=e_{23}\cdot (H_1+\mathcal{E}_2+\mathcal{E}_3)=e_{23}\cdot H_1+2$.

We now prove that the cone of effective curves of $F_2$ is polyhedral and generated by $s_1,s_2,s_3,f_1,f_2,f_3,e_{12},e_{13},e_{23}$, by proving that every irreducible curve $C$ is a non-negative linear combination of these.  If $C$ is contained in one of the surfaces $E_1,\mathcal{E}_i,H_i$, $i\in \{1,2,3\}$ this is true as our curves include all extremal rays of these del Pezzo surfaces. We may thus assume that $C$ is not contained in $E_1$ or in any of the $\mathcal{E}_i$ or $H_i$. The curve $C$ is the strict transform of $\eta_2(C)\equiv \sum_{i=1}^3 a_i \tilde{\ell}_i+be_1$ for some $a_1,a_2,a_3,b\ge 0$. This gives $C\equiv \sum_{i=1}^3 a_i s_i+be_{23}+\sum_{i=1}^3 c_i f_i$ for some $c_1,c_2,c_3\in \Z$. As $0\le H_1\cdot C=c_1-b$ we find that $c_1\ge 0$. Similarly, $0\le H_i\cdot C=c_i$ for $i=2,3$, proving the statement on the Mori cone of $F_2$.

We have $E_2=\sum_{i=1}^3\mathcal{E}_i$ and $D=-\frac{3}{2}K_{F_2}-E_1-\frac{1}{2}E_2$. Writing $\eta=\eta_1\circ\eta_2$, we get
\[\begin{array}{ccccc}
K_{F_2}&=&\eta^*(K_F)+2E_1+E_2&=&-2\sum\limits_{i=1}^3 H_i-4E_1-3E_2.\\
D&=&-\frac{3}{2}\eta^*(K_F)-4E_1-2E_2&=&3\sum\limits_{i=1}^3 H_i+5E_1+4E_2
\end{array}\] This  implies that \[D\cdot e_{ij}=0, D\cdot s_i=1\text{ and }D\cdot f_i=2.\]
 for all distinct $i,j\in \{1,2,3\}$.

The divisor $D$ is thus nef. As $-\frac{1}{2}K_F$ is big (it is very ample), the divisor $-\frac{1}{2}\eta^*(K_F)=\sum\limits_{i=1}^3 H_i+3E_1+2E_2$ is big. Since $D+\frac{1}{2}\eta^*(K_F)=2\sum\limits_{i=1}^3 H_i+2E_1+2E_2$ is effective, this implies that $D$ is big. This achieves the proof of ~\ref{P1P1P1Dnefbig}.

 To prove~\ref{P1P1P1semiample}, we change coordinates, assume that $p=([0:1],[0:1],[0:1])$ and take coordinates $([x_0:x_1],[y_0:y_1],[z_0:z_1])$ on $F$. This implies that $H_1$, $H_2$, $H_3$ are respectively given by $x_0=0$, $y_0=0$ and $z_0=0$.
We consider the linear system $\lvert D+K_{F_2}\rvert$ on $F_2$. Since 
 \[D+K_{F_2}=\sum\limits_{i=1}^3 H_i+E_1+E_2=-\frac{1}{2}\eta^*(K_F)-2E_1-E_2,\]
 it corresponds to the strict transform of hyperplane sections of $F=\p^1\times \p^1\times\p^1$ of tridegree $(1,1,1)$ having multiplicity $2$ at $p$ and passing through the three curves $\ell_1,\ell_2,\ell_3$. This last condition is in fact implied by the first. The linear system corresponds then to the toric birational map $\tau\colon \p^1\times \p^1\times \p^1\dasharrow \p^3$ given by 
\[ \tau\colon ([x_0:x_1],[y_0:y_1],[z_0:z_1])\mapsto [x_0y_0z_0:x_1y_0z_0:x_0y_1z_0:x_0y_0z_1],\]
whose inverse is given by
\[\tau^{-1}\colon [w:x:y:z]\mapsto ([w:x],[w:y],[w:z]),\]
and which restricts to an isomorphism $F\setminus (H_1\cup H_2\cup H_3)\iso \p^3 \setminus H_w$, where $H_w\subset \p^3$ is the hyperplane given by $w=0$.
Hence, the linear system $\lvert D+K_{F_2}\rvert$ yields the toric birational map $\tau_2=\tau\circ \eta\colon F_2\dasharrow \p^3$. 

We prove now that $\tau_2$ is a morphism, i.e.~that it is defined at every point $q\in F$. As $\tau$ is defined outside of $\ell_1\cup\ell_2\cup\ell_3$, we may assume that $\eta(q)\in \ell_1\cup\ell_2\cup\ell_3$. Using the action of $\mathrm{Sym}_3$ on $x,y,z$, we may assume that $\eta(q)\in \ell_1$. If $\eta(q)\not=p$, then $\eta(q)$ belongs to the image of the open embedding $\A^3\hookrightarrow F, (r,s,t)\mapsto ([1:r],[s:1],[t:1])$. The morphism $\eta$ corresponds on this chart to the blow-up of $s=t=0$, that is $\{((r,s,t),[u:v])\in \A^3\times \p^1\mid sv=tu\}$. Hence, $\tau_2$ is locally given by $((r,s,t),[u:v])\mapsto [st:rst:t:s]=[sv:rsv:v:u]=[tu:rtu:v:u]$ and is  then well-defined at every point. The remaining case is where $\eta(q)=p$, so $\eta_2(q)$ belongs to the surface $E_1\subset F_1$ isomorphic to $\p^2$. We study $\tau_1=\tau\circ \eta_1\colon F_1\dasharrow \p^3$ in a neighbourhood of $E_1$. For this, we take the open embedding $\A^3\hookrightarrow F, (r,s,t)\mapsto ([r:1],[s:1],[t:1])$, and obtain that $\eta_1$ is the blow-up of the origin of $\A^3$ in this chart, corresponding to $\{((r,s,t),[u:v:w])\in \A^3\times \p^2\mid su=rv,sw=tv,rw=tu\}$. The rational map $\eta_1$ is locally given by $((r,s,t),[u:v:w])\mapsto [rst:st:rt:rs]=[rvw:vw:uw:uv]$. The divisor $E_1$ corresponds to $(0,0,0)\times \p^2$, so $\eta_1$ is defined at every point of $E_1$ except the three toric points. These are exactly the points where $(\eta_2)^{-1}$ is not an isomorphism. Using the symmetry, we may assume that $\eta_2(q)=E_1\cap \tilde{\ell}_1$, corresponding to $((0,0,0),[0:0:1])\in \A^3\times \p^2$. We choose the open embedding $\A^3\hookrightarrow \A^3\times \p^2$, $(a,b,c)\mapsto ((ac,bc,c),[a:b:1])$, and see $\eta_2$ as the blow-up of the line $a=b=0$. This latter is given by $\{((a,b,c),[\alpha:\beta])\in \A^3\times \p^1\mid a\beta=b\alpha\}$ and  $\tau_2$ is given by $((a,b,c),[\alpha:\beta])\mapsto [abc:b:a:ab]=[a\beta c:\beta:\alpha :a\beta ],$ and is thus defined at every point.

We have now proven that $\tau_2=\lvert D+\frac{1}{2}K_F\rvert\colon F_2\to \p^3$ is a birational morphism. Since $\tau(H_1\setminus (\ell_2\cup \ell_3))=[0:1:0:0]$, the morphism $\tau_2$ contracts the surface $H_2\subset F_2$ onto $[0:1:0:0]$. Similarly, the surfaces $H_1,H_3\subset F_2$ are contracted onto $[0:0:1:0]$ and $[0:0:0:1]$. The above description of $\tau_2$ implies also that $\mathcal{E}_1$ is contracted to the curve $w=x=0$, so $\mathcal{E}_2$ and $\mathcal{E}_3$ are contracted onto $w=y=0$ and $w=z=0$. One can then either check in coordinates or use the universal property of blowing-ups, to see that $\tau_2$ is exactly the blow-up of $[0:1:0:0]$, $[0:0:1:0]$ and $[0:0:0:1]$, followed by the blow-up of the strict transform of the three lines through these points. This achieves the proof of \ref{P1P1P1semiample}.

It remains to prove~\ref{P1P1P1flopcontraction}. We have already shown that $\tau_1$ is obtained by blowing-up  the curves $\tilde{\ell}_1,\tilde{\ell}_2,\tilde{\ell}_3$, then contracting their divisors $\mathcal{E}_1,\mathcal{E}_2,\mathcal{E}_3\simeq \p^1\times \p^1$ ``in the other direction'' and then contracting the strict transforms of the divisors of $F$ of tridegree $(1,0,0)$, $(0,1,0)$, $(0,0,1)$ through $p$ onto $[0:1:0:0]$, $[0:0:1:0]$ and $[0:0:0:1]$. As  $\tilde{\ell}_1,\tilde{\ell}_2,\tilde{\ell}_3\subset F_1$ are extremal and have intersection $0$ with the canonical, the blow-up of them followed by the contracting of the divisors ``in the other direction''  simply consists of three Atiyah flops.
\end{proof}

\subsection{Symmetric birational maps from $\p(T_{ \p^2})$}
We now describe symmetric birational maps from a smooth hypersurface of $\p^2\times \p^2$ of bidegree $(1,1)$ $($Case~\hyperlink{T4}{$4$} of Table~$\ref{3folds.MFS})$, which is isomorphic to $\p(T_{ \p^2})$ (Lemma~\ref{T4Aut}).

The following lemma is similar to Lemma~\ref{Lem:CurveinP1P1P1link}.
\begin{lemma}\label{Lem:LineinPTP2link}
Let $C$ be a curve of bidegree $(1,1)$ in  $\p^2\times \p^2$ contained in a smooth hypersurface $F$ of bidegree $(1,1)$. Let $\eta\colon \hat{F}\to F$ be the blow-up of $F$ at $C$, with exceptional divisor $E$. 
\begin{enumerate}
\item\label{PTP2LFano}
The threefold $\hat{F}$ is a smooth Fano threefold of Picard rank $3$.
\item\label{PTP2Lbignef}
The divisor $D=-\frac{3}{2}K_{\hat{F}}-\frac{1}{2}E$ is ample.
\item\label{PTP2LBirMorph}
The linear system $\lvert D+K_{\hat{F}}\rvert$ gives a birational morphism $\hat{F}\to Q$, where $Q$ is a smooth quadric in $\p^4$, which is  the contraction of the strict transforms of divisors of $F$ of bidegree $(0,1)$, $(1,0)$ through $C$, or equivalently the blow-up of two skew lines of $Q$.\end{enumerate}
\end{lemma}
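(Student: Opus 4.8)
The plan is to imitate the proofs of Lemmata~\ref{Lem:CurveinP1P1P1link} and~\ref{Lemm:Iso26}: construct the birational morphism of part~\ref{PTP2LBirMorph} explicitly in coordinates and then read off parts~\ref{PTP2LFano} and~\ref{PTP2Lbignef}. First I would normalise the data. By Lemma~\ref{T4Aut}\ref{F11standard} we may take $F=\{\sum_{i=0}^2x_iy_i=0\}\subset\p^2\times\p^2$. A curve of bidegree $(1,1)$ in $F$ projects to a line in each factor, and if $C$ is irreducible it is the graph of an isomorphism between these two lines, which forces the bilinear pairing between the two spanned $2$-planes to be nondegenerate; since $\PGL_3(\C)\simeq\Autz(F)$ acts transitively on such configurations we may assume $C=\{([a:b:0],[b:-a:0])\mid[a:b]\in\p^1\}$. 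From $K_F=-2H$, with $H$ the restriction of a bidegree $(1,1)$ divisor, and $K_{\hat F}=\eta^*K_F+E$, one gets $D+K_{\hat F}=\eta^*H-E$, so $\lvert D+K_{\hat F}\rvert$ is the strict transform of the linear system of bidegree $(1,1)$ divisors of $F$ through $C$. Computing the bidegree $(1,1)$ forms vanishing on $C$ and reducing modulo the equation of $F$ shows that this system defines
\[\tau\colon F\dashrightarrow\p^4,\qquad([x],[y])\mapsto[x_0y_2:x_1y_2:x_2y_0:x_2y_1:x_2y_2].\]
Using $\sum x_iy_i=0$ one checks that the image of $\tau$ lies on the smooth quadric $Q=\{w_0w_2+w_1w_3+w_4^2=0\}\subset\p^4$, that $[w]\mapsto([w_0:w_1:w_4],[w_2:w_3:w_4])$ is inverse to $\tau$ on $Q$ (so $\tau$ is birational onto $Q$), and that the base locus of $\tau$ on $F$ is exactly $\{x_2=y_2=0\}\cap F=C$.

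Second, to identify the induced morphism $\hat F\to Q$ with a blow-up I would follow Lemma~\ref{Lem:CurveinP1P1P1link}\ref{P1P1P1CBirMorph}. Let $L_1=\{w_0=w_1=w_4=0\}$ and $L_2=\{w_2=w_3=w_4=0\}$, two disjoint lines lying on $Q$, and let $\kappa\colon Q'\to Q$ be their blow-up. The two components of $\tau^{-1}$ are the linear projections of $\p^4$ away from $L_1$ and away from $L_2$, which become morphisms after blowing up $L_1$, resp.\ $L_2$; hence $\tau^{-1}\circ\kappa\colon Q'\to\p^2\times\p^2$ is a morphism with image $F$, it is birational between smooth projective threefolds, and it contracts the strict transform of the hyperplane section $\{w_4=0\}\cap Q\simeq\p^1\times\p^1$ onto $C\simeq\p^1$ via a projection $\p^1\times\p^1\to\p^1$. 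Since this contraction is a smooth $\p^1$-bundle over $C$, the morphism $Q'\to F$ is the blow-up of $C$; therefore $Q'\simeq\hat F$ and, under this identification, $\hat F\to Q$ is the blow-up of $L_1\cup L_2$. Tracking strict transforms shows that the divisors contracted by $\hat F\to Q$ are the strict transforms of the unique divisors of $F$ of bidegree $(0,1)$ and $(1,0)$ through $C$, namely $F\cap\{y_2=0\}$ and $F\cap\{x_2=0\}$, which are sent to $L_1$ and $L_2$ respectively. This proves part~\ref{PTP2LBirMorph}; in particular $\eta^*H-E=D+K_{\hat F}$ is base-point-free, hence nef and big.

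The remaining parts are then formal. For part~\ref{PTP2LFano}: $\hat F$ is smooth, as the blow-up of the smooth threefold $F$ along the smooth curve $C$, and $\rho(\hat F)=\rho(F)+1=3$; it is Fano because $\hat F\simeq\mathrm{Bl}_{L_1\cup L_2}Q$ appears in the Mori--Mukai classification of Fano threefolds \cite{MoriMukai1}, or, alternatively, because one computes (as in Lemma~\ref{Lem:PointpinP1P1P1linkII}) that the cone of curves of $\hat F$ is polyhedral and that $-K_{\hat F}=2\eta^*H-E$ is positive on each of its extremal rays. For part~\ref{PTP2Lbignef}: $D=-\tfrac32K_{\hat F}-\tfrac12E=(-K_{\hat F})+(\eta^*H-E)$ is the sum of the ample divisor $-K_{\hat F}$ (part~\ref{PTP2LFano}) and the nef divisor $\eta^*H-E$, hence ample.

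The main obstacle I expect is showing that $\tau$ really extends to a morphism on all of $\hat F$ which is precisely the blow-up of $L_1\cup L_2$ — that is, controlling the map along $E$ and along the two contracted divisors. As in the model lemmas this is done either by direct computations in the standard affine charts of the blow-ups, or, more cleanly, by the detour through $Q'=\mathrm{Bl}_{L_1\cup L_2}Q$ above, which reduces the point to the elementary fact that blowing up the centre of a linear projection resolves it; the accompanying bookkeeping (which divisor maps to which line, base-point-freeness, Picard numbers) is routine but must be carried out carefully. A minor preliminary point is to check the transitivity of $\Autz(F)$ on smooth bidegree $(1,1)$ curves of $F$, which legitimises the normalisation of $C$.
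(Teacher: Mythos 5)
Your proposal is correct and follows essentially the same route as the paper: normalise $F$ and $C$ by Lemma~\ref{T4Aut}, write the map induced by the strict transforms of bidegree $(1,1)$ divisors through $C$ explicitly (your coordinates for $C$ and for the quadric $Q$ differ only cosmetically from the paper's), identify its inverse with the pair of linear projections away from two skew lines of $Q$, and deduce that $\tau^{-1}\circ\kappa$ is the blow-up of $C$ because it is a birational morphism of smooth threefolds contracting the quadric surface $\{w_4=0\}\cap Q$ onto $C$; parts~\ref{PTP2LFano} and~\ref{PTP2Lbignef} are then obtained exactly as in the paper (Mori--Mukai classification or cone-of-curves computation, and $D=(-K_{\hat F})+(D+K_{\hat F})$ as ample plus nef). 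The only point handled slightly differently is the normalisation of $C$, which the paper carries out by two explicit coordinate changes rather than by invoking transitivity of $\Autz(F)$ on such curves, but both are fine.
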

\begin{proof}
We have $K_F=-2H$, where $H\subset F$ is the intersection of $F$ with a hypersurface of $\p^2\times \p^2$ of bidegree $(1,1)$.
Since $K_{\hat{F}}=\eta^{*}(K_F)+E$, we obtain $D+K_{\hat{F}}=-\frac{1}{2}K_{\hat{F}}-\frac{1}{2}E=-\frac{1}{2}\eta^{*}(K_F)-E=\eta^{*}(H)-E$.
Changing coordinates, we may assume that \[F=\left\{([x_0:x_1:x_2],[y_0:y_1:y_2])\in \p^2\times \p^2 \left| \textstyle \sum\limits_{i=0}^2 x_i y_i=0\right\}\right..\]
 (Lemma~\ref{T4Aut}). We may then apply an element of $\PGL_3(\C)$ as in Lemma~\ref{T4Aut}\ref{PTP2PGL3} and assume that the projection of $C$ onto the first coordinate is given by $x_0=0$, so $C$ is given by 
\[C=\{([0:u:v],[\alpha u+\beta v:-v:u])\mid [u:v]\in \p^1\}\]
for some $\alpha,\beta\in \C$. Applying an automorphism of the form $([x_0:x_1:x_2],[y_0:y_1:y_2])\mapsto ([x_0:x_1-\beta x_0:x_2+\alpha x_0],[y_0+\beta y_1-\alpha y_2:y_1:y_2])$, we may assume that $\alpha=\beta=0$.
 
The divisors $H_1,H_2\subseteq F$  of bidegree $(0,1)$, $(1,0)$ through $C$ are then given respectively by 
\[H_1=\{y_0=0\}, H_2=\{x_0=0\}.\]

The linear system $\lvert -\frac{1}{2}K_{\hat{F}}-\frac{1}{2}E\rvert$ is the linear system of strict transforms of hypersurfaces of bidegree $(1,1)$ through $C$ and thus the rational map $\tau\colon F\dasharrow Q\subset \p^4$ induced by it is given by
\[ ([x_0:x_1:x_2],[y_0:y_1:y_2])\mapsto [x_0y_0:x_0y_1:x_1y_0:x_0y_2:x_2y_0].\]
Its image is $Q=\{[z_0:\cdots:z_4]\in \p^4\mid z_0^2+z_1z_2+z_3z_4=0\}$. The inverse $\tau^{-1}\colon Q \dasharrow F$ is given by
\[[z_0:\cdots:z_4]\mapsto ([z_0:z_2:z_4],[z_0:z_1:z_3]).\]
We observe that $\tau^{-1}$ contracts the smooth quadric surface $S=\{z_0=0, z_1z_2+z_3z_4=0\}\subset Q\subset\p^4$ onto the curve $C$, and that $\tau$ contracts respectively $H_1,H_2$ onto the two skew lines $\ell_1,\ell_2\subset S$ given by $\ell_1=\{z_0=z_2=z_4=0\}$ and $\ell_2=\{z_0=z_1=z_3=0\}$. Denote by $\kappa\colon X\to Q$ the blow-up of $\ell_1,\ell_2$. For $i\in \{1,2\}$, we denote by $\pi_i\colon \p^2\times \p^2\times \p^2$  the $i$-th projection, and observe that $\pi_i\circ \tau^{-1}\colon  \p^4\dasharrow \p^2$ is the linear projection away from the line $\ell_i$. Hence, $\pi_i\circ \tau^{-1}\circ\kappa \colon X\to \p^2$ is a morphism. This being true for the two projections, the birational map $\tau^{-1}\circ\kappa \colon X\to F$ is a morphism. 
This birational morphism between two smooth threefolds contracts the strict transform of $S$, isomorphic to $\p^1\times \p^1$ onto the curve $C\simeq \p^1$, and is thus the blow-up of $C$. This achieves the proof of~\ref{PTP2LBirMorph}.

To prove~\ref{PTP2LFano}, one can compute the cone of effective curves, prove that it is polyhedral like in Lemma~\ref{Lem:PointpinP1P1P1linkII} and check that $-K_{\hat{F}}$ is ample. Equivalently, $\hat{F}$ is a smooth Fano threefold appearing in the Mori-Mukai classification (see \cite[n$\degree$20 of Table~3]{MoriMukai1}). The proof of~\ref{PTP2Lbignef} can be done as follows: we  first observe that~\ref{PTP2LBirMorph} implies that  $D+K_{\hat{F}}$ is big and nef, and \ref{PTP2LFano} implies that $-K_{\hat{F}}$ is ample, so $D$ is ample.
\end{proof}

The following result, and its proof, are very similar to Lemma~\ref{Lem:PointpinP1P1P1linkII}. The main difference is the morphism induced by $\lvert D+K_{F_2}\rvert$: it is birational with image $\p^3$ in the case of $(\p^1)^3$ and it is not birational in the case of $F\subset \p^2\times \p^2$, as it is the restriction of $\p^2\times \p^2\dasharrow \p^1\times\p^1$ where each factor is a projection from a point.
\begin{lemma}\label{Lem:PointpinPTP2linkII}
Let $p=(p_1,p_2)$ be a point of $\p^2\times \p^2$,  contained in a smooth hypersurface $F$ of bidegree $(1,1)$. Let $\ell_1,\ell_2\subset F$ be the two curves of bidegree $(1,0)$, $(0,1)$  that pass through $p$. Let $\eta_1\colon F_1\to F$ be the blow-up of $F$ at $p$ and let $\eta_2\colon F_2\to F_1$
be the blow-up at the strict transforms $\tilde{\ell}_1,\tilde{\ell}_2$ of  $\ell_1,\ell_2$.
Denoting by $E_i\subset F_i$ the exceptional divisor of $\eta_i$ and writing again $E_1\subset F_2$ for the strict transform of $E_1\subset F_1$, the following hold:
\begin{enumerate}
\item\label{PTP2Dnefbig}
The divisor $D=-\frac{3}{2}K_{F_2}-E_1-\frac{1}{2}E_2$ is big and nef.
\item\label{PTP2semiample}
The linear system $\lvert D+K_{F_2}\rvert$ gives a morphism $\tau_2\colon F_2\to \p^1\times \p^1\subset \p^3$, with general fibres isomorphic to $\p^1$, 
corresponding to projections $\p^2\dasharrow \p^1$ away from $p_i$ on the two factors of $\p^2\times \p^2$.
\item\label{PTP2p1Ample}
The divisor $A_1=-(\eta_1)^*K_F-E_1$  is an ample divisor of $F_1$ and the union of the curves of $F_1$ having intersection $1$ with $A_1$ is $E_1\cup\tilde\ell_1\cup\tilde\ell_2\cup\tilde\ell_3$.
\end{enumerate}
\end{lemma}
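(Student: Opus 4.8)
The plan is to follow the proof of Lemma~\ref{Lem:PointpinP1P1P1linkII} closely. Two of its ingredients simplify here (only the two curves $\ell_1,\ell_2$ occur, not three), and one changes qualitatively: the morphism attached to $\lvert D+K_{F_2}\rvert$ is a fibration onto $\p^1\times\p^1$, not a birational morphism onto $\p^3$.

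\emph{Set-up.} By Lemma~\ref{T4Aut}\ref{F11standard} I may assume $F=\{\sum_{i=0}^2 x_iy_i=0\}\subset\p^2\times\p^2$, and, using the $\PGL_3(\C)$-action of Lemma~\ref{T4Aut}\ref{PTP2PGL3}, that $p=([0:0:1],[1:0:0])$; then $\ell_1=\{([0:x_1:x_2],[1:0:0])\}$ is the fibre of the second projection $\pi_2$ over $p_2$ and $\ell_2=\{([0:0:1],[y_0:y_1:0])\}$ the fibre of $\pi_1$ over $p_1$. Let $h_1,h_2$ be the pullbacks to $F$ of the two hyperplane classes and $H=h_1+h_2$, so that $-K_F=2H$ and $\overline{\NE}(F)=\R_{\ge0}[\ell_1]+\R_{\ge0}[\ell_2]$ (the rays of the two $\p^1$-bundle structures of $F\simeq\p(T_{\p^2})$). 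Write $\eta=\eta_1\circ\eta_2$ and let $\mathcal{E}_i\subset F_2$ be the component of $E_2$ lying over $\tilde\ell_i$. Blowing up a point and then two disjoint smooth curves yields $K_{F_2}=\eta^*K_F+2E_1+E_2$ with $E_2=\mathcal{E}_1+\mathcal{E}_2$, hence
\[D+K_{F_2}=-\tfrac12K_{F_2}-E_1-\tfrac12E_2=\eta^*H-2E_1-E_2=(\eta^*h_1-E_1-\mathcal{E}_2)+(\eta^*h_2-E_1-\mathcal{E}_1),\qquad D=\eta^*H+2(D+K_{F_2}).\]

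\emph{Assertion \ref{PTP2semiample}.} The rational map $\phi_1\colon F\dashrightarrow\p^1$, $([x],[y])\mapsto[x_0:x_1]$ (first projection followed by the linear projection away from $p_1$) has indeterminacy locus exactly $\ell_2$; since $I_{\ell_2}\O_{F_2}=\O_{F_2}(-E_1-\mathcal{E}_2)$ is invertible (because $\ell_2$ passes once through $p$), the morphism $F_2\to F$ factors through $\mathrm{Bl}_{\ell_2}F$ by the universal property of blowing up, giving a morphism $\psi_1\colon F_2\to\p^1$ with $\psi_1^*\O_{\p^1}(1)=\O_{F_2}(\eta^*h_1-E_1-\mathcal{E}_2)$; symmetrically $\phi_2\colon([x],[y])\mapsto[y_1:y_2]$ has indeterminacy $\ell_1$ and yields $\psi_2\colon F_2\to\p^1$ with $\psi_2^*\O_{\p^1}(1)=\O_{F_2}(\eta^*h_2-E_1-\mathcal{E}_1)$. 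Set $\tau_2=(\psi_1,\psi_2)\colon F_2\to\p^1\times\p^1$; then $\tau_2^*\O_{\p^1\times\p^1}(1,1)=\O_{F_2}(D+K_{F_2})$ by the displayed identity, and $\tau_2$ is surjective (if not, $\psi_2$ would factor through $\psi_1$, giving an impossible linear relation between $\eta^*h_1,\eta^*h_2,E_1,\mathcal{E}_1,\mathcal{E}_2$ in $N^1(F_2)$). Computing in charts, for general $([a_0:a_1],[b_1:b_2])$ the map $\phi=(\phi_1,\phi_2)$ contracts the curve $F\cap(L_a\times M_b)$ to that point, where $L_a,M_b\subset\p^2$ are the lines through $p_1,p_2$ determined by $a,b$; in suitable coordinates on $L_a\times M_b\simeq\p^1\times\p^1$ this is the $(1,1)$-curve $\{x_0y_0+(a_1x_0+b_2x_2)y_1=0\}$ (after normalising $a_0=b_1=1$), whose bilinear form has determinant $b_2\ne0$, so it is smooth, i.e.\ $\simeq\p^1$; it meets $\ell_1\cup\ell_2$ only at $p$ and with a tangent different from those of $\ell_1,\ell_2$, so its strict transform in $F_2$ is again $\simeq\p^1$ and is the fibre of $\tau_2$ over $([a_0:a_1],[b_1:b_2])$. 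In particular $\tau_2$ has connected general fibre, so $(\tau_2)_*\O_{F_2}=\O_{\p^1\times\p^1}$ by Stein factorisation and normality of $\p^1\times\p^1$, giving $h^0(F_2,D+K_{F_2})=h^0(\p^1\times\p^1,\O(1,1))=4$; composing $\tau_2$ with the Segre embedding $\p^1\times\p^1\hookrightarrow\p^3$ identifies the morphism defined by $\lvert D+K_{F_2}\rvert$ with $\tau_2$.

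\emph{Assertions \ref{PTP2Dnefbig} and \ref{PTP2p1Ample}.} Since $D+K_{F_2}$ is base-point-free (hence nef) and nonzero, and $\eta^*H$ is the pullback of the ample class $H$ (hence nef and big), the identity $D=\eta^*H+2(D+K_{F_2})$ gives that $D$ is nef and big, which is \ref{PTP2Dnefbig}. For \ref{PTP2p1Ample} (where the union stated should read $E_1\cup\tilde\ell_1\cup\tilde\ell_2$, the $\tilde\ell_3$ being a leftover from the three-curve case), I would first show $\overline{\NE}(F_1)=\R_{\ge0}[\tilde\ell_1]+\R_{\ge0}[\tilde\ell_2]+\R_{\ge0}[e_1]$, with $e_1$ a line of $E_1\simeq\p^2$: an irreducible curve is either contained in $E_1$, hence a multiple of $e_1$, or is the strict transform $\tilde C'$ of an irreducible $C'\subset F$ of bidegree $(a,b)$ with $a,b\ge0$, and intersecting $C'$ with a general $\pi_1^{-1}(L_1)$, $L_1\ni p_1$ (resp.\ $\pi_2^{-1}(L_2)$, $L_2\ni p_2$, or treating $a=0$ resp.\ $b=0$ directly, $C'$ then being a fibre of a projection) gives $\mathrm{mult}_pC'\le a+b$, so the coefficient $a+b-\mathrm{mult}_pC'$ of $e_1$ in $\tilde C'$ is $\ge0$. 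One computes $A_1=-\eta_1^*K_F-E_1=2\eta_1^*H-E_1$ and $A_1\cdot\tilde\ell_1=A_1\cdot\tilde\ell_2=A_1\cdot e_1=1$, so $A_1$ is positive on $\overline{\NE}(F_1)\setminus\{0\}$, hence ample by Kleiman's criterion; and writing $C\equiv a\tilde\ell_1+b\tilde\ell_2+c\,e_1$ with $a,b,c\ge0$ one has $A_1\cdot C=a+b+c$, so $A_1\cdot C=1$ forces an irreducible $C$ to be a line of $E_1$, or $\tilde\ell_1$, or $\tilde\ell_2$; their union is $E_1\cup\tilde\ell_1\cup\tilde\ell_2$.

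\emph{Main obstacle.} The delicate point is the base-point-freeness and fibre analysis in \ref{PTP2semiample}: one must verify that blowing up $p$ and then the strict transforms $\tilde\ell_1,\tilde\ell_2$ (rather than $\ell_1,\ell_2$ themselves) already resolves both pencils with no residual base locus on $E_1,\mathcal{E}_1,\mathcal{E}_2$, and that the general fibre of $\tau_2$ equals the strict transform of the $(1,1)$-curve above (not a larger one-cycle). This is exactly where the chart computation and the identifications $I_{\ell_1}\O_{F_2}=\O_{F_2}(-E_1-\mathcal{E}_1)$, $I_{\ell_2}\O_{F_2}=\O_{F_2}(-E_1-\mathcal{E}_2)$ do the work, and it is the only place where the geometry differs substantively from Lemma~\ref{Lem:PointpinP1P1P1linkII}.
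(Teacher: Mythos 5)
Your proof is correct, and for assertions \ref{PTP2Dnefbig} and \ref{PTP2semiample} it takes a genuinely different route from the paper's. The paper proves \ref{PTP2Dnefbig} first, by computing the whole cone of effective curves of $F_2$ (generated by the classes $s_1,s_2,f_1,f_2,e_2$ in its notation), checking $D\cdot C\ge 0$ on each generator, and extracting bigness from the effective decomposition $D+\frac{1}{2}\eta^*(K_F)=2H_1+2H_2+2E_2$; it then proves \ref{PTP2semiample} by writing the map $([x],[y])\mapsto([x_1:x_2],[y_0:y_2])$ explicitly and verifying in local charts, point by point along $E_1$ and the $\mathcal{E}_i$, that the composition with $\eta$ is everywhere defined. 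You reverse the order: the identities $I_{\ell_i}\mathcal{O}_{F_2}=\mathcal{O}_{F_2}(-E_1-\mathcal{E}_i)$ together with the universal property of blowing up replace the chart-by-chart verification and produce the two morphisms $\psi_1,\psi_2$ at once; the splitting $D+K_{F_2}=(\eta^*h_1-E_1-\mathcal{E}_2)+(\eta^*h_2-E_1-\mathcal{E}_1)$ identifies $\lvert D+K_{F_2}\rvert$ with $\tau_2=(\psi_1,\psi_2)$ followed by the Segre embedding; and \ref{PTP2Dnefbig} then drops out of $D=\eta^*H+2(D+K_{F_2})$ with no Mori-cone computation on $F_2$ at all. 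What the paper's computation buys is the explicit list of extremal classes of $F_2$, which is not used elsewhere, so your shortcut loses nothing; what yours buys is brevity and a conceptual reason why $\tau_2$ is a morphism. Your treatment of \ref{PTP2p1Ample} coincides with the paper's, and you are right that the $\tilde\ell_3$ in the statement is a leftover from Lemma~\ref{Lem:PointpinP1P1P1linkII} (the paper's own proof repeats the typo): the union is $E_1\cup\tilde\ell_1\cup\tilde\ell_2$. The one step you should spell out is that a general fibre of $\tau_2$ has no component inside $E_1\cup\mathcal{E}_1\cup\mathcal{E}_2$: since $\psi_1^*\mathcal{O}(1)\vert_{E_1}$ and $\psi_2^*\mathcal{O}(1)\vert_{E_1}$ are the pencils of lines through $E_1\cap\tilde\ell_2$ and $E_1\cap\tilde\ell_1$ respectively, the restriction $\tau_2\vert_{E_1}$ is birational onto $\p^1\times\p^1$, and likewise no $\mathcal{E}_i$ is contracted to a curve, so the general fibre is exactly the strict transform of the smooth $(1,1)$-curve you computed.
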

\begin{proof}
Changing coordinates, we may assume that \[F=\left\{([x_0:x_1:x_2],[y_0:y_1:y_2])\in \p^2\times \p^2 \left| \textstyle \sum_{i=0}^2 x_i y_i=0\right\}\right..\]
 (Lemma~\ref{T4Aut}). We may then apply an element of $\PGL_3(\C)$ as in Lemma~\ref{T4Aut}\ref{PTP2PGL3} and assume that $p_1=[1:0:0]$ and $p_2=[0:1:0]$. This gives \begin{align*}\ell_1&=\{([u:0:v],[0:1:0]),[u:v]\in \p^1\},\\
 \ell_2&=\{([1:0:0],[0:u:v]),[a:b]\in \p^1\}.\end{align*}
 We denote by $H_1,H_2\subset F$ the divisors given by $x_1=0,y_0=0$ respectively.
We observe that the involution $\sigma\in \Aut(F)$ given by $([x_0:x_1:x_2],[y_0:y_1:y_2])\mapsto( [y_1:y_0:y_2],[x_1:x_0:x_2])$ exchanges $\ell_1$ and $\ell_2$, 
 exchanges $H_1$ and $H_2$ and fixes $p$.
 
The divisors $H_1$ and $H_2$ generate the cone of effective divisors of $F$ and satisfy $H_1\cap H_2=\ell_1\cup \ell_2$. For $\{i,j\}=\{1,2\}$, the  surface $H_i$ is isomorphic to $\F_1$, the projection on the $i$-th factor gives a $\p^1$-bundle on a line of $\p^2$, the curve $\ell_j\subset H_1$ being a fibre and the projection on the $j$-th factor gives the contraction $\F_1\to \p^2$ of the $(-1)$-curve $\ell_i\subset H_i$.  We also have $H_i\cdot \ell_j=0$ and $H_i\cdot \ell_i=1$ when $\{i,j\}=\{1,2\}$. The cone of curves of $F$ is then generated by $\ell_1,\ell_2$, and one has $-K_{F}=2H_1+2H_2$, so $-K_{F}\cdot \ell_i=2$ for each $i\in \{1,2\}$.

We denote by $e_1\subset E_1\subset F_1$ a line in $E_1\simeq \p^2$, by $\tilde{\ell}_i$ and $H_i$ the strict transforms in $F_1$ of  $\ell_i$ and $H_i$, giving $(\eta_1)^*(H_i)=H_i+E_1$, for $i=1,2$. For  $\{i,j\}=\{1,2\}$, one finds the following intersection numbers:
\[\begin{array}{|c|c|c|c|c|}
\hline
&  e_1 & \tilde{\ell}_i&\tilde{\ell}_j\vphantom{\Big|} \\
\hline
E_1 & -1 &1&1  \\
H_i& 1& 0 &-1\\ \hline
\end{array}\]

 This implies that the cone of curves on $F_1$ is polyhedral, generated by $\tilde{\ell}_1,\tilde{\ell}_2,e_1$. 
 Indeed, each irreducible curve $C$ of $F_1$ is either contained in $E_1\simeq \p^2$ and thus equivalent to a positive multiple of $e_1$, 
 or is the strict transform of a curve of $F$, so equal to $\sum a_i \tilde{\ell}_i+be_1$ with $b\in \Z$ and $a_1,a_2\ge 0$. 
 There is moreover $i\in \{1,2\}$ such that $a_i\ge 1$. If $C$ is not equal to $\tilde{\ell}_i$, then it is not contained in $H_i$, 
 for some right choice of $H_i$ in its pencil.
 We obtain $0\le H_i\cdot C=b-a_i$, which implies that $b\ge 1$. Moreover, we obtain $K_{F_1}=(\eta_1)^*(K_{F})+2E_1=-2H_1-2H_2-2E_1.$

\begin{figure}[ht]
\begin{tikzpicture}
\begin{axis}[hide x axis,hide y axis, hide z axis]
\PLine{(1.1,0,0) (1.1,0.7,0)};
\PLine{(-1.1,0,0) (-1.1,1,0)};
\Polygone{(0,0,1) (1,0,0) (1,0.2,-1) (-1,0,0)}{4}{vlightgray};
\Polygone{(0,0,1) (1,0,0) (-1,0,-1) (-1,0,0)}{4}{vlightgray};
\LLine{(1,0.2,-1) (-1,0,0) };
\node[inner sep=0.5pt] at (axis cs:-0.85,0,-0.7) {$H_1$};
\node[inner sep=0.5pt] at (axis cs:0.8,0.2,-0.7) {$H_2$};
\node[inner sep=0.5pt] at (axis cs:0,0,1) {$\bullet$};
\node[inner sep=0.5pt] at (axis cs:0,0,1.2) {$p$};
\node[inner sep=0.5pt] at (axis cs:-0.5,0,0.68) {$\ell_1$};
\node[inner sep=0.5pt] at (axis cs:0.5,0,0.68) {$\ell_2$};
\end{axis}
\end{tikzpicture}
 \begin{tikzpicture}
\begin{axis}[hide x axis,hide y axis, hide z axis]
\PLine{(1.1,0,0) (1.1,0.7,0)};
\PLine{(-1.1,0,0) (-1.1,1,0)};
\Polygone{(-1,0,0)(-0.3,0,0.6) (0.3,0,0.6) (1,0,0) (1,0.2,-1) }{5}{vlightgray};
\Polygone{(-1,0,0)(-0.3,0,0.6) (0.3,0,0.6) (1,0,0) (-1,0,-1) }{5}{vlightgray};
\Polygone{(-0.3,0,0.6) (0.3,0,0.6) (0,0,1.3) }{3}{gray};
\LLine{(1,0.2,-1) (-1,0,0) };
\node[inner sep=0.5pt] at (axis cs:-0.85,0,-0.7) {$H_1$};
\node[inner sep=0.5pt] at (axis cs:0.8,0.2,-0.7) {$H_2$};
\node[inner sep=0.5pt] at (axis cs:0,0,0.85) {$E_1$};
\node[inner sep=0.5pt] at (axis cs:-0.21,0,1) {$e_1$};
\node[inner sep=0.5pt] at (axis cs:0.2,0,1.1) {$e_1$};
\node[inner sep=0.5pt] at (axis cs:0,0,0.45) {$e_1$};
\node[inner sep=0.5pt] at (axis cs:-0.55,0,0.58) {$\tilde{\ell}_1$};
\node[inner sep=0.5pt] at (axis cs:0.55,0,0.58) {$\tilde{\ell}_2$};
\end{axis}
\end{tikzpicture}
\caption{The divisors and curves on $F$ and $F_1$ respectively.} \label{FF1PTP2}
\end{figure}
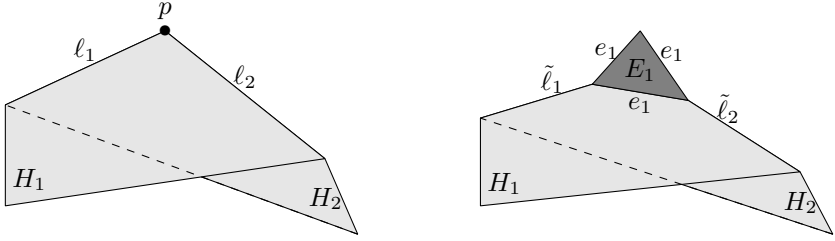

 We now use this to prove \ref{PTP2p1Ample}. Firstly, the divisor $A_1=-(\eta_1)^*K_F-E_1=2H_1+2H_2+3 E_1$
 is ample as $A_1\cdot \tilde{\ell}_1=A_1\cdot \tilde{\ell}_2=A_1\cdot e_1=1$. As every line in $E_1\simeq \p^2$ is equivalent to $e_1$, its intersection with $A_1$ is $1$. The union of curves having intersection $1$ with $A_1$  thus contains $E_1\cup\tilde\ell_1\cup\tilde\ell_2\cup\tilde\ell_3$. Conversely, an irreducible curve $C\subset F_1$ not contained in $E_1$ is numerically equivalent to $\sum a_i \tilde{\ell}_i+be_1$ with $a_1,a_2,b\ge 0$ and $a_1+a_2\ge 1$. Moreover, if it is not equal to $ \tilde{\ell}_1$ or $ \tilde{\ell}_2$, then $b\ge 1$, as we observed before, so $C\cdot A_1=a_1+a_2+b\ge 2$.  This achieves the proof of  \ref{PTP2p1Ample}.

The morphism $\eta_2\colon F_2\to F_1$ being the blow-up of $\tilde{\ell}_1,\tilde{\ell}_2$, 
we denote by $\mathcal{E}_i$ the irreducible component of $E_2$ lying over $\tilde{\ell}_i$, for $i=1,2$. We then denote again by $E_1,H_1,H_2$ the strict transforms of the same surfaces on $F_2$.
The morphism $\mathcal{E}_i\to \tilde{\ell}_i$ is a $\p^1$-bundle, and $H_1\cap \mathcal{E}_i$, $H_2\cap \mathcal{E}_i$ are two sections. 
We now prove that $\mathcal{E}_i$ is isomorphic to $\p^1\times \p^1$ and that $H_j\cap \mathcal{E}_i$ has self-intersection $1$ or $0$, if $j=i$ or $j\not=i$ respectively. 
To see this, we can look at the blow-up of $\ell_i\subset F$. Using the symmetry, we assume $i=1$; this allows to work on the open subset $U\subset F$ where $y_0=1$, 
isomorphic to $\p^1\times \A^2$, via $([u:v],(a,b))\mapsto ([u:-au-bv:v],[a:1:b])$. The curve $\ell_1$ is given in this chart by $a=b=0$, 
so the exceptional divisor is isomorphic to $\p^1\times \p^1$. The surface $H_1$ and $H_2$ are given by $au+bv=0$ and $a=0$ and thus their strict transform intersect 
the exceptional divisor along sections of self-intersection $1$ and $0$ respectively.

\begin{figure}[ht]
 \begin{tikzpicture}
\begin{axis}[view={20}{25},hide x axis,hide y axis, hide z axis]
\PLine{(1.1,0,0) (1.1,0.7,0)};
\PLine{(-1.1,0,0) (-1.1,1,0)};
% H2
\Polygone{(-1,0,0.2)(-0.3,0,0.2) (0.3,0,0.2) (1,0,0.8) (1,0.2,-1)}{5}{vlightgray};
\Polygone{(-0.3,0,0.2)(-0.3,0,0.8)  (0,0,1.5) (0.3,0,0.8) (0.3,0,0.2)}{5}{gray};

\Polygone{(-1,0,0.2)(-0.3,0,0.2)(-0.3,0,0.8)(-1,0,0.8) }{4}{lightgray}; %\mathcal{E}_1
\Polygone{(1,0,0.2)(0.3,0,0.2)(0.3,0,0.8)(1,0,0.8) }{4}{lightgray};%\mathcal{E}_2

%H1
\Polygone{(-1,0,0.8)(-0.3,0,0.2) (0.3,0,0.2) (1,0,0.2) (-1,0,-1) }{5}{vlightgray};
\LLine{(1,0.2,-1) (-1,0,0.2) }; %H2
\LLine{(0.3,0,0.2) (1,0,0.8) };
\LLine{(-0.3,0,0.2) (-1,0,0.2) }; % bottom of \mathcal{E}_1
\node[inner sep=0.5pt] at (axis cs:-0.85,0,-0.7) {$H_1$};
\node[inner sep=0.5pt] at (axis cs:0.8,0.2,-0.7) {$H_2$};
\node[inner sep=0.5pt] at (axis cs:0,0,0.85) {$E_1$};
\node[inner sep=0.5pt] at (axis cs:-0.21,0,1.2) {$\tilde{e}_1$};
\node[inner sep=0.5pt] at (axis cs:0.2,0,1.3) {$\tilde{e}_1$};
\node[inner sep=0.5pt] at (axis cs:0,0,0.08) {$e_2$};
\node[inner sep=0.5pt] at (axis cs:-0.22,0,0.5) {$f_1$};
\node[inner sep=0.5pt] at (axis cs:-1,0,0.5) {$f_1$};
\node[inner sep=0.5pt] at (axis cs:-0.65,0,0.82) {$s_1$};
\node[inner sep=0.5pt] at (axis cs:-0.65,0,0.18) {$s_1$};
\node[inner sep=0.5pt] at (axis cs:0.65,0,0.82) {$s_2$};
\node[inner sep=0.5pt] at (axis cs:0.65,0,0.18) {$s_2$};
\node[inner sep=0.5pt] at (axis cs:0.22,0,0.5) {$f_2$};
\node[inner sep=0.5pt] at (axis cs:1,0,0.5) {$f_2$};
\node[inner sep=0.5pt] at (axis cs:-0.65,0,0.5) {$\mathcal{E}_1$};
\node[inner sep=0.5pt] at (axis cs:0.65,0,0.5) {$\mathcal{E}_2$};
\end{axis}
\end{tikzpicture}\vspace{-0.2cm}
\caption{The divisors and curves on $F_2$ respectively.} \label{F1F2PTP2}
\end{figure}
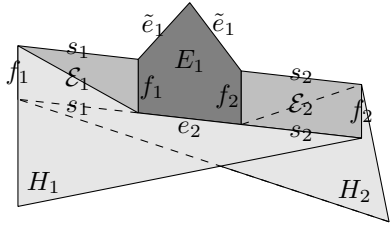

For $\{i,j\}=\{1,2\}$, we write $s_i=\mathcal{E}_i\cap H_j$, which is a section of $\mathcal{E}_i\to \tilde{\ell}_i$ of self-intersection $0$ and $f_i\subset \mathcal{E}_i$ a fibre; in particular $f_i\cdot \mathcal{E}_i=-1$. As $\delta_i=H_i\cap \mathcal{E}_i$ has bidegree $(1,1)$ in $\mathcal{E}_i\simeq \p^1\times \p^1$, we find $H_i\cdot f_i=H_i\cdot s_i=1$. Moreover, $H_j\cdot \delta_i=H_j\cdot f_i=1$, which implies that $H_j\cdot s_i=0$, since $\delta_i\equiv f_i+s_i$. 

We now denote by $e_2\subset F_2$ the strict transform of the unique line of $E_1\simeq \p^2$ that intersects $\tilde{\ell}_1$ and $\tilde{\ell}_2$. It then satisfies $e_2\cdot \mathcal{E}_1=e_2\cdot \mathcal{E}_2=1$ and since $(\eta_2)_*(e_2)=e_1$.

To compute $s_i\cdot \mathcal{E}_i$ and $e_2\cdot H_i$, we use $\eta_2^*(H_i)=H_i+\mathcal{E}_1+\mathcal{E}_2$. We have $s_i\cdot \eta_2^*(H_i)=(\eta_2)_*(s_i)\cdot H_i=\tilde{\ell}_i\cdot H_i=0$, which gives $s_i\cdot \mathcal{E}_i=-1$ since $s_i\cdot \mathcal{E}_j=0$ and $s_i\cdot H_i=1.$ Similarly, $e_2\cdot \eta_2^*(H_i)=(\eta_2)_*(e_2)\cdot H_i=e_1\cdot H_i=1$, which gives $e_2\cdot H_i=-1$.
We also compute $e_2\cdot E_1=e_2\cdot (\eta_2)^*(E_1)=e_1\cdot E_1=-1$.

For $\{i,j\}=\{1,2\}$, one finds the following intersection numbers: 
\[\begin{array}{|c|c|c|c|c|c|}
\hline
& s_i & s_j& f_i& f_j & e_2\\
\hline
H_i&1&0&1 & 1&-1  \\
E_1&1&1&0&0&-1  \\
\mathcal{E}_i& -1 &0&-1&0&1 \\
\hline
\end{array}\]
We now prove that the cone of effective curves of $F_2$ is polyhedral and generated by $s_1,s_2,f_1,f_2,e_{2}$, by proving that every irreducible curve $C$ is a non-negative linear combination of these.  If $C$ is contained in one of the surfaces $E_1$, $H_1$, $H_2$, $\mathcal{E}_1$ or $\mathcal{E}_2$ this is true as our curves include all extremal rays of these smooth toric surfaces. We may thus assume that $C$ is not contained in $E_1$, $\mathcal{E}_1$, $\mathcal{E}_2$ or $H_1$ or $H_2$. The curve $C$ is the strict transform of $\eta_2(C)\equiv a_1\tilde{\ell}_1+a_2\tilde{\ell}_2 +be_1$ for some $a_1,a_2,b\ge 0$. This gives $C\equiv a_1 s_1+a_2s_2+be_{2}+c_1 f_1+c_2f_2$ for some $c_1,c_2\in \Z$. For $\{i,j\}=\{1,2\}$ we have $0\le (\mathcal{E}_i+H_i)\cdot C=c_j$.

We have $E_2=\mathcal{E}_1+\mathcal{E}_2$ and $D=-\frac{3}{2}K_{F_2}-E_1-\frac{1}{2}E_2$. Writing $\eta=\eta_1\circ\eta_2$, we get $\eta^*(K_F)=-2H_1-2H_2-4E_1-4E_2$ and obtain
\[\begin{array}{ccccc}
K_{F_2}&=&\eta^*(K_F)+2E_1+E_2&=&-2H_1-2H_2-2E_1-3E_2,\\
D&=&-\frac{3}{2}\eta^*(K_F)-4E_1-2E_2&=& 3H_1+3H_2+2E_1+4E_2.\\
D+K_{F_2}&=&-\frac{1}{2}\eta^*(K_F)-2E_1-E_2&=&H_1+H_2+E_2\\
\end{array}\] This  implies that $D\cdot e_2=0,$ $D\cdot s_i=1$ and $D\cdot f_i=2$
 for all $i\in \{1,2\}$.

The divisor $D$ is thus nef. As $-\frac{1}{2}K_F$ is big (it is very ample), the divisor $-\frac{1}{2}\eta^*(K_F)=H_1+H_2+2E_1+2E_2$ is big. 
Since $D+\frac{1}{2}\eta^*(K_F)=2H_1+2H_2+2E_2$ is effective, this implies that $D$ is big. This achieves the proof of ~\ref{PTP2Dnefbig}.

 To prove~\ref{PTP2semiample}, we consider the linear system $\lvert D+K_{F_2}\rvert$ on $F_2$. 
Its elements are
 the strict transforms of hyperplane sections of $F\subset \p^2\times \p^2$ of bidegree $(1,1)$ having multiplicity $2$ at $p$ and passing through the two curves $\ell_1,\ell_2$. 
 This last condition is in fact implied by the first. The linear system then induces   the rational map $\tau\colon \p^2\times \p^2\dasharrow \p^3$ given by 
\[ \tau\colon ([x_0:x_1:x_2],[y_0:y_1:y_2])\mapsto [x_1y_0:x_2y_0:x_1y_2:x_2y_2],\]
whose image is contained in the smooth quadric $Q=\{[x_0:\cdots:x_3]\in \p^3\mid x_0x_3=x_1x_2\}\simeq \p^1\times \p^1$. 
The rational map to $\p^1\times \p^1$ is then given by \[ \tau'\colon ([x_0:x_1:x_2],[y_0:y_1:y_2])\mapsto ([x_1:x_2],[y_0:y_2])\]
and thus corresponds to the projections $\p^2\dasharrow \p^1$ away from $p_i$ on the two factors $i=1,2$ of $\p^2\times \p^2$. 

It remains to see that  $\tau_2=\tau'\circ \eta\colon F_2\to \p^1\times \p^1$  is a morphism, i.e.~that it is defined at every point $q\in F$. 
As $\tau$ is defined outside of $\ell_1\cup\ell_2$, we may assume that $\eta(q)\in \ell_1\cup\ell_2$. After composing with the automorphism $\sigma$, we may assume that $\eta(q)\in \ell_2$. If $\eta(q)\not=p$, then $\eta(q)$ belongs to the image of the open embedding $\A^3\hookrightarrow F, (r,s,t)\mapsto ([1:r:s],[-s-rt:t:1])$. The morphism $\eta$ corresponds on this chart to the blow-up of $r=s=0$, that is $\{((r,s,t),[u:v])\in \A^3\times \p^1\mid rv=su\}$. Hence, $\tau_2$ is locally given by $((r,s,t),[u:v])\mapsto ([u:v],[t:1])$ and is  then well-defined at every point. The remaining case is where $\eta(q)=p$, so $\eta_2(q)$ belongs to the surface $E_1\subset F_1$ isomorphic to $\p^2$. We study $\tau_1=\tau'\circ \eta_1\colon F_1\dasharrow \p^3$ in a neighbourhood of $E_1$. For this, we take the open embedding $\A^3\hookrightarrow F, (r,s,t)\mapsto ([1:r:s],[-r-st:1:t])$, and obtain that $\eta_1$ is the blow-up of the origin of $\A^3$ in this chart, corresponding to $\{((r,s,t),[u:v:w])\in \A^3\times \p^2\mid su=rv,sw=tv,rw=tu\}$. The rational map $\eta_1$ is locally given by $((r,s,t),[u:v:w])\mapsto ([u:v],[-u-tv:w])$. The divisor $E_1$ corresponds to $(0,0,0)\times \p^2$, so $\eta_1$ is defined at every point of $E_1$ except the two points $[0:0:1]$ and $[0:1:0]$. These are exactly the points where $(\eta_2)^{-1}$ is not an isomorphism. Using the symmetry, we may assume that $\eta_2(q)=E_1\cap \tilde{\ell}_2$, corresponding to $((0,0,0),[0:0:1])\in \A^3\times \p^2$. We choose the open embedding $\A^3\hookrightarrow \A^3\times \p^2$, $(a,b,c)\mapsto ((ac,bc,c),[a:b:1])$, and see $\eta_2$ as the blow-up of the line $a=b=0$. This latter is given by $\{((a,b,c),[\alpha:\beta])\in \A^3\times \p^1\mid a\beta=b\alpha\}$ and  $\tau_2$ is given by $((a,b,c),[\alpha:\beta])\mapsto ([\alpha:\beta],[-a-bc:1])$ and is thus defined at every point. This achieves the proof of \ref{PTP2semiample}.
\end{proof}

\section{Mori fibre spaces with general fibres isomorphic to $(\p^1)^3$ or $\P(T_{\p^2})$.}\label{Sec:MfsP13TP2}
In this section we prove Theorem~\ref{Thm:Dimension4}. Propositions \ref{prop:FibreP1P1P1goestoP3} and \ref{prop:FibrePTP2goesQP1} deal with Mori fibre spaces with general fibres isomorphic to $\p^1\times \p^1\times \p^1$ and  $\P(T_{\p^2})$ respectively.
In Lemma~\ref{Lemm:Balanced} we prove that the horizontal subvarieties of such Mori fibre spaces mark balanced subvarieties on the general fibres. 
\subsection{Balanced curves and divisors on a general fibre}
\begin{lemma}\label{Lemm:Balanced}
We write $F_1=\p^1\times \p^1\times \p^1$ and denote by $F_2\subset \p^2\times \p^2$ a smooth divisor of bidegree $(1,1)$. 
Let $i\in \{1,2\}$, let $\pi\colon X\to B$ be a Mori fibre space with the general fibre isomorphic to $F_i$ and let $Y\subsetneq X$ be a proper irreducible closed subset
with $\pi(Y)=B$. 
Then, the intersection of $Y$ with a general fibre of $\pi$ corresponds in $F_i$ to one of the following:
\begin{enumerate}
\item\label{DivBal}
A divisor linearly equivalent to $aK_{F_i}$ for some $a\in \Q$.
\item\label{CurvBal}
A curve $C\subset \p^1\times \p^1\times \p^1$ of tridegree $(a,a,a)$ $(i=1)$ or a curve $C\subset \p^2\times \p^2$ of bidegree $(a,a)$ $(i=2)$ for some integer $a\ge 1$. 
Moreover, the degree of $\pi_i\colon C\to \pi_i(C)$ is the same for all $i$, where $\pi_i$ is the projection onto each factor of $\p^1\times \p^1\times \p^1$ 
$($resp. of $\p^2\times \p^2)$.
\item
Finitely many points.
\end{enumerate}
\end{lemma}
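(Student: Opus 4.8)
The key point is that the subvariety $Y\cap F$ of a general fibre $F\cong F_i$ must be invariant (as a set, or at least its numerical class must be preserved) under the monodromy action of $\pi_1(U)$ on $F$, where $U\subseteq B$ is the open set over which $\pi$ is a smooth fibration. I would first invoke the same mechanism as in Lemma~\ref{lem:monodr}: by \cite[Theorems~2.2 and~2.5]{CFST16} together with \cite{KolMor}, the restriction map $N^1(X)_\Q\to N^1(F)_\Q^{\pi_1(U)}$ is surjective, and since $\rho(X/B)=1$ we have $N^1(X)_\Q/\Q[F_{\mathrm{gen}}]\cong\Q$, so the invariant part $N^1(F)_\Q^{\pi_1(U)}$ is one-dimensional, generated by $K_F$ (or equivalently by $-K_F$). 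This is the structural input that forces ``balancedness'': the class of $Y\cap F$ in $N^1(F)$ (if $Y$ is a divisor) or the class of each projection-image (if $Y$ is a curve) must be monodromy-invariant, hence proportional to the restriction of a class coming from $X$.

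Then I would split according to $d=\dim(Y\cap F)$.

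\emph{Case $d=2$ (divisor).} Here $[Y\cap F]\in N^1(F)_\Q$ is monodromy-invariant. By the previous paragraph $N^1(F)_\Q^{\pi_1(U)}=\Q\cdot K_F$, so $[Y\cap F]=aK_F$ for some $a\in\Q$. For $F_i$ a Fano threefold, numerical equivalence on divisors is the same as linear equivalence (since $\mathrm{Pic}(F_i)$ is free of finite rank and $H^1(\O_{F_i})=H^2(\O_{F_i})=0$, so $\mathrm{Pic}=N^1=NS$ is torsion-free and the exponential sequence gives $\mathrm{Pic}\hookrightarrow H^2(F_i,\Z)$ with torsion-free cokernel issues irrelevant here). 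Hence $Y\cap F\sim aK_{F_i}$, which is conclusion~\ref{DivBal}. (Strictly, one should note $a<0$ since $Y\cap F$ is effective and $-K_{F_i}$ is ample, but the statement only claims $a\in\Q$.)

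\emph{Case $d=1$ (curve).} Now $Y\cap F$ is a curve $C$ in $F_i$, and I consider the projections $\pi_j\colon F_i\to \p^1$ (for $i=1$, three of them) or $\pi_j\colon F_i\to\p^2$ (for $i=2$, the two projections from $\p^2\times\p^2$). Each $\pi_j$ extends fibrewise to a rational map on $X$ — more precisely, by Blanchard's Lemma~\ref{blanchard} applied over $U$, or by the fact that the $\p^1$-bundle (resp. $\p^2$-bundle) structures on $F_i$ glue along $U$ up to monodromy — the monodromy permutes these projections. For $F_1=(\p^1)^3$ the tridegree $(a_1,a_2,a_3)=(C\cdot\pi_1^*\O(1),\dots)$ must be monodromy-invariant; but the monodromy group acts on $N^1(F_1)$ preserving the one-dimensional invariant subspace $\Q(1,1,1)$ and the only way the integer vector $(a_1,a_2,a_3)$ is fixed is if $a_1=a_2=a_3=:a$, since any non-trivial monodromy element permutes the three $\p^1$-factors (this permutation action is exactly what makes $\rho(F)^{\pi_1(U)}$ drop to rank $1$, cf. \cite[Theorem~1.2]{ProGFano2}). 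For $F_2\subset\p^2\times\p^2$ the bidegree $(a_1,a_2)$ is similarly forced to satisfy $a_1=a_2=:a$ because the non-trivial monodromy swaps the two $\p^2$-factors. The integrality and positivity $a\ge 1$ follow since $C$ dominates $B$ and projects nontrivially to each factor (it cannot be contracted by any $\pi_j$, as a $\pi_j$-contracted horizontal curve would give a section-like subvariety of the wrong type; more carefully, if $a=0$ then $C$ maps to a point under some $\pi_j$, but monodromy-invariance of the tridegree would then force $a=0$ for all $j$, i.e.\ $C$ is a point, contradiction with $\dim C=1$). Finally, the statement ``$\deg(\pi_j\colon C\to\pi_j(C))$ is the same for all $j$'' again follows from monodromy-invariance: this degree equals $(a\cdot\text{something depending only on the image curve class})$, and since the monodromy permutes the $\pi_j$ transitively on each orbit and fixes $[C]$, these degrees are permuted among themselves and hence all equal.

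\emph{Case $d=0$.} Then $Y\cap F$ is a finite set of points, which is conclusion~(3), with nothing further to prove (the cardinality is the generic fibre cardinality of $Y\to B$).

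\textbf{Main obstacle.} The delicate point is the curve case: I need that the monodromy group $\pi_1(U)$ acts on the set of projection structures $\{\pi_j\}$ \emph{transitively} (within each ``block''), so that fixing the class $[C]$ really forces all the multidegree components equal, and likewise for the projection degrees. This requires a little care: a priori the monodromy could act trivially on $N^1(F)$ yet the argument of Lemma~\ref{lem:monodr} shows it cannot (the invariant Néron–Severi has rank $1$ while $\rho(F)\ge 2$), so some monodromy element acts nontrivially; and on $(\p^1)^3$ or $\P(T_{\p^2})$ the automorphisms of the lattice $N^1$ preserving the nef cone and the canonical class are exactly the obvious permutations of factors. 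Pinning this down — identifying the relevant monodromy as permutations of the $\p^1$- or $\p^2$-factors and checking it is ``full enough'' to equalise the multidegrees — is where the real work lies; the rest is bookkeeping with intersection numbers on $F_1$ and $F_2$.
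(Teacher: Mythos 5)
Your route is genuinely different from the paper's: you work with the topological monodromy of $\pi_1(U)$ on a general \emph{closed} fibre, whereas the paper passes to the geometric generic fibre $\mathcal{F}_{L}$ over $L=\overline{\C(B)}$, identifies it with $F_i$ via \cite[Lemma~2.1]{Vial} and the classification of Fano threefolds (matching $\rho$ and $K^3$), and then lets the Galois group $G=\Gal(L/K)$ do the work: $\rk\NS(\mathcal{F})=\rk\NS(\mathcal{F}_L)^G=1$ forces $G$ to permute the factors transitively, and the generic fibre of $Y\to B$, being defined over $K$, is $G$-stable. The two mechanisms are two faces of the same phenomenon, and for the divisor case and for the equality of the multidegree components your argument is sound: the class of $Y\cap F_b$ is a flat section of the local system, the monodromy preserves the nef cone of $(\p^1)^3$ (resp.\ of $F_2$) and hence acts by permuting the factors, and rank-one invariants force the permutation action to be transitive, whence $(a,a,a)$ (resp.\ $(a,a)$).

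There is, however, a concrete gap exactly where you flagged the ``main obstacle'', and it is not merely a matter of bookkeeping: the final claim that $\deg(\pi_j\colon C\to\pi_j(C))$ is independent of $j$ does not follow from monodromy-invariance of the class $[C]$ alone. For $i=2$ one has $\deg(\pi_j|_C)\cdot\deg\pi_j(C)=a$ for $j=1,2$, and the two factorisations need not agree just because the products do. Monodromy acts only on (co)homology; it is not realised by automorphisms of $F_b$, so the sentence ``these degrees are permuted among themselves'' needs an actual carrier. The paper gets this for free because Galois elements are genuine (semilinear) automorphisms of $\mathcal{F}_L$ stabilising $\mathcal{Y}_L$ and swapping the two projections. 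To repair your version you would argue that the relative extremal contractions of $X_U\to U$ form a finite \'etale cover of $U$ (this is what \cite{KolMor} and \cite[Theorems~2.2 and~2.5]{CFST16} provide, as used in Lemma~\ref{lem:monodr}), that this cover is connected because the invariant N\'eron--Severi group has rank one, and that $b\mapsto\deg(\pi_j|_{C_b})$ is a locally constant function on it. Note that this equality is actually used later (the dichotomy $a=1$ versus $a=2$ with both projections embeddings in Proposition~\ref{prop:FibrePTP2goesQP1}), so it cannot be left as a remark. The rest of your outline, including the reduction of the nef-cone automorphisms to factor permutations and the integrality and positivity of $a$, is correct.
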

\begin{proof} Assume that the intersection of $Y$ with the general fibre of $\pi$ has dimension at least $1$. Even if case \ref{DivBal} directly follows from the definition of Mori fibre space,
we will  
address case \ref{DivBal} and \ref{CurvBal} at the same time, as the proof is the same.

As general fibres are isomorphic to the smooth Fano threefold $F_i$, the generic  fibre of $\pi$ is a smooth Fano threefold $\mathcal{F}$ defined over the field $K=\C(B)$. 
We denote by $L=\overline{K}$ an algebraic closure of $K$,  and by $\mathcal{F}_L$ the geometric generic fibre. We now prove that $\mathcal{F}_L$ is isomorphic to $(F_i)_L$. 
By  \cite[Lemma~2.1]{Vial}, there is a field isomorphism $\C\iso L$ which induces an isomorphism  $\psi\colon F_i\iso\mathcal{F}_L$. 
In particular, the Picard rank of $\mathcal{F}_L$ is $4-i\in \{3,2\}$ and $K_{\mathcal{F}_L}^3=K_{F_i}^3=-48$. 
If $i=1$, this implies that $\mathcal{F}_L$ is either isomorphic to $(\p^1)^3$ or $\p^1\times \mathbb{F}_1$ \cite[Table~3]{MoriMukai1}, 
the second case being impossible because of the existence of the isomorphism $\psi$. If $i=2$, this implies that $\mathcal{F}_L$ is isomorphic to a hypersurface of bidegree $(1,1)$ in $\p^2\times \p^2$ \cite[Table~2]{MoriMukai1}. 
 
 Let $G=\mathrm{Gal}(L/K)$ be the Galois group. Then, we have an isomorphism $\mathrm{NS}(\mathcal{F})\simeq\mathrm{NS}
 (\mathcal{F}_L)^G$ \cite[Chapter~II, Proposition~4.3]{KolRC}. Since $\pi$ is a Mori fibration, the Picard rank of $\mathcal{F}$ is equal to $1$, so 
$\rk\mathrm{NS}(\mathcal{F})=\rk\mathrm{NS}(\mathcal{F}_L)^G=1$.
 As $\mathcal{F}_L$ is isomorphic to $(\p^1_L)^3$ or to a hypersurface of $(\p^2_L)^2$ of bidegree $(1,1)$, the Galois group has 
 to permute the factors in a transitive way. 
This implies that the cone of curves of $\mathcal{F}_L$ is of rank $1$, and that every curve on $\mathcal{F}_L$ corresponds in $(\p^1_L)^3$ or $(\p^2_L)^2$ to a curve of multidegree $(a,a,a)$ or $(a,a)$ for some integer $a\ge 1$; and the same holds for hypersurfaces.

We now consider the proper irreducible closed subset $Y\subsetneq X$. If the intersection of $Y$ with a general fibre $F$ is not finite, it is either a curve or a divisor. 
The generic fibre of $Y\to B$ is a curve or a divisor $\mathcal{Y}\subset \mathcal{F}$, corresponding in $\mathcal{F}_L$ to a balanced curve (of multidegree $(a,a,a)$ or $(a,a)$ for some integer $a\ge 1$, with projections on each factor of the same degree, as stated in \ref{CurvBal}) or a balanced divisor (equivalent to a multiple of the canonical divisor). Restricting to a general fibre, we obtain the cases~\ref{DivBal} and \ref{CurvBal}.
\end{proof}
\subsection{Mori fibre spaces with general fibres isomorphic to $(\p^1)^3$}
\begin{proposition}\label{prop:FibreP1P1P1goestoP3}
Let $\pi\colon X\to\p^1$ be a Mori fibre space whose general fibres are isomorphic to $(\p^1)^3$. Then, there is an $\Autz(X)$-equivariant commutative diagram
 \[\begin{tikzcd}
X\ar[dr,"\pi",swap]\ar[rr,"\varphi",dashed] &&Y\ar[dl,"\pi_Y"]   \\
& \p^1
\end{tikzcd}\]
where $\varphi$ is birational and $\pi_Y\colon Y\to \p^1$ is a Mori fibre space whose general fibres are isomorphic to $\p^3$.
\end{proposition}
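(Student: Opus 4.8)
The plan is to realise, fibrewise and $\Autz(X)$-equivariantly, one of the birational maps $(\p^1)^3\dasharrow\p^3$ constructed in Lemma~\ref{Lem:CurveinP1P1P1link} and Lemma~\ref{Lem:PointpinP1P1P1linkII}, taking as centre of the required blow-ups the invariant horizontal subset furnished by Theorem~\ref{Thm:ExistenceSection}.

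First I would apply Theorem~\ref{Thm:ExistenceSection} (the hypothesis $\rho(F)\ge 2$ holds since $\rho((\p^1)^3)=3$) to obtain a section $s$ and the $\Autz(X)$-invariant proper closed subset $\mathcal{S}=\Autz(X)\cdot s$ with $\pi^{-1}(b)\cap\mathcal{S}=(\Autz(X)_{\p^1})^{\circ}\cdot p$ for $p\in s$ above $b$. As $\mathcal{S}$ is closed and dominates $\p^1$, Lemma~\ref{Lemm:Balanced} applies to $Y=\mathcal{S}$: the set $\mathcal{S}$ meets a general fibre $F\simeq(\p^1)^3$ in a balanced divisor, a balanced curve of tridegree $(d,d,d)$, or finitely many points. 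On the other hand $\mathcal{S}\cap F$ is a single orbit of the connected group $G^{\circ}=(\Autz(X)_{\p^1})^{\circ}$, which acts faithfully through $\Autz(F)=\PGL_2(\C)^3$, and it is closed in $F$, hence a complete homogeneous space for $G^{\circ}$; such a space embedded in $(\p^1)^3$ is a point, a copy of $\p^1$, or a product of $\p^1$'s, and in particular is smooth. Matching the two descriptions rules out the divisor case (by adjunction a smooth balanced surface in $(\p^1)^3$ is a del Pezzo surface of degree $6$ if $d=1$ and a non-rational surface if $d\ge 2$, so never such a homogeneous space), and forces $d=1$ in the curve case (every surjective endomorphism of $\PGL_2(\C)$ being an automorphism, a closed orbit $\simeq\p^1$ must be the diagonal, of tridegree $(1,1,1)$). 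We are therefore in one of two cases: \textbf{(A)} $\mathcal{S}=s$ is a section and $\mathcal{S}\cap F$ is a single point $p$ fixed by $G^{\circ}$; or \textbf{(B)} $\mathcal{S}$ is a surface and $\mathcal{S}\cap F$ is a curve of tridegree $(1,1,1)$.

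Next I would build an $\Autz(X)$-equivariant birational morphism $\varepsilon\colon X'\to X$ by blowing up these invariant centres. After an $\Autz(X)$-equivariant resolution we may assume $X$ is smooth over the dense open $U\subseteq\p^1$ above which all fibres are smooth. In case \textbf{(B)} let $\varepsilon\colon X'\to X$ be the blow-up along $\mathcal{S}$, with exceptional divisor $\mathcal{E}$, so that a general fibre of $X'\to\p^1$ is the Fano threefold $\hat F$ of Lemma~\ref{Lem:CurveinP1P1P1link}. In case \textbf{(A)} first blow up $\mathcal{S}=s$, getting $\varepsilon_1\colon X_1\to X$ with exceptional divisor $\mathcal{E}_1$, then blow up the closed subset $\mathcal{L}_1\subset X_1$ which in each general fibre $F_1$ is the union of the curves of degree $1$ with respect to the ample divisor $\big(-(\varepsilon_1)^{*}K_X-\mathcal{E}_1\big)|_{F_1}$ that are not contained in the exceptional divisor: by Lemma~\ref{Lem:PointpinP1P1P1linkII}\ref{P1P1P1p1Ample} this is the union of the strict transforms of the three coordinate lines through $p$, so $\mathcal{L}_1$ is cut out by an $\Autz(X)$-equivariant condition and is $\Autz(X)$-invariant. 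We obtain $\varepsilon\colon X'\to X$ with a general fibre equal to the threefold $F_2$ of Lemma~\ref{Lem:PointpinP1P1P1linkII}, with last exceptional divisor $\mathcal{E}_2$. Now put $D=-\tfrac32 K_{X'}-\mathcal{E}$ in case \textbf{(B)}, respectively $D=-\tfrac32 K_{X'}-\mathcal{E}_1-\tfrac12\mathcal{E}_2$ in case \textbf{(A)}; by Lemma~\ref{Lem:CurveinP1P1P1link} and Lemma~\ref{Lem:PointpinP1P1P1linkII} the restriction $D|_{F'}$ to a general fibre is ample and $(D+K_{X'})|_{F'}$ is base-point-free and defines a birational morphism $F'\to\p^3$. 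Since $D$ is relatively ample on general fibres, $r(D+K_{X'})-K_{X'}=(r-1)(D+K_{X'})+D$ is nef and big over $\p^1$ for $r\gg 0$, so Theorem~\ref{fujino} gives that $D+K_{X'}$ is relatively semiample; the induced morphism $X'\to Z$ over $\p^1$ restricts on a general fibre to $F'\to\p^3$, so $Z\to\p^1$ has general fibre $\simeq\p^3$. Finally, since $\rho(\p^3)=1$, a relative MMP over $\p^1$ on an $\Autz(X)$-equivariant $\Q$-factorial terminalisation of $Z$ (Lemma~\ref{relMfs}) terminates with a Mori fibre space which is still fibred over $\p^1$ (no contraction occurs over the generic point, the generic fibre being $\p^3$ over $\C(\p^1)$) with general fibre $\p^3$; the composition $X\dasharrow X'\to Z\dasharrow Y$ is the desired $\Autz(X)$-equivariant birational map. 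One may equivalently carry out these steps over $U$ and conclude with an $\Autz(X)$-equivariant compactification, as in the proof of Lemma~\ref{relBPF}.

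The main obstacle should be Step~1: extracting from the combination of Lemma~\ref{Lemm:Balanced} with the classification of closed orbits of $(\p^1)^3$ that $\mathcal{S}\cap F$ is \emph{exactly} a point or a $(1,1,1)$-curve, together with the attendant exclusion of balanced divisors and of balanced curves of degree $\ge 2$. A secondary delicate point is the $\Autz(X)$-equivariance of the auxiliary centre $\mathcal{L}_1$ in case \textbf{(A)} in the presence of monodromy permuting the three $\p^1$-factors of a fibre: although the three line-families are individually not globally defined, their union is, and is pinned down intrinsically by Lemma~\ref{Lem:PointpinP1P1P1linkII}\ref{P1P1P1p1Ample}. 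Verifying the numerical hypotheses of Theorem~\ref{fujino} over all of $\p^1$, rather than merely on a general fibre, is the remaining point requiring care, which is why working over $U$ and compactifying afterwards is the safer route.
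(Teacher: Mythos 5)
Your proposal is correct and follows essentially the same route as the paper: Theorem~\ref{Thm:ExistenceSection} produces the invariant set $\mathcal{S}$, Lemma~\ref{Lemm:Balanced} combined with the fact that $\mathcal{S}\cap F$ is a closed orbit rules out the divisor case and forces tridegree $(1,1,1)$ in the curve case, and the two remaining cases are handled by blowing up $\mathcal{S}$ (resp.\ $\mathcal{S}$ and then the degree-one curves singled out by Lemma~\ref{Lem:PointpinP1P1P1linkII}\ref{P1P1P1p1Ample}, which the paper extracts rigorously via $\Chow_{1,1}(X_1)$ with a twist by $\eta_1^*\pi^*\alpha$) and applying Theorem~\ref{fujino} over the smooth locus $U$ before compactifying with Lemma~\ref{lem:cpt}. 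The only differences are cosmetic: your exclusion arguments via the classification of closed orbits replace the paper's rationality-plus-Blanchard argument, and your fibrewise definition of $\mathcal{L}_1$ is the informal version of the paper's Chow-variety construction.
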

\begin{proof}
By Theorem~\ref{Thm:ExistenceSection} there is a section $s\subset X$ of $\pi$ such that the following holds: the set
 \[\mathcal{S}=\Autz(X)\cdot s=\Autz(X)_{\p^1}\cdot s=(\Autz(X)_{\p^1})^{\circ}\cdot s\] is a proper closed subset of $X$ such that
 for each $b\in \p^1$, the fibre $\mathcal S_b=\pi^{-1}(b)\cap \mathcal{S}$ of $\pi|_{\mathcal{S}}\colon \mathcal{S}\to \p^1$ is equal to \[\pi^{-1}(b)\cap \mathcal{S}=(\Autz(X)_{\p^1})^{\circ}\cdot p,\] where $p\in s$ is the point such that $\pi(p)=b$.
 
 We first observe that $\dim(\mathcal{S})=3$ leads to a contradiction. For a general $b\in \p^1$, the fibre $\mathcal S_b$ is then a surface in 
 $\pi^{-1}(b)\simeq \p^1\times \p^1\times \p^1$ which has tridegree $(a,a,a)$ for some integer $a\ge 1$ (Lemma~\ref{Lemm:Balanced}). 
 As $\mathcal S_b$ is the orbit of a point by $(\Autz(X)_{\p^1})^{\circ}$, which acts on $\p^1\times \p^1\times \p^1$ via a subgroup 
 of $\PGL_2(\C)\times \PGL_2(\C)\times \PGL_2(\C)$, it is rational and therefore its canonical divisor is not pseudoeffective, whence $a=1$.
Hence the morphisms induced by projections $\mathcal S_t\to\p^1\times\p^1$ are birational, but not isomorphisms (this can be shown for instance by  computing  $K_{\mathcal S_t}^2$). The action of $(\Autz(X)_{\p^1})^{\circ}$ on $\p^1\times \p^1\times \p^1$ yields an action on $\p^1\times \p^1$, by Blanchard's Lemma~\ref{blanchard}, which cannot be transitive, contradicting the fact that $\mathcal S_t$ is an orbit.

We now study the case where $\dim(\mathcal{S})=2$. For a general $b\in \p^1$, the fibre $\mathcal S_b$ is then a curve in $\pi^{-1}(b)\simeq \p^1\times \p^1\times \p^1$ which has tridegree $(a,a,a)$ for some integer $a\ge 1$  (Lemma~\ref{Lemm:Balanced}). We then observe that $a=1$. Indeed, if $a\ge 2$, the projection onto each factor would be a finite ramified cover, so the action given by Blanchard's Lemma cannot be transitive, contradicting the fact that $\mathcal S_b$ is an orbit. 

We now denote by $\hat{X}\to X$ the blow-up of $\mathcal{S}$,
with exceptional divisor $E$.
Let $U\subset \p^1$ be the open set over which $\widehat X\to\p^1$ is smooth. We consider the divisor $-\frac{1}{2}K_{\hat{X}}-E$. By Lemma~\ref{Lem:CurveinP1P1P1link} we have $-\frac{1}{2}K_{\hat{X}}-E=K_{\hat{X}}+D$ where $D$ is relatively big and nef over $U$. 
By Theorem~\ref{fujino}, the divisor $-\frac{1}{2}K_{\hat{X}}-E$ induces a morphism over $U$ which, again by Lemma~\ref{Lem:CurveinP1P1P1link}, 
on each fibre contracts the strict transforms of 
the three  divisors of tridegree $(1,1,0),(1,0,1),(0,1,1)$ through $\mathcal{S}_b$.
This gives an $\Autz(X)$-equivariant birational morphism $\tilde X_U\to Y_U$, where $Y_U\to U$ has fibres isomorphic to $\p^3$ (again by Lemma~\ref{Lem:CurveinP1P1P1link}).
By Lemma~\ref{lem:cpt} we get a Mori fibre space $Y\to\p^1$ that is $\Autz(X)$-birational to $X$ over $\p^1$ and whose general fibres are isomorphic to $\p^3$.

It remains to study the case where $\dim(\mathcal{S})=1$, which implies that $\mathcal{S}=s$ is a section, pointwise fixed by $(\Autz(X)_{\p^1})^{\circ}$ and invariant by $\Autz(X)$.

We now denote by $\eta_1\colon X_1\to X$ the blow-up of $\mathcal{S}$ with exceptional divisor $E_1$. Let $H$ be an ample divisor of the form $-\eta_1^*K_X-E_1+\eta_1^*\pi^*\alpha$ where $\alpha$ is sufficiently ample on $\p^1$.
We consider the projective variety $\Chow_{1,1}(X_1)$
which parametrises the proper algebraic cycles of dimension $1$ and degree $1$ with respect to $H$. If $\hat{F}$ is a general fibre  of $\pi\circ \eta\colon X_1\to \p^1$,
by Lemma~\ref{Lem:PointpinP1P1P1linkII}\ref{P1P1P1p1Ample} the only $1$-cycles of degree~$1$ with respect to $H$ contained in $\hat{F}$ are the strict transforms of the three curves through $F\cap \mathcal{S}$ of tridegree $(1,0,0),(0,1,0),(0,0,1)$ and the lines in $E_1$. 
Moreover, if $\alpha$ is sufficiently ample, the only 1-cycles of degree $1$ with respect to $H$ are contained in fibres of $\pi\circ\eta_1$. We set $U=\{(x,[t])\in X_1\times \Chow_{1,1}(X_1)\vert\; x\in t\}$.

Therefore the image of the first projection $U\to X_1$ is a subvariety of $X_1$ 
of the form $Z\cup Z'\cup E $ where $Z$ is horizontal, and $Z'$ is vertical.

The subvariety $Z$ is such that its intersection with the general fibre of $\pi\circ\eta_1$ is the union of the strict transforms of the three curves through $F\cap \mathcal{S}$ of tridegree $(1,0,0),(0,1,0),(0,0,1)$. We set $\eta_2\colon X_2\to X_1$ the blow-up of $X_1$ along $Z$. We set $E_2$ the exceptional divisor and write again $E_1$ the strict transform of the exceptional divisor $E_1$ of $\eta_1$. Let $U\subset \p^1$ be the open set over which $ X_1\to\p^1$ is smooth. The divisor $-\frac{1}{2}K_{X_2}-E_1-\frac{1}{2}E_2$ is relatively semiample over $U$ by Theorem~\ref{fujino} as it is the sum of $K_{X_2}$ and of $-\frac{3}{2}K_{X_2}-E_1-\frac{1}{2}E_2$ which by Lemma~\ref{Lem:PointpinP1P1P1linkII} is relatively big and nef over $U$.  It induces then a morphism $(X_2)_U\to Y_U\to U$ where the general fibre of $Y_U\to U$ is $\p^3$. By Lemma~\ref{lem:cpt} we get a Mori fibre space $Y\to\p^1$ that is $\Autz(X)$-birational to $X$ over $\p^1$ and whose general fibre is $\p^3$ (using again Lemma~\ref{Lem:PointpinP1P1P1linkII}). 
\end{proof}

\subsection{Mori fibre spaces with general fibres isomorphic to $\p(T_{\p^2})$}

\begin{proposition}\label{prop:FibrePTP2goesQP1}
Let $\pi\colon X\to\p^1$ be a Mori fibre space whose general fibres  are isomorphic to a smooth hypersurface of $\p^2\times \p^2$ of bidegree $(1,1)$. Then, there is an $\Autz(X)$-equivariant commutative diagram
 \[\begin{tikzcd}
X\ar[dd,"\pi",swap]\ar[r,"\varphi",dashed] &Y\ar[d,"\pi_Y"]   \\
&B\ar[dl,"\pi_B"]\\
\p^1
\end{tikzcd}\]
where $\varphi$ is birational and $\pi_Y\colon Y\to B$ is a Mori fibre space whose general fibres are either isomorphic to $\p^1$ or  a smooth quadric $Q\subset \p^4$ $($in this latter case, $B\to \p^1$ is an isomorphism$)$.
\end{proposition}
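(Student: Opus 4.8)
The strategy mirrors the proof of Proposition~\ref{prop:FibreP1P1P1goestoP3}, using Theorem~\ref{Thm:ExistenceSection} to produce an invariant closed subset $\mathcal{S}\subsetneq X$, then analysing the three possibilities $\dim\mathcal{S}\in\{1,2,3\}$ and in each case running a relative version of one of the birational constructions from Section~\ref{Sec:Symmbirmap}. Here $F\simeq\p(T_{\p^2})$, a smooth divisor of bidegree $(1,1)$ in $\p^2\times\p^2$, with $\Autz(F)\simeq\PGL_3(\C)$ (Lemma~\ref{T4Aut}). By Theorem~\ref{Thm:ExistenceSection}, there is a section $s\subset X$ of $\pi$ whose $\Autz(X)$-orbit $\mathcal{S}=\Autz(X)\cdot s=\Autz(X)_{\p^1}\cdot s=(\Autz(X)_{\p^1})^{\circ}\cdot s$ is a proper closed subset, with each fibre $\mathcal{S}_b=\pi^{-1}(b)\cap\mathcal{S}$ equal to a $(\Autz(X)_{\p^1})^{\circ}$-orbit inside $\pi^{-1}(b)\simeq F$. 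Lemma~\ref{Lemm:Balanced} constrains the intersection of $\mathcal{S}$ with a general fibre to be a balanced divisor (equivalent to a rational multiple of $K_F$), a balanced curve of bidegree $(a,a)$ with equal projection degrees, or finitely many points.

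First I would rule out $\dim\mathcal{S}=3$. Then $\mathcal{S}_b$ is a divisor of bidegree $(a,a)$ in $F$, which as an orbit of a subgroup of $\PGL_2(\C)$-type (the image in $\Aut(F)$ of the connected stabiliser group) is rational, hence has non-pseudoeffective canonical class, forcing $a$ small; one then sees that $(\Autz(X)_{\p^1})^{\circ}$ would act transitively on a surface admitting a non-trivial quotient (via Blanchard, Lemma~\ref{blanchard}) — a contradiction as in Proposition~\ref{prop:FibreP1P1P1goestoP3}. For $\dim\mathcal{S}=2$, the fibre $\mathcal{S}_b$ is a curve of bidegree $(a,a)$; since both projections $\mathcal{S}_b\to\p^2$ have equal degree $a$ and must be equivariant for a transitive group action, $a=1$ (a degree-$\ge 2$ cover of $\p^1$ ramifies, obstructing transitivity). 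So $\mathcal{S}_b$ is a curve of bidegree $(1,1)$, and I would apply Lemma~\ref{Lem:LineinPTP2link} relatively: blow up $\mathcal{S}$ to get $\hat X\to X$ with exceptional divisor $E$; over the open locus $U\subseteq\p^1$ of smooth fibres the divisor $-\tfrac12 K_{\hat X}-E=K_{\hat X}+D$ with $D$ relatively big and nef (Lemma~\ref{Lem:LineinPTP2link}\ref{PTP2Lbignef}), so by Theorem~\ref{fujino} it is relatively semiample and induces an $\Autz(X)$-equivariant birational morphism $\hat X_U\to Y_U$ over $U$ whose fibres are smooth quadrics $Q\subset\p^4$ (Lemma~\ref{Lem:LineinPTP2link}\ref{PTP2LBirMorph}). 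Lemma~\ref{lem:cpt} then compactifies this to an $\Autz(X)$-equivariant Mori fibre space $Y\to\p^1$ with general fibre $Q$, giving case $B\xrightarrow{\sim}\p^1$.

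For $\dim\mathcal{S}=1$, i.e. $\mathcal{S}=s$ a section pointwise fixed by $(\Autz(X)_{\p^1})^{\circ}$, I would follow the end of the proof of Proposition~\ref{prop:FibreP1P1P1goestoP3}, replacing Lemma~\ref{Lem:PointpinP1P1P1linkII} by Lemma~\ref{Lem:PointpinPTP2linkII}. Blow up $\mathcal{S}$ to get $\eta_1\colon X_1\to X$ with exceptional divisor $E_1$; using an ample divisor $H=-\eta_1^*K_X-E_1+\eta_1^*\pi^*\alpha$ with $\alpha$ sufficiently ample, the variety $\Chow_{1,1}(X_1)$ and the incidence variety detect, on a general fibre, exactly the strict transforms of the two curves of bidegree $(1,0),(0,1)$ through $F\cap\mathcal{S}$ together with the lines in $E_1$ (Lemma~\ref{Lem:PointpinPTP2linkII}\ref{PTP2p1Ample}), so the horizontal part $Z$ of the union of these cycles is an $\Autz(X)$-invariant subvariety meeting the general fibre in those two curves. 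Blow up $Z$ to obtain $\eta_2\colon X_2\to X_1$ with exceptional divisor $E_2$; over the smooth locus $U$, Lemma~\ref{Lem:PointpinPTP2linkII}\ref{PTP2Dnefbig} gives that $-\tfrac32 K_{X_2}-E_1-\tfrac12 E_2$ is relatively big and nef, so $-\tfrac12 K_{X_2}-E_1-\tfrac12 E_2=K_{X_2}+(\text{big and nef})$ is relatively semiample by Theorem~\ref{fujino} and induces (Lemma~\ref{Lem:PointpinPTP2linkII}\ref{PTP2semiample}) an $\Autz(X)$-equivariant morphism $(X_2)_U\to (Y_B)_U$ whose fibres over $U$ are $\p^1$, i.e. a morphism onto a threefold $B_U$ with $B_U\to U$ having general fibres $\p^1\times\p^1$ — more precisely $(X_2)_U\to\p^1\times\p^1$ fibrewise. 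Running a relative $K$-MMP (Lemma~\ref{relMfs}) over the resulting base and compactifying via Lemma~\ref{lem:cpt}, I get an $\Autz(X)$-equivariant Mori fibre space $Y\to B\to\p^1$ with general fibre $\p^1$.

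\textbf{Main obstacle.} The delicate point is the $\dim\mathcal{S}=1$ case: one must check that the Chow-variety construction genuinely produces an $\Autz(X)$-invariant horizontal subvariety $Z$ with the claimed fibrewise description — this requires that $\alpha$ can be chosen ample enough that no stray horizontal $1$-cycles of $H$-degree $1$ appear, and that the flat/semiample machinery behaves well at the (finitely many) bad fibres so that Lemma~\ref{lem:cpt} applies. The relative semiampleness at bad fibres, and verifying that the intermediate morphism $(X_2)_U\to\p^1\times\p^1$ over $U$ assembles into a genuine Mori fibre space tower $Y\to B\to\p^1$ after MMP and compactification, is where the real work lies; the geometry on a single fibre is entirely supplied by Lemma~\ref{Lem:PointpinPTP2linkII}.
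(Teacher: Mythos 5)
Your overall architecture matches the paper's proof: Theorem~\ref{Thm:ExistenceSection} produces $\mathcal{S}$, the case $\dim\mathcal{S}=3$ is excluded exactly as you say, and the case $\dim\mathcal{S}=1$ is handled as in Proposition~\ref{prop:FibreP1P1P1goestoP3} with Lemma~\ref{Lem:PointpinPTP2linkII} in place of Lemma~\ref{Lem:PointpinP1P1P1linkII}. However, there is a genuine gap in your treatment of $\dim\mathcal{S}=2$. You argue that since $\mathcal{S}_b$ has bidegree $(a,a)$ and is a homogeneous curve, a ``degree-$\ge 2$ cover of $\p^1$ ramifies'', forcing $a=1$. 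But the degree of the projection $\pi_i\colon \mathcal{S}_b\to\pi_i(\mathcal{S}_b)$ is not $a$; it is $a$ divided by the degree of the plane curve $\pi_i(\mathcal{S}_b)\subset\p^2$. The ramification/Blanchard argument only shows that $\mathcal{S}_b\to\pi_i(\mathcal{S}_b)$ is an isomorphism, which leaves \emph{two} possibilities: $a=1$ (image a line), or $a=2$ with both projections embeddings onto conics. The second case genuinely occurs --- it is exactly the configuration of family \hyperlink{T6}{$6$} of Table~\ref{3folds.MFS}, where $(\Autz(X)_{\p^1})^{\circ}$ acts through $\mathrm{PO}_3(\C)\simeq\PGL_2(\C)$ with a conic as closed orbit --- and your proof silently discards it.

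This is not a removable technicality: in the $a=2$ subcase you cannot invoke Lemma~\ref{Lem:LineinPTP2link} (which concerns bidegree-$(1,1)$ curves), and the morphism induced by $-\tfrac12 K_{\hat X}-E$ is \emph{not} birational onto a quadric. Instead, by Lemma~\ref{Lemm:Iso26}\ref{PTP2CBirMorph}, the linear system $\lvert D+K_{\hat F}\rvert$ on each fibre gives a fibration onto $\p^2$ with general fibre $\p^1$; one must then apply Lemmas~\ref{relBPF} and~\ref{relMfs} to extract an $\Autz(X)$-equivariant Mori fibre space $Y\to B$ with general fibre $\p^1$ and $\dim B=3$. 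This subcase is one of the sources of the ``general fibre $\p^1$'' alternative in the statement, so omitting it leaves the proposition unproved for a nonempty class of Mori fibre spaces. The rest of your argument, including the identification of the main technical burden in the $\dim\mathcal{S}=1$ case, is in line with the paper.
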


\begin{proof}Let $F=\{([x_0:x_1:x_2],[y_0:y_1:y_2])\in \p^2\times \p^2 \mid \sum x_i y_i=0\}.$ The general fibres of $\pi$ are isomorphic to $F$ (Lemma~\ref{T4Aut}).

We apply Theorem~\ref{Thm:ExistenceSection} and obtain a section $s\subset X$ of $\pi$ such that the following holds: the set
 \[\mathcal{S}=\Autz(X)\cdot s=\Autz(X)_{\p^1}\cdot s=(\Autz(X)_{\p^1})^{\circ}\cdot s\] is a proper closed subset of $X$, and such that
 for each $b\in \p^1$, the fibre $\mathcal S_b=\pi^{-1}(b)\cap \mathcal{S}$ of $\pi|_{\mathcal{S}}\colon \mathcal{S}\to \p^1$ is equal to \[\pi^{-1}(b)\cap \mathcal{S}=(\Autz(X)_{\p^1})^{\circ}\cdot p,\] where $p\in s$ is the point such that $\pi(p)=b$.
 
 We first observe that $\dim(\mathcal{S})=3$ leads to a contradiction. For a general $b\in \p^1$, the fibre $\mathcal S_b$ is then a surface in $\pi^{-1}(b)\simeq F$ which is the intersection of $F$ with a hypersurface of  $\p^2\times \p^2$ of bidegree $(a,a)$ for some integer $a\ge 1$ (Lemma~\ref{Lemm:Balanced}). As $\mathcal S_b$ is the orbit of a point by $(\Autz(X)_{\p^1})^{\circ}$, its canonical divisor is not pseudoeffective, so $a=1$.
Hence the morphism induced by any projection $\mathcal S_t\to\p^2$ is birational, but not an isomorphism (this can be shown for instance by  computing  $K_{\mathcal S_t}^2$). The action of $(\Autz(X)_{\p^1})^{\circ}$ on $\p^2$ yields an action on $\p^2$, by Blanchard's Lemma~\ref{blanchard}, which cannot be transitive, contradicting the fact that $\mathcal S_t$ is an orbit.

We now study the case where $\dim(\mathcal{S})=2$. For a general $b\in \p^1$, the fibre $\mathcal S_b$ is then a curve $C_b$ in $\pi^{-1}(b)\simeq F\subset \p^2\times \p^2$ which has bidegree $(a,a)$ for some integer $a\ge 1$  (Lemma~\ref{Lemm:Balanced}). As $C_b$ is an orbit, it is smooth, and it is rational
 since $(\Autz(X)_{\p^1})^{\circ}$ acts on $\p^2\times \p^2$ via a subgroup of $\PGL_2(\C)\times \PGL_2(\C)$, by Blanchard's Lemma~\ref{blanchard}.
 The degree of $\pi_i\colon C_b\to \pi_i(C_b)$ is the same for $i=1,2$, where $\pi_i\colon \p^2\times \p^2$ is the projection on each factor (Lemma~\ref{Lemm:Balanced}). 
 So $\pi_i(C_b)$ is a curve for each $i$. Moreover, $C_b\to\pi_i(C_b)$ is an isomorphism, as otherwise the ramification points would be fixed 
 (using again Blanchard's Lemma, we have an action on each $\p^2$). There are thus two possibilities: either $a=1$, or $a=2$ and the projections to each factor are embeddings.

We now denote by $\hat{X}\to X$ the blow-up of $\mathcal{S}$,
with exceptional divisor $E$.
Let $U\subset \p^1$ be the open set over which $\widehat X\to\p^1$ is smooth. We consider the divisor $-\frac{1}{2}K_{\hat{X}}-E$. 
By Lemma~\ref{Lem:LineinPTP2link} if $a=1$ or Lemma~\ref{Lemm:Iso26} if $a=2$ we have $-\frac{1}{2}K_{\hat{X}}-E=K_{\hat{X}}+D$ where $D$ is relatively big and nef 
(in fact relatively ample) over $U$. By Theorem~\ref{fujino}, the divisor $-\frac{1}{2}K_{\hat{X}}-E$ induces a morphism over $U$.

If $a=1$, the morphism is birational, and contracts on each fibre the strict transforms of 
the two  divisors of bidegree $(0,1),(1,0)$ through $\mathcal{S}_b$ (Lemma~\ref{Lem:LineinPTP2link}). 
This gives an $\Autz(X)$-equivariant birational morphism $\tilde X_U\to Y_U$, where $Y_U\to U$ is a morphism with fibres isomorphic to a smooth quadric $Q\subset \p^4$ (again by Lemma~\ref{Lem:LineinPTP2link}).
By Lemma~\ref{lem:cpt}, we get a Mori fibre space $Y\to\p^1$ that is $\Autz(X)$-birational to $X$ over $\p^1$ and whose general fibres are isomorphic to $Q$. 
This concludes the proof in this case, and the variety $B$ in the statement is isomorphic to $\p^1$.

If $a=2$, the morphism induced by $-\frac{1}{2}K_{\hat{X}}-E$ is not birational. On each fibre, it gives a morphism to $\p^2$ with general fibres  isomorphic to $\p^1$ (Lemma~\ref{Lemm:Iso26}).
We then apply Lemmas~\ref{relBPF} and \ref{relMfs} to get an $\Autz(X)$-equivariant birational map  $\varphi\colon X\dasharrow Y$, where $Y\to B$ is a Mori fibre space with general fibres isomorphic to $\p^1$. We moreover obtain a morphism $B\to \p^1$ which makes the diagram commutative, as in the statement.

It remains to study the case where $\dim(\mathcal{S})=1$, which implies that $\mathcal{S}=s$ is a section, pointwise fixed by $(\Autz(X)_{\p^1})^{\circ}$ and invariant by $\Autz(X)$. The proof follows the same lines as the proof of Proposition~\ref{prop:FibreP1P1P1goestoP3}.

 We denote by $\eta_1\colon X_1\to X$ the blow-up of $\mathcal{S}$ with exceptional divisor $E_1$. Let $H$ be an ample divisor of the form $-\eta_1^*K_X-E_1+\eta_1^*\pi^*\alpha$ where $\alpha$ is sufficiently ample on $\p^1$.
We consider the projective variety $\Chow_{1,1}(X_1)$
which parametrises the proper algebraic cycles of dimension $1$ and degree $1$ with respect to $H$. If $\hat{F}$ is a general fibre of $\pi\circ \eta\colon X_1\to \p^1$,
by Lemma~\ref{Lem:PointpinPTP2linkII}\ref{PTP2p1Ample} the only $1$-cycles of degree $1$ with respect to $H$ contained in $\hat{F}$ are the strict transforms of the two curves through $F\cap \mathcal{S}$ of bidegree $(1,0),(0,1)$ and the lines in $E_1$. 
Moreover, if $\alpha$ is sufficiently ample, the only $1$-cycles of degree $1$ with respect to $H$ are contained in fibres of $\pi\circ\eta_1$. We set $U=\{(x,[t])\in X_1\times \Chow_{1,1}(X_1)\vert\; x\in t\}$.

Therefore the image of the first projection $U\to X_1$ is a subvariety of $X_1$ 
of the form $Z\cup Z'\cup E_1$ where $Z$ is horizontal and $Z'$ is vertical.  

The subvariety $Z$ is such that its intersection with the general fibre of $\pi\circ\eta_1$ is the union of the strict transforms of the two curves through $F\cap \mathcal{S}$ 
of bidegree $(1,0),(0,1)$. We set $\eta_2\colon X_2\to X_1$ the blow-up of $X_1$ along $Z$. We set $E_2$ the exceptional divisor and denote again by $E_1$ the strict transform of 
the exceptional divisor of $\eta_1$. Let $U\subset \p^1$ be the open set over which $X_2\to\p^1$ is smooth. 
The divisor $-\frac{1}{2}K_{X_2}-E_1-\frac{1}{2}E_2$ is relatively semiample over $U$ by Theorem~\ref{fujino} 
as it is the sum of $K_{X_2}$ and of $-\frac{3}{2}K_{X_2}-E_1-\frac{1}{2}E_2$ which by Lemma~\ref{Lem:PointpinPTP2linkII} is relatively big and nef over $U$. 
Therefore, the divisor $-\frac{1}{2}K_{X_2}-E_1-\frac{1}{2}E_2$  induces a morphism, which, on each fibre of $(X_2)_U\to U$, is a morphism to $\p^1\times\p^1$ with general fibre isomorphic to $\p^1$ (again by Lemma~\ref{Lem:PointpinPTP2linkII}). 
We then apply Lemmas~\ref{relBPF} and \ref{relMfs} to get an $\Autz(X)$-equivariant birational map  $\varphi\colon X\dasharrow Y$, where $Y\to B$ is a Mori fibre space with general fibres isomorphic to $\p^1$. We moreover obtain a morphism $B\to \p^1$ which makes the diagram commutative, as in the statement.
\end{proof}
We can now achieve this text by proving Theorem~\ref{Thm:Dimension4}.
\begin{proof}[Proof of Theorem~\ref{Thm:Dimension4}]
Let $\pi\colon X\to\p^1$ be a $\Q$-factorial terminal Mori fibre space such that a general fibre $F$ is a smooth threefold of Picard rank $\ge 2$, 
and such that $\Autz(X)$ is not trivial. The general fibres of $\pi$ belong to one of the families listed in Table~$\ref{3folds.MFS}$ (follows from \cite[Theorem~1.4]{CFST16}).

Suppose first that  \[\Autz(X)_{\p^1}=\{g\in \Autz(X)\mid \pi g=\pi\}\] is finite. In this case, Proposition~\ref{Prop:Torus} implies that $\Autz(X)$ is a torus of dimension~$1$ and provides an $\Autz(X)$-equivariant birational map $X\dasharrow \p^1\times Z$ where $Z$ is a terminal threefold. 
We are then in Case~\ref{Dim41} of Theorem~\ref{Thm:Dimension4}, with $Y=\p^1\times Z$ and $B=Z$.

We may now assume that $\Autz(X)_{\p^1}$ is of positive dimension. 
Let us write $k=\max\{\dim((\Autz(X)_{\p^1})^\circ\cdot x)\mid x\in X\}>0$ for the maximal dimension of an orbit of $(\Autz(X)_{\p^1})^\circ$ 
(equivalently of $\Autz(X)_{\p^1}$)  on $X$.

 If $k=1$, then  Proposition~\ref{Prop:Smallorbit} gives an $\Autz(X)$-equivariant birational map $X\dasharrow Y$, 
where $Y\to B$ is a Mori fibre space, with $\dim B=3$. In this case, the general fibres are isomorphic to $\p^1$, so we are in Case~\ref{Dim41} of Theorem~\ref{Thm:Dimension4}.

We now assume that $k\ge 2$. By Lemma~\ref{lemm:DimMaxopen}, a general fibre $F$ of $\pi\colon X\to \p^1$ then satisfies  
$\dim \Autz(F)\ge 2$. By Proposition~\ref{prop:AutFiniteTable}, the general fibres belong to  the families \hyperlink{T3}{$3$}, \hyperlink{T4}{$4$}, \hyperlink{T6}{$6$} or \hyperlink{T7}{$7$} of Table~$\ref{3folds.MFS}$. 
 We do a case-by-case analysis.

If the general fibres belong to the family  \hyperlink{T3}{$3$}, they are all such that $\Autz(F)\simeq \PGL_2(\C)$ and are isomorphic to the 
blow-up of the quadric $Q\subset \p^4$ of equation $x_0x_4 - 4x_1x_3 + 3x_2^2=0$ along the image of the Veronese embedding of degree $4$ of $\p^1$ (Lemma~\ref{T3Aut}). 
We are then in Case~\ref{Dim43} of Theorem~\ref{Thm:Dimension4}.

If the general fibres belong to the family  \hyperlink{T6}{$6$}, they are all such that $\Autz(F)\simeq \PGL_2(\C)$ and are isomorphic to
\[\left\{(x,y,z)\in (\p^2)^3 \left| 
 \sum_{i=0}^2 x_i y_i= \sum_{i=0}^2 x_i z_i=\sum_{i=0}^2 y_i z_i=0\right\}\right..\]
 (Lemma~\ref{T6Aut}). We are then in Case~\ref{Dim44} of Theorem~\ref{Thm:Dimension4}.

If the general fibres belong to the families \hyperlink{T4}{$4$} and  \hyperlink{T7}{$7$}, 
Proposition~\ref{prop:FibrePTP2goesQP1} and Proposition~\ref{prop:FibreP1P1P1goestoP3} provide an $\Autz(X)$-equivariant birational map $X\dasharrow Y$ where $Y\to B$ is 
a Mori fibre space whose general fibres are either isomorphic to $\p^1$, $\p^3$ or  a smooth quadric $Q\subset \p^4$. We are in Case~\ref{Dim41} of Theorem~\ref{Thm:Dimension4}.
\end{proof}

\bibliographystyle{alpha}
\bibliography{biblio}

\end{document}